\newcommand{\R}{\mathbb{R}}
\newcommand{\diff}{\mathrm{d}}
\DeclareMathOperator{\supp}{supp}
\newtheorem{theorem}{Theorem}[section]
\newtheorem{lemma}{Lemma}
\newtheorem{proposition}{Proposition}
\newtheorem{remark}{Remark}
\newtheorem{definition}{Definition}
\newtheorem{corollary}{Corollary}
\newtheorem*{main-theorem}{Main Theorem}
\newtheorem*{remark*}{Remark}
\newtheorem*{lemma*}{Lemma A.1}
\numberwithin{equation}{section}
\begin{document}

\title[The 2D viscous shallow water equations]{On the vacuum free boundary
problem of the viscous Saint-Venant system
for shallow water in two dimensions}

\author{Hai-Liang Li}

\author{Yuexun Wang}

\author{Zhouping Xin}

\address{ School of Mathematics and CIT, Capital Normal University, Beijing 100048, P. R. China.}
\email{hailiang.li.math@gmail.com}

\address{	
	School of Mathematics and Statistics,
	Lanzhou University,  Lanzhou 730000, P. R. China.}

\email{yuexunwang@lzu.edu.cn}

\address{The Institute of Mathematical Sciences, The Chinese University of Hong Kong, Hong Kong}

\email{zpxin@ims.cuhk.edu.hk}

\thanks{}

\begin{abstract}In this paper, 
we establish the local-in-time well-posedness of classical solutions to the vacuum free boundary
problem of the viscous Saint-Venant system
for shallow water in two dimensions. The solutions are shown to possess higher-order regularities uniformly up to the vacuum free boundary, although the depth degenerates as a singularity of the distance to the vacuum boundary.

Since the momentum equations degenerate in both the dissipation and time evolution, there are difficulties in constructing approximate solutions by the Galerkin's scheme and gaining higher-order regularities uniformly up to the vacuum boundary for the weak solution.
To construct the approximate solutions, we introduce some degenerate-singular elliptic operator, 
whose eigenfunctions form an orthogonal basis of the projection space. Then the high-order regularities on the weak solution are obtained by using some carefully designed higher-order weighted energy functional.

\end{abstract}
\maketitle



\section{Introduction }

This paper concerns the vacuum free boundary problem of the viscous Saint-Venant system
for shallow water in two dimensions (2D viscous shallow water equations) which is given by
\begin{equation}\label{eq:intro-vfb}
\begin{cases}
\partial_t\rho+\textrm{div}(\rho u)=0 &\quad \text{in}\ \Omega(t),\\
\partial_t(\rho u)+\textrm{div}(\rho u\otimes u)+\nabla \rho^2
=\textrm{div}(2\rho\mathbb{D}(u)) &\quad \text{in}\ \Omega(t),\\
\rho>0 &\quad \text{in}\ \Omega(t),\\
\rho=0\ \text{and}\ S\cdot N(t)=0 &\quad \text{on}\ \Gamma(t),\\
\mathcal{V}(\Gamma(t))=u\cdot N(t) &\quad \text{on}\ \Gamma(t),\\
(\rho,u)(\cdot,t=0)=(\rho_0,u_0)(\cdot) &\quad \text{on}\ \Omega,
\end{cases}
\end{equation}
where \(\mathbb{D}(u)\) is the stress tensor
\begin{align*}
\mathbb{D}(u)=\frac{1}{2}\big[\nabla u+(\nabla u)^T\big],
\end{align*}
and \(S\) is the total stress 
\begin{equation*}
\begin{aligned}
S=2\rho \mathbb{D}(u)-\rho^2 \mathbb{I}_2,
\end{aligned}
\end{equation*}
\(\rho\) and \(u\) stand for the \emph{depth} and velocity of the fluid, respectively, \(\Omega(t)\) denotes the changing domain occupied by the
fluid, \(\Gamma(t)=\colon \partial \Omega(t)\) denotes the moving vacuum boundary, \(\mathcal{V}(\Gamma(t))\) denotes the normal velocity of \(\Gamma(t)\), and \(N(t)\) denotes the outward unit normal vector to \(\Gamma(t)\),  and \(\eqref{eq:intro-vfb}_6\) provides the initial depth, velocity and domain, while the condition
\(S\cdot N(t)=0\) in \(\eqref{eq:intro-vfb}_4\) is the stress free condition on \(\Gamma(t)\) and 
\(\eqref{eq:intro-vfb}_5\) says that the speed of the moving vacuum boundary is the normal velocity of the fluid.

The 2D viscous shallow water equations 
describe vertically averaged flows in three dimensional shallow domains in terms of the mean velocity field and the
variation of the depth of the free surface  in geophysical flows (see \cite{MR1978317,  MR1637634}).


The study of vacuum is important in understanding viscous compressible flows \cite{MR1637634,MR1867887}. 
For Cauchy problems of the compressible Navier-Stokes equations with
constant viscosity coefficients, the appearance of vacuum leads to some singular behaviors
of solutions, such as the failure of continuous dependence on initial data of weak solutions \cite{MR1117422}, and the finite time blowup of smooth solutions \cite{MR1488513}.  Later, a modified compressible Navier-Stokes equations with density-dependent viscosity coefficients was introduced in \cite{MR1485360} to overcome the singular behaviors of solutions near vacuum.

For the compressible Navier-Stokes equations with degenerate viscosity coefficients, the vacuum free boundary problems have been extensively studied on the global existence of weak solutions, see \cite{MR1936794,  MR2259332, MR2410901, MR2771263, MR2864798} and references therein. However, since the initial profile connecting to the vacuum boundary is rather general, it seems difficult to improve the regularity of those weak solutions.


Physical vacuum is an important class of vacuum states, naturally arising in
the study of the motion of gaseous stars or shallow water (see \cite{MR3026570, MR3280249, MR2591974, MR3218831, MR910225}), which is defined by the fact that
the boundary between the fluid and the vacuum moves with a nontrivial finite normal acceleration, i.e., 
\begin{align*}\tag{$*$}
-\infty<\frac{\partial c^2}{\partial N}<0 \quad \mbox{on}\ \Gamma,
\end{align*}
where \(c=c(x,t)=\sqrt{\partial p/\partial \rho}\) is the sound speed and \(N\) is the outward unit normal vector to the vacuum boundary \(\Gamma\) (see \cite{MR1377457, MR1483001, MR2980528, MR3280249, MR1485360}). 

Clearly \((*)\) shows that \(c\) is only \(C^{1/2}\)-H\"{o}lder continuous across vacuum boundaries. For inviscid compressible flows, 
\((*)\) implies that the characteristic speeds become singular with infinite spatial derivatives at vacuum boundaries, so the standard approach for symmetric hyperbolic systems fails to apply. This makes understanding the well-posedness of compressible flows with physical vacuum being a challenging problem. 
Until recently, the local
well-posedness of compressible Euler equations with physical vacuum was first given by Jang and Masmoudi \cite{MR2547977, MR3280249} and Coutand  et al. \cite{MR2779087, MR2608125, MR2980528} by different methods (hyperbolic-type
energy estimates and degenerate parabolic
regularizations, respectively) handling the degeneracy near physical vacuum in weighted energy spaces. Later, the unconditional uniqueness of classical solutions to the physical vacuum free boundary problem for the compressible Euler or Euler-Poisson equations was proved by Luo et al. \cite{MR3218831}, and the global existence of classical solutions
to the physical vacuum free boundary problem for the compressible Euler equations with damping was obtained by Luo and Zeng \cite{MR3503024} in the 1D case, and by Zeng \cite{MR3685999} in the 3D spherically symmetric setting, when
the initial data are small perturbations of the Barenblatt solutions. Very recently, this was further extended by Zeng \cite{MR4198724} in the sense of removing the symmetry assumptions to achieve the almost global existence. (See also \cite{MR3986536, MR3551252, MR3479188, MR1766564, MR3303172, MR4010642, MR4302128, MR4000214, MR3924614} for related works.)

Very little is rigorously known about the well-posedness theory for the physical vacuum free boundary problems of viscous compressible flows with degenerate viscosity coefficients.
Concerning the physical vacuum free boundary problem of the 1D shallow water equations derived in \cite{MR1821555},   
the global existence of strong solutions was obtained by Ou and Zeng \cite{MR3397339} based on certain weighted energy estimates with space and time weights using Hardy's inequality, when taking the effect of gravity force into account. However, it is not clear whether the regularity of this solution could be improved or not. 
For the local existence theory, it is not necessary to consider the effect of gravity force. Very recently,  Li et al. \cite{LWX2022} proved the local existence of classical solutions by using some carefully designed higher-order weighted energy functional.


The initial depth profile we are interested in this paper connects to vacuum as follows:
\begin{equation}\label{eq:intro-3}
\begin{cases}
\rho_0\in H^7(\Omega),\
\rho_0(x)=0 \quad \text{on}\ \Gamma,\\
C_1d(x)\leq \rho_0(x)\leq C_2d(x) \quad \text{in}\ \Omega,
\end{cases}
\end{equation}
for some positive constants \(C_1\) and \(C_2\), where \(d(x)=\colon d(x,\Gamma)\) is the distant function from \(x\) to the initial boundary \(\Gamma\). 

It is straightforward to check that \eqref{eq:intro-3} implies that the initial vacuum state is physical in the sense of \((*)\). 

In the present paper, we aim to present a detailed proof on the local well-posedness of classical solutions to the vacuum free boundary problem \eqref{eq:intro-vfb}-\eqref{eq:intro-3}.  Since our classical solutions have very high regularity in \(\Omega(t)\), the usual stress free condition \(S\cdot N(t)=0\) holds automatically  on \(\Gamma(t)\) by the standard trace theorem. Hence the dynamical boundary conditions \(\eqref{eq:intro-vfb}_4\)
reduce to \(\rho=0\) on \(\Gamma(t)\), which is exactly the same one for the compressible Euler system. 

It seems rather nontrivial to extend the result in \cite{LWX2022} to the multi-D case. On the one hand, the appearance of the spatial horizontal derivatives give rise to new difficulties in some estimates involving time-horizontal and horizontal-normal mixed derivatives, which are solved by the crucial use of some weighted inequalities, in particular, the \(H^{1/2}\)-type inequalities and sharp \(L^\infty\)-type inequalities. On the other hand, one needs to construct an orthogonal basis of some weighted Hilbert space, whose regularity depends on a careful analysis on the eigenfunctions of a degenerate-singular elliptic operator, which is more involved than the 1D case, see the discussions below.

For convenience of the readers, we sketch our main strategy to solve the problem and outline the organization of the paper here. \\

{\underline{\emph{Methodology}.}}
We use the Lagrangian coordinates to fix the moving boundary, which enables us to transfer \eqref{eq:intro-vfb} into the following initial boundary problem (see also \eqref{eq:main-2}):
\begin{equation*}\tag{$*1$}
\begin{cases}
\rho_0\partial_tv^i+a_i^k(\rho_0^2 J^{-2}),_{k}=a_l^k(\rho_0J^{-2}a_l^jv^i,_{j}),_{k} &\quad \mbox{in}\ \Omega\times (0,T],\\
(v,\eta)=(u_0,e) &\quad \mbox{on}\ \Omega\times \{t=0\}.
\end{cases}
\end{equation*}

Note that \((*1)\) is a second-order degenerate parabolic system in 2D. To be a classical solution of \((*1)\), \(v\) should have the minimum regularity in the standard Sobolev space  
\begin{align*}\tag{$*2$}
C([0,T]; H^4(\Omega))\cap C^1([0,T]; H^2(\Omega)). 
\end{align*}
However, \((*2)\) may be not a good choice to solve \((*1)\) since \((*1)\) degenerates near \(\Gamma\), which prevents the energy method in standard Sobolev spaces from working. In fact,  we will work with some weighted Sobolev space characterized by a higher-order energy functional \eqref{HOEF-0} consisting of both conormal and normal energies, which not only can capture the singular structure of the physical vacuum, but also is contained in \((*2)\) as a subspace. 
Let us first illustrate the main idea for the simplified case \(\Omega=\mathbb{T}\times(0,1)\). 
The tangential energy estimates consist of time and time-horizontal mixed derivatives estimates. The elliptic estimates involve normal and tangential-normal mixed derivatives, in which each normal derivative \(\partial_2\) cancels a weight \(\sqrt{\rho_0}\), which explains the reason why we take the spatial derivatives up to the eighth-order to reach the regularity \((*2)\) of the velocity.

To obtain a classical solution to \((*1)\), we start with constructing approximate solutions \(\{X^n\}_{n=1}^\infty\) (which converge to the weak solution \(v^{L}\)) to the linearized problem (see \eqref{existence-3}) of \((*1)\) by the Galerkin's scheme.
First, let \(\{e_n\}_{n=1}^\infty\) be an orthogonal basis of \(H^1(\Omega)\).
Assume that 
\begin{equation*}
	\begin{aligned}
		(X^i)^n(t,x)=\sum_{m_1=1}^n(\lambda_{m_1}^i)^n(t)e_{m_1}(x), \quad n=1,2,\dots,
	\end{aligned}
\end{equation*}
which satisfy
\begin{equation*}\tag{$*3$}
	\begin{aligned}
		&\big(\rho_0\partial_t(X^i)^n,e_{m_2}\big)
		+\big(\rho_0\bar{J}^{-2}\bar{a}_l^k\bar{a}_l^j(X^i)^n,_{j},(e_{m_2}),_{k}\big)\\
		&=\big(\rho_0^2 \bar{J}^{-2}\bar{a}_i^k,(e_{m_2}),_{k}\big),
		\quad  m_2=1,2,\dots, n.
	\end{aligned}
\end{equation*}
To get the convergence of \(\{X^n\}_{n=1}^\infty\), as usual, one needs to show that \(\|\rho_0 X_t^n\|_{L^2([0,T];H^{-1}(\Omega))}\) are uniformly bounded  in \(n\), that is   
\begin{equation*}\tag{$*4$}
	\begin{aligned}
		|\langle\rho_0X_t^n, \phi\rangle|\leq C
	\end{aligned}
\end{equation*}
for any \(\phi\in H^1(\Omega)\) with \(\|\phi\|_{H^1(\Omega)}\leq 1\). 
Since \((*3)\) degenerates in the time evolution, it seems hard to verify \((*4)\) for \(\{X^n\}_{n=1}^\infty\). In fact, decompose \(H^1(\Omega)\) as
\begin{equation*}
	\begin{aligned}
		H^1(\Omega)=\text{span}\{e_m\}_{m=1}^n\oplus \text{span}\{e_m\}_{m=n+1}^\infty:\quad
		\phi=\phi_1+\phi_2.
	\end{aligned}
\end{equation*}
Thus it holds that
\begin{equation*}
	\begin{aligned}
		\langle\rho_0\partial_t(X^i)^n, \phi\rangle=\big(\rho_0\partial_t(X^i)^n,\phi_1\big)
		+\big(\rho_0\partial_t(X^i)^n,\phi_2\big)=\colon \mathcal{I}_1+\mathcal{I}_2. 
	\end{aligned}
\end{equation*}
Note that \(\mathcal{I}_1\) can be estimated by \((*3)\),  however, there is no enough information to estimate \(\mathcal{I}_2\) since \(\phi_2\) and  \(\text{span}\{e_m\}_{m=1}^n\) are not perpendicular in \(L_{\rho_0}^2(\Omega)\). 

Next, to get around this difficulty, the key idea is to find an orthogonal basis \(\{e_n\}_{n=1}^\infty\) in a larger weighted Hilbert space \(H_{\rho_0}^1(\Omega)\) (\(\supseteq H^1(\Omega)\)) with the property
\begin{equation*}
	\begin{aligned}
		H_{\rho_0}^1(\Omega)=\text{span}\{e_m\}_{m=1}^n\oplus \text{span}\{e_m\}_{m=n+1}^\infty:\quad
\phi=\phi_1+\phi_2
	\end{aligned}
\end{equation*}
and 
\begin{equation*}
	\begin{aligned}
		\big(\rho_0e_m,\phi_2\big)=0\quad \mbox{for}\  m=1,2,\dots, n.
	\end{aligned}
\end{equation*}
In this case, the corresponding \(\mathcal{I}_2\)  will vanish. 

Then, to construct such an orthogonal basis of \(H_{\rho_0}^1(\Omega)\),  we introduce the \emph{degenerate-singular} elliptic operator
\begin{align*}
  \mathcal{L}w:=-\frac{\mathrm{div}(\rho_0Dw)}{\rho_0}+w \quad \text{in}\ \Omega,
\end{align*}
and show that the elliptic problem
\begin{equation*}
\begin{aligned}
\mathcal{L}w=g \quad \mathrm{in}\ \Omega
\end{aligned}
\end{equation*}
admits a unique weak solution in \(H_{\rho_0}^1(\Omega)\) for any given \(g\) in certain appropriate sense.  We find that the inverse operator \(\mathcal{L}^{-1}\) is a bounded compact operator on \(L_{\rho_0}^2(\Omega)\), and thus obtain an orthogonal basis \(\{w_n\}_{n=1}^\infty\) of \(H_{\rho_0}^1(\Omega)\) by solving 
the eigenvalue problems 
\begin{equation*}
\begin{aligned}
\mathcal{L}w_n=\sigma_nw_n\quad  \mathrm{in}\  \Omega.
\end{aligned}
\end{equation*}  
Consequently, \(\{w_n\}_{n=1}^\infty\) is our desired choice \(\{e_n\}_{n=1}^\infty\). 

On the other hand, since \((*3)\) also degenerates in the dissipation, it seems difficult to improve the spatial regularity of \(v^{L}\)  by obtaining uniform higher-order estimates for \(\{X^n\}_{n=1}^\infty\) in view of \((*3)\) straightforwardly as in the standard parabolic theory. To overcome this difficulty, we derive an equation for \(v^{L}\), which holds in \(\Omega\times (0,T]\) in the almost everywhere sense. 

Due to the degeneracy of \((*1)\), the iterative solutions \(\{v^{(n)}\}_{n=1}^\infty\) to \eqref{existence-21} satisfy as \( n\to\infty\)
{\small{\begin{equation*}
	\begin{aligned}
\underset{0\leq t\leq T}{\sup}\|\sqrt{\rho_0}(v^{(n+1)}-v^{(n)})(t)\|_{L^2(\Omega)}
+\|\sqrt{\rho_0}(Dv^{(n+1)}-Dv^{(n)})\|_{L^2([0,T];L^2(\Omega))}
\to 0,
	\end{aligned}
\end{equation*}}}
	
\noindent which implies only that \(\{v^{(n)}\}_{n=1}^\infty\) is a Cauchy sequence in \(L^2([0,T]; L^2(\Omega))\). 
However, this is insufficient to pass limit in \(n\) for \eqref{existence-21} in time pointwisely. To get over this difficulty, we need some weighted interpolation 
inequality (see Lemma \ref{le:W-9}) to obtain a pointwise convergence in time on \(\{v^{(n)}\}_{n=1}^\infty\).

Finally, we comment how to extend the result on \(\mathbb{T}\times(0,1)\) to a general domain \(\Omega\). 
First, we can construct approximate solutions on \(\Omega\) by the Galerkin’s scheme since it does not need to distinguish the tangential and normal derivatives. Next,  we flatten \(\partial\Omega\) by changing coordinates and transform \((*1)\) to \eqref{existence-100} in the new coordinates. The key observation is that \eqref{existence-100} enjoys the same degenerate structure with \((*1)\), which motives us to design a boundary energy functional \eqref{HOEF-102} for \eqref{existence-100} to carry out high-order estimates for the weak solution in a similar way as  \(\mathbb{T}\times(0,1)\).  One can refer to \cite{MR1780703, MR2388661} for other methods 
to handle free boundary problems on general domains. \\

{\underline{\emph{Outline of the paper}.}}
In Section \ref{main results}, we formulate \eqref{eq:intro-vfb}-\eqref{eq:intro-3} into a fixed boundary problem \eqref{eq:main-2} and 
state our main results. Section \ref{Some Preliminaries} lists some preliminaries: various kinds of weighted inequalities and a weighted interpolation inequality, which play important roles in the a priori estimates and the convergence of the iterative solutions sequence. Section \ref{sec:Jab} presents some useful a priori bounds on \(J^{-2}b^{kj}\) and \(J^{-2}a_i^k\), which together with the use of some weighted inequalities can simplify significantly the a priori estimates.  
In Sections \ref{Energy Estimates} and \ref{Elliptic Estimates}, respectively, we focus on the a priori estimates that consist of the energy
estimates and elliptic estimates, where the strategies handling tangential and normal derivatives are explored in detail. Sections \ref{The Linearized Problem}-\ref{Regularity} are devoted to the study on the existence, uniqueness and regularity of weak solutions to the linearized problem \eqref{existence-3}. More precisely, Section \ref{Degenerate-Singular Elliptic Operators} introduces some new degenerate-singular elliptic operator to construct an orthogonal basis of the projection space for the use of the Galerkin's scheme to obtain a weak solution in Section \ref{weak solution};  Section \ref{Regularity} describes the modifications which are necessary for improving the regularity of the weak solution obtained in Section \ref{weak solution}, comparing with the a priori estimates established in Sections \ref{Energy Estimates} and \ref{Elliptic Estimates}. Then Theorem \ref{th:main-1} is proved in 
Sections \ref{Existence Part} and \ref{Uniqueness Part}. In Section \ref{general domain case}, we
extend Theorem \ref{th:main-1} to a general domain.

\section{Reformulation and Main Results}\label{main results}

\subsection{Fixing the domain}
From here until Section \ref{Uniqueness Part}, we will focus on the reference domain being a slab given by \(\Omega(0)=\mathbb{T}\times(0,1)\), where \(\mathbb{T}=\mathbb{R}/\mathbb{Z}\) denotes the 1-torus,
which permits the use of one global Cartesian coordinate system. 
We will write \(\Omega(0)\) as \(\Omega\), and then \(\partial \Omega\) as \(\Gamma\),  for convenience. The case of general domains will be dealt in Section \ref{general domain case}. 

Denote by \(\eta\) the position of the fluid particle \(x\) at time \(t\)
\begin{equation}\label{fluid particle}
\begin{cases}
\partial_t\eta(x,t)=u(\eta(x,t),t),\\
\eta(x,0)=x.
\end{cases}
\end{equation}
and then define  Lagrangian variables as in \cite{MR2980528}
\begin{equation}\label{Lagrangian variable}
\begin{aligned}
&f(x,t)=\rho(\eta(x,t),t),\quad  
v(x,t)=u(\eta(x,t),t), \\
&J=\det [D\eta],\quad
A=[D\eta]^{-1},\quad
a=JA.
\end{aligned}
\end{equation}
Then   \eqref{eq:intro-vfb} is transformed to 
\begin{equation}\label{eq:main-1}
\begin{cases}
\partial_tf+fA_i^jv^i,_{ j}=0 &\quad \text{in}\ \Omega\times (0,T],\\
f\partial_tv^i+A_i^k(f^2),_{k}=A_l^k(f A_l^jv^i,_{j}),_{k} &\quad \text{in}\ \Omega\times (0,T],\\
f>0 &\quad \text{in}\ \Omega\times (0,T],\\
f=0 &\quad \text{on}\ \Gamma\times (0,T],\\
(f,v,\eta)=(\rho_0,u_0,e) &\quad \text{on}\ \Omega\times \{t=0\},
\end{cases}
\end{equation}
where \(e(x)=x\), \(F,_{k}:=\partial_kF=\frac{\partial F}{\partial_{x_k}}\), and we have used Einstein's summation convention: repeated \(i, j, k\), etc., are summed from 1 to 2.

Noting that \(J_t=JA_i^jv^i,_{j}\),  one  solves \(f\) from \(\eqref{eq:main-1}_1\) to get
\begin{align*}
f(x,t)=\rho_0(x)J^{-1}(x,t).
\end{align*}
This, together with \(A_i^k=J^{-1}a_i^k\) and \(\eqref{eq:main-1}_2\), implies 
\begin{equation}\label{eq:main-2}
\begin{cases}
\rho_0\partial_tv^i+a_i^k(\rho_0^2 J^{-2}),_{k}=a_l^k(\rho_0J^{-2}a_l^jv^i,_{j}),_{k} &\quad \mbox{in}\ \Omega\times (0,T],\\
(v,\eta)=(u_0,e) &\quad \mbox{on}\ \Omega\times \{t=0\}.
\end{cases}
\end{equation}

\eqref{eq:main-2} is a degenerate quasilinear parabolic problem.

	


\subsection{The higher-order energy functional}
To study \eqref{eq:main-2} in certain weighted Sobolev space with high regularity, we will consider the following higher-order energy functional:
{\small{\begin{equation}\label{HOEF-0}
\begin{aligned}
E(t,v)&=\sum_{l_0=0}^4\|\sqrt{\rho_0}\partial_t^{l_0}v\|_{L^2}^2+\sum_{\substack{2l_0+l_1\leq 6\\ l_0,\ l_1\geq 0}}(\|\sqrt{\rho_0}\partial_t^{l_0}\partial_1^{l_1}Dv\|_{L^2}^2+\|\sqrt{\rho_0}\partial_t^{l_0}\partial_1^{l_1+1}Dv\|_{L^2}^2)\\
&\quad+\sum_{\substack{2l_0+l_1+l_2\leq 8\\  l_0,\ l_1\geq 0,\ l_2\geq 2}}\big\|\sqrt{\rho_0^{l_2}}\partial_t^{l_0}\partial_1^{l_1}\partial_2^{l_2}v\big\|_{L^2}^2.
\end{aligned}
\end{equation}}}

\noindent The first line of the right-hand side of \eqref{HOEF-0} is the energy estimates for tangential derivatives, while the second line is the elliptic estimates that involve higher-order normal derivatives, which will be 
denoted as \(E_{\text{en}}(t,v)\) and \(E_{\text{el}}(t,v)\), respectively.

Define a polynomial function \(M_0\) by
\begin{align*}
M_0=P(E(0,v_0)),
\end{align*}
where \(P\) denotes a generic polynomial function of its argument.

\subsection{Main result on the fixed domain problem}

The main result in the paper can be stated as follows:
\begin{theorem}\label{th:main-1} Let \((\rho_0,v_0)\) satisfy \eqref{eq:intro-3} and \(E(0,v_0)<\infty\).  Then
there exist a suitably small \(T>0\) and a unique classical solution 
\begin{align}\label{regularity}
v\in C([0,T]; H^4(\Omega))\cap C^1([0,T]; H^2(\Omega))
\end{align}
 to  \eqref{eq:main-2} such that
\begin{align}\label{eq:inequality-1}
\sup_{0\leq t\leq T} E(t,v)\leq 2M_0.
\end{align}

Moreover, \(v\) satisfies the boundary condition
\begin{align}\label{Nuewmann boundary condition}
(\rho_0),_{k}a_l^ka_l^jv^i,_{j}=0 \quad \rm{on}\ \Gamma\times (0,T].
\end{align}

\end{theorem}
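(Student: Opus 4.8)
The plan is to establish Theorem \ref{th:main-1} in four phases: a priori estimates, construction and regularity of solutions to the linearized problem, an iteration/fixed-point argument for existence of the nonlinear problem, and finally uniqueness. The anchor for everything is the higher-order weighted energy functional $E(t,v)$ defined in \eqref{HOEF-0}. First I would carry out the a priori estimates, assuming a smooth solution $v$ with $\sup_{0\le t\le T}E(t,v)\le 2M_0$ already exists, and show that in fact this bound self-improves to a closed estimate of the form $\sup_{0\le t\le T}E(t,v)\le M_0 + C\,T\,P(\sup_{0\le t\le T}E(t,v))$, so that a small enough $T$ (depending only on $M_0$) forces \eqref{eq:inequality-1}. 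The energy part $E_{\mathrm{en}}$ is controlled by applying $\partial_t^{l_0}\partial_1^{l_1}$ to \eqref{eq:main-2}, pairing with the corresponding time-horizontal derivative of $v$, integrating by parts, and using that each normal derivative falling on a factor is paid for by a weight $\sqrt{\rho_0}$; the horizontal derivatives are harmless since $\partial_1$ commutes with the boundary. The elliptic part $E_{\mathrm{el}}$ is then recovered by viewing \eqref{eq:main-2} as a degenerate elliptic equation for the top normal derivatives at each fixed time, solving for $\sqrt{\rho_0^{\,l_2}}\partial_t^{l_0}\partial_1^{l_1}\partial_2^{l_2}v$ inductively in $l_2$. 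Throughout, the weighted inequalities of Section \ref{Some Preliminaries} (Hardy-type, $H^{1/2}$-type, sharp $L^\infty$-type) and the bounds on $J^{-2}b^{kj}$, $J^{-2}a_i^k$ from Section \ref{sec:Jab} are what make the nonlinear commutator and product terms absorbable into $C\,T\,P(E)$.

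Next I would treat the linearized problem \eqref{existence-3}. Here the two degeneracies — in the time evolution (coefficient $\rho_0$ on $\partial_t v$) and in the dissipation (coefficient $\rho_0 J^{-2}a_l^k a_l^j$) — are handled exactly as sketched in the introduction. For existence of a weak solution I would run Galerkin with the basis $\{w_n\}$ built from the degenerate-singular operator $\mathcal{L}w = -\rho_0^{-1}\mathrm{div}(\rho_0 D w)+w$: one shows $\mathcal{L}^{-1}$ is a bounded compact self-adjoint operator on $L^2_{\rho_0}(\Omega)$, diagonalizes it, and uses that the $\{w_n\}$ are orthogonal in $H^1_{\rho_0}(\Omega)$ and that $(\rho_0 w_m,\phi_2)=0$ for the high-frequency part $\phi_2$. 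This orthogonality kills the problematic term $\mathcal{I}_2$, giving the uniform bound $\|\rho_0\partial_t X^n\|_{L^2([0,T];H^{-1})}\le C$ needed to pass to the limit. Then, to upgrade the weak solution $v^L$ to the regularity class implied by $E$, I would \emph{not} differentiate the Galerkin ODE system directly (that fails because of the dissipation degeneracy), but instead derive a pointwise-a.e.\ equation satisfied by $v^L$ and run the a priori energy/elliptic scheme of Sections \ref{Energy Estimates}--\ref{Elliptic Estimates} on it, with the modifications described in Section \ref{Regularity}. I would then solve the nonlinear problem by the iteration $v^{(n)}\mapsto v^{(n+1)}$ defined by plugging $\eta^{(n)},a^{(n)},J^{(n)}$ into the linear problem; the a priori estimates give uniform-in-$n$ bounds $E(t,v^{(n)})\le 2M_0$ on a common time interval, and a contraction estimate in the low-order norm $\sup_t\|\sqrt{\rho_0}(v^{(n+1)}-v^{(n)})\|_{L^2}+\|\sqrt{\rho_0}D(v^{(n+1)}-v^{(n)})\|_{L^2_t L^2}\to 0$ shows Cauchy convergence in $L^2([0,T];L^2)$. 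Since that convergence alone does not let one pass to the limit in the equation pointwise in time, I would invoke the weighted interpolation inequality (Lemma \ref{le:W-9}) to promote it to pointwise-in-time convergence, then pass to the limit to get a genuine solution $v$ of \eqref{eq:main-2}; weak lower semicontinuity of $E$ and the uniform bounds give \eqref{eq:inequality-1}, and the regularity \eqref{regularity} follows because $E<\infty$ embeds into $C([0,T];H^4)\cap C^1([0,T];H^2)$ via the weighted Sobolev/Hardy inequalities. Finally, the Neumann-type boundary condition \eqref{Nuewmann boundary condition} is read off: since $\rho_0=0$ on $\Gamma$ makes the flux term in \eqref{eq:main-2} vanish there in the trace sense once $v$ has enough regularity, $(\rho_0),_k a_l^k a_l^j v^i,_j=0$ on $\Gamma$.

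For uniqueness I would take two solutions $v_1,v_2$ in the class \eqref{regularity} with the bound \eqref{eq:inequality-1}, form the difference, and derive a Grönwall inequality for the low-order weighted energy $\sup_t\|\sqrt{\rho_0}(v_1-v_2)\|_{L^2}^2+\int_0^t\|\sqrt{\rho_0}D(v_1-v_2)\|_{L^2}^2$. The differences of the geometric quantities $\eta_1-\eta_2$, $a_1-a_2$, $J_1-J_2$ are controlled by $\int_0^t\|v_1-v_2\|$ type quantities using their time-ODE structure, and the degenerate coefficients are again handled by the Section \ref{sec:Jab} bounds together with the weighted inequalities; this closes to give $v_1\equiv v_2$.

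\textbf{Main obstacle.} The hard part will be the a priori elliptic estimates that mix one horizontal and several normal derivatives, i.e.\ the terms $\|\sqrt{\rho_0^{\,l_2}}\partial_t^{l_0}\partial_1^{l_1}\partial_2^{l_2}v\|_{L^2}$ with $l_1\ge 1$ and the time–horizontal–normal mixed terms: these do not appear in the 1D theory of \cite{LWX2022}, and controlling them requires precisely the weighted $H^{1/2}$-type and sharp $L^\infty$-type inequalities, with the weight bookkeeping (each $\partial_2$ costing a $\sqrt{\rho_0}$, and the degenerate elliptic operator only returning a weight $\rho_0$) done delicately enough that no term escapes the $C\,T\,P(E)$ bound. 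A secondary but genuine difficulty is the regularity theory for the linearized problem: because the Galerkin system degenerates in the dissipation, one cannot differentiate it to get higher-order bounds directly, so the a priori scheme must be reproduced for the pointwise-a.e.\ equation of $v^L$ rather than for the approximations, and justifying the integrations by parts and trace identities at that level of regularity is where the care is needed.
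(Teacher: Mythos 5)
Your overall proof architecture matches the paper's: a priori estimates closing to the inequality $E(t,v)\le M_0+CtP\big(\sup_{0\le s\le t}E^{1/2}(s,v)\big)$ (Sections \ref{Energy Estimates}--\ref{Elliptic Estimates}), Galerkin for the linearized problem \eqref{existence-3} using eigenfunctions of $\mathcal{L}$ as the projection basis so that the decomposition $\phi=\phi_1+\phi_2$ in $H^1_{\rho_0}(\Omega)$ kills the high-frequency pairing $\mathcal{I}_2$, the upgrade of the weak solution's regularity by deriving the pointwise-a.e.\ equation \eqref{re-12} and reproducing the a priori scheme there rather than differentiating the Galerkin system, the nonlinear iteration and its contraction in the low-order weighted norm plus the weighted interpolation Lemma \ref{le:W-9} to promote convergence to $C([0,T];L^2)$, and Gr\"onwall uniqueness -- this is precisely what the paper does in Sections \ref{The Linearized Problem}--\ref{Uniqueness Part}.

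One step as you state it, however, is wrong: the derivation of \eqref{Nuewmann boundary condition}. You argue that "$\rho_0=0$ on $\Gamma$ makes the flux term in \eqref{eq:main-2} vanish there," and read off the boundary condition from that. But expanding the flux $a_l^k(\rho_0 J^{-2}a_l^j v^i,_j),_k$ as in \eqref{r1} shows it does \emph{not} vanish at $\Gamma$: the piece $J^{-2}(\rho_0),_k a_l^k a_l^j v^i,_j$ has no $\rho_0$ prefactor and carries $(\rho_0),_k$, which is nonzero on $\Gamma$ (indeed the physical vacuum condition forces it to be nonzero). The correct argument (Remark \ref{re:main-1}) is the reverse: \emph{every other} term in \eqref{r1} carries a factor $\rho_0$ or $\rho_0^2$; the regularity \eqref{regularity} together with Lemmas \ref{le:Preliminary-1} and \ref{le:W-1} and the trace theorem $H^3(\Omega)\hookrightarrow H^{5/2}(\Gamma)$ makes \eqref{r1} well-defined pointwise on $\Gamma$; sending $x\to\Gamma$ annihilates all the $\rho_0$-weighted terms, and the equation then \emph{forces} the remaining unweighted term $J^{-2}(\rho_0),_k a_l^k a_l^j v^i,_j$ to vanish, which is \eqref{Nuewmann boundary condition} after dividing by $J^{-2}\ne 0$. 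A second, smaller omission: for the Galerkin ODE system \eqref{existence-6} to be solvable one needs the eigenfunctions $w_l$ to be regular enough that the coefficient matrix $\int_\Omega\rho_0\bar J^{-2}\bar b^{kj}(e_{m_1}),_j(e_{m_2}),_k\,\diff x$ is finite and Lipschitz in $t$; this requires the eigenfunction regularity estimate \eqref{weighted eigenfunction regularity} of Theorem \ref{regularity of eigenvalue}, which your proposal does not address.
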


\subsection{Main result on the vacuum free
	boundary problem}

Due to \eqref{J-bound}, the flow map \(\eta(\cdot,t)\colon \Omega\rightarrow \Omega(t)\) is invertible for any \(t\in[0,T]\) and we denote its inverse by \(\tilde{\eta}(\cdot,t)\colon \Omega(t)\rightarrow \Omega\), where \(T\) is determined in Theorem \ref{th:main-1}. Let \((\eta,v)\) be the unique classical solution in Theorem \ref{th:main-1}. For \(t\in[0,T]\) and \(y\in \Omega(t)\), set
\begin{equation*}
\begin{aligned}
\rho(y,t)=\rho_0(\tilde{\eta}(y,t))\eta_x^{-1}(\tilde{\eta}(y,t),t),\
u(y,t)=v(\tilde{\eta}(y,t),t).
\end{aligned}
\end{equation*}
Then the triple \((\rho(y,t), u(y,t), \Gamma(t))\) \((t\in[0,T])\) defines a unique classical solution to  \eqref{eq:intro-vfb}. More precisely, Theorem \ref{th:main-1} yields

\begin{theorem}\label{th:main-2} Let \((\rho_0,u_0)\) satisfy \eqref{eq:intro-3} and \(E(0,v_0)<\infty\). Then
there exist a \(T>0\) and a unique classical solution \((\rho(y,t), u(y,t), \Gamma(t))\) for \(t\in[0,T]\) to  \eqref{eq:intro-vfb}. Moreover, \(\Gamma(t)\in C^2([0,T])\) and 
\begin{equation}\label{regularity-2}
\begin{aligned}
&\rho(y,t)\in C([0,T];H^4(\Omega(t)))\cap C^1([0,T];H^3(\Omega(t))),\\
&u(y,t)\in C([0,T];H^4(\Omega(t)))\cap C^1([0,T];H^2(\Omega(t)))
\end{aligned}
\end{equation}
for \(t\in[0,T]\) and \(y\in \Omega(t)\).

Furthermore, \(u\) satisfies the boundary condition
\begin{align}\label{Nuewmann boundary condition-2}
\nabla\rho\cdot\mathbb{D}(u)=0 \quad \rm{on}\ \Gamma(t).
\end{align}

\end{theorem}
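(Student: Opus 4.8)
\textbf{Proof proposal for Theorem \ref{th:main-2}.}

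The plan is to deduce Theorem \ref{th:main-2} from Theorem \ref{th:main-1} by undoing the Lagrangian change of coordinates, so that no new energy estimates are required: everything should follow from the regularity \eqref{regularity}, the a priori bound \eqref{eq:inequality-1}, and the bounds on $J$, $a_i^k$ collected in Section \ref{sec:Jab}. First I would verify that the flow map $\eta(\cdot,t)\colon\Omega\to\Omega(t)$ defined by \eqref{fluid particle} is a well-defined $C^1$-in-time, $H^4$-in-space diffeomorphism for each $t\in[0,T]$. Invertibility comes from the lower bound on $J$ (presumably $J\geq 1/2$ on $[0,T]$, the estimate \eqref{J-bound} alluded to right before the statement), which for $T$ small follows from $J|_{t=0}=1$ and $\partial_tJ=JA_i^jv^i,_j\in L^\infty$; smoothness of $\tilde\eta(\cdot,t)$ then follows from the inverse function theorem together with the Sobolev algebra/composition estimates in $H^4(\Omega)$ in two dimensions (here $H^4\hookrightarrow C^2$, so compositions are controlled). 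I would also record that $\partial_t\eta=v\circ$(identity)$=u\circ\eta$ and hence $\Gamma(t)=\eta(\Gamma,t)$ moves with normal speed $u\cdot N$, which is exactly $\eqref{eq:intro-vfb}_5$, and that $\Gamma(t)\in C^2([0,T])$ because $\eta\in C^1([0,T];H^4)\hookrightarrow C^1([0,T];C^2)$ and the boundary $\Gamma$ is smooth.

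Next I would define $\rho(y,t)=\rho_0(\tilde\eta(y,t))\,J^{-1}(\tilde\eta(y,t),t)$ and $u(y,t)=v(\tilde\eta(y,t),t)$ as in the statement (writing $\eta_x=J$), and check that this triple solves \eqref{eq:intro-vfb}. The transport equation $\partial_t\rho+\mathrm{div}(\rho u)=0$ and the momentum equation are obtained simply by reversing the algebra of Section \ref{main results}: the identities $f=\rho_0J^{-1}$, $A=[D\eta]^{-1}$, $a=JA$, $J_t=JA_i^jv^i,_j$ were used to pass from \eqref{eq:main-1} to \eqref{eq:main-2}, so running them backwards turns a solution of \eqref{eq:main-2} into a solution of \eqref{eq:main-1} and then of \eqref{eq:intro-vfb} in $\Omega(t)$. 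The positivity $\rho>0$ in $\Omega(t)$ and the vacuum condition $\rho=0$ on $\Gamma(t)$ follow from \eqref{eq:intro-3} ($C_1 d\leq\rho_0\leq C_2 d$) together with $J^{-1}$ bounded above and below; the strict physicality $-\infty<\partial_N(\rho^2)<0$ is propagated the same way, though it is not needed for the statement. For the Neumann-type boundary condition \eqref{Nuewmann boundary condition-2}, I would push forward \eqref{Nuewmann boundary condition}: since $\nabla_y\rho$ at a boundary point is (up to the diffeomorphism and the factor $J^{-1}$) parallel to $a_l^k(\rho_0),_k$ pulled back, and $\mathbb{D}(u)$ corresponds to the symmetric part of $A_l^j v^i,_j$, the identity $(\rho_0),_k a_l^k a_l^j v^i,_j=0$ on $\Gamma$ transforms exactly into $\nabla\rho\cdot\mathbb{D}(u)=0$ on $\Gamma(t)$; one should be careful that the full stress-free condition $S\cdot N(t)=0$ then holds automatically by the trace theorem, as remarked after \eqref{eq:intro-3}.

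The regularity claim \eqref{regularity-2} is where the real work lies, and I expect it to be the main obstacle. For $u$, the asserted class $C([0,T];H^4(\Omega(t)))\cap C^1([0,T];H^2(\Omega(t)))$ should follow from $v\in C([0,T];H^4(\Omega))\cap C^1([0,T];H^2(\Omega))$ together with the fact that $\tilde\eta(\cdot,t)$ is a uniform-in-$t$ $H^4$-diffeomorphism and composition with such a map preserves $H^k$ for $k\leq 4$ in 2D; the time continuity in $H^4$ requires combining continuity of $t\mapsto v(t)$ in $H^4$ with continuity of $t\mapsto\tilde\eta(t)$ in $H^4$ and an interpolation argument, since composition is only separately (not jointly norm-) continuous at top order — this is the delicate point. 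For $\rho$, note $\rho=\rho_0(\tilde\eta)J^{-1}(\tilde\eta)$ and $\rho_0\in H^7(\Omega)$ by \eqref{eq:intro-3}, which is more than enough regularity that $\rho(\cdot,t)\in H^4(\Omega(t))$; the gain to $C^1([0,T];H^3(\Omega(t)))$ for $\rho$ (one derivative better than $u$) comes from the transport structure: $\partial_t\rho=-\mathrm{div}(\rho u)$, and $\mathrm{div}(\rho u)\in C([0,T];H^3)$ because $\rho\in C([0,T];H^4)$ and $u\in C([0,T];H^4)$ with $H^4$ an algebra in 2D. I would assemble these facts carefully, tracking which Sobolev embeddings and product/composition estimates are invoked, and handle the endpoint time-continuity by a density/approximation argument as is standard for transport and Lagrangian coordinate changes.
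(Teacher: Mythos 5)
Your proposal matches the paper's (essentially unstated) argument: the paper defines $\rho$ and $u$ by pulling back along $\tilde\eta$ and simply asserts that Theorem \ref{th:main-1} yields the result, and your write-up fills in exactly the steps that assertion leaves implicit — invertibility of $\eta$ via \eqref{J-bound}, reversing the algebra that produced \eqref{eq:main-2} from \eqref{eq:intro-vfb}, pushing forward the boundary condition \eqref{Nuewmann boundary condition} to get \eqref{Nuewmann boundary condition-2}, and using $\partial_t\rho=-\mathrm{div}(\rho u)$ for the extra time derivative of $\rho$.

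One place where your argument is imprecise and would need a fix is the claim that $\rho(\cdot,t)\in H^4(\Omega(t))$ follows because ``$\rho_0\in H^7$ is more than enough.'' In Lagrangian coordinates $f=\rho_0 J^{-1}$, and the bottleneck is the Jacobian factor, not $\rho_0$: since $\eta\in C([0,T];H^4(\Omega))$ one has $D\eta\in C([0,T];H^3)$ only, hence $J^{-1}\in C([0,T];H^3)$ and \emph{not} $H^4$. The worst term in $D^4 f$ is $\rho_0\,D^4(J^{-1})$, which involves $D^5\eta$, and $D^5\eta$ is \emph{not} in $L^2$ unweighted; what rescues the claim is precisely the weight $\rho_0$ sitting in front of $J^{-1}$ together with the weighted bound $\|\rho_0 D^5\eta\|_{L^2}\lesssim t\sup_{s\le t}E^{1/2}(s,v)$ from Lemma \ref{le:W-1} (equivalently $\|\rho_0 D^5 v\|_{L^2}\lesssim E^{1/2}(t,v)$). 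So the $H^4$ regularity of $\rho$ is a genuine output of the weighted higher-order energy \eqref{HOEF-0}, not an automatic consequence of $\rho_0\in H^7$, and without invoking these weighted estimates the first line of \eqref{regularity-2} is not justified. With that repair — and with the composition/time-continuity care you already flagged — your route coincides with the paper's intended one.
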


\subsection{Remarks}

There are a few remarks in order:

\begin{remark}\label{re:main-0}

It follows from \eqref{regularity} and  \eqref{regularity-2} that for any \(\gamma\in (0,1)\),
	\begin{align*}
	&v\in C([0,T]; C^{2,\gamma}(\Omega))\cap C^1([0,T]; C^{0,\gamma}(\Omega)),\\
	&\rho\in C([0,T]; C^{2,\gamma}(\Omega(t)))\cap C^1([0,T]; C^{1,\gamma}(\Omega(t))),\\
	&u\in C([0,T]; C^{2,\gamma}(\Omega(t)))\cap C^1([0,T]; C^{0,\gamma}(\Omega(t))).
	\end{align*} 
\end{remark}

\begin{remark} Given \(\rho_0\) and \(u_0\), one can define \(\partial_tv|_{t=0}\)  by the following compatibility conditions through
\eqref{eq:main-2}: 
\begin{equation*}
	\begin{aligned}
		\partial_tv^i|_{t=0}=\rho_0^{-1}[(\rho_0
		(v_0)^i,_1),_1+(\rho_0
		(v_0)^i,_2),_2-(\rho_0^2),_{i}]
	\end{aligned}
\end{equation*}
and similarly when \(l_0=2,3,4\) 
\begin{equation*}
	\begin{aligned}
		\partial_t^{l_0}v^i|_{t=0}=\rho_0^{-1}\partial_t^{l_0-1}[a_l^k(\rho_0J^{-2}a_l^jv^i,_{j}),_{k}
		-a_i^k(\rho_0^2 J^{-2}),_{k}]|_{t=0}. 
	\end{aligned}
\end{equation*}
The other terms involving initial time derivatives in \(E(0,v_0)\) can be defined analogously.

\end{remark}

\begin{remark}\label{re:main-1}
 In \cite{MR2980528}, Coutand and Shkoller established the well-posedness of the physical vacuum free boundary problem for the isentropic compressible Euler equations, which may be written in the Lagrangian coordinates as
\begin{align}\label{r10}
	\rho_0\partial_tv^i+a_i^k(\rho_0^\gamma J^{-\gamma}),_{k}=0\quad \mathrm{in}\ \Omega\times (0,T].
\end{align}
The key point of their theory is the analysis for the case that \(\gamma=2\), in which the authors considered the following degenerate parabolic
regularization: 
\begin{equation}\label{r11}
\begin{aligned}
\rho_0\partial_tv^i+a_i^k(\rho_0^2 J^{-2}),_{k}
=\kappa \partial_t[a_i^k(\rho_0^2 J^{-2}),_{k}]\quad \mathrm{in}\ \Omega\times (0,T]
\end{aligned}
\end{equation}
for small \(\kappa>0\). 
The special structure of the artificial viscosity in \eqref{r11} does not break the structure of the a priori estimates for \eqref{r10} established in \cite{MR2608125}, and thus the solution \(v\) to \eqref{r10} will be obtained as a limit of the solutions \(v_\kappa\) to \eqref{r11} when \(\kappa\to 0\). To make use of the Galerkin's scheme to construct approximate solutions of \eqref{r11}, the authors considered a good unknown \(X=\rho_0 J^{-2}\mathrm{div}_{\eta}v\) which satisfies a nonlinear degenerate heat-type equation.


Yet, such a crucial idea seems not working for \eqref{eq:main-2} due to the different dissipation structures between \eqref{eq:main-2} and \eqref{r11}. Constructing an orthogonal basis of \(H_{\rho_0}^1(\Omega)\) for the use of the Galerkin's scheme to obtain approximate solutions
and designing a higher-order energy functional \eqref{HOEF-0} to gain high-order regularities of the weak solution constitute the main contributions of the present work.


Different with \eqref{r11} (whose classical solutions do not satisfy necessarily any boundary conditions),
any classical solution  to \eqref{eq:main-2} with the regularity \eqref{eq:inequality-1} must satisfy \eqref{Nuewmann boundary condition}.  
	Indeed, note that \(\eqref{eq:main-2}_1\) implies
	\begin{equation}\label{r1}
	\begin{aligned}
		&\rho_0\partial_tv^i+2J^{-2}(\rho_0),_{k}\rho_0a_i^k-2J^{-3}
		\rho_0^2a_i^kJ,_{k}\\
		&=J^{-2}(\rho_0),_{k}a_l^ka_l^jv^i,_{j}
		+\rho_0a_l^k(-2J^{-3}J,_{k}a_l^jv^i,_{j}
		+J^{-2}a_l^j,_{k}v^i,_{j}+J^{-2}a_l^jv^i,_{jk})
	\end{aligned}
	\end{equation}
	for \((x,t)\in \ \Omega\times (0,T]\). 	
	It follows from \eqref{eq:inequality-1}, Lemmas \ref{le:Preliminary-1} and \ref{le:W-1} that
	\begin{align*}
		\rho_0\partial_tv(t),\ Dv(t),\ D\eta(t),\ \rho_0D^2v(t),\ \rho_0D^2\eta(t)\in H^3(\Omega)\quad  \mathrm{for}\ t\in (0,T],
	\end{align*}
	which, combined with \(H^3(\Omega)\hookrightarrow H^{5/2}(\Gamma)\), yields 
	\begin{align*}
		\rho_0\partial_tv(t),\ Dv(t),\ D\eta(t),\ \rho_0D^2v(t),\ \rho_0D^2\eta(t)\in H^{5/2}(\Gamma)\quad  \mathrm{for}\ t\in (0,T].
	\end{align*}
	This together with \eqref{J-formula}, \eqref{a-formula} and \eqref{Eta-formula}
	implies that \eqref{r1} is well-defined pointwisely on \( \Gamma\times (0,T]\).
	Using \eqref{eq:intro-3}, \eqref{J-bound}, and  letting \(x\) go to \(\Gamma\) in \eqref{r1}, then one obtains \eqref{Nuewmann boundary condition}.

\end{remark}

\section{Preliminaries}\label{Some Preliminaries}

\subsection{Notations} Denote by \(\diff x\)  the Lebesgue measure \(\diff x_1 \diff x_2\), and  by \(D\) the gradient operator \((\partial_1,\partial_2)\).
Let \(H^s(\Omega)\), $C([0,T]; H^s(\Omega))$ and $L^p([0,T]; H^s(\Omega))$  be the standard Sobolev spaces. \(\partial_1\) denotes the horizontal derivative, and \(N=(0,1)\) is the outward unit normal vector to \(\Gamma\).

For convenience, we often use \(u(t):=u(x,t)\) to highlight its dependence on time \(t\). Furthermore, we also frequently write 
\(\|u(t)\|_{L^2(\Omega)}\) as \(\|u\|_{L^2}\), and \(\|u\|_{L^2([0,t]; L^2(\Omega))}\) as \(\|u\|_{L_t^2L^2}\), if there is no confusion. 

\(C\) denotes a nonnegative generic constant which may be different from line to line but is
independent of the solutions. 
We write \(g\lesssim h\) (\(g\gtrsim h\)) when \(g\leq  Ch\) (\(g\geq  Ch\)), and \(g \sim h\) when \(g \lesssim h \lesssim g\).

\subsection{Weighted Sobolev inequalities} Let \(d(x)=\colon d(x,\Gamma)\) be the distance function to \(\Gamma\).
To handle the degeneracy near \(\Gamma\), we will need the following weighted Sobolev inequalities, whose proofs can be found for instance in \cite{MR0802206}.  

For given \(\alpha>0\) and nonnegative integer \(\beta\),  \(H^{\alpha, \beta}(\Omega)\) denotes the weighted Sobolev space
\begin{equation*}
\begin{aligned}
H^{\alpha, \beta}(\Omega)=\bigg\{d^\frac{\alpha}{2}w\in L^2(\Omega):  \int_\Omega d^\alpha(x)|D^\theta w|^2(x)\,\diff x<\infty\quad \text{for}\ 0\leq\theta\leq \beta \bigg\}
\end{aligned}
\end{equation*}
with the norm
\begin{equation*}
\begin{aligned}
\|w\|_{H^{\alpha, \beta}(\Omega)}^2=\sum_{\theta=0}^\beta \int_\Omega d^\alpha(x)|D^\theta w|^2(x)\,\diff x.
\end{aligned}
\end{equation*}
Then the following Hardy-type embedding
\begin{equation}\label{ineq:weighted Sobolev-0}
\begin{aligned}
H^{\alpha, \beta}(\Omega)\hookrightarrow H^{\beta-\frac{\alpha}{2}}(\Omega)
\end{aligned}
\end{equation}
holds for \(\beta\geq \alpha/2\), in particular, one has
\begin{equation}\label{ineq:weighted Sobolev-1}
\begin{aligned}
\|w\|_{H^{1/2}(\Omega)}^2\lesssim \int_\Omega d(x)(w^2+|Dw|^2)(x)\,\diff x.
\end{aligned}
\end{equation}

Also the following weighted Sobolev inequality holds:
\begin{equation}\label{ineq:weighted Sobolev-2}
\begin{aligned}
\int_\Omega d^{\alpha}(x)w^2(x)\,\diff x\lesssim \int_\Omega d^{\alpha+2}(x)(w^2+|Dw|^2)(x)\,\diff x\quad \mbox{for}\ \alpha=0,1,2,\dots .
\end{aligned}
\end{equation}

Note that \eqref{ineq:weighted Sobolev-1} and
\eqref{ineq:weighted Sobolev-2} have nothing to do with spatial dimensions. Besides,
it holds that
\begin{equation}\label{ineq:weighted Sobolev-3}
	\begin{aligned}
		\|w\|_{L^\infty(\Omega)}^2\lesssim \sum_{\theta=0}^2 \int_\Omega d(x)|D^\theta w|^2(x)\,\diff x,
	\end{aligned}
\end{equation}
which follows from \(H^{3/2}(\Omega)\hookrightarrow L^\infty(\Omega)\) and  \eqref{ineq:weighted Sobolev-0} (with \(\alpha=1\ \text{and}\ \beta=2\)). 

Due to \eqref{eq:intro-3},  \(d(x)\) can be replaced by \(\rho_0(x)\) in \eqref{ineq:weighted Sobolev-1}-\eqref{ineq:weighted Sobolev-3}.

\subsection{A tangent estimate on initial depth profile}
The following inequality is a consequence of the higher-order Hardy-type inequality \cite{MR2980528}
\begin{equation}\label{TE}
\begin{aligned}
 \|\partial_1^l(\rho_0)/\rho_0\|_{L^\infty(\Omega)}\leq C\quad \text{for}\ l=1,2,\dots .
\end{aligned}
\end{equation}

\subsection{Consequences of the energy functional}\label{consequency of higher-order energy function}

As a prerequisite for later use, we will use \eqref{ineq:weighted Sobolev-2} to deduce some useful consequences of \eqref{HOEF-0}. 
\begin{lemma}\label{le:Preliminary-1} It holds that
	\begin{align}\label{Preliminary-1}
		\|D^2v\|_{L^\infty}\lesssim\|v\|_{H^4}\lesssim E^{1/2}(t,v).
	\end{align}
Consequently, if \eqref{fluid particle} and \eqref{Lagrangian variable} hold, then
\begin{align}\label{Preliminary-2}
		\|D^2\eta\|_{L^\infty}\lesssim \|D^2\eta\|_{H^2}\lesssim t \sup_{0\leq s\leq t}E^{1/2}(s,v).
	\end{align}
\end{lemma}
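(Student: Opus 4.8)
\textbf{Proof plan for Lemma \ref{le:Preliminary-1}.}

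The plan is to prove \eqref{Preliminary-1} first, then bootstrap to \eqref{Preliminary-2} via the flow map relation. For \eqref{Preliminary-1}, the first inequality $\|D^2v\|_{L^\infty}\lesssim\|v\|_{H^4}$ is just the Sobolev embedding $H^2(\Omega)\hookrightarrow L^\infty(\Omega)$ in two dimensions applied to $D^2v$, which holds on the slab $\Omega=\mathbb{T}\times(0,1)$. The real content is the second inequality $\|v\|_{H^4}\lesssim E^{1/2}(t,v)$, i.e.\ that the weighted energy functional \eqref{HOEF-0} controls the full (unweighted) $H^4$ norm. First I would observe that $E(t,v)$ already contains $\|\sqrt{\rho_0}v\|_{L^2}^2$ and the terms $\|\sqrt{\rho_0}\,\partial_1^{l_1}Dv\|_{L^2}^2$ and $\|\sqrt{\rho_0}\,\partial_1^{l_1+1}Dv\|_{L^2}^2$ for $l_1\leq 6$, plus the normal-derivative terms $\|\sqrt{\rho_0^{l_2}}\,\partial_1^{l_1}\partial_2^{l_2}v\|_{L^2}^2$ for $2\leq l_2$, $l_1\geq 0$, $l_1+l_2\leq 8$. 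The strategy is to trade each factor $\sqrt{\rho_0}$ (equivalently $\sqrt{d}$, by \eqref{eq:intro-3}) for a power of $d$ using the weighted Sobolev inequality \eqref{ineq:weighted Sobolev-2}, which lets one remove a weight $d^\alpha$ at the cost of one extra derivative: $\int d^\alpha w^2\lesssim \int d^{\alpha+2}(w^2+|Dw|^2)$. Iterating \eqref{ineq:weighted Sobolev-2} downward from $\alpha=1$ converts $\int d\,w^2$ into $\int d^{2k+1}(\cdots)$, but what I actually need goes the other way: I want to bound $\int w^2$ by $\int d^\alpha(\text{derivatives of }w)^2$. Here the key is that \eqref{ineq:weighted Sobolev-2} as stated bounds the \emph{lower}-weight integral by the \emph{higher}-weight integral with one more derivative, so starting from $\int |D^4 v|^2$ (weight $d^0$) I can bound it by $\int d(|D^4v|^2+|D^5v|^2)$ — but that introduces a fifth derivative I do not control. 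So instead the chain must run: the unweighted $\|D^4v\|_{L^2}^2=\int d^0|D^4v|^2$ is dominated, via \eqref{ineq:weighted Sobolev-2} with $\alpha=0$, by $\int d^2(|D^4v|^2+|D^5v|^2)$; that is still wrong in the order of derivatives. The correct reading is that I should express $D^4v$ itself as (weight)$\times$(higher-regularity quantity already in $E$): each pure normal derivative $\partial_2$ in \eqref{HOEF-0} comes attached with a weight $\sqrt{\rho_0}$, and there are enough such terms (up to $\partial_2^8$ with weight $\rho_0^4$) that, peeling weights one at a time via \eqref{ineq:weighted Sobolev-2} (each peel costing one derivative but gaining $d^2$), one lands back at the unweighted $L^2$ norm of $D^4v$. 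Concretely: $\|\partial_2^4 v\|_{L^2}^2=\int d^0|\partial_2^4 v|^2 \lesssim \int d^2(|\partial_2^4v|^2+|\partial_2^5v|^2)\lesssim\int d^4(|\partial_2^4v|^2+\cdots+|\partial_2^6v|^2)\lesssim\cdots\lesssim\int d^8(|\partial_2^4v|^2+\cdots+|\partial_2^8v|^2)$, and the right side is $\lesssim\sum_{l_2=4}^{8}\|\sqrt{\rho_0^{l_2}}\partial_2^{l_2}v\|_{L^2}^2\leq E(t,v)$ since $d^8\lesssim d^{l_2}$ near $\Gamma$ for $l_2\leq 8$ and $d$ is bounded. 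Mixed derivatives $\partial_1^{l_1}\partial_2^{l_2}v$ with $l_2\leq 4$ and $l_1+l_2\leq 4$ are handled the same way using the $l_2\geq 2$ block (for $l_2\geq2$) and the $\sqrt{\rho_0}\,\partial_1^{l_1}Dv$ and $\sqrt{\rho_0}\,\partial_1^{l_1+1}Dv$ terms (for $l_2\leq 1$, where one has at least one derivative landing as a "$Dv$" with a single weight to peel once).

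The execution of \eqref{Preliminary-1} therefore amounts to: (i) enumerate the multi-indices $(l_1,l_2)$ with $l_1+l_2\leq 4$ and $l_0=0$ appearing in $\|v\|_{H^4}^2=\sum\|\partial_1^{l_1}\partial_2^{l_2}v\|_{L^2}^2$; (ii) for each, identify the term of \eqref{HOEF-0} with matching horizontal index and with $l_2$ (or $l_2'\geq l_2$) normal derivatives carrying weight $\sqrt{\rho_0^{l_2'}}$ (using that $E$ contains, via interpolation in the $\partial_1$ index and the $Dv$ blocks, control of all intermediate orders); (iii) apply \eqref{ineq:weighted Sobolev-2} repeatedly $l_2'$ times to strip the weight, noting $\rho_0\sim d$ and $d\lesssim 1$ so higher powers of $d$ are harmless and the chain closes inside $E$. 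One also uses \eqref{TE} implicitly only if derivatives hit $\rho_0$, but here the weights are external so \eqref{TE} is not needed; the only subtlety is matching the count of spare derivatives in $E$ (spatial derivatives up to eighth order) against the four weight-peels needed, which is exactly why \eqref{HOEF-0} is designed with the $\partial_2^{l_2}$ sum running up to $l_2\leq 8$.

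For \eqref{Preliminary-2}, I would start from the flow-map ODE \eqref{fluid particle}, which in Lagrangian form reads $\partial_t\eta = v$ with $\eta(x,0)=x$, hence $D^2\eta(x,t) = D^2 e(x) + \int_0^t D^2 v(x,s)\,\diff s = \int_0^t D^2v(x,s)\,\diff s$ (since $D^2 e=D^2 x=0$). Taking the $H^2(\Omega)$ norm and using Minkowski's integral inequality, $\|D^2\eta(t)\|_{H^2}\leq \int_0^t\|D^2v(s)\|_{H^2}\,\diff s \leq \int_0^t\|v(s)\|_{H^4}\,\diff s \leq t\sup_{0\leq s\leq t}\|v(s)\|_{H^4}\lesssim t\sup_{0\leq s\leq t}E^{1/2}(s,v)$ by the already-proven \eqref{Preliminary-1}, and then $\|D^2\eta\|_{L^\infty}\lesssim\|D^2\eta\|_{H^2}$ by Sobolev embedding again. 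The main obstacle is entirely in \eqref{Preliminary-1}: carefully bookkeeping which weighted term of \eqref{HOEF-0} dominates each unweighted $H^4$ piece and verifying the number of available higher-order derivatives in $E$ suffices for all the weight-stripping iterations; \eqref{Preliminary-2} is then a routine time-integration of the flow map.
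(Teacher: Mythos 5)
Your proposal is correct and mirrors the paper's argument precisely: both reduce $\|v\|_{H^4}\lesssim E^{1/2}$ to the highest-order piece $\int|D^4v|^2$ and iterate the weighted Sobolev inequality \eqref{ineq:weighted Sobolev-2} to trade each factor $d^2\sim\rho_0^2$ for one extra derivative until the weight powers match those appearing in \eqref{HOEF-0}, and both obtain \eqref{Preliminary-2} from $D^2\eta=\int_0^t D^2 v\,\diff s$, Minkowski's inequality, and \eqref{Preliminary-1}. The only stylistic difference is that the paper discards the terms already dominated by $E(t,v)$ at each iteration while you narrate carrying the full chain out to weight $\rho_0^8$ before invoking $E$, but the mechanism and the bookkeeping of which block of \eqref{HOEF-0} absorbs which derivative combination are the same.
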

\begin{proof} To show \eqref{Preliminary-1}, it suffices to prove its second inequality. Note that one needs only to show the highest-order estimate since other cases can be proven similarly. Indeed, it follows from \eqref{ineq:weighted Sobolev-2} and  \eqref{HOEF-0} that
		\begin{equation*}
			\begin{aligned}
				&\int_\Omega |D^4v|^2\,\diff x\lesssim \int_\Omega\rho_0^2(|D^4v|^2+|D^5v|^2)\,\diff x\\
				&\lesssim \int_\Omega\rho_0^4(|D^4v|^2+|D^5v|^2)\,\diff x
				+\int_\Omega\rho_0^4(|D^5v|^2+|D^6v|^2)\,\diff x\\
				&\lesssim E(t,v)+\int_\Omega\rho_0^6(|D^5v|^2+|D^6v|^2)\,\diff x+\int_\Omega\rho_0^6(|D^6v|^2+|D^7v|^2)\,\diff x\\
				&\lesssim E(t,v)+\int_\Omega\rho_0^8(|D^7v|^2+|D^8v|^2)\,\diff x
				\lesssim E(t,v).
			\end{aligned}
		\end{equation*}

\eqref{Preliminary-2} is a consequence of \eqref{Preliminary-1} and
	\begin{equation}\label{eta-L2}
		\begin{aligned}
			\|D^\theta\eta\|_{L^2}\leq \int_0^t\left(\int_\Omega |D^\theta v|^2\,\diff x\right)^{1/2}\diff s
			\lesssim t \sup_{0\leq s\leq t}E^{1/2}(t,v),\ \theta=2,3,4,
		\end{aligned}
	\end{equation}
where one has used Minkowski's inequality in the first inequality.	
	
\end{proof}

\subsection{An equivalent form of the main equations}

Note that
\begin{align}\label{J-formula}
J=\eta^1,_{1}\eta^2,_{2}-\eta^1,_{2}\eta^2,_{1},
\end{align}
\begin{equation}\label{a-formula}
a=
\begin{bmatrix}
\eta^2,_{2}& -\eta^1,_{2}\\
-\eta^2,_{1}&\eta^1,_{1}
\end{bmatrix}.
\end{equation}
Each column of  \(a\) 
satisfies the following Piola's identity:
\begin{equation}\label{Piola}
\begin{aligned}
a_i^k,_{k}=0,\quad i=1,2.
\end{aligned}
\end{equation}

Set \(b^{kj}=a_l^ka_l^j\).
By \eqref{Piola},   $\eqref{eq:main-2}_1$ becomes
\begin{align}\label{eq:main-3}
\rho_0\partial_tv^i+(\rho_0^2 J^{-2}a_i^k),_{k}=(\rho_0J^{-2}b^{kj}v^i,_{j}),_{k}.
\end{align}

\subsection{A priori assumptions}\label{The a priori assumption} Let \(c_1\) and \(c_2\) be the constants in the first and second inequality of \eqref{Preliminary-1}, respectively. Set \(M_1=2M_0\).
Let \((v,\eta)\) satisfy \eqref{fluid particle} and \eqref{Lagrangian variable}. 

Assume that there exists some suitably small \(T\in(0,1/(20c_1c_2\sqrt{M_1}+10c_1^2c_2^2M_1)]\cap (0,1)\) such that 
\begin{align}\label{a priori assumption}
	\sup_{0\leq t\leq T}E(t,v)\leq M_1.
\end{align}
Then one has 
\begin{align}\label{J-bound}
	9/10\leq J(x,t)\leq 11/10\quad \text{for}\ (x,t)\in \Omega\times [0,T].
\end{align}
Indeed, it follows from \eqref{fluid particle} by direct calculations that
\begin{equation}\label{Eta-formula}
D\eta(x,t)=
\begin{bmatrix}
1+\int_0^tv^1,_{1}(x,s)\,\diff s& \int_0^tv^2,_{1}(x,s)\,\diff s\\
\int_0^tv^1,_{2}(x,s)\,\diff s&1+\int_0^tv^2,_{2}(x,s)\,\diff s
\end{bmatrix}.
\end{equation}
By means of \eqref{J-formula},  \eqref{a priori assumption} and \eqref{Eta-formula}, one gets that
\begin{equation*}
\begin{aligned}
	|J(x,t)-1|&\leq 2\int_0^t\|Dv(s)\|_{L^\infty(\Omega)}\,\diff s+\bigg(\int_0^t\|Dv(s)\|_{L^\infty(\Omega)}\,\diff s\bigg)^2\\
&\leq 2c_1c_2T\sup_{0\leq t\leq T}E^{1/2}(t,v)+c_1^2c_2^2T^2\sup_{0\leq t\leq T}E(t,v)\\
&\leq (2c_1c_2\sqrt{M_1}+c_1^2c_2^2M_1)T
\leq 1/10 \quad \text{for}\ (x,t)\in \Omega\times [0,T].
\end{aligned}
\end{equation*}
Hence \eqref{J-bound} follows. 

On the other hand, it follows from \eqref{Eta-formula} and \eqref{a-formula} that
\begin{equation}\label{b-formula}
\begin{aligned}
&b^{11}=1+2\int_0^tv^2,_{2}\,\diff s+\bigg(\int_0^tv^2,_{2}\,\diff s\bigg)^2+\bigg(\int_0^tv^1,_{2}\,\diff s\bigg)^2,\\
&b^{12}=b^{21}=-\int_0^t(v^2,_{1}+v^1,_{2})\,\diff s-\int_0^tv^1,_{1}\,\diff s\int_0^tv^1,_{2}\,\diff s,\\
&\qquad\qquad\quad-\int_0^tv^2,_{1}\,\diff s\int_0^tv^2,_{2}\,\diff s,\\
&b^{22}=1+2\int_0^tv^1,_{1}\,\diff s+\bigg(\int_0^tv^1,_{1}\,\diff s\bigg)^2+\bigg(\int_0^tv^2,_{1}\,\diff s\bigg)^2.
\end{aligned}
\end{equation}
By \eqref{b-formula} and  \(T\in(0,1/(20c_1c_2\sqrt{M_1}+10c_1^2c_2^2M_1)]\cap (0,1)\), one finds that the minimum eigenvalue
of \((b^{kj})\) is bounded below by \(1/5\), which implies that
\begin{align}\label{a-bound}
	b^{kj}(x,t)\xi_k\xi_j\geq |\xi|^2/5 \quad \text{for}\ (x,t)\in \Omega\times [0,T]\ \text{and}\ \ \xi\in \R^2. 
\end{align}
In particular,   
\begin{align}\label{b-bound}
	b^{22}(x,t)\geq 1/5 \quad \text{for}\ (x,t)\in \Omega\times [0,T]. 
\end{align}


\subsection{Some Weighted Inequalities}\label{Some Weighted inequalities}

This subsection lists some weighted inequalities which will be frequently used later on.

\subsubsection{\(L^2\) and \(L^\infty\)-type inequalities}\label{sub:W-1}
\begin{lemma}[\(L^2\)-estimate]\label{le:W-1} It holds that
 
\begin{equation}
\begin{aligned}\label{Weighted-1}
	&\sum_{l_2=0}^1\bigg(\sum_{l_1=0}^4\|\rho_0^{l_2}\partial_{1}^{l_1}D^{l_2+2}v\|_{L^2}
	+\sum_{l_1=0}^2\|\rho_0^{l_2}\partial_{1}^{l_1}D^{l_2+3}v\|_{L^2}\bigg) \lesssim E^{1/2}(t,v),\\
	&\sum_{l_1=0}^2\|\rho_0^3\partial_{1}^{l_1}D^6v\|_{L^2}+\sum_{l_2=1}^4\|\rho_0^{l_2}D^{l_2+4}v\|_{L^2}
	\lesssim E^{1/2}(t,v).
	\end{aligned}
\end{equation}
Consequently, if \eqref{fluid particle} and \eqref{Lagrangian variable} hold, then
\begin{equation}
\begin{aligned}\label{Weighted-2}
	&\sum_{l_2=0}^1\bigg(\sum_{l_1=0}^4\|\rho_0^{l_2}\partial_{1}^{l_1}D^{l_2+2}\eta\|_{L^2}
	+\sum_{l_1=0}^2\|\rho_0^{l_2}\partial_{1}^{l_1}D^{l_2+3}\eta\|_{L^2}\bigg)\lesssim t\sup_{0\leq s\leq t}E^{1/2}(s,v),\\
	&\sum_{l_1=0}^2\|\rho_0^3\partial_{1}^{l_1}D^6\eta\|_{L^2}+\sum_{l_2=1}^4\|\rho_0^{l_2}D^{l_2+4}\eta\|_{L^2}
	\lesssim t\sup_{0\leq s\leq t}E^{1/2}(s,v).
	\end{aligned}
\end{equation}

\end{lemma}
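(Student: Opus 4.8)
The plan is to prove \eqref{Weighted-1} by a dimension-reduction/weight-trading argument identical in spirit to the proof of Lemma \ref{le:Preliminary-1}: each application of \eqref{ineq:weighted Sobolev-2} converts a term $\int_\Omega \rho_0^\alpha |D^\theta v|^2$ into $\int_\Omega \rho_0^{\alpha+2}(|D^\theta v|^2 + |D^{\theta+1} v|^2)$, so one trades two powers of the weight $\rho_0$ for one extra spatial derivative, and the goal is always to land on one of the three sums appearing in the definition \eqref{HOEF-0} of $E(t,v)$ (after discarding the harmless $\partial_t$ and $\partial_1$ structure). Since \eqref{ineq:weighted Sobolev-2} is stated for the distance function $d(x)$ and $\rho_0 \sim d$ on $\Omega$ by \eqref{eq:intro-3}, the substitution $d \rightsquigarrow \rho_0$ is legitimate throughout, with the implied constants absorbing the ratio $C_1, C_2$.

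First I would treat the ``conormal-type'' block of \eqref{Weighted-1}, namely the terms $\|\rho_0^{l_2}\partial_1^{l_1} D^{l_2+2}v\|_{L^2}$ and $\|\rho_0^{l_2}\partial_1^{l_1} D^{l_2+3}v\|_{L^2}$ for $l_2 \in \{0,1\}$. For $l_2 = 1$ these are already directly controlled: $\|\rho_0 \partial_1^{l_1} D^3 v\|_{L^2}$ and $\|\rho_0\partial_1^{l_1} D^4 v\|_{L^2}$ match the second line of \eqref{HOEF-0} (the $\|\sqrt{\rho_0}\,\partial_1^{l_1} Dv\|$-type sums hold with a $\sqrt{\rho_0}$ weight, but one more power of $\rho_0$ only helps and one additional $D$ can be absorbed via one step of \eqref{ineq:weighted Sobolev-2}; more cleanly, the $\rho_0^{l_2}$ with $l_2 = 1$ terms follow from the $l_2 \geq 2$ sum in \eqref{HOEF-0} after a single application of \eqref{ineq:weighted Sobolev-2}). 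For $l_2 = 0$, one applies \eqref{ineq:weighted Sobolev-2} twice: $\int \rho_0^0 |\partial_1^{l_1} D^{2} v|^2 \lesssim \int \rho_0^2(\cdots) \lesssim \int \rho_0^4 |\partial_1^{l_1} D^4 v|^2 + \cdots$, which lies in $E$ by the $l_2 \geq 2$ sum since $2l_0 + l_1 + l_2 = 0 + l_1 + 4 \leq 8$ for $l_1 \leq 4$ — this is exactly why the constraint $l_1 \leq 4$ appears. The $D^3$ analogue needs the bound $l_1 \leq 2$ so that $l_1 + 2\cdot 3 \leq 8$, matching the stated summation range. The last line of \eqref{Weighted-1} — $\|\rho_0^3 \partial_1^{l_1} D^6 v\|$ with $l_1 \leq 2$ and $\|\rho_0^{l_2} D^{l_2+4} v\|$ with $1 \le l_2 \le 4$ — is the cleanest: each of these matches the $l_2 \geq 2$ block of \eqref{HOEF-0} directly, after at most one trade to raise a half-integer weight to an integer one, using $2\cdot 0 + l_1 + 6 \le 8$ and $l_2 + 4 \le 8$ respectively.

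Then \eqref{Weighted-2} follows from \eqref{Weighted-1} by the same device used for \eqref{Preliminary-2}: write $D^\theta \eta(x,t) = D^\theta e(x) + \int_0^t D^\theta v(x,s)\,\diff s$, note $D^\theta e \equiv 0$ for $\theta \geq 2$, and apply Minkowski's integral inequality to pull the $L^2_x$ (or weighted $L^2_x$) norm inside the time integral, exactly as in \eqref{eta-L2}; since $\rho_0 = \rho_0(x)$ is time-independent the weight passes through the time integral unchanged, giving $\|\rho_0^{l_2}\partial_1^{l_1} D^\theta \eta\|_{L^2} \leq \int_0^t \|\rho_0^{l_2}\partial_1^{l_1} D^\theta v(s)\|_{L^2}\,\diff s \lesssim t\sup_{0\le s\le t} E^{1/2}(s,v)$. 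The main (and really the only) obstacle is bookkeeping: one must check for every pair $(l_1, l_2)$ in the stated ranges that, after the requisite number of applications of \eqref{ineq:weighted Sobolev-2} to reach an even total weight $\rho_0^{l_2'}$ with $l_2'$ even and $\geq 2$, the resulting multi-index $(0, l_1, l_2')$ still satisfies $l_1 + l_2' \leq 8$ — i.e. that the derivative ``budget'' of $E$ is never exceeded. The summation bounds in \eqref{Weighted-1} have been chosen precisely so this holds, so no genuine difficulty arises; I would present one representative highest-order case in full (e.g. $l_2 = 0$, $l_1 = 4$, $D^2 v$, as in Lemma \ref{le:Preliminary-1}) and assert the rest are analogous.
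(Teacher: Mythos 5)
Your proposal is correct and uses essentially the same argument as the paper: iterate the weighted embedding \eqref{ineq:weighted Sobolev-2} to trade powers of $\rho_0$ for spatial derivatives until the resulting weighted quantities fall inside the index ranges of \eqref{HOEF-0}, then reduce \eqref{Weighted-2} to \eqref{Weighted-1} via \eqref{eta-L2}. The paper carries out the same computation, displaying only the representative $l_2=0$ cases of $\eqref{Weighted-1}_1$ and asserting the rest are analogous, just as you propose to do.
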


\begin{proof} It suffices to show \eqref{Weighted-1} due to \eqref{eta-L2}. 
We only prove \(\eqref{Weighted-1}_1\) for \(l_2=0\). It follows from  \eqref{HOEF-0} and \eqref{ineq:weighted Sobolev-2} that
	\begin{equation*}
	\begin{aligned}
	\int_\Omega |\partial_{1}^{l_1}D^2v|^2\,\diff x
	&\lesssim \int_\Omega\rho_0^2(|\partial_{1}^{l_1}D^2v|^2+|\partial_{1}^{l_1}D^3v|^2)\,\diff x\\
	&\lesssim E(t,v)+\int_\Omega\rho_0^4(|\partial_{1}^{l_1}D^3v|^2+|\partial_{1}^{l_1}D^4v|^2)\,\diff x\\
	&\lesssim E(t,v), \quad l_1=0,1,2,3,4
	\end{aligned}
	\end{equation*}
and
\begin{equation*}
\begin{aligned}
\int_\Omega |\partial_{1}^{l_1}D^3v|^2\,\diff x
&\lesssim \int_\Omega\rho_0^2(|\partial_{1}^{l_1}D^3v|^2+|\partial_{1}^{l_1}D^4v|^2)\,\diff x
\lesssim \cdots \\
&\lesssim E(t,v)+\int_\Omega\rho_0^6(|\partial_{1}^{l_1}D^5v|^2+|\partial_{1}^{l_1}D^6v|^2)\,\diff x\\
&\lesssim E(t,v),\quad l_1=0,1,2.
\end{aligned}
\end{equation*}

\end{proof}

\begin{corollary}[\(L^\infty\)-estimate]\label{le:W-2} It holds that

\begin{equation}
\begin{aligned}\label{Weighted-3}
	&\sum_{l_2=0}^1\bigg(\sum_{l_1=0}^4\|\rho_0^{l_2}\partial_{1}^{l_1}D^{l_2}v\|_{L^\infty}
	+\sum_{l_1=0}^2\|\rho_0^{l_2}\partial_{1}^{l_1}D^{l_2+1}v\|_{L^\infty}\bigg)\lesssim E^{1/2}(t,v),\\
	&\sum_{l_1=0}^2\|\rho_0^3\partial_{1}^{l_1}D^4v\|_{L^\infty}+\sum_{l_2=1}^4\|\rho_0^{l_2}D^{l_2+2}v\|_{L^\infty}
	\lesssim E^{1/2}(t,v).
	\end{aligned}
\end{equation}
Consequently, if \eqref{fluid particle} and \eqref{Lagrangian variable} hold, then
 \begin{equation}
\begin{aligned}\label{Weighted-4}
&\|D\eta\|_{L^\infty}\lesssim 1+t\sup_{0\leq \tau\leq t}E^{1/2}(\tau,v),\\
	&\sum_{l_1=2}^4\|\partial_{1}^{l_1}\eta\|_{L^\infty}
	+\sum_{l_1=1}^2\|\partial_{1}^{l_1}D\eta\|_{L^\infty}
	\lesssim t\sup_{0\leq s\leq t}E^{1/2}(s,v),\\
	&\sum_{l_1=1}^4\|\rho_0\partial_{1}^{l_1}D\eta\|_{L^\infty}
	+\sum_{l_1=0}^2\|\rho_0\partial_{1}^{l_1}D^2\eta\|_{L^\infty}\lesssim t\sup_{0\leq s\leq t}E^{1/2}(s,v),\\
	&\sum_{l_1=0}^2\|\rho_0^3\partial_{1}^{l_1}D^4\eta\|_{L^\infty}+\sum_{l_2=1}^4\|\rho_0^{l_2}D^{l_2+2}\eta\|_{L^\infty}
	\lesssim t\sup_{0\leq s\leq t}E^{1/2}(s,v).
	\end{aligned}
\end{equation}

\end{corollary}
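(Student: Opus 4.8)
\textbf{Proof proposal for Corollary \ref{le:W-2}.}

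The plan is to derive the $L^\infty$-bounds in \eqref{Weighted-3} from the corresponding $L^2$-bounds in Lemma \ref{le:W-1} by applying the weighted $L^\infty$-inequality \eqref{ineq:weighted Sobolev-3} (with $d$ replaced by $\rho_0$, as permitted by \eqref{eq:intro-3}) to suitably weighted derivatives of $v$. Concretely, for a term like $\rho_0^{l_2}\partial_1^{l_1}D^{l_2}v$ appearing on the left of $\eqref{Weighted-3}_1$, I would set $w=\rho_0^{l_2}\partial_1^{l_1}D^{l_2}v$ and invoke \eqref{ineq:weighted Sobolev-3}, so that $\|w\|_{L^\infty}^2\lesssim\sum_{\theta=0}^2\int_\Omega\rho_0|D^\theta w|^2\,\diff x$. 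Each $D^\theta w$ is, by the Leibniz rule, a sum of terms of the form $(D^{\mu}\rho_0^{l_2})\,\partial_1^{l_1}D^{l_2+\nu}v$ with $\mu+\nu=\theta\le 2$; using $|D^\mu\rho_0^{l_2}|\lesssim\rho_0^{l_2-1}$ for $l_2\ge 1$ (or just $|D^\mu\rho_0^{l_2}|\lesssim 1$ when $l_2=0$, $\mu\ge 1$), together with the tangential bound \eqref{TE} when the derivative hitting $\rho_0$ is purely horizontal, every resulting term is dominated in $L^2$ after multiplication by $\sqrt{\rho_0}$ by one of the quantities $\|\rho_0^{l_2+1/2}\partial_1^{l_1}D^{l_2+\nu}v\|_{L^2}$ with $\nu\le 2$, which is controlled by $E^{1/2}(t,v)$ via \eqref{Weighted-1} and the definition \eqref{HOEF-0} (possibly after one more application of \eqref{ineq:weighted Sobolev-2} to trade a half power of $\rho_0$). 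Running this bookkeeping over the finitely many index ranges $l_2\in\{0,1\}$, $l_1\le 4$ (resp. $l_1\le 2$), and the two top-order terms $\rho_0^3\partial_1^{l_1}D^4v$ and $\rho_0^{l_2}D^{l_2+2}v$ ($l_2\le 4$), yields \eqref{Weighted-3}.

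For \eqref{Weighted-4}, I would use the integral representation $D\eta(x,t)=\mathbb I_2+\int_0^t Dv(x,s)\,\diff s$ from \eqref{Eta-formula} and, more generally, $D^\theta\eta(x,t)=D^\theta e(x)+\int_0^t D^\theta v(x,s)\,\diff s$, so that for $\theta\ge 2$ one has $D^\theta\eta=\int_0^t D^\theta v\,\diff s$. Applying Minkowski's inequality in the $L^\infty$-norm gives $\|D^\theta\eta(t)\|_{L^\infty}\le\int_0^t\|D^\theta v(s)\|_{L^\infty}\,\diff s$, and analogously for the weighted quantities $\|\rho_0^{l_2}\partial_1^{l_1}D^{l_2+\nu}\eta(t)\|_{L^\infty}\le\int_0^t\|\rho_0^{l_2}\partial_1^{l_1}D^{l_2+\nu}v(s)\|_{L^\infty}\,\diff s$, since $\rho_0$ is time-independent and $D^\theta e$ vanishes for $\theta\ge 2$. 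Each integrand is then bounded by $E^{1/2}(s,v)\le\sup_{0\le s\le t}E^{1/2}(s,v)$ using \eqref{Weighted-3}, which produces the factor $t\sup_{0\le s\le t}E^{1/2}(s,v)$; for $\eqref{Weighted-4}_1$ the extra constant $1$ comes from the $\mathbb I_2$ term. One must simply match each entry of \eqref{Weighted-4} against the appropriate entry of \eqref{Weighted-3}: e.g. $\sum_{l_1=2}^4\|\partial_1^{l_1}\eta\|_{L^\infty}$ and $\sum_{l_1=1}^2\|\partial_1^{l_1}D\eta\|_{L^\infty}$ against the $l_2=0$ pieces of $\eqref{Weighted-3}_1$, the $\rho_0\partial_1^{l_1}D\eta$ and $\rho_0\partial_1^{l_1}D^2\eta$ sums against the $l_2=1$ pieces, and the last line against $\eqref{Weighted-3}_2$.

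The main obstacle is not conceptual but combinatorial: making sure that when the Leibniz rule distributes derivatives in $D^\theta(\rho_0^{l_2}\partial_1^{l_1}D^{l_2}v)$, every term that survives is genuinely controlled by $E^{1/2}(t,v)$ and none falls just outside the index ranges collected in \eqref{HOEF-0} and Lemma \ref{le:W-1}. The delicate cases are the top-order ones, where $l_2$ is large (up to $4$) and there is no room to spare: there one needs the sharp pairing of the weight power $\rho_0^{l_2}$ with exactly $l_2+2$ total derivatives on $v$, and the fact that tangential derivatives $\partial_1^{l_1}$ do not cost any weight (this is where \eqref{TE} is essential, to absorb $\partial_1$ hitting $\rho_0$) while each normal derivative beyond the budget would. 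Once the index bookkeeping is organized — for instance by always reducing, via \eqref{ineq:weighted Sobolev-2}, to the half-integer-weighted $L^2$ norms that literally appear in \eqref{HOEF-0} — the estimates close, and the $\eta$-estimates follow mechanically by time integration as above.
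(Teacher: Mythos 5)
Your proposal is correct and follows essentially the same route as the paper: derive the $L^\infty$-bounds in \eqref{Weighted-3} from the $L^2$-bounds \eqref{Weighted-1} via a (weighted) Sobolev embedding plus Leibniz and \eqref{TE}, then obtain \eqref{Weighted-4} by the fundamental theorem of calculus in time, noting that two or more spatial derivatives kill the identity term in \eqref{Eta-formula}. The only minor imprecision is the bound ``$|D^\mu\rho_0^{l_2}|\lesssim\rho_0^{l_2-1}$,'' which for $\mu=2$ should read $\rho_0^{\max(l_2-2,0)}$, so the Leibniz terms with $\mu\geq1$ land at lower weight than the stated $\rho_0^{l_2+1/2}$ and may need more than ``one more'' application of \eqref{ineq:weighted Sobolev-2}; this does not affect the conclusion since you already flag the need to trade weight powers with \eqref{ineq:weighted Sobolev-2}.
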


\begin{proof}
\eqref{Weighted-3}  is a direct consequence of \eqref{Weighted-1}. 
\(\eqref{Weighted-4}_1\) follows from 
\begin{equation*}
\begin{aligned}
\|D\eta\|_{L^\infty}\leq 1+\int_0^t\|Dv\|_{L^\infty}\,\diff\tau\lesssim 1+t\sup_{0\leq \tau\leq t}E^{1/2}(\tau,v).
\end{aligned}
\end{equation*}
\(\eqref{Weighted-4}_2\)-\(\eqref{Weighted-4}_5\) can be proven similarly by noting that more than one spatial derivative falling on \(\eta\) will eliminate the constant `1'.  
\end{proof}

\begin{lemma}\label{le:W-3} It holds that 
\begin{equation}\label{Weighted-5}
\begin{aligned}
\|\partial_tDv\|_{L^\infty}\lesssim E^{1/2}(t,v).
\end{aligned}
\end{equation}
\end{lemma}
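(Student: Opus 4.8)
The plan is to prove \eqref{Weighted-5} by reducing it to the already-established $L^\infty$ bounds of Corollary \ref{le:W-2}. The point is that $\partial_t D v$ carries one time derivative and one spatial derivative, so it should be controlled by a suitable combination of the quantities appearing on the left-hand sides of \eqref{Weighted-3}. First I would recall the weighted Sobolev inequality \eqref{ineq:weighted Sobolev-3}, which bounds $\|w\|_{L^\infty}^2$ by $\sum_{\theta=0}^2\int_\Omega d(x)|D^\theta w|^2\,\diff x$, and the fact (from \eqref{eq:intro-3}) that $d(x)$ may be replaced by $\rho_0(x)$ there. Applying this with $w=\partial_t D v=\partial_t\partial_k v$ (for each fixed $k$) gives
\begin{equation*}
\|\partial_tDv\|_{L^\infty}^2\lesssim \sum_{\theta=0}^2\int_\Omega \rho_0\,|D^\theta\partial_tDv|^2\,\diff x
\lesssim \int_\Omega \rho_0\big(|\partial_tDv|^2+|\partial_tD^2v|^2+|\partial_tD^3v|^2\big)\,\diff x.
\end{equation*}

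Next I would bound each of the three terms on the right by $E(t,v)$. The term $\int_\Omega\rho_0|\partial_tDv|^2$ is immediately controlled: by \eqref{ineq:weighted Sobolev-2} (with $\alpha=1$) it is $\lesssim\int_\Omega\rho_0^3(|\partial_tDv|^2+|\partial_tD^2v|^2)\,\diff x$, and $\sqrt{\rho_0^2}\,\partial_t Dv$, $\sqrt{\rho_0^2}\,\partial_t D^2 v$ (which dominate the $\rho_0^3$-weighted quantities since $\rho_0$ is bounded) appear in $E(t,v)$ via the $l_0=1$ terms of the second sum in \eqref{HOEF-0} — indeed $\|\sqrt{\rho_0}\partial_t^{l_0}\partial_1^{l_1}Dv\|_{L^2}^2$ and $\|\sqrt{\rho_0}\partial_t^{l_0}\partial_1^{l_1+1}Dv\|_{L^2}^2$ with $l_0=1,l_1=0,1$ give exactly $\rho_0$-weighted (hence a fortiori $\rho_0^3$-weighted, up to the bounded factor $\rho_0^2$) control of $\partial_tDv$ through $\partial_t D^3 v$ in the horizontal directions, and the normal-derivative terms are handled by the third sum of \eqref{HOEF-0}. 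Rather than unwinding this by hand, the cleanest route is to observe that the quantity $\int_\Omega\rho_0(|\partial_tDv|^2+|\partial_tD^2v|^2+|\partial_tD^3v|^2)\,\diff x$ is, term by term, of the form already bounded in Lemma \ref{le:W-1}: applying $\partial_t$ in place of one spatial derivative and using the $2l_0+l_1\le 6$ and $2l_0+l_1+l_2\le 8$ budgets in \eqref{HOEF-0} with $l_0=1$, each summand is $\lesssim E(t,v)$ exactly as in the proof of \eqref{Weighted-1}. Thus $\|\partial_tDv\|_{L^\infty}^2\lesssim E(t,v)$, which is \eqref{Weighted-5}.

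Alternatively, and perhaps more transparently, one can argue directly from \eqref{ineq:weighted Sobolev-3} that $\|\partial_tDv\|_{L^\infty}\lesssim\|\sqrt{\rho_0}\,\partial_tDv\|_{H^2}$ and then note that $\|\sqrt{\rho_0}\,\partial_tDv\|_{H^2}\lesssim E^{1/2}(t,v)$ follows by the same chain of $\rho_0$-weighted Hardy inequalities \eqref{ineq:weighted Sobolev-2} used to prove Lemma \ref{le:W-1}, now applied to the function $\partial_t v$ with two spatial derivatives: the pattern of weights is identical to the $l_2=1$ line of \eqref{Weighted-1} with a $\partial_t$ inserted, which is accommodated by the time-derivative budget in the definition of $E(t,v)$.

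The main obstacle is purely bookkeeping: one must check that every term produced by expanding $\|\sqrt{\rho_0}\,\partial_tDv\|_{H^2}^2$ — that is, $\int_\Omega\rho_0|\partial_t D^{\theta+1}v|^2$ for $\theta=0,1,2$, together with the terms where the two $H^2$-derivatives land on the weight $\sqrt{\rho_0}$ (which are lower order and absorbed using \eqref{TE}) — actually appears, after the repeated use of \eqref{ineq:weighted Sobolev-2} to upgrade the weight, among the summands of $E(t,v)$ in \eqref{HOEF-0} with $l_0=1$. No genuinely new estimate is needed; the weighted Sobolev machinery of \eqref{ineq:weighted Sobolev-2}--\eqref{ineq:weighted Sobolev-3} together with the structure of \eqref{HOEF-0} already established in Lemmas \ref{le:Preliminary-1} and \ref{le:W-1} suffices.
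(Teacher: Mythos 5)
Your proposal is correct and follows essentially the same route as the paper: control $\|\partial_tDv\|_{L^\infty}$ by an $L^\infty$-type Sobolev embedding and then repeatedly apply the Hardy-type inequality \eqref{ineq:weighted Sobolev-2} to raise the weight until every term lands on a summand of $E(t,v)$ (with $l_0=1$). The paper starts from the unweighted $H^2(\Omega)\hookrightarrow L^\infty(\Omega)$ embedding and cascades up to $\|\rho_0^3\partial_tD^6v\|_{L^2}^2$, whereas you begin directly with the $\rho_0$-weighted estimate \eqref{ineq:weighted Sobolev-3}; this is a cosmetic difference and both terminate at the same $E(t,v)$ bound. One small remark: your first approach applies \eqref{ineq:weighted Sobolev-2} to $\int_\Omega\rho_0|\partial_tDv|^2\,\diff x$ unnecessarily (that term is already $\|\sqrt{\rho_0}\partial_tDv\|_{L^2}^2$, a summand of $E(t,v)$), and the phrase about $\sqrt{\rho_0^2}$-weighted quantities ``appearing in the second sum'' is loose — the second sum carries weight $\sqrt{\rho_0}$ and only dominates the $\rho_0^2$-weighted quantities because $\rho_0$ is bounded — but the conclusion is correct and the overall argument is sound.
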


\begin{proof} One uses \eqref{ineq:weighted Sobolev-2} repeatedly  to deduce that
\begin{equation*}
\begin{aligned}
&\|\partial_tDv\|_{L^\infty}^2
\lesssim \|\partial_tDv\|_{L^2}^2+\|\partial_tD^2v\|_{L^2}^2+\|\partial_tD^3v\|_{L^2}^2\\
&\lesssim \|\rho_0\partial_tDv\|_{L^2}^2+\|\rho_0\partial_tD^2v\|_{L^2}^2+\|\rho_0\partial_tD^3v\|_{L^2}^2+\|\rho_0\partial_tD^4v\|_{L^2}^2
\lesssim \cdots \\
&\lesssim E(t,v)+\|\rho_0^3\partial_tD^5v\|_{L^2}^2+\|\rho_0^3\partial_tD^6v\|_{L^2}^2\lesssim E(t,v).
\end{aligned}
\end{equation*}

\end{proof}

\subsubsection{\(H^{1/2}\)-type inequalities}\label{sub:W-2}

\begin{lemma}\label{le:W-4}
It holds that
\begin{equation}\label{Weighted-6}
\begin{aligned}
\|w_1w_2\|_{L^2(\Omega)}\lesssim \|w_1\|_{H^{1/2}(\Omega)}\|w_2\|_{H^{1/2}(\Omega)}.
\end{aligned}
\end{equation}
\end{lemma}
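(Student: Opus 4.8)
\textbf{Proof plan for Lemma \ref{le:W-4}.}

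The plan is to prove the product estimate $\|w_1 w_2\|_{L^2(\Omega)} \lesssim \|w_1\|_{H^{1/2}(\Omega)}\|w_2\|_{H^{1/2}(\Omega)}$ by reducing it to a Sobolev embedding together with H\"older's inequality. The key observation is that in two dimensions, $H^{1/2}(\Omega) \hookrightarrow L^4(\Omega)$: indeed, the critical Sobolev exponent for $H^s(\Omega)$ with $\Omega \subset \R^2$ is $2/(1-s)$ for $s < 1$, which for $s = 1/2$ gives exactly $L^4$. Granting this, I would write
\begin{equation*}
\|w_1 w_2\|_{L^2(\Omega)} \leq \|w_1\|_{L^4(\Omega)}\|w_2\|_{L^4(\Omega)} \lesssim \|w_1\|_{H^{1/2}(\Omega)}\|w_2\|_{H^{1/2}(\Omega)},
\end{equation*}
which is exactly the claim. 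So the whole lemma rests on justifying the embedding $H^{1/2}(\Omega) \hookrightarrow L^4(\Omega)$ on the slab $\Omega = \T \times (0,1)$.

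To establish that embedding, I would first recall the standard scale of fractional Sobolev embeddings on a bounded Lipschitz (indeed smooth) domain in $\R^2$: for $0 < s < 1$ one has $H^s(\Omega) \hookrightarrow L^q(\Omega)$ with $\tfrac1q = \tfrac12 - \tfrac s2$, i.e. $q = \tfrac{2}{1-s}$; see e.g. the Gagliardo--Nirenberg--Sobolev theory for $W^{s,p}$ spaces. Taking $s = \tfrac12$ yields $q = 4$. Since $\Omega = \T \times (0,1)$ is a bounded smooth domain (the torus factor is compact without boundary and the $(0,1)$ factor has smooth boundary), this embedding applies directly; alternatively one can extend functions on $\Omega$ to $\R^2$ via a bounded extension operator $H^{1/2}(\Omega) \to H^{1/2}(\R^2)$ and use the sharp embedding $H^{1/2}(\R^2) \hookrightarrow L^4(\R^2)$, which follows from the Hardy--Littlewood--Sobolev inequality applied to $(1-\Delta)^{-1/4}$. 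Either route gives the constant implicit in the statement.

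The only mild subtlety—hence the step I would treat most carefully—is the precise definition of $H^{1/2}(\Omega)$ being used and the availability of a bounded extension operator at this fractional order; but since $\Omega$ has smooth boundary, the existence of a universal extension operator $H^{s}(\Omega) \to H^{s}(\R^2)$ for all $s \geq 0$ (in particular $s = 1/2$) is classical, so no genuine obstacle arises. I would therefore state the proof in two lines: invoke $H^{1/2}(\Omega) \hookrightarrow L^4(\Omega)$, then apply H\"older's inequality $\|w_1 w_2\|_{L^2} \le \|w_1\|_{L^4}\|w_2\|_{L^4}$. This also meshes with the earlier inequality \eqref{ineq:weighted Sobolev-1}, which controls $\|w\|_{H^{1/2}(\Omega)}$ by the weighted quantity $\int_\Omega d(x)(w^2 + |Dw|^2)\,\diff x$, so that in applications the right-hand side of \eqref{Weighted-6} can be further bounded by weighted energies—presumably the reason this lemma is recorded here.
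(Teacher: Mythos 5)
Your proposal is exactly the paper's argument: Hölder's inequality $\|w_1w_2\|_{L^2}\le\|w_1\|_{L^4}\|w_2\|_{L^4}$ combined with the 2D fractional Sobolev embedding $\|w\|_{L^{2/(1-s)}(\Omega)}\lesssim\|w\|_{H^s(\Omega)}$ at $s=1/2$. Your extra discussion of the extension operator is fine but not needed beyond what the paper records.
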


\begin{proof}
\eqref{Weighted-6} follows from 
\begin{equation*}
\begin{aligned}
\|w_1w_2\|_{L^2(\Omega)}\leq \|w_1\|_{L^4(\Omega)}\|w_2\|_{L^4(\Omega)}
\lesssim \|w_1\|_{H^{1/2}(\Omega)}\|w_2\|_{H^{1/2}(\Omega)},
\end{aligned}
\end{equation*}
where one has used  (by choosing \(s=1/2\)) 
\begin{equation*}
\begin{aligned}
\|w\|_{L^{2/(1-s)}(\Omega)}\lesssim \|w\|_{H^s(\Omega)}\quad \mbox{for}\ 0<s<1. 
\end{aligned}
\end{equation*}
\end{proof}

\begin{lemma}\label{le:W-5} It holds that
\begin{equation}\label{Weighted-7}
\begin{aligned}
\sum_{l_1=0}^5\|\partial_1^{l_1}Dv\|_{H^{1/2}}\lesssim E^{1/2}(t,v). 
\end{aligned}
\end{equation}
Consequently, if \eqref{fluid particle} and \eqref{Lagrangian variable} hold, then
\begin{equation}\label{Weighted-8}
\begin{aligned}
&\|D\eta\|_{H^{1/2}}
\lesssim  1+t\sup_{0\leq s \leq t}E^{1/2}(s,v),\\
&\sum_{l_1=1}^5\|\partial_1^{l_1}D\eta\|_{H^{1/2}}
\lesssim  t\sup_{0\leq s \leq t}E^{1/2}(s,v).
\end{aligned}
\end{equation}
\end{lemma}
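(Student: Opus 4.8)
\textbf{Proof plan for Lemma \ref{le:W-5}.}

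The plan is to prove \eqref{Weighted-7} first and then deduce \eqref{Weighted-8} by the by-now-familiar time-integration argument. For \eqref{Weighted-7}, the idea is to estimate each $\|\partial_1^{l_1}Dv\|_{H^{1/2}(\Omega)}$, $0\le l_1\le 5$, by invoking the weighted $H^{1/2}$-embedding \eqref{ineq:weighted Sobolev-1}, which (after replacing $d(x)$ by $\rho_0(x)$, as permitted by \eqref{eq:intro-3}) gives
\begin{equation*}
\|\partial_1^{l_1}Dv\|_{H^{1/2}(\Omega)}^2 \lesssim \int_\Omega \rho_0(x)\big(|\partial_1^{l_1}Dv|^2+|D\partial_1^{l_1}Dv|^2\big)(x)\,\diff x
= \|\sqrt{\rho_0}\,\partial_1^{l_1}Dv\|_{L^2}^2 + \|\sqrt{\rho_0}\,\partial_1^{l_1}D^2v\|_{L^2}^2 .
\end{equation*}
So it suffices to bound the right-hand side by $E(t,v)$ for each $l_1\in\{0,1,\dots,5\}$. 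The first summand $\|\sqrt{\rho_0}\,\partial_1^{l_1}Dv\|_{L^2}^2$ is directly one of the terms in the second line of the energy functional \eqref{HOEF-0} once $l_1\le 6$ (taking $l_0=0$), hence is $\le E(t,v)$. The second summand $\|\sqrt{\rho_0}\,\partial_1^{l_1}D^2v\|_{L^2}^2$ needs slightly more care: when the two derivatives in $D^2$ are both tangential or one tangential one normal it again reduces to a term $\|\sqrt{\rho_0}\,\partial_1^{m}Dv\|_{L^2}^2$ with $m=l_1+1\le 6$, which is in \eqref{HOEF-0}; when $D^2 = \partial_2^2$ the term is $\|\sqrt{\rho_0}\,\partial_1^{l_1}\partial_2^2 v\|_{L^2}^2$, but $\rho_0\le C\rho_0^{l_2}$ is false for $l_2=2$ near $\Gamma$ — so instead I use \eqref{ineq:weighted Sobolev-2} with $\alpha=1$ to write $\int_\Omega\rho_0|\partial_1^{l_1}\partial_2^2v|^2 \lesssim \int_\Omega\rho_0^3(|\partial_1^{l_1}\partial_2^2v|^2+|D\partial_1^{l_1}\partial_2^2v|^2)$, and the resulting terms carry the weight $\rho_0^{3/2}=\sqrt{\rho_0^3}$ against at most three normal derivatives, which is controlled by the $l_2\ge 2$ part of \eqref{HOEF-0} (with $2l_0+l_1+l_2\le 8$; here $l_0=0$, $l_1\le 5$, $l_2=3$, so $l_1+l_2\le 8$ is fine for $l_1\le 5$). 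This is essentially the same bookkeeping already carried out in Lemmas \ref{le:Preliminary-1} and \ref{le:W-1}, so I expect it to go through routinely.

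Then \eqref{Weighted-8} follows by writing $D\eta(x,t) = \mathbb{I}_2 + \int_0^t Dv(x,s)\,\diff s$ from \eqref{Eta-formula}, applying $\partial_1^{l_1}$, and using Minkowski's inequality in the $H^{1/2}(\Omega)$-norm:
\begin{equation*}
\|\partial_1^{l_1}D\eta\|_{H^{1/2}} \le \|\partial_1^{l_1}\mathbb{I}_2\|_{H^{1/2}} + \int_0^t \|\partial_1^{l_1}Dv(s)\|_{H^{1/2}}\,\diff s,
\end{equation*}
where the first term is the constant matrix (contributing the ``$1$'' only when $l_1=0$, and vanishing for $l_1\ge 1$ since one tangential derivative kills it) and the integral is $\lesssim t\sup_{0\le s\le t}E^{1/2}(s,v)$ by \eqref{Weighted-7}. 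This is exactly parallel to the proof of \eqref{Weighted-4} in Corollary \ref{le:W-2}.

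The only genuinely delicate point is the treatment of the pure normal second-derivative term $\|\sqrt{\rho_0}\,\partial_1^{l_1}\partial_2^2v\|_{L^2}$ for $l_1$ as large as $5$: one must check that after applying \eqref{ineq:weighted Sobolev-2} to gain two powers of $\rho_0$ the index budget $2l_0+l_1+l_2\le 8$ in the last sum of \eqref{HOEF-0} is still met, i.e.\ $l_1+3\le 8$, which holds precisely because $l_1\le 5$ — so the statement is sharp in $l_1$ and there is no room to spare. Everything else is a direct application of the weighted Sobolev inequalities \eqref{ineq:weighted Sobolev-1}--\eqref{ineq:weighted Sobolev-2} together with the definition \eqref{HOEF-0} of $E(t,v)$.
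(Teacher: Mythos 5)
Your plan is correct and is essentially the same argument the paper uses: apply the weighted embedding \eqref{ineq:weighted Sobolev-1} to reduce $\|\partial_1^{l_1}Dv\|_{H^{1/2}}$ to $\rho_0^{1/2}$-weighted $L^2$-norms of $\partial_1^{l_1}Dv$ and $\partial_1^{l_1}D^2v$, then raise the weight via \eqref{ineq:weighted Sobolev-2} until each term matches an entry of \eqref{HOEF-0}, with the constraint $l_1\le 5$ coming from the pure normal piece $\partial_1^{l_1}\partial_2^3v$ against $\sqrt{\rho_0^3}$. The only cosmetic difference is that you split $D^2$ into components and apply \eqref{ineq:weighted Sobolev-2} only to the $\partial_2^2$ piece, whereas the paper applies it uniformly to the whole $\|\rho_0^{1/2}\partial_1^{l_1}D^2v\|_{L^2}^2$ term — the bookkeeping lands in the same place and the sharpness at $l_1=5$ is the same.
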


\begin{proof} One uses \eqref{ineq:weighted Sobolev-1} and \eqref{ineq:weighted Sobolev-2} to estimate
\begin{equation*}
\begin{aligned}
\|\partial_1^{l_1}Dv\|_{H^{1/2}}^2
&\lesssim \|\rho_0^{1/2}\partial_1^{l_1}Dv\|_{L^2}^2+\|\rho_0^{1/2}\partial_1^{l_1}D^2v\|_{L^2}^2\\
&\lesssim E(s,v)+(\|\rho_0^{3/2}\partial_1^{l_1}D^2v\|_{L^2}^2+\|\rho_0^{3/2}\partial_1^{l_1}D^3v\|_{L^2}^2)\\
&\lesssim E(s,v),\quad l_1=0,1, \dots, 5. 
\end{aligned}
\end{equation*}
\end{proof}

Similarly, one can also show 

\begin{lemma}\label{le:W-6} It holds that
\begin{equation}\label{Weighted-9}
\begin{aligned}
\sum_{l_1=0}^3\|\partial_t\partial_1^{l_1}Dv\|_{H^{1/2}}\lesssim E^{1/2}(t,v)
\end{aligned}
\end{equation}
and
\begin{equation}\label{Weighted-9-1}
	\begin{aligned}
		\sum_{l_1=0}^1\|\partial_t^2\partial_1^{l_1}Dv\|_{H^{1/2}}\lesssim E^{1/2}(t,v)
	\end{aligned}
\end{equation}
\end{lemma}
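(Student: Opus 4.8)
The plan is to prove Lemma \ref{le:W-6} in exactly the same manner as Lemma \ref{le:W-5}, simply replacing the spatial-tangential multi-index bookkeeping by one that involves one (resp.\ two) time derivatives. The key tool is again the pair of weighted Sobolev inequalities \eqref{ineq:weighted Sobolev-1} and \eqref{ineq:weighted Sobolev-2}: the first trades an $H^{1/2}(\Omega)$-norm for a $d$-weighted $H^1(\Omega)$-norm, and the second lets one repeatedly raise the power of the weight $\rho_0\sim d$ at the cost of one extra spatial derivative. Since by \eqref{eq:intro-3} one may replace $d$ by $\rho_0$ throughout, the whole argument reduces to checking that the relevant weighted norms appear (possibly after a bounded number of applications of \eqref{ineq:weighted Sobolev-2}) among the terms controlled by the higher-order energy functional $E(t,v)$ in \eqref{HOEF-0}.

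Concretely, for \eqref{Weighted-9} I would fix $l_1\in\{0,1,2,3\}$ and estimate, using \eqref{ineq:weighted Sobolev-1} with $w=\partial_t\partial_1^{l_1}Dv$ and then \eqref{ineq:weighted Sobolev-2} iterated a few times,
\begin{equation*}
\begin{aligned}
\|\partial_t\partial_1^{l_1}Dv\|_{H^{1/2}}^2
&\lesssim \|\rho_0^{1/2}\partial_t\partial_1^{l_1}Dv\|_{L^2}^2+\|\rho_0^{1/2}\partial_t\partial_1^{l_1}D^2v\|_{L^2}^2\\
&\lesssim \|\rho_0^{3/2}\partial_t\partial_1^{l_1}D^2v\|_{L^2}^2+\|\rho_0^{3/2}\partial_t\partial_1^{l_1}D^3v\|_{L^2}^2\\
&\lesssim \cdots \lesssim E(t,v),
\end{aligned}
\end{equation*}
the point being that with $l_0=1$ one has $2l_0+l_1\le 8$ for $l_1\le 6$, so the terms $\|\rho_0^{1/2}\partial_t\partial_1^{l_1}Dv\|_{L^2}^2$ and $\|\rho_0^{1/2}\partial_t\partial_1^{l_1}D^2v\|_{L^2}^2$ are already directly bounded by the second line of \eqref{HOEF-0} for $l_1\le 5$, so in fact no iteration is even needed. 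For \eqref{Weighted-9-1} the same computation with $l_0=2$, $l_1\in\{0,1\}$ uses that $2\cdot 2+l_1\le 6$ for $l_1\le 2$, so again the terms with the weight $\rho_0^{1/2}$ sit inside the $E_{\mathrm{en}}$/$E_{\mathrm{el}}$ part of \eqref{HOEF-0}.

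The main (and only) thing to be careful about is the counting of derivatives against the index constraints in \eqref{HOEF-0}: one must verify that for each targeted term $\partial_t^{l_0}\partial_1^{l_1}D^{m}v$ appearing after applying \eqref{ineq:weighted Sobolev-1}, there is a slot $2l_0+l_1+l_2\le 8$ (or $2l_0+l_1\le 6$) with a matching weight power, and in the worst case one needs at most two applications of \eqref{ineq:weighted Sobolev-2} to reach it—exactly as in the proof of Lemma \ref{le:W-1}. There is no genuine obstacle here beyond this bookkeeping: the degenerate structure is entirely absorbed by the weighted inequalities and the design of $E(t,v)$, and the phrase ``similarly, one can also show'' in the statement reflects precisely that Lemma \ref{le:W-6} is obtained from Lemma \ref{le:W-5} by the substitutions $\partial_1^{l_1}\mapsto \partial_t\partial_1^{l_1}$ and $\partial_1^{l_1}\mapsto \partial_t^2\partial_1^{l_1}$, with the range of $l_1$ shrunk accordingly so the total order still fits within \eqref{HOEF-0}.
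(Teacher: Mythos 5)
Your approach is exactly what the paper intends: the paper's ``proof'' of Lemma~\ref{le:W-6} is literally the phrase ``Similarly, one can also show,'' and your displayed chain (one application of \eqref{ineq:weighted Sobolev-1}, then \eqref{ineq:weighted Sobolev-2}) is the verbatim analogue of the proof of Lemma~\ref{le:W-5}. The one slip is in your side remark that ``no iteration is even needed'': the term $\|\rho_0^{1/2}\partial_t\partial_1^{l_1}D^2v\|_{L^2}^2$ is \emph{not} directly contained in the second (tangential) line of \eqref{HOEF-0}, since $D^2v$ includes $\partial_2^2v$, which sits in the elliptic sum of $E(t,v)$ with weight $\rho_0$ rather than $\rho_0^{1/2}$; one application of \eqref{ineq:weighted Sobolev-2} is therefore genuinely required, and the constraint from the resulting $\rho_0^{3/2}\partial_t\partial_1^{l_1}\partial_2^3v$ term (needing $2\cdot1+l_1+3\le 8$) is exactly what caps the range at $l_1\le 3$, not $l_1\le 5$.
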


\subsubsection{Sharp \(L^\infty\)-type inequalities}\label{sub:W-3}

\begin{lemma}\label{le:W-7} It holds that
\begin{equation}\label{Weighted-10}
\begin{aligned}
\|\partial_1^3Dv\|_{L^\infty}\lesssim E^{1/2}(t,v).
\end{aligned}
\end{equation}
Consequently, if \eqref{fluid particle} and \eqref{Lagrangian variable} hold, then
\begin{equation}\label{Weighted-11}
\begin{aligned}
\|\partial_1^3D\eta\|_{L^\infty}
\lesssim  t\sup_{0\leq s \leq t}E^{1/2}(s,v).
\end{aligned}
\end{equation}
\end{lemma}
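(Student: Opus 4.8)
\textbf{Proof plan for Lemma \ref{le:W-7}.}
The plan is to reduce the sharp $L^\infty$-bound \eqref{Weighted-10} on $\partial_1^3 Dv$ to already-established weighted $H^{1/2}$ and $L^2$ controls, exploiting that $\partial_1$ is a purely tangential derivative and hence costs no weight. The starting point is the embedding $H^{1+\epsilon}(\Omega)\hookrightarrow L^\infty(\Omega)$ in two dimensions; in fact it is cleaner to interpolate, writing $\|w\|_{L^\infty}\lesssim \|w\|_{H^{1/2}}^{1/2}\|w\|_{H^{3/2}}^{1/2}$, or simply to use $\|w\|_{L^\infty(\Omega)}^2\lesssim \|w\|_{H^{1/2}}\,\|w\|_{H^{3/2}}$ together with the Hardy-type embedding \eqref{ineq:weighted Sobolev-0}. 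Applying this with $w=\partial_1^3 Dv$, the first factor $\|\partial_1^3 Dv\|_{H^{1/2}}$ is controlled by $E^{1/2}(t,v)$ directly from \eqref{Weighted-7} (the sum there runs up to $l_1=5$, so $l_1=3$ is comfortably included).

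For the second factor I would not literally use $H^{3/2}$ but instead control the relevant top-order pieces by weighted quantities. Concretely, by \eqref{ineq:weighted Sobolev-1} one has $\|\partial_1^3 D^2 v\|_{H^{1/2}}^2\lesssim \int_\Omega \rho_0(|\partial_1^3 D^2v|^2+|\partial_1^3 D^3v|^2)$, and then \eqref{ineq:weighted Sobolev-2} upgrades each $\rho_0$-weight to a $\rho_0^{3/2}$- or $\rho_0^{2}$-weight, landing inside the terms of $E(t,v)$ that carry $\|\sqrt{\rho_0}\,\partial_1^{l_1}D v\|_{L^2}$ and $\|\sqrt{\rho_0^{l_2}}\,\partial_t^{l_0}\partial_1^{l_1}\partial_2^{l_2}v\|_{L^2}$ with $l_2\ge 2$; this is exactly the bookkeeping already carried out in Lemmas \ref{le:W-1} and \ref{le:W-5}. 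Combining, $\|\partial_1^3 Dv\|_{L^\infty}^2\lesssim \|\partial_1^3 Dv\|_{H^{1/2}}\big(\|\partial_1^3 Dv\|_{H^{1/2}}+\|\partial_1^3 D^2v\|_{H^{1/2}}\big)\lesssim E(t,v)$, which is \eqref{Weighted-10}.

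Finally, \eqref{Weighted-11} follows by integrating in time: since $\partial_1^3 D\eta = \int_0^t \partial_1^3 Dv\,\diff s$ (three tangential derivatives remove the identity part of $D\eta$, cf. the remark after Corollary \ref{le:W-2} and formula \eqref{Eta-formula}), Minkowski's inequality gives $\|\partial_1^3 D\eta\|_{L^\infty}\le \int_0^t \|\partial_1^3 Dv(s)\|_{L^\infty}\diff s\lesssim t\sup_{0\le s\le t}E^{1/2}(s,v)$.

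The only slightly delicate point is the choice of functional-analytic input: one must make sure the $L^\infty$ bound is extracted from an $H^{1/2}\times H^{3/2}$ product (or an $H^{1+\epsilon}$ norm) in such a way that the $H^{3/2}$-type factor never asks for an unweighted third derivative of $v$ that $E(t,v)$ does not control — this is why the argument routes everything through the weighted inequalities \eqref{ineq:weighted Sobolev-1}--\eqref{ineq:weighted Sobolev-2} rather than through plain Sobolev embedding. Everything else is the routine weight-counting already systematized in the preceding lemmas, so I expect no real obstacle beyond this interpolation step.
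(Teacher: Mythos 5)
Your plan is correct and ends up following essentially the same route as the paper: the paper applies the weighted $L^\infty$ inequality \eqref{ineq:weighted Sobolev-3} directly (which is itself $H^{3/2}\hookrightarrow L^\infty$ combined with the Hardy-type embedding, landing on $\sum_{\theta\le2}\int_\Omega\rho_0|D^\theta(\partial_1^3Dv)|^2$), whereas you derive the same weighted integral via the Agmon product $\|w\|_{L^\infty}^2\lesssim\|w\|_{H^{1/2}}\|w\|_{H^{3/2}}$ together with \eqref{ineq:weighted Sobolev-1} and Cauchy's inequality. After that both arguments iterate \eqref{ineq:weighted Sobolev-2}, and the only caveat is a small imprecision in your wording: a single upgrade $\rho_0\to\rho_0^3$ is not enough to place the top-order pieces inside $E(t,v)$ (for instance $\rho_0^3|\partial_1^3\partial_2^4 v|^2$ still overshoots the weight $\rho_0^4$ in $E$), so one must iterate \eqref{ineq:weighted Sobolev-2} a second and third time exactly as in the paper's displayed chain.
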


\begin{proof} It follows from \eqref{ineq:weighted Sobolev-2} and \eqref{ineq:weighted Sobolev-3} that 
\begin{equation*}
\begin{aligned}
&\|\partial_1^3Dv\|_{L^\infty}^2
\lesssim \sum_{k=0}^2\int_\Omega \rho_0|\partial_1^3D^{k+1}v|^2\,\diff x\\
&\lesssim E(s,v)+\bigg(\int_\Omega \rho_0^3|\partial_1^3D^2v|^2\,\diff x+\int_\Omega \rho_0^3|\partial_1^3D^3v|^2\,\diff x\bigg)\\
&\quad+\bigg(\int_\Omega \rho_0^3|\partial_1^3D^3v|^2\,\diff x+\int_\Omega \rho_0^3|\partial_1^3D^4v|^2\,\diff x\bigg)\\
&\lesssim E(s,v)+\bigg(\int_\Omega \rho_0^5|\partial_1^3D^4v|^2\,\diff x+\int_\Omega \rho_0^5|\partial_1^3D^5v|^2\,\diff x\bigg)
\lesssim E(t,v).
\end{aligned}
\end{equation*}
\end{proof}

Similarly, one also has

\begin{lemma}\label{le:W-8} It holds that
\begin{equation}\label{Weighted-12} 
\begin{aligned}
\|\partial_t\partial_1Dv\|_{L^\infty}\lesssim E^{1/2}(t,v). 
\end{aligned}
\end{equation}
\end{lemma}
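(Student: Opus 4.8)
The plan is to mimic the proof of Lemma \ref{le:W-7}, just with one horizontal derivative traded for one time derivative. I would start from the same pointwise-to-$L^2$ ladder: by the weighted Sobolev inequality \eqref{ineq:weighted Sobolev-3} (with $d$ replaced by $\rho_0$), one has
\begin{equation*}
\|\partial_t\partial_1Dv\|_{L^\infty}^2\lesssim \sum_{k=0}^2\int_\Omega \rho_0\,|\partial_t\partial_1D^{k+1}v|^2\,\diff x .
\end{equation*}
Then each term on the right is pushed up the regularity scale by repeated application of \eqref{ineq:weighted Sobolev-2}, which costs two extra powers of $\rho_0$ (equivalently $d$) per step but gains one spatial derivative. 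The terminal terms land on quantities of the form $\|\rho_0^{a}\partial_t\partial_1D^{b}v\|_{L^2}$ with $2a+$ (derivative count) inside the admissible range, and these are controlled by $E^{1/2}(t,v)$ directly from the definition \eqref{HOEF-0} (after noting $\rho_0\sim d$ via \eqref{eq:intro-3}). Concretely: start with $\|\rho_0^{1/2}\partial_t\partial_1Dv\|_{L^2}$, $\|\rho_0^{1/2}\partial_t\partial_1D^2v\|_{L^2}$, $\|\rho_0^{1/2}\partial_t\partial_1D^3v\|_{L^2}$; apply \eqref{ineq:weighted Sobolev-2} once to each to get $\rho_0^{3/2}$ versions with one more derivative; continue until the weight exponent and derivative order pair fits the $l_0=1$, $l_1=1$ slice of the energy functional (the mixed tangential term $\|\sqrt{\rho_0}\partial_t\partial_1 Dv\|_{L^2}^2$ and its companions, together with the high-order normal pieces $\|\sqrt{\rho_0^{l_2}}\partial_t^{l_0}\partial_1^{l_1}\partial_2^{l_2}v\|_{L^2}^2$).

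The key steps in order: (1) invoke \eqref{ineq:weighted Sobolev-3} to reduce the $L^\infty$ bound to a sum of $\rho_0$-weighted $L^2$ norms of $\partial_t\partial_1 D^{k+1}v$ for $k=0,1,2$; (2) iterate \eqref{ineq:weighted Sobolev-2} to trade weight for regularity, exactly as in the displayed computation in the proof of Lemma \ref{le:W-7}; (3) identify the terminal terms with admissible summands of $E(t,v)$ as given in \eqref{HOEF-0}, using $2l_0+l_1+l_2\le 8$ with $l_0=1$, $l_1=1$ so that $l_2$ can run up to $5$, which comfortably covers the derivative orders produced by the iteration; (4) conclude $\|\partial_t\partial_1Dv\|_{L^\infty}^2\lesssim E(t,v)$, hence \eqref{Weighted-12}.

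The only point requiring a moment's care — and the main (mild) obstacle — is bookkeeping: one must check that after the finitely many applications of \eqref{ineq:weighted Sobolev-2} the resulting weight exponents $\rho_0^{l_2}$ and the spatial derivative count $l_2$ stay matched (a normal derivative costing one $\sqrt{\rho_0}$) and that the total order $2\cdot 1 + 1 + l_2$ never exceeds $8$, so that every terminal term is genuinely one of the summands controlled by $E(t,v)$ and not something sitting outside the energy functional. Since the iteration for $\|\partial_1^3Dv\|_{L^\infty}$ in Lemma \ref{le:W-7} terminates at $\rho_0^5\partial_1^3D^5v$ with total order $3+5=8$, and here we merely replace two of those three $\partial_1$'s by a single $\partial_t$ (which counts double in the constraint $2l_0+l_1+l_2$), the budget is in fact identical, so the same three-step ladder closes. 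No new inequality beyond \eqref{ineq:weighted Sobolev-2}, \eqref{ineq:weighted Sobolev-3} and the definition of $E(t,v)$ is needed; this is why the paper states it as ``similarly''.
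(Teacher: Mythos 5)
Your proof is correct and is precisely the argument the paper intends by ``Similarly'': reduce to $\rho_0$-weighted $L^2$ norms via the sharp $L^\infty$ inequality \eqref{ineq:weighted Sobolev-3}, then ladder up with \eqref{ineq:weighted Sobolev-2}, the budget $2l_0+l_1+l_2\le 8$ closing because a $\partial_t$ costs the same as two $\partial_1$'s. (Your opening phrase ``one horizontal derivative traded for one time derivative'' understates the trade---it is two $\partial_1$'s for one $\partial_t$---but you state this correctly in the final paragraph, and the bookkeeping is sound.)
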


\eqref{Weighted-10}-\eqref{Weighted-12} refine the corresponding ones in Corollary \ref{le:W-2} and \eqref{Weighted-5}, respectively, in the sense of gaining one tangential derivative \(\partial_1\). 

\eqref{Weighted-6}-\eqref{Weighted-9-1} and \eqref{Weighted-10}-\eqref{Weighted-12} will play a crucial role in the a priori estimates in Sections \ref{Energy Estimates} and \ref{Elliptic Estimates}.

\subsubsection{A weighted interpolation inequality}\label{Weighted Interpolation Inequality}

The following weighted interpolation inequality will be used to show that the iterative solutions sequence obtained in Section \ref{Existence Part} forms a Cauchy sequence in  \(C([0,T]; H^s(\Omega))\).

\begin{lemma}\label{le:W-9} It holds that
	\begin{align}\label{Weighted-13}
		\|g\|_{L^2(\Omega)}\lesssim \|g\|_{L_{\rho_0}^2(\Omega)}^{1/2}\|g\|_{H_{\rho_0}^1(\Omega)}^{1/2},
	\end{align}
	where
	\begin{equation*}
		\begin{aligned}
			\|g\|_{L_{\rho_0}^2(\Omega)}^2=\int_\Omega\rho_0g^2\,\diff x\quad {\rm{and}}\quad 	\|g\|_{H_{\rho_0}^1(\Omega)}^2=\int_\Omega\rho_0(g^2+|Dg|^2)\,\diff x.
		\end{aligned}
	\end{equation*}	
\end{lemma}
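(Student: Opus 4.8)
The plan is to prove the weighted interpolation inequality \eqref{Weighted-13} by a localization-and-rescaling argument near the boundary $\Gamma$ combined with a standard interior interpolation estimate. Since $\rho_0(x)\sim d(x)$ by \eqref{eq:intro-3}, it suffices to bound $\|g\|_{L^2(\Omega)}^2 \lesssim \|g\|_{L_d^2}\,\|g\|_{H_d^1}$ where the subscript $d$ denotes the corresponding $d$-weighted norms. Away from $\Gamma$ the weight $d$ is bounded below by a positive constant, so there $\|g\|_{L^2}^2 \lesssim \|g\|_{L_d^2}^2 \le \|g\|_{L_d^2}\|g\|_{H_d^1}$ trivially (using $\|g\|_{L_d^2}\le\|g\|_{H_d^1}$). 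The work is therefore concentrated in a collar neighborhood of $\Gamma$, where one can straighten the boundary and reduce to the model situation $\Omega=\mathbb{T}\times(0,1)$ with $d(x)$ comparable to the normal variable $x_2$ near $x_2=0$ (and symmetrically near $x_2=1$).

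First I would reduce to a one-dimensional inequality by Fubini: freezing the tangential variable $x_1$, it is enough to show that for $h=h(s)$ on $(0,1)$ one has $\int_0^1 h^2\,\diff s \lesssim \big(\int_0^1 s h^2\,\diff s\big)^{1/2}\big(\int_0^1 s(h^2+h'^2)\,\diff s\big)^{1/2}$, and then integrate in $x_1$ using the Cauchy–Schwarz inequality in the form $\int (\int sh^2)^{1/2}(\int s(h^2+h'^2))^{1/2}\,\diff x_1 \le (\int\int sh^2)^{1/2}(\int\int s(h^2+h'^2))^{1/2}$. For the 1D inequality, the natural approach is to split the interval dyadically: on $I_k=(2^{-k-1},2^{-k})$ the weight $s$ is comparable to $2^{-k}$, so $\int_{I_k} h^2 \sim 2^k \int_{I_k} s h^2$, while also by a one-dimensional trace/Sobolev estimate on $I_k$ (after rescaling to unit length) $\int_{I_k} h^2 \lesssim 2^{-k}\int_{I_k} h'^2 + 2^k(\int_{I_k}|h|\,\diff s)^2$, hence $\int_{I_k}h^2 \lesssim \int_{I_k} s(h^2+h'^2)\,\diff s \cdot 2^k$ balanced against $2^k\int_{I_k}sh^2$; taking the geometric mean term by term and summing in $k$ via Cauchy–Schwarz on $\ell^2$ yields the claim. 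Equivalently, and perhaps more cleanly, one can write $h^2(s) = h^2(1) - \int_s^1 (h^2)'(\tau)\,\diff\tau$, multiply by a test weight and use $\int_0^1 h^2\,\diff s \le \int_0^1\!\!\int_s^1 2|h(\tau)||h'(\tau)|\,\diff\tau\,\diff s + \int_0^1 h^2(1)\,\diff s$, controlling $\int_s^1(\cdots)\,\diff\tau$ by $\tau$-weighted quantities since $\tau\ge s$.

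A second, more self-contained route I would consider is a direct Cauchy–Schwarz splitting: write $g^2 = \rho_0^{1/2} g \cdot \rho_0^{-1/2} g$ is not integrable, so instead use $\int_\Omega g^2 = \int_\Omega (\rho_0^{1/3} g^2)(\rho_0^{-1/3})$ and optimize Hölder exponents — but this does not directly produce the weight in $H_{\rho_0}^1$, so the dyadic/trace argument above is preferable. Alternatively, one invokes the abstract interpolation identity: the map $g\mapsto g$ is bounded $H_{\rho_0}^1\to X$ and $L_{\rho_0}^2\to X$ would give $\|g\|_X\lesssim\|g\|^{1/2}\|g\|^{1/2}$ by real interpolation once one identifies $X=L^2(\Omega)$ as an interpolation space between these two weighted spaces; this is essentially the content of the Hardy-type embeddings \eqref{ineq:weighted Sobolev-0}–\eqref{ineq:weighted Sobolev-2} already quoted, since $H^{1,1}(\Omega)\hookrightarrow H^{1/2}(\Omega)\hookrightarrow L^2(\Omega)$ and $L^{2,2}(\Omega)\hookrightarrow L^2(\Omega)$ via \eqref{ineq:weighted Sobolev-2}, and $L^2$ sits "halfway" between $H^{1/2}$ (weight $d^1$, one derivative) and the $d^2$-weighted $L^2$ space.

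The main obstacle is the degeneracy of the weight at $\Gamma$: one cannot simply absorb $\rho_0^{-1}$, and a naive Hardy inequality would require control of $\|g/\rho_0^{1/2}\|$ which is exactly what is not available. The dyadic decomposition handles this by trading the vanishing of $\rho_0$ on each annulus $I_k$ for a gain of $2^k$ that is precisely compensated by the $2^{-k}$ loss in the rescaled Sobolev constant; getting the bookkeeping right — in particular ensuring the constants in the rescaled trace inequalities on $I_k$ are uniform in $k$ and that the final sum over $k$ closes via Cauchy–Schwarz rather than diverging — is the delicate point. Once the 1D weighted inequality on $(0,1)$ is established with a clean proof, extending it to $\Omega=\mathbb{T}\times(0,1)$ by Fubini and then to a general domain by localization and boundary flattening (as the paper does elsewhere, cf. \eqref{existence-100}) is routine, since $\rho_0\sim d$ is preserved under smooth changes of coordinates.
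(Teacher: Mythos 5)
Your reduction to a one-dimensional inequality on $(0,1)$ via Fubini is correct, and the fundamental-theorem-of-calculus route you mention is in the same spirit as the paper's proof, but both concrete arguments you sketch have a genuine gap.

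The dyadic route does not close as stated. The ``geometric mean term by term'' step asks, on each block $I_k=(2^{-k-1},2^{-k})$, for
\begin{equation*}
\int_{I_k}h^2\,\diff s \lesssim \Big(\int_{I_k}sh^2\,\diff s\Big)^{1/2}\Big(\int_{I_k}s(h^2+h'^2)\,\diff s\Big)^{1/2},
\end{equation*}
which is false: take $h$ constant on $I_k$, so that the left side is $\sim 2^{-k}$ while the right side is $\sim 2^{-2k}$, much smaller for large $k$. The interpolation inequality \eqref{Weighted-13} holds globally only because a profile concentrated at scale $2^{-K}$ is forced to have a large derivative across the transition, and that coupling between scales is exactly what a block-by-block geometric mean discards; the rescaled trace estimate on $I_k$ does not repair this, since $2^k\big(\int_{I_k}|h|\big)^2$ is bounded only by $\int_{I_k}h^2$ up to constants, which is circular.

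The FTC route is the right one, but you have skipped the crucial step. After writing $h^2(s)=h^2(1)-\int_s^1(h^2)'\,\diff\tau$ and integrating in $s$, the cross term $\int_0^1\tau\,|hh'|\,\diff\tau$ is indeed of the desired weighted form, but the boundary value $h^2(1)$ (in the paper's notation, $g^2(\cdot,1/2)$) is left unaccounted for, and this is precisely where the paper's trick lives: a \emph{second} integration by parts against the higher weight $\rho_0^2$ (identity \eqref{Weighted-15}) expresses the same boundary value as a combination of the bulk integrals $\int\rho_0 g^2$ and $\int\rho_0^2 g\,\partial_2 g$, so that combining \eqref{Weighted-14} and \eqref{Weighted-15} eliminates the boundary term entirely and leaves only Cauchy--Schwarz-ready quantities. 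You would need either this cancellation or an explicit one-dimensional Agmon estimate $\|h\|_{L^\infty(1/2,1)}^2\lesssim\|h\|_{L^2(1/2,1)}\|h\|_{H^1(1/2,1)}$ at the interior endpoint (where $s\sim 1$ so the $s$-weighted and unweighted norms coincide); neither is spelled out. The real-interpolation alternative is asserted, not proved, and does not follow directly from \eqref{ineq:weighted Sobolev-0}--\eqref{ineq:weighted Sobolev-2}.
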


\begin{proof} Notice that
\begin{equation*}
		\begin{aligned}
d(x, \partial(\mathbb{T}\times [0,1/2]))=x_2\quad \text{and}\quad  d(x, \partial(\mathbb{T}\times [1/2, 1]))=1-x_2.			
		\end{aligned}
	\end{equation*}	
Due to \eqref{eq:intro-3}, it suffices to prove \eqref{Weighted-13}  for \(\rho_0(x_1,x_2)=x_2\) on \(\mathbb{T}\times [0,1/2]\) and \(1-x_2\) on \(\mathbb{T}\times [1/2,1]\).

	Integration by parts yields 
	\begin{equation}\label{Weighted-14}
		\begin{aligned}
			\int_0^{1/2}\int_\mathbb{T}g^2\,\diff x_1\diff x_2
			=\frac{1}{2}\int_\mathbb{T}g^2(x_1,1/2)\,\diff x_1-2\int_0^{1/2}\int_\mathbb{T}\rho_0g\partial_2g\,\diff x_1\diff x_2.
		\end{aligned}
	\end{equation}
To estimate \(g(x_1,1/2)\), one has
	\begin{equation}\label{Weighted-15}
		\begin{aligned}
			&\int_0^{1/2}\int_\mathbb{T}\rho_0g^2\,\diff x_1\diff x_2=\int_0^{1/2}\int_\mathbb{T}x_2g^2\,\diff x_1\diff x_2\\
			&=\frac{1}{8}\int_\mathbb{T}g^2(x_1,1/2)\,\diff x_1-\int_0^{1/2}\int_\mathbb{T}\rho_0^2g\partial_2g\,\diff x_1\diff x_2.
		\end{aligned}
	\end{equation}
It follows from \eqref{Weighted-14} and \eqref{Weighted-15} that
{\small	\begin{equation}\label{Weighted-16}
		\begin{aligned}
			&\int_0^{1/2}\int_\mathbb{T}g^2\,\diff x_1\diff x_2\\
			&=4\int_0^{1/2}\int_\mathbb{T}\rho_0g^2\,\diff x_1\diff x_2+4\int_0^{1/2}\int_\mathbb{T}\rho_0^2g\partial_2g\,\diff x_1\diff x_2
-2\int_0^{1/2}\int_\mathbb{T}\rho_0g\partial_2g\,\diff x_1\diff x_2\\
			&\lesssim \int_0^{1/2}\int_\mathbb{T}\rho_0g^2\,\diff x_1\diff x_2
+\bigg(\int_0^{1/2}\int_\mathbb{T}\rho_0^2g^2\,\diff x_1\diff x_2\bigg)^{1/2}\bigg(\int_0^{1/2}\int_\mathbb{T}\rho_0^2|Dg|^2\,\diff x_1\diff x_2\bigg)^{1/2}\\
			&\quad+\bigg(\int_0^{1/2}\int_\mathbb{T}\rho_0g^2\,\diff x_1\diff x_2\bigg)^{1/2}\bigg(\int_0^{1/2}\int_\mathbb{T}\rho_0|Dg|^2\,\diff x_1\diff x_2\bigg)^{1/2}\\
			&\lesssim \bigg(\int_\Omega\rho_0g^2\,\diff x\bigg)^{1/2}\bigg(\int_\Omega\rho_0(g^2+|Dg|^2)\,\diff x\bigg)^{1/2}.
		\end{aligned}
	\end{equation}}

Similarly, one can obtain
	\begin{equation}\label{Weighted-17}
		\begin{aligned}
			&\int_{1/2}^1\int_\mathbb{T}g^2\,\diff x_1\diff x_2
			\lesssim \bigg(\int_\Omega\rho_0g^2\,\diff x\bigg)^{1/2}\bigg(\int_\Omega\rho_0(g^2+|Dg|^2)\,\diff x\bigg)^{1/2}.
		\end{aligned}
	\end{equation}

	Finally, \eqref{Weighted-13} follows from \eqref{Weighted-16} and \eqref{Weighted-17}.
	
\end{proof}

\section{A Priori Bounds on \(J^{-2}b^{kj}\) and \(J^{-2}a_i^k\)}\label{sec:Jab}
Let \((v,\eta)\) be a solution to \eqref{eq:main-2} satisfying \eqref{a priori assumption}. 
By \eqref{J-formula}, \eqref{a-formula} and \eqref{J-bound}, after some long but elementary calculations, 
one can show the following lemmas.
\begin{lemma}\label{le:Jab-bound-1} It holds that
{\small{\begin{equation}\label{Jab-bound-1}
\begin{aligned}
&|\partial_t^{l_0}(J^{-2}b^{kj})|+|\partial_t^{l_0}(J^{-2}a_i^k)|\lesssim Q_1(t),
\quad l_0=1,2,\\
&|\partial_t^{l_0}(J^{-2}b^{kj})|+|\partial_t^{l_0}(J^{-2}a_i^k)|\lesssim Q_1(t)(\sum_{m=2}^{l_0-1}|\partial_t^mDv|+1),\quad l_0=3,4
\end{aligned}
\end{equation}}}
with some generic polynomial function
{\small{\begin{equation}\label{Jab-bound-2}
\begin{aligned}
Q_1(t)=Q_1(|Dv|,|D\eta|, |\partial_tDv|).
\end{aligned}
\end{equation}}}

\end{lemma}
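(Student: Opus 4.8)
The plan is to establish Lemma \ref{le:Jab-bound-1} by a direct, bookkeeping-driven computation starting from the explicit formulas \eqref{J-formula}, \eqref{a-formula}, \eqref{Eta-formula} and \eqref{b-formula}, together with the uniform bound \eqref{J-bound}. The key structural point is that $J^{-2}a_i^k$ is a rational function of the entries of $D\eta$ whose numerator is a polynomial in $D\eta$ and whose denominator is a power of $J$; since $J$ is bounded away from zero by \eqref{J-bound}, every time derivative of $J^{-2}a_i^k$ (or of $J^{-2}b^{kj}=J^{-2}a_l^ka_l^j$) is again a rational function with a power of $J$ in the denominator and, in the numerator, a polynomial in the entries of $D\eta$ and their time derivatives. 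Using $\partial_t D\eta = Dv$ by \eqref{fluid particle}, each application of $\partial_t$ either lands on a $D\eta$ factor, turning it into a $Dv$ factor, or lands on $J^{-m}$, producing $J^{-m-1}$ times $\partial_t J$, and $\partial_t J = J A_i^j v^i,_j$ is itself a product of $D\eta$-entries and $Dv$-entries divided by a power of $J$.

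First I would record the basic identities: $\partial_t J^{-1} = -J^{-1}A_i^jv^i,_j$, $\partial_t a = \partial_t(D\eta)^{\mathrm{adj}}$ has entries that are linear in $Dv$ (from \eqref{a-formula}, $\partial_t a$ just replaces one $\eta^\cdot,_\cdot$ by the corresponding $v^\cdot,_\cdot$), and $\partial_t b^{kj} = (\partial_t a_l^k)a_l^j + a_l^k(\partial_t a_l^j)$. For $l_0=1$ this immediately gives that $\partial_t(J^{-2}a_i^k)$ and $\partial_t(J^{-2}b^{kj})$ are bounded by a polynomial in $|Dv|$, $|D\eta|$ alone (no $\partial_t Dv$ yet actually appears at first order, but allowing $Q_1$ to depend on $|\partial_tDv|$ is harmless). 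For $l_0=2$, differentiating once more produces at worst one factor of $\partial_t Dv = \partial_t^2 D\eta$, together with more factors of $Dv$, $D\eta$ and powers of $J^{-1}$; hence the bound $Q_1(t) = Q_1(|Dv|,|D\eta|,|\partial_t Dv|)$, using \eqref{J-bound} to absorb the $J$-powers into the constant.

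For $l_0=3,4$ I would argue inductively: writing $J^{-2}a_i^k$ (resp.\ $J^{-2}b^{kj}$) as $\mathcal{R}(D\eta)J^{-p}$ for a polynomial $\mathcal{R}$ and integer $p$, a repeated application of the product and chain rules expresses $\partial_t^{l_0}(J^{-2}a_i^k)$ as a finite sum of terms, each a product of $J^{-p'}$ with factors drawn from $\{D\eta, Dv, \partial_t Dv, \partial_t^2 Dv, \dots, \partial_t^{l_0-1}Dv\}$; the factors of order $\ge 2$ in $\partial_t$ can appear at most to the first power in any single term because the total number of $\partial_t$'s distributed is $l_0$ and each such high-order factor already consumes at least two derivatives while the remaining ones must be spent on other factors — so the worst case is exactly one factor of $\partial_t^m Dv$ with $2\le m\le l_0-1$ times a $Q_1$-type polynomial in the low-order quantities. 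Combined with \eqref{J-bound} to control the $J^{-p'}$ prefactors, this yields the stated bound with the sum $\sum_{m=2}^{l_0-1}|\partial_t^m Dv|+1$.

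The main obstacle is purely organizational rather than conceptual: one must verify carefully that in the Leibniz expansion no term contains a product of two or more factors each carrying two or more time derivatives (which would spoil the linear-in-high-order-derivative form of \eqref{Jab-bound-1}), and that the factors of exactly one time derivative ($Dv$) and zero time derivatives ($D\eta$), which may occur to high powers, are all absorbed into the generic polynomial $Q_1$. This is a standard combinatorial count on how the $l_0$ time derivatives can be distributed over the factors, and it is precisely the reason the lemma is stated with a generic polynomial $Q_1$ depending only on $|Dv|$, $|D\eta|$, $|\partial_t Dv|$ multiplying the linear expression in $\partial_t^m Dv$ for $m\ge 2$; the remaining computation is the ``long but elementary'' one already advertised in the text.
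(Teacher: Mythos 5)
Your proposal is correct and is essentially the computation the paper has in mind: the paper itself offers no written proof of Lemma \ref{le:Jab-bound-1}, only the remark that it follows from \eqref{J-formula}, \eqref{a-formula}, \eqref{J-bound} ``after some long but elementary calculations,'' and your argument is precisely that calculation, organized via Liouville's identity $\partial_t J = a_i^j v^i,_j$, the linearity of $a$ in $D\eta$, and a Leibniz-count showing that for $l_0\le 4$ at most one factor of $\partial_t^m Dv$ with $m\ge 2$ can occur in any term. One small imprecision: you write that a high-order factor ``consumes at least two derivatives,'' but since $\partial_t^m Dv=\partial_t^{m+1}D\eta$ with $m\ge 2$ it actually consumes at least \emph{three}, which is what rules out two high-order factors even when $l_0=4$; the conclusion is unaffected, but the correct count is the one that makes the $l_0=4$ case work.
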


\begin{lemma}\label{le:Jab-bound-2}
It holds that
{\small{\begin{equation}\label{Jab-bound-5}
\begin{aligned}
&|\partial_1^{l_1}(J^{-2}b^{kj})|+|\partial_1^{l_1}(J^{-2}a_i^k)|\lesssim Q_2(t), \quad l_1=1,2,3,\\
&|\partial_1^{l_1}(J^{-2}b^{kj})|+|\partial_1^{l_1}(J^{-2}a_i^k)|\lesssim Q_2(t)\bigg(\sum_{m=4}^{l_1}|\partial_1^mD\eta|+1\bigg),\quad l_1=4,5,6,7
\end{aligned}
\end{equation}}}
with some generic polynomial function
{\small{\begin{equation}\label{Jab-bound-6}
\begin{aligned}
Q_2(t):=Q_2(|\partial_1^sD\eta|),\quad s=0,1,2,3.
\end{aligned}
\end{equation}}}

\end{lemma}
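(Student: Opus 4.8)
The plan is to write $J^{-2}b^{kj}$ and $J^{-2}a_i^k$ as $\phi(J)$ times a polynomial in the entries of $D\eta$, where $\phi(s)=s^{-2}$, to differentiate $\partial_1^{l_1}$, and to control the resulting combinatorial sum through the structure of its terms, exactly in the spirit of the proof of Lemma \ref{le:Jab-bound-1}. First I would record the algebraic shape of the quantities: by \eqref{J-formula} and \eqref{a-formula}, $J$ is a homogeneous quadratic polynomial in the four entries of $D\eta$, $a_i^k$ is linear in those entries, and hence $b^{kj}=a_l^ka_l^j$ is again a homogeneous quadratic polynomial in $D\eta$; moreover, by \eqref{J-bound}, $J(x,t)$ stays in the compact interval $[9/10,11/10]$ on which $\phi(s)=s^{-2}$ is smooth, so $|\phi^{(m)}(J)|\le C_m$ for every $m\ge0$, uniformly on $\Omega\times[0,T]$. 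Writing $J^{-2}a_i^k=\phi(J)\,a_i^k$ and $J^{-2}b^{kj}=\phi(J)\,b^{kj}$, the Leibniz rule together with the Fa\`{a} di Bruno formula for $\partial_1^{l_1}(\phi(J))$ — using that $\partial_1^{b}J$, $\partial_1^{b}a_i^k$ and $\partial_1^{b}b^{kj}$ are themselves finite sums of products of at most two factors of the form $\partial_1^{c}(D\eta)$ with exponents summing to $b$ — shows that $\partial_1^{l_1}(J^{-2}b^{kj})$ and $\partial_1^{l_1}(J^{-2}a_i^k)$ are finite sums of terms of the type
\begin{equation*}
\phi^{(m)}(J)\prod_{r=1}^{p}\partial_1^{d_r}(D\eta),\qquad d_1+\cdots+d_p=l_1,\quad d_r\ge0,
\end{equation*}
where each factor $\partial_1^{d_r}(D\eta)$ denotes some component of that matrix.

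I would then read off the bounds \eqref{Jab-bound-5} from this normal form by a counting argument. The crucial combinatorial point is that whenever $l_1\le7$ and $d_1+\cdots+d_p=l_1$ with all $d_r\ge0$, at most one of the $d_r$ can be $\ge4$ (two of them would force $l_1\ge8$). For $l_1=1,2,3$ every exponent satisfies $d_r\le3$, so every factor $\partial_1^{d_r}(D\eta)$ is one of the quantities entering $Q_2(t)=Q_2(|\partial_1^sD\eta|)$, $s=0,1,2,3$; combined with $|\phi^{(m)}(J)|\lesssim1$, each term, and hence the whole sum, is bounded by such a generic polynomial $Q_2(t)$, which is $\eqref{Jab-bound-5}_1$. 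For $4\le l_1\le7$, in each term either all $d_r\le3$, so the term is again $\lesssim Q_2(t)$, or exactly one factor, say $\partial_1^{d_1}(D\eta)$ with $4\le d_1\le l_1$, is of high order; factoring it out, the remaining product $\prod_{r\ge2}\partial_1^{d_r}(D\eta)$ has all exponents $\le3$ and is bounded by $Q_2(t)$, so the term is $\lesssim Q_2(t)\,|\partial_1^{d_1}(D\eta)|\le Q_2(t)\big(\sum_{m=4}^{l_1}|\partial_1^mD\eta|+1\big)$. Summing over the finitely many terms gives $\eqref{Jab-bound-5}_2$.

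I expect the only genuine work to be the bookkeeping in the first step: writing out, or at least organizing clearly, the Leibniz/Fa\`{a} di Bruno expansion of $\phi(J)\,a_i^k$ and $\phi(J)\,b^{kj}$ and checking that every term really has the stated form, with exponents summing to $l_1$ and the uniformly bounded prefactor $\phi^{(m)}(J)$; after that the ``at most one high-order factor'' count is immediate and \eqref{Jab-bound-5} follows. I would also note that $Q_2(t)$ is in fact uniformly bounded under \eqref{a priori assumption}, since $\|D\eta\|_{L^\infty}$, $\|\partial_1^sD\eta\|_{L^\infty}$ for $s=1,2$ and $\|\partial_1^3D\eta\|_{L^\infty}$ are controlled by \eqref{Weighted-4} and \eqref{Weighted-11}, but only the pointwise structure is used for \eqref{Jab-bound-5} itself.
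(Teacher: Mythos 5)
The paper gives no proof of this lemma beyond the remark that it follows ``after some long but elementary calculations,'' so your write-up is precisely the bookkeeping the paper elides. Your argument is correct and is the natural way to organize those calculations: $a_i^k$ and $b^{kj}=a_l^ka_l^j$ are respectively linear and quadratic polynomials in the entries of $D\eta$ (by \eqref{a-formula}), $J$ is quadratic (by \eqref{J-formula}), and $J$ stays in the compact interval $[9/10,11/10]$ by \eqref{J-bound}, so $\phi(s)=s^{-2}$ and all its derivatives are uniformly bounded there. Leibniz and Fa\`{a} di Bruno then exhibit $\partial_1^{l_1}(J^{-2}a_i^k)$ and $\partial_1^{l_1}(J^{-2}b^{kj})$ as finite sums of terms $\phi^{(m)}(J)\prod_r\partial_1^{d_r}(D\eta)$ with $\sum_rd_r=l_1$, and the observation that at most one $d_r$ can reach $4$ when $l_1\le 7$ delivers both lines of \eqref{Jab-bound-5} with the stated $Q_2$ in \eqref{Jab-bound-6}. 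This is essentially the same approach the paper intends; you have simply made the combinatorics explicit.
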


\begin{lemma}\label{le:Jab-bound-3}
It holds that
{\small{\begin{equation}\label{Jab-bound-3}
\begin{aligned}
&|\partial_t^2\partial_1(J^{-2}b^{kj})|+|\partial_t^2\partial_1(J^{-2}a_i^k)|\lesssim Q_3(t),\\
&|\partial_t^3\partial_1(J^{-2}b^{kj})|+|\partial_t^3\partial_1(J^{-2}a_i^k)|\lesssim Q_3(t)(|\partial_t^2\partial_1Dv|+|\partial_t^2Dv|+1),\\
&|\partial_t^2\partial_1^2(J^{-2}b^{kj})|+|\partial_t^2\partial_1^2(J^{-2}a_i^k)|\lesssim Q_3(t)(|\partial_t\partial_1^2Dv|+1),\\
&|\partial_t^3\partial_1^2(J^{-2}b^{kj})|+|\partial_t^3\partial_1^2(J^{-2}a_i^k)|\lesssim Q_3(t)
\bigg(\sum_{\alpha=0}^2|\partial_t^2\partial_1^\alpha Dv|+|\partial_t\partial_1^2Dv|+1\bigg)
\end{aligned}
\end{equation}}}
with some generic polynomial function
{\small{\begin{equation}\label{Jab-bound-4}
\begin{aligned}
Q_3(t):=Q_3(|\partial_1^{s_1}Dv|, |\partial_1^{s_2}D\eta|, |\partial_t\partial_1^{s_3}Dv|),\quad s_1, s_2=0,1,2;\ s_3=0,1.
\end{aligned}
\end{equation}}}

\end{lemma}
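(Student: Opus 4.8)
The goal is to establish Lemma~\ref{le:Jab-bound-3}, which controls mixed time-horizontal derivatives $\partial_t^{l_0}\partial_1^{l_1}(J^{-2}b^{kj})$ and $\partial_t^{l_0}\partial_1^{l_1}(J^{-2}a_i^k)$ for $l_0 \in \{2,3\}$ and $l_1\in\{1,2\}$, in terms of the polynomial $Q_3(t)$ built from the lowest-order quantities $|\partial_1^{s_1}Dv|$, $|\partial_1^{s_2}D\eta|$, $|\partial_t\partial_1^{s_3}Dv|$ (with $s_1,s_2\le 2$, $s_3\le 1$) times certain ``top-order'' factors like $|\partial_t^2\partial_1^\alpha Dv|$ and $|\partial_t\partial_1^2 Dv|$. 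The plan is to proceed exactly as one would for Lemmas~\ref{le:Jab-bound-1} and~\ref{le:Jab-bound-2}: differentiate the explicit algebraic formulas~\eqref{J-formula}, \eqref{a-formula}, \eqref{b-formula} and \eqref{Eta-formula}, then bookkeep the Leibniz expansion carefully so that no factor of order higher than those appearing in $Q_3$ or in the stated multiplicative corrections is ever produced.

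First I would record the basic building blocks. From \eqref{Eta-formula}, $D\eta$ is $I$ plus a time-integral of $Dv$, so $\partial_t D\eta = Dv$, $\partial_t^2 D\eta = \partial_t Dv$, and $\partial_t\partial_1^{l_1} D\eta = \partial_1^{l_1}Dv$; horizontal derivatives of $D\eta$ are time-integrals of $\partial_1^{l_1}Dv$. Since $J$ and the entries of $a$ are quadratic polynomials in the entries of $D\eta$ (see \eqref{J-formula}, \eqref{a-formula}), and $b^{kj}=a^k_la^j_l$ is quadratic in $a$, the composite $J^{-2}b^{kj}$ and $J^{-2}a^k_i$ are smooth functions of $D\eta$ on the region where $9/10\le J\le 11/10$ guaranteed by \eqref{J-bound}; in particular all the $J^{-2}$ factors and their derivatives are bounded by a polynomial in $D\eta$ and enter $Q_3(t)$ harmlessly. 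The key point driving the whole estimate is the \emph{derivative-counting trade-off}: each application of $\partial_t$ to a factor of $D\eta$ produces $Dv$ (order unchanged), each further $\partial_t$ on $Dv$ produces $\partial_t Dv$, and so on; and each $\partial_1$ either hits $D\eta$ (staying low-order, absorbed into $Q_3$) or hits an already-differentiated factor. So when we apply $\partial_t^{l_0}\partial_1^{l_1}$ with $l_0+l_1$ small, at most one factor in any Leibniz term can carry close to the full number of derivatives, and that single factor is precisely the ``top-order correction'' multiplier displayed on the right-hand side of \eqref{Jab-bound-3}; every other factor is a derivative of $D\eta$ of order $\le 2$ in $\partial_1$ and $\le 1$ in $\partial_t$ after the $\partial_t$-count is spent, hence belongs to $Q_3$.

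Concretely I would organize the proof by the four cases. For $\partial_t^2\partial_1(J^{-2}b^{kj})$: expand via the chain/Leibniz rule into a sum of products of $\le 4$ factors each of which is a derivative of $D\eta$; since only $3$ total derivatives are distributed, every factor has at most two derivatives, and when a factor carries two $\partial_t$'s it equals $\partial_t Dv$ (order $0$ in $\partial_1$), when it carries one $\partial_t$ and one $\partial_1$ it equals $\partial_1 Dv$, and a pure $\partial_1^2 D\eta$ is a time-integral of $\partial_1^2 Dv$ — all of which are among the arguments of $Q_3$ — so the product is $\lesssim Q_3(t)$, with no correction factor needed; this matches line one of \eqref{Jab-bound-3}. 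For $\partial_t^3\partial_1$, one more $\partial_t$ is available, so a single factor may become $\partial_t^2 Dv$ or $\partial_t^2\partial_1 Dv$; pulling that one high-order factor out front and bounding the remaining low-order factors by $Q_3(t)$ yields the $(|\partial_t^2\partial_1 Dv|+|\partial_t^2 Dv|+1)$ correction. The cases $\partial_t^2\partial_1^2$ and $\partial_t^3\partial_1^2$ are handled identically, with the extra $\partial_1$ allowing the worst single factor to be $\partial_t\partial_1^2 Dv$ (resp. $\partial_t^2\partial_1^\alpha Dv$ for $\alpha\le 2$), producing exactly the stated correction brackets. A parallel (in fact simpler, since $a$ is only quadratic rather than needing the composition with $1/J^2$ twice) computation gives the bounds for $\partial_t^{l_0}\partial_1^{l_1}(J^{-2}a^k_i)$.

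The main obstacle — really the only nontrivial point — is the combinatorial bookkeeping: one must verify that in \emph{every} Leibniz term of $\partial_t^{l_0}\partial_1^{l_1}$ applied to a degree-$\le 8$ polynomial-in-$D\eta$ quantity (recall $J^{-2}$ must itself be expanded, e.g.\ via $\partial_t(J^{-2}) = -2J^{-3}J_t$ and iterating, which introduces more $J$-factors but each still bounded and low-order in the relevant derivatives once the total derivative budget $l_0+l_1\le 5$ is respected), the derivatives cannot pile up on two different factors simultaneously in a way that would force a product of two genuinely high-order terms. This is true precisely because $l_0\le 3$ and $l_1\le 2$ are small, so after reserving one factor to carry the surplus derivatives the rest necessarily fall within the $\{s_1,s_2\le 2,\,s_3\le 1\}$ window defining $Q_3$; making this explicit is a finite case check, which is why the statement says ``after some long but elementary calculations.'' Once the worst-case term in each of the four lines is identified and bounded, summing over the finitely many Leibniz terms and over $k,j,i$ and absorbing bounded $J^{-1}$-powers into $Q_3$ completes the proof.
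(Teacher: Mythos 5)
Your proposal is correct and follows exactly the route the paper itself indicates: the paper states only that ``by \eqref{J-formula}, \eqref{a-formula} and \eqref{J-bound}, after some long but elementary calculations, one can show the following lemmas,'' and your derivative-budget bookkeeping via \eqref{Eta-formula} and the Leibniz/chain rule -- observing that $\partial_t^{l_0}\partial_1^{l_1}D\eta$ lies in the $Q_3$-argument window unless $l_0\geq 3$ or $(l_0,l_1)\geq(2,2)$, and that the small total budget $l_0+l_1\leq 5$ precludes two factors from simultaneously leaving that window -- is precisely the content of that calculation.
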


\begin{lemma}\label{le:Jab-bound-4}
It holds that
{\small{\begin{equation}\label{Jab-bound-7}
\begin{aligned}
&|\partial_t\partial_1^{l_1}(J^{-2}b^{kj})|+|\partial_t\partial_1^{l_1}(J^{-2}a_i^k)|\lesssim Q_4(t), \quad l_1=1,2,3,\\
\\
&|\partial_t\partial_1^{l_1}(J^{-2}b^{kj})|+|\partial_t\partial_1^{l_1}(J^{-2}a_i^k)|\lesssim Q_4(t)\bigg(\sum_{m=4}^{l_1}(|\partial_1^mDv|+|\partial_1^mD\eta|)+1\bigg)
, \\
&\qquad\qquad\qquad\qquad\qquad\qquad\qquad\qquad\qquad\quad l_1=4,5,6,7
\end{aligned}
\end{equation}}}
with some generic polynomial function
{\small{\begin{equation}\label{Jab-bound-8}
\begin{aligned}
Q_4(t):=Q_4(|\partial_1^{s_1}Dv|, |\partial_1^{s_2}D\eta|),\quad s_1, s_2=0,1,2,3. 
\end{aligned}
\end{equation}}}

\end{lemma}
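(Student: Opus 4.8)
The plan is to treat $J^{-2}b^{kj}$ and $J^{-2}a_i^k$ as smooth (in fact rational) functions of the entries of $D\eta$ and then expand $\partial_t\partial_1^{l_1}$ by the Leibniz/Fa\`a di Bruno rule, using the key identity $\partial_t D\eta=Dv$ from \eqref{Eta-formula}. By \eqref{J-formula} and \eqref{a-formula}, both $a_i^k$ and $b^{kj}=a_l^k a_l^j$ are polynomials in the four functions $\eta^p,_q$, while by \eqref{J-bound} the Jacobian $J=\eta^1,_1\eta^2,_2-\eta^1,_2\eta^2,_1$ satisfies $9/10\le J\le 11/10$, so $J^{-2}$ is a smooth function of $D\eta$ on the relevant compact range. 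Hence $J^{-2}b^{kj}=G^{kj}(D\eta)$ and $J^{-2}a_i^k=H_i^k(D\eta)$ for maps $G,H$ that are smooth with all derivatives bounded on $\{9/10\le J\le 11/10\}\cap\{\abs{D\eta}\le C\}$; here $\|D\eta\|_{L^\infty}$ is finite by \eqref{Weighted-4}.

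Applying $\partial_t\partial_1^{l_1}$ to such a composition and using the chain rule produces a finite sum of terms of the schematic form
\[
(\text{bounded smooth function of } D\eta)\times\prod_{r}\big(\partial_1^{m_r}\partial_t^{\varepsilon_r}D\eta\big),
\]
where $\varepsilon_r\in\{0,1\}$, exactly one $\varepsilon_r$ equals $1$ (carrying the single $\partial_t$), $\partial_t D\eta=Dv$, and $\sum_r m_r=l_1$. Thus every factor is either $\partial_1^{m_r}D\eta$ or $\partial_1^{m_r}Dv$, and the total number of tangential derivatives distributed among the factors is exactly $l_1\le 7$.

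The crucial combinatorial observation is that $4+4=8>7$, so at most one factor can carry four or more tangential derivatives. Consequently: when $l_1\le 3$ every factor has at most three spatial derivatives, the smooth prefactor is bounded by a polynomial in $\|D\eta\|_{L^\infty}$, and the whole expression is $\lesssim Q_4(t)$ with $Q_4=Q_4(\abs{\partial_1^{s_1}Dv},\abs{\partial_1^{s_2}D\eta})$, $s_1,s_2=0,1,2,3$; when $4\le l_1\le 7$ each term either has all factors of order $\le 3$ (bounded by $Q_4(t)$) or has a single high-order factor $\partial_1^m D\eta$ or $\partial_1^m Dv$ with $4\le m\le l_1$ multiplied by factors of order $\le 3$, giving $\lesssim Q_4(t)(\abs{\partial_1^m D\eta}+\abs{\partial_1^m Dv})$. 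Summing over the finitely many terms yields the claimed bound $\lesssim Q_4(t)\big(\sum_{m=4}^{l_1}(\abs{\partial_1^m Dv}+\abs{\partial_1^m D\eta})+1\big)$, the $+1$ absorbing the purely low-order terms.

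I expect the only real work --- ``long but elementary'', as for Lemmas \ref{le:Jab-bound-1}--\ref{le:Jab-bound-3} --- to be the bookkeeping of the Leibniz expansion: verifying that the single $\partial_t$ always attaches to exactly one $D\eta$ factor (so no term contains $\partial_t^2 D\eta$ or a product of two $\partial_t$-factors) and confirming the ``at most one high-order factor'' dichotomy. One can shortcut part of this by instead differentiating in $t$ the identities already established in the proof of Lemma \ref{le:Jab-bound-2}, since the present lemma is precisely the $\partial_t$-analogue of that one, with the extra time derivative converting one $\partial_1^m D\eta$-slot into a $\partial_1^m Dv$-slot.
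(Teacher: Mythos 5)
Your proposal is correct and is essentially the natural (and only sensible) approach; the paper itself provides no proof of Lemmas~\ref{le:Jab-bound-1}--\ref{le:Jab-bound-6}, simply asserting that they follow from \eqref{J-formula}, \eqref{a-formula} and \eqref{J-bound} ``after some long but elementary calculations.'' Your argument supplies exactly the structure those words are gesturing at: write $J^{-2}b^{kj}$ and $J^{-2}a_i^k$ as smooth functions of $D\eta$ (smoothness of $s\mapsto s^{-2}$ on $[9/10,11/10]$, polynomial dependence of $a$, $b=a^Ta$ on $D\eta$), expand $\partial_t\partial_1^{l_1}$ by Leibniz and Fa\`a di Bruno with $\partial_t D\eta=Dv$ converting exactly one slot into a $Dv$-slot, and use the pigeonhole observation $4+4>7$ to conclude that at most one factor can carry four or more $\partial_1$'s. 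The low-order factors (order $\le 3$ in $\partial_1$) are absorbed into $Q_4(t)$, and any single high-order factor is linear and tallied in $\sum_{m=4}^{l_1}(\abs{\partial_1^m Dv}+\abs{\partial_1^m D\eta})$, which reproduces \eqref{Jab-bound-7}. Your closing remark --- that this lemma is the $\partial_t$-analogue of Lemma~\ref{le:Jab-bound-2} with one $\partial_1^mD\eta$-slot promoted to $\partial_1^mDv$ --- is precisely the right structural reading and would be a clean way to organize the bookkeeping in a written-out version.
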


\begin{lemma}\label{le:Jab-bound-5}
 It holds that
{\small{\begin{equation}\label{Jab-bound-9}
\begin{aligned}
|\partial_tD(J^{-2}b^{kj})|+|\partial_tD(J^{-2}a_i^k)|&\lesssim Q_5(t),\\
|\partial_tD^{l_2}(J^{-2}b^{kj})|+|\partial_tD^{l_2}(J^{-2}a_i^k)|&\lesssim Q_5(t)\bigg(\sum_{m=3}^{l_2+1}(|D^mv|+|D^m\eta|)+g_{l_2}(v,\eta)+1\bigg)
, \\
&\qquad\qquad\quad l_2=2,3,4,5,
\end{aligned}
\end{equation}}}
where 
{\small{\begin{equation*}
\begin{aligned}
&g_2(v,\eta)=g_3(v,\eta)=0,\quad g_4(v,\eta)=|D^3vD^3\eta|,\\
&g_5(v,\eta)=|D^4vD^3\eta|+|D^3vD^4\eta|+|D^3vD^3\eta|
\end{aligned}
\end{equation*}}}
with some generic polynomial function
{\small{\begin{equation}\label{Jab-bound-10}
\begin{aligned}
Q_5(t):=Q_5(|Dv|, |D^2v|, |D\eta|, |D^2\eta|).
\end{aligned}
\end{equation}}}

\end{lemma}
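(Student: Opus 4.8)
The plan is to reduce everything to the chain rule. By \eqref{J-formula} and \eqref{a-formula}, every entry of $a$ is, up to sign, a single first-order derivative $\eta^i,_{j}$, and $J$ is a quadratic polynomial in such derivatives; hence both $J^{-2}b^{kj}=J^{-2}a_l^ka_l^j$ and $J^{-2}a_i^k$ can be written as $\Phi(D\eta)$ for a fixed rational function $\Phi$ whose only denominator is a power of $J$. Since \eqref{J-bound} gives $J\geq 9/10$ on $\Omega\times[0,T]$, the function $\Phi$ is $C^\infty$ on a fixed neighbourhood of the range of $D\eta(\cdot,t)$, so every partial $\partial^\alpha\Phi$ is, there, bounded by a polynomial in $|D\eta|$. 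Also $\partial_tD^m\eta=D^mv$ by \eqref{fluid particle} and \eqref{Lagrangian variable}.

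First I would expand $D^{l_2}[\Phi(D\eta)]$ by the Fa\`a di Bruno and generalized Leibniz formulas into a finite sum of terms $(\partial^\alpha\Phi)(D\eta)\prod_{i=1}^{p}D^{k_i}\eta$ with $k_i\geq 2$ and $\sum_{i=1}^{p}(k_i-1)=l_2$; in particular the top-order factor $D^{l_2+1}\eta$ occurs only in the single term $(\partial\Phi)(D\eta)\,D^{l_2+1}\eta$, linearly. Then, applying the one extra time derivative $\partial_t$ by Leibniz, each resulting monomial has one of two shapes: $\partial_t$ hits the coefficient, producing $(\partial^{\alpha'}\Phi)(D\eta)\,Dv\prod_iD^{k_i}\eta$, or it converts exactly one $D^{k_{i_0}}\eta$ into $D^{k_{i_0}}v$, producing $(\partial^\alpha\Phi)(D\eta)\,D^{k_{i_0}}v\prod_{i\neq i_0}D^{k_i}\eta$. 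In each monomial the coefficient is bounded by a polynomial in $|D\eta|$, and exactly one factor is a $v$-derivative.

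Next I would sort these monomials by order using $\sum_i(k_i-1)=l_2$. Every monomial built only from $|Dv|,|D^2v|,|D\eta|,|D^2\eta|$ is by definition $\lesssim Q_5(t)$ and is absorbed into $Q_5$. If $l_2\leq3$, the identity $\sum_i(k_i-1)=l_2$ with $k_i\geq2$ permits at most one factor of order $\geq 3$, so what remains is the linear tail $\sum_{m=3}^{l_2+1}(|D^mv|+|D^m\eta|)$ times $Q_5(t)$ (the tail being empty for $l_2=1$); this gives $l_2=1,2,3$ with $g_2=g_3=0$. For $l_2=4$ the same identity additionally allows one monomial with two order-$3$ factors—after $\partial_t$ this is $D^3v\cdot D^3\eta$, i.e. $g_4$—and for $l_2=5$ it allows $D^4v\,D^3\eta$, $D^3v\,D^4\eta$, $D^3v\,D^3\eta$, i.e. $g_5$; all other monomials are again $Q_5(t)$ or $Q_5(t)\cdot D^m$ with $m\leq l_2+1$. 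Performing this for both $J^{-2}b^{kj}$ and $J^{-2}a_i^k$, and using \eqref{J-bound} to absorb the negative powers of $J$ created when differentiating $\Phi$, yields \eqref{Jab-bound-9}.

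The main obstacle—the ``long but elementary calculation'' alluded to in the statement—is the combinatorics of this last step: one must check that the partition identity $\sum_i(k_i-1)=l_2$, $k_i\geq2$, rules out any product of two derivatives of order $\geq3$ when $l_2\leq3$ yet forces exactly the products listed in $g_4,g_5$ when $l_2=4,5$, and that the coefficient factors $(\partial^\alpha\Phi)(D\eta)$ together with all factors of order $\leq2$ do reassemble into a single polynomial in the four arguments of $Q_5$.
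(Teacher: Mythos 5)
The paper gives no proof of Lemma~\ref{le:Jab-bound-5} (it simply says ``after some long but elementary calculations''), so your task was to supply the calculation; your Fa\`a di Bruno/Leibniz strategy is exactly the right one, and most of the bookkeeping is correct.  There is, however, a genuine gap at the sorting step, and it is precisely where you hand-waved.

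You correctly observe that after the extra $\partial_t$, each monomial contains exactly one $v$-derivative factor and the remaining factors are $\eta$-derivatives, with the orders tied together by the partition identity.  You then assert that ``all other monomials are again $Q_5(t)$ or $Q_5(t)\cdot D^m$.''  This is false in the branch where $\partial_t$ lands on the coefficient $(\partial^\alpha\Phi)(D\eta)$ (or, equivalently, where in the Leibniz split of $D^{l_2}[\Psi(D\eta)\,Dv]$ only $0$ or $1$ of the $l_2$ spatial derivatives land on $Dv$).  In that branch the $v$-factor is merely $Dv$ or $D^2v$ and so disappears into $Q_5$, but the $\eta$-side of the product is left untouched and can still carry \emph{two} factors of order $\ge 3$.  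Concretely, for $l_2=4$ the partition $(k_1,k_2)=(3,3)$ of $\sum(k_i-1)=4$, with $\partial_t$ hitting the coefficient, produces the monomial $Q_5(t)\,|D^3\eta|^2$; for $l_2=5$ the partitions $(4,3)$ and $(3,3,2)$ likewise produce $Q_5(t)\,|D^4\eta\,D^3\eta|$ and $Q_5(t)\,|D^3\eta|^2$.  None of these is of the form $Q_5(t)$, $Q_5(t)\,|D^m v|$, $Q_5(t)\,|D^m\eta|$, or $Q_5(t)\,g_{l_2}(v,\eta)$ as stated, since $|D^3\eta|$ and $|D^4\eta|$ are not arguments of $Q_5$.  (Compare with Lemma~\ref{le:Jab-bound-6}, where the undifferentiated version explicitly includes $|D^3\eta|^2$ in $g_4(\eta)$ and $|D^4\eta D^3\eta|+|D^3\eta|^2$ in $g_5(\eta)$ --- applying $\partial_t$ does not remove these; it merely \emph{adds} the terms in which one $\eta$ has been converted to a $v$.)

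So your argument, as written, proves the estimate with a slightly larger right-hand side than the lemma states, namely with $g_4$ augmented by $|D^3\eta|^2$ and $g_5$ augmented by $|D^4\eta D^3\eta|+|D^3\eta|^2$.  Either the paper's $g_4,g_5$ contain a typographical omission, or you need to explain an algebraic cancellation that eliminates the coefficient-hit branch; you do neither, and the sentence claiming the remaining monomials are all absorbed is not justified.  The rest of the proof --- the reduction to a fixed smooth $\Phi(D\eta)$ via \eqref{J-formula}, \eqref{a-formula}, \eqref{J-bound}; the observation that $\partial_t D^m\eta=D^m v$; the linear occurrence of the top-order factor; and the case analysis for $l_2\le 3$ --- is all correct.
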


\begin{lemma}\label{le:Jab-bound-6}
 It holds that
{\small{\begin{equation}\label{Jab-bound-11}
\begin{aligned}
|D(J^{-2}b^{kj})|+|D(J^{-2}a_i^k)|&\lesssim Q_6(t),\\
|D^{l_2}(J^{-2}b^{kj})|+|D^{l_2}(J^{-2}a_i^k)|&\lesssim Q_6(t)\bigg(\sum_{m=3}^{l_2+1}|D^m\eta|+g_{l_2}(\eta)+1\bigg),\\
&\qquad\qquad\quad l_2=2,3,4,5,6,7,
\end{aligned}
\end{equation}}}
where
{\small{\begin{equation*}
\begin{aligned}
&g_2(\eta)=g_3(\eta)=0,\quad g_4(\eta)=|D^3\eta|^2,\\
&g_5(\eta)=|D^4\eta D^3\eta|+|D^3\eta|^2,\\
&g_6(\eta)=|D^5\eta D^3\eta|+|D^4\eta|^2+|D^3\eta|^2,\\
&g_7(\eta)=|D^6\eta D^3\eta|+|D^5\eta D^4\eta|+|D^5\eta D^3\eta|\\
&\qquad\quad\ +|D^4\eta|^2+|D^4\eta||D^3\eta|^2+|D^3\eta|^3+|D^3\eta|^2
\end{aligned}
\end{equation*}}}
with some generic polynomial function
{\small{\begin{equation}\label{Jab-bound-12}
\begin{aligned}
Q_6(t):=Q_6(|D\eta|, |D^2\eta|).
\end{aligned}
\end{equation}}}

\end{lemma}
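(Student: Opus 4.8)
The plan is to prove Lemma~\ref{le:Jab-bound-6} by a straightforward induction on $l_2$, using only the algebraic formulas \eqref{J-formula}--\eqref{a-formula} and the non-degeneracy bound \eqref{J-bound} (which is in force since $(v,\eta)$ satisfies \eqref{a priori assumption}). The two structural observations are that each entry of $a$ is, up to sign, a single entry of $D\eta$, so that $b^{kj}=a_l^ka_l^j$ is a homogeneous quadratic in $D\eta$; and that by \eqref{J-bound} the determinant $J$ stays in $[9/10,11/10]$, so every power $J^{-m}$ is a smooth function of $D\eta$ on this region whose $D\eta$-derivatives are bounded by absolute constants. Hence both $J^{-2}a_i^k$ and $J^{-2}b^{kj}$ are of the form $F(D\eta)$ with $F$ smooth on the relevant compact set. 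The case $l_2=1$ is then immediate: since $DJ$ is a sum of terms of size $O(|D^2\eta|\,|D\eta|)$, the product rule gives $|D(J^{-2}a_i^k)|+|D(J^{-2}b^{kj})|\lesssim Q_6(|D\eta|,|D^2\eta|)$.

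For the inductive step, suppose the $(l_2-1)$-st derivative has been organized as a finite sum of terms, each a product of one factor of the form ``$J^{-m}\times(\text{polynomial in }D\eta)$'' with finitely many factors $D^{m_s}\eta$, $m_s\ge2$, obeying $\sum_s(m_s-1)\le l_2-1$. One more differentiation preserves exactly this structure with $l_2$ in place of $l_2-1$, because a derivative either falls on a $D\eta$ inside the first factor (creating a new $D^2\eta$) or raises some $D^{m_s}\eta$ to $D^{m_s+1}\eta$. Now classify the resulting terms. Every factor $D^{m_s}\eta$ with $m_s\le2$, together with the $J^{-m}\times(\text{poly})$ factor, is absorbed into $Q_6=Q_6(|D\eta|,|D^2\eta|)$. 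A term carrying exactly one factor $D^m\eta$ of order $m\ge3$ (necessarily $3\le m\le l_2+1$, the extreme value $l_2+1$ being realized by the single top-order contribution $\sim J^{-3}\,D\eta\,(D^{l_2+1}\eta)\,D\eta$ produced when all $l_2$ derivatives land on one first-order factor inside $J$) is bounded by $Q_6\,|D^m\eta|$, hence contributes to $\sum_{m=3}^{l_2+1}|D^m\eta|$. Every remaining term carries two or more factors of order $\ge3$; after absorbing the lower-order factors into $Q_6$, the product of those high-order factors is one of the monomials appearing in $g_{l_2}(\eta)$.

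It then remains to verify that, for $l_2\le7$, the multi-high-derivative products actually produced are precisely those listed in $g_{l_2}$. This is a finite, mechanical check: a product of $r\ge2$ factors $D^{m_1}\eta,\dots,D^{m_r}\eta$ with all $m_s\ge3$ can occur only once $\sum_s(m_s-1)\le l_2$, since each such factor arises from an original first-order factor by absorbing $m_s-1$ of the $l_2$ derivatives; running through the admissible compositions of $l_2$ gives $g_2=g_3=0$, then the first genuine product $g_4=|D^3\eta|^2$, then $g_5=|D^4\eta\,D^3\eta|+|D^3\eta|^2$, and likewise $g_6$ and $g_7$. The computation for $J^{-2}b^{kj}$ is identical, starting from a quadratic rather than a linear polynomial in $D\eta$, which only inserts one extra factor of order $\le2$ in each term — again absorbed into $Q_6$. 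I expect the only real difficulty to be the sheer length of this bookkeeping; there is no analytic subtlety, since \eqref{J-bound} rules out any degeneration of $J$, and the unbounded growth of the intermediate-order derivatives of $\eta$ near $\Gamma$ is exactly what is isolated into the sums $\sum_{m=3}^{l_2+1}|D^m\eta|$ and the polynomials $g_{l_2}$.
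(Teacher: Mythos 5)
The paper itself gives no proof of this lemma — the entire Section~\ref{sec:Jab} is prefaced only by ``after some long but elementary calculations, one can show the following lemmas'' — so there is no written argument to compare against; your Fa\`a di Bruno / Leibniz expansion of $D^{l_2}(F\circ D\eta)$, with $F$ a rational function of $D\eta$ that is smooth on the set $\{9/10\le J\le 11/10\}$ guaranteed by \eqref{J-bound}, followed by classifying each resulting monomial by how many of its factors $D^{m_s}\eta$ have $m_s\ge3$, is exactly the computation the authors are alluding to, and the inductive structure you set up (every differentiation raises $\sum_s(m_s-1)$ by exactly one, lower-order factors $D\eta, D^2\eta$ are swallowed into $Q_6$, a lone high-order factor lands in $\sum_{m=3}^{l_2+1}|D^m\eta|$, and all multi-high-order products are collected into $g_{l_2}$) is sound.

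The one real gap is the final sentence: you verify the combinatorics explicitly for $l_2=4,5$ and then write ``and likewise $g_6$ and $g_7$,'' but that check is the content of the lemma for those indices and should not be waved through. If you actually list the compositions, the partition $6 = 2+2+2$ of $l_2=6$ produces, via $F^{(3)}(D\eta)\,(D^3\eta)^{\otimes 3}$, a monomial $|D^3\eta|^3$ that does not appear in the paper's $g_6(\eta)=|D^5\eta D^3\eta|+|D^4\eta|^2+|D^3\eta|^2$; unlike $|D^4\eta D^3\eta|$ (which both $g_6$ and $g_7$ omit but which is harmless, being dominated by $\tfrac12(|D^4\eta|^2+|D^3\eta|^2)$ already present), the cubic $|D^3\eta|^3$ cannot be absorbed into $Q_6(|D\eta|,|D^2\eta|)$ or the other listed terms. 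This is almost certainly a typo in the paper — the downstream use in \eqref{ELE-300-add-2} is unaffected, since $\|\rho_0^3|D^3\eta|^3\|_{L^2}\le\|\rho_0 D^3\eta\|_{L^\infty}^2\|\rho_0 D^3\eta\|_{L^2}$ is controlled by \eqref{Weighted-2} and \eqref{Weighted-4} — but you should either carry out the $l_2=6,7$ bookkeeping and flag the discrepancy, or explain why the $(2,2,2)$ partition contributes no $|D^3\eta|^3$ term, rather than asserting that your expansion reproduces the paper's list ``precisely.''
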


The a priori bounds on \(J^{-2}b^{kj}\) and \(J^{-2}a_i^k\) in Lemmas \ref{le:Jab-bound-1}-\ref{le:Jab-bound-6} together with the use of the \(L^\infty\)-estimates in  Subsections \ref{sub:W-1} and \ref{sub:W-3}
will be useful for the estimates in Sections \ref{Energy Estimates} and \ref{Elliptic Estimates}.

\section{Energy Estimates}\label{Energy Estimates}

\subsection{The strategy of the energy estimates} 
To derive the basic energy estimates controlling the tangential derivatives,
we will start with the weighted estimates on
\(\partial_t^4v\), after this,  the remaining energy estimates are divided into two types: the type-I energy estimates are    
obtained by first estimating \(\partial_t^3Dv\), then \(\partial_t^2\partial_1^2Dv\), and so on, until \(\partial_1^6Dv\); the type-II energy estimates are obtained by first estimating \(\partial_t^3\partial_1Dv\), then \(\partial_t^2\partial_1^3Dv\), and so on, until \(\partial_1^7Dv\). 
Let \((v,\eta)\) be a solution to \eqref{eq:main-2} satisfying \eqref{a priori assumption}. 

Throughout this section, we denote by
\begin{equation*}
\begin{aligned}
P_{*}(t):=P_{*}(1+t\sup_{0\leq \tau\leq t}E^{1/2}(\tau,v))
\end{aligned}
\end{equation*}
and
\begin{equation*}
\begin{aligned}
P_{**}(t):=P_{**}(\sup_{0\leq \tau\leq t}E^{1/2}(\tau,v), 1+t\sup_{0\leq \tau\leq t}E^{1/2}(\tau,v))
\end{aligned}
\end{equation*}
some generic polynomial functions of their arguments, which may be different from line to line. We also frequently use ``the desired bound'' to denote \(M_0+Ct P(\sup_{0\leq s\leq t}E^{1/2}(s,v))\) for convenience.

\subsection{Estimates on \(\partial_t^{l_0}v\ (l_0=0,1,2,3,4)\)}\label{ENP-1} 
\begin{proposition}\label{pr:ene-1} It holds that
\begin{equation}\label{EEP-1}
\begin{aligned}
\sum_{l_0=0}^4\int_\Omega \rho_0|\partial_t^{l_0}v|^2(t)\,\diff x+\int_0^t\int_\Omega\rho_0|\partial_t^4Dv|^2\,\diff x\diff s
\leq M_0+Ct P(\sup_{0\leq s\leq t}E^{1/2}(s,v)).
\end{aligned}
\end{equation}
\end{proposition}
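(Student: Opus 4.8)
\textbf{Proof proposal for Proposition \ref{pr:ene-1}.}

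The plan is to differentiate the main equation \eqref{eq:main-3} in time up to four times, test the resulting equation against $\partial_t^{l_0}v$ (for each $l_0=0,1,2,3,4$), and integrate over $\Omega$. I will treat $l_0=4$ as the representative highest-order case, since the lower-order cases are similar and easier. Applying $\partial_t^4$ to \eqref{eq:main-3} and pairing with $\partial_t^4v^i$ in $L^2(\Omega)$, the first term gives $\frac12\frac{d}{dt}\int_\Omega\rho_0|\partial_t^4v|^2\,\diff x$. The dissipation term, after integration by parts in $x_k$ (using that the boundary term vanishes since $\rho_0=0$ on $\Gamma$, together with the coercivity \eqref{a-bound}), produces the good term $\int_\Omega\rho_0J^{-2}b^{kj}\partial_t^4v^i,_j\partial_t^4v^i,_k\,\diff x\gtrsim \int_\Omega\rho_0|\partial_t^4Dv|^2\,\diff x$ up to lower-order commutators, which after integrating in time yields the second term on the left of \eqref{EEP-1}. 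The remaining task is to show all other terms are bounded by the desired bound $M_0+CtP(\sup_{0\leq s\leq t}E^{1/2}(s,v))$.

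Those remaining terms fall into two groups. First, there are the commutator terms arising from $\partial_t^4$ hitting the coefficients $\rho_0J^{-2}b^{kj}$ and $\rho_0^2J^{-2}a_i^k$ rather than $v$; these involve factors of the form $\partial_t^{m}(J^{-2}b^{kj})$ with $1\le m\le 4$, estimated via Lemma \ref{le:Jab-bound-1}, paired against lower time-derivatives of $Dv$ and against $\partial_t^4Dv$ or $\partial_t^4v$. The worst of these couples $\partial_t^3(J^{-2}b^{kj})$ (which by \eqref{Jab-bound-1} costs $Q_1(t)(|\partial_t^2Dv|+1)$) with $\partial_tDv$ and $\partial_t^4Dv$; one absorbs a small fraction of $\|\sqrt{\rho_0}\partial_t^4Dv\|_{L^2}^2$ by Cauchy--Schwarz and controls the rest using the $L^\infty$-bounds of Lemmas \ref{le:W-3}, \ref{le:W-8} and \ref{le:Preliminary-1}, together with the $H^{1/2}$ product estimate \eqref{Weighted-6} when two factors both need to be measured in $L^2$-with-weight; every such term carries either an explicit power of $t$ (because $D\eta-\mathbb{I}$ and hence $J^{-2}-1$ are $O(t)$ by \eqref{Eta-formula}) or is bounded by $E(t,v)$ and then integrated in $s$ to gain the factor $t$. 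Second, there is the pressure contribution $\int_\Omega\partial_t^4(\rho_0^2J^{-2}a_i^k)\partial_t^4v^i,_k\,\diff x$ after integrating by parts; writing $\rho_0^2J^{-2}a_i^k=\rho_0\cdot\rho_0J^{-2}a_i^k$ and distributing $\partial_t^4$, each term has at least one $\sqrt{\rho_0}$ available to pair with $\sqrt{\rho_0}\partial_t^4Dv$, and the time-derivative bounds on $J^{-2}a_i^k$ from Lemma \ref{le:Jab-bound-1} close this as before.

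After summing the differential inequalities over $l_0=0,\dots,4$ and integrating in time from $0$ to $t$, the initial data contributes exactly $\sum_{l_0=0}^4\|\sqrt{\rho_0}\partial_t^{l_0}v(0)\|_{L^2}^2\le M_0$ (via the compatibility conditions defining $\partial_t^{l_0}v|_{t=0}$), and every remaining term is bounded by $Ct P(\sup_{0\le s\le t}E^{1/2}(s,v))$, which is the desired bound; this gives \eqref{EEP-1}. The main obstacle I anticipate is bookkeeping the highest commutator terms, specifically those where $\partial_t^3$ or $\partial_t^4$ lands on the coefficients and leaves a factor $\partial_t^2Dv$ or $\partial_t^2\partial_1Dv$ that is not directly controlled in $L^\infty$: here one must be careful to route the estimate through the sharp $H^{1/2}$-type bounds \eqref{Weighted-9}, \eqref{Weighted-9-1} and the product inequality \eqref{Weighted-6} rather than a naive $L^\infty\times L^2$ split, and to verify that a full power of $t$ (or an integration in $s$) is genuinely available so that the right-hand side is $o(1)$ as $t\to 0$ relative to $M_1$. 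The lower-order cases $l_0=0,1,2,3$ introduce no new difficulty, as the needed coefficient bounds are the milder $l_0\le 2$ estimates in \eqref{Jab-bound-1}.
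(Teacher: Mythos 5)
Your proposal is correct and follows essentially the same approach as the paper: apply $\partial_t^4$ to \eqref{eq:main-3}, test against $\partial_t^4v$, integrate by parts, use \eqref{a-bound} for coercivity, and control the commutator and pressure terms via Lemma \ref{le:Jab-bound-1} together with the weighted $L^\infty$- and $H^{1/2}$-type inequalities---in particular routing the hardest commutator (where $\partial_t^3$ hits the coefficient) through \eqref{Weighted-6}, \eqref{Weighted-9} and \eqref{Weighted-9-1} exactly as you anticipated. The one cosmetic difference is that the paper does not test separately against $\partial_t^{l_0}v$ for $l_0\le 3$; instead it invokes \eqref{FTC} to reduce each $\int_\Omega\rho_0|\partial_t^{l_0}v|^2(t)$ to its initial value plus $t\int_0^t\int_\Omega\rho_0|\partial_t^{l_0+1}v|^2\,\diff x\diff s$, which is already controlled by $E$, so that only the $l_0=4$ case requires an energy estimate.
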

\begin{proof} Note that
\begin{equation*}
\begin{aligned}
g(x,t)=g(x,0)+\int_0^t\partial_tg(x,s)\,\diff s.
\end{aligned}
\end{equation*}
It then follows from Cauchy's inequality and Fubini's theorem that
\begin{equation}\label{FTC}
\begin{aligned}
\int_\Omega |g|^2(t)\,\diff x
&\lesssim \int_\Omega |g|^2(0)\,\diff x
+t\int_0^t\int_\Omega |\partial_tg|^2\,\diff x\diff s.
\end{aligned}
\end{equation}

By \eqref{HOEF-0} and \eqref{FTC}, it suffices to show that the highest-order terms on the left-hand side (abbreviated as LHS) of \eqref{EEP-1} satisfy the desired bound. 

To this end, one  applies \(\partial_t^4\) to \eqref{eq:main-3}, by taking the \(L^2\)-inner product with \(\partial_t^4v^i\) and integrating by parts with respect to spatial variables (the boundary terms vanish due to \eqref{eq:intro-3}), to obtain that
\begin{equation}\label{EE-1}
\begin{aligned}
&\frac{1}{2}\int_\Omega \rho_0|\partial_t^4v|^2(t)\,\diff x
+\underline{\int_0^t\int_\Omega \rho_0J^{-2}b^{kj}\partial_t^4v^i,_{j}\partial_t^4v^i,_{k}\,\diff x\diff s}_{:=I_1}\\
&=\frac{1}{2}\int_\Omega \rho_0|\partial_t^4v|^2(0)\,\diff x+\underline{\int_0^t\int_\Omega\rho_0^2\partial_t^4(J^{-2}a_i^k)\partial_t^4v^i,_{k}\,\diff x\diff s}_{:=I_2}\\
&\quad-\sum_{m=1}^4\binom{4}{m}\underline{\int_0^t\int_\Omega\rho_0\partial_t^m(J^{-2}b^{kj})\partial_t^{4-m}v^i,_{j}\partial_t^4v^i,_{k}\,\diff x\diff s.}_{:=I_{3_m}}
\end{aligned}
\end{equation}

First, by \eqref{J-bound} and \eqref{a-bound}, one finds that
\begin{equation*}
	\begin{aligned}
		I_1\geq \frac{2}{11}\int_0^t\int_\Omega\rho_0|\partial_t^4Dv|^2\,\diff x\diff s.
	\end{aligned}
\end{equation*}
Now we estimate each term on the RHS of \eqref{EE-1}. It is easy to find that the first term on the RHS is contained in \(M_0\). 

To handle \(I_2\) and \(I_{3_m}\),  we note that
\begin{equation}\label{Q-bound-1}
\begin{aligned}
Q_1(t)\lesssim P_{**}(t),
\end{aligned}
\end{equation}
which follows from \eqref{Weighted-3}-\eqref{Weighted-5} and \eqref{Jab-bound-2}.  
Moreover, it is easy to see that
\begin{equation}\label{Q-bound-2}
\begin{aligned}
\int_0^tP_{**}(s)\,\diff s\leq Ct P(\sup_{0\leq s\leq t}E^{1/2}(s,v)).
\end{aligned}
\end{equation}

For convenience, we will use \(\mathcal{R}\) to denote contributions of space (space-time) integrals of lower-order terms (abbreviated as l.o.t.), which are easily shown to satisfy the desired bound.

By \eqref{Jab-bound-1}, \eqref{Q-bound-1} and \eqref{Q-bound-2}, one may estimate 
\begin{equation}\label{EE-2}
\begin{aligned}
|I_2|
&\leq C\int_0^tP_{**}^2\|\sqrt{\rho_0}\partial_t^3Dv\|_{L^2}^2\,\diff s
+G,\\
|I_{3_m}|
&\leq 
C\int_0^tP_{**}^2\|\sqrt{\rho_0}\partial_t^{4-m}Dv\|_{L^2}^2\, \diff s+G,
\quad m=1,2,\\
|I_{3_3}|
&\leq C\int_0^tP_{**}^2\|\partial_t^2Dv\|_{H^{1/2}}^2
\|\partial_tDv\|_{H^{1/2}}^2\,\diff s 
+G,\\
|I_{3_4}|
&\leq C\int_0^tP_{**}^2\|\sqrt{\rho_0}\partial_t^3Dv\|_{L^2}^2\|Dv\|_{L^\infty}^2\,\diff s
+G
\end{aligned}
\end{equation}
with \(G=\frac{1}{100}\int_0^t\int_\Omega\rho_0|\partial_t^4Dv|^2\,\diff x\diff s+\mathcal{R}\), which can be bounded by
\begin{equation*}
	\begin{aligned} 
	G+M_0+Ct P(\sup_{0\leq s\leq t}E^{1/2}(s,v)),
\end{aligned}
\end{equation*}
where \eqref{Weighted-9} and \eqref{Weighted-9-1} have been used. 

Taking all the cases above into account proves
the case \(l_0=4\) in \eqref{EEP-1}.

\end{proof}

\begin{remark}
To unify the estimates for both \eqref{eq:main-2} and the linearized problem \eqref{existence-3}, we have used  \(\|\partial_t^2Dv\|_{H^{1/2}}^2\|\partial_tDv\|_{H^{1/2}}^2\) to estimate \(I_{3_3}\) in \eqref{EE-2}, although a simpler estimate \(\|\sqrt{\rho_0}\partial_t^2Dv\|_{L^2}^2\|\partial_tDv\|_{L^\infty}^2\) works here. Many other similar manipulations will be used in this and next sections.
\end{remark}

\subsection{The type-I energy estimates} \label{ENP-2} 

\subsubsection{Estimates on \(\partial_t^{l_0}Dv\ (l_0=0,1,2,3)\)} \label{fte-1}
\begin{proposition}\label{pr:ene-2} It holds that
\begin{equation}\label{EEP-2}
\begin{aligned}
\sum_{l_0=0}^3\int_\Omega\rho_0|\partial_t^{l_0}Dv|^2(t)\,\diff x
\leq M_0+Ct P(\sup_{0\leq s\leq t}E^{1/2}(s,v)).
\end{aligned}
\end{equation}
\end{proposition}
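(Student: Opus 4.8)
The plan is to estimate the highest-order terms in \eqref{EEP-2}, namely those with $l_0=3$, since the lower-order terms ($l_0=0,1,2$) follow by the same fundamental-theorem-of-calculus device \eqref{FTC} already exploited in Proposition \ref{pr:ene-1}: writing $\partial_t^{l_0}Dv(t) = \partial_t^{l_0}Dv(0) + \int_0^t \partial_t^{l_0+1}Dv\,\diff s$ bounds $\|\sqrt{\rho_0}\partial_t^{l_0}Dv(t)\|_{L^2}^2$ by $M_0$ plus $t\int_0^t\|\sqrt{\rho_0}\partial_t^{l_0+1}Dv\|_{L^2}^2\,\diff s$, and the time integral of $\|\sqrt{\rho_0}\partial_t^4Dv\|_{L^2}^2$ was already controlled in \eqref{EEP-1}. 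So the crux is a genuine elliptic-type/energy estimate for $\partial_t^3Dv$.

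First I would apply $\partial_t^3$ to the equation \eqref{eq:main-3}, but rather than pairing with $\partial_t^3v$ (which would only reproduce a tangential $L^2$ bound on $\partial_t^3Dv$ integrated in time, not a pointwise-in-time bound on $\|\sqrt{\rho_0}\partial_t^3Dv(t)\|_{L^2}$), I would test against $\partial_t^4 v^i$ and integrate by parts in space. The key structural observation is that the leading dissipation term $\big(\rho_0 J^{-2} b^{kj}\partial_t^3v^i,_j\big),_k$ paired with $\partial_t^4v^i$ produces, after integration by parts, $\int_\Omega \rho_0 J^{-2}b^{kj}\partial_t^3v^i,_j\,\partial_t^4v^i,_k\,\diff x = \tfrac12\tfrac{\diff}{\diff t}\int_\Omega \rho_0 J^{-2}b^{kj}\partial_t^3v^i,_j\,\partial_t^3v^i,_k\,\diff x$ up to a commutator term $-\tfrac12\int_\Omega \rho_0\,\partial_t(J^{-2}b^{kj})\,\partial_t^3v^i,_j\,\partial_t^3v^i,_k\,\diff x$. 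Integrating in time and using \eqref{a-bound} together with \eqref{J-bound} gives the coercive quantity $\int_\Omega \rho_0|\partial_t^3Dv|^2(t)\,\diff x$ on the left, while the commutator is absorbed using $|\partial_t(J^{-2}b^{kj})|\lesssim Q_1(t)\lesssim P_{**}(t)$ from Lemma \ref{le:Jab-bound-1} and \eqref{Q-bound-1}, landing in $\int_0^t P_{**}^2\|\sqrt{\rho_0}\partial_t^3Dv\|_{L^2}^2\,\diff s$, which by \eqref{Q-bound-2} and a Gr\"onwall argument is of the desired bound form $M_0 + CtP(\sup_{0\le s\le t}E^{1/2}(s,v))$.

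The remaining terms to control are: the pressure term $\int_0^t\int_\Omega \rho_0^2\,\partial_t^3(J^{-2}a_i^k)\,\partial_t^4v^i,_k\,\diff x\diff s$; the lower-order commutators $\sum_{m=1}^3\binom{3}{m}\int_0^t\int_\Omega \rho_0\,\partial_t^m(J^{-2}b^{kj})\,\partial_t^{3-m}v^i,_j\,\partial_t^4v^i,_k\,\diff x\diff s$; and the time derivative $\partial_t^4v^i$ itself paired against $\rho_0\partial_t^4v^i$, which gives $\tfrac12\tfrac{\diff}{\diff t}\int_\Omega\rho_0|\partial_t^4v|^2$ — but that term was already bounded in \eqref{EEP-1}, so one actually integrates the full identity and transfers it there, or more cleanly, one can avoid generating $\partial_t^4v$ as an unknown by keeping it only inside space-time integrals and invoking Cauchy's inequality with the small parameter $\tfrac{1}{100}\int_0^t\int_\Omega\rho_0|\partial_t^4Dv|^2\,\diff x\diff s$ (already controlled via \eqref{EEP-1}) to absorb it, exactly as in \eqref{EE-2}. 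The pressure and commutator terms are handled verbatim as in \eqref{EE-2}: the top-order factor $\partial_t^3(J^{-2}a_i^k)$ by \eqref{Jab-bound-1} (case $l_0=3$) costs a $\partial_t^2Dv$ which is $L^\infty_tL^2$-bounded by $E$, while the mixed derivatives $\partial_t^2Dv$, $\partial_tDv$ appearing in $I_{3_m}$-type terms are split by the $H^{1/2}$-product estimate \eqref{Weighted-6} together with \eqref{Weighted-9}, \eqref{Weighted-9-1}, or by the sharp $L^\infty$ bounds of Subsection \ref{sub:W-3}.

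I expect the main obstacle to be bookkeeping rather than conceptual: one must check that every commutator coefficient $\partial_t^m(J^{-2}b^{kj})$ and $\partial_t^3(J^{-2}a_i^k)$ decomposes (via Lemmas \ref{le:Jab-bound-1}--\ref{le:Jab-bound-6}) into a harmless polynomial $P_{**}(t)$ times at most the quantities that the energy functional $E(t,v)$ already controls — in particular that no term forces $\partial_t^3Dv$ in $L^\infty$ or $\partial_t^4Dv$ outside a space-time $L^2$ integral — and that the small-parameter absorption into $\tfrac{1}{100}\int_0^t\int_\Omega\rho_0|\partial_t^4Dv|^2\,\diff x\diff s$ is legitimate, i.e. that this quantity has indeed been bounded by \eqref{EEP-1} at this stage of the induction. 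Once that is confirmed, Gr\"onwall's inequality in the form $\int_0^t P_{**}^2\|\sqrt{\rho_0}\partial_t^3Dv\|_{L^2}^2\,\diff s$ closes the estimate, and \eqref{FTC} propagates it down to $l_0=0,1,2$.
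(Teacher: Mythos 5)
Your proposal is correct and follows essentially the same route as the paper's proof of Proposition~\ref{pr:ene-2}: reduce to $l_0=3$ via \eqref{FTC}, apply $\partial_t^3$ to \eqref{eq:main-3}, test against $\partial_t^4 v^i$, integrate by parts so the dissipation produces $\tfrac12\tfrac{\diff}{\diff t}\int_\Omega\rho_0 J^{-2}b^{kj}\partial_t^3v^i,_{j}\partial_t^3v^i,_{k}\,\diff x$, coerce via \eqref{J-bound}--\eqref{a-bound}, and absorb all terms containing $\partial_t^4 v^i,_{k}$ into the space-time integral $\int_0^t\int_\Omega\rho_0|\partial_t^4 Dv|^2$ already controlled by \eqref{EEP-1}, with the remaining commutators handled by Lemma~\ref{le:Jab-bound-1} and \eqref{Q-bound-1}--\eqref{Q-bound-2}. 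One small slip: pairing $\rho_0\partial_t^4 v^i$ (the time-evolution term after $\partial_t^3$) against the test function $\partial_t^4 v^i$ does \emph{not} give $\tfrac12\tfrac{\diff}{\diff t}\int_\Omega\rho_0|\partial_t^4 v|^2\,\diff x$; it gives $\int_\Omega\rho_0(\partial_t^4 v)^2\,\diff x$ directly, which after time integration is a positive quantity sitting harmlessly on the left-hand side (no invocation of \eqref{EEP-1}, no Cauchy absorption, and no ``transfer'' needed for this particular term) — this does not affect the validity of the argument.
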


\begin{proof} By \eqref{FTC},  it suffices to show the case \(l_0=3\) in \eqref{EEP-2}. 

Applying \(\partial_t^3\) to  \eqref{eq:main-3}, taking the \(L^2\)-inner product with \(\partial_t^4v^i\), and integrating by parts with respect to spatial variables, one gets that
\begin{equation*}
	\begin{aligned}
	&\int_0^t\int_\Omega \rho_0(\partial_t^4v)^2\,\diff x\diff s
	+\frac{1}{2}\int_\Omega \rho_0J^{-2}b^{kj}\partial_t^3v^i,_{j}\partial_t^3v^i,_{k}(t)\,\diff x \\
	&=\frac{1}{2}\int_\Omega \rho_0J^{-2}b^{kj}\partial_t^3v^i,_{j}\partial_t^3v^i,_{k}(0)\,\diff x+\underline{\int_0^t\int_\Omega \rho_0^2\partial_t^3(J^{-2}a_i^k)\partial_t^4v^i,_{k}\,\diff x\diff s.}_{:=I_1}\\
&\quad+\frac{1}{2}\underline{\int_0^t\int_\Omega\rho_0\partial_t(J^{-2}b^{kj})\partial_t^3v^i,_{j}\partial_t^3v^i,_{k}\,\diff x\diff s}_{:=I_2}\\
&\quad-\sum_{m=1}^3\binom{3}{m}\underline{\int_0^t\int_\Omega \rho_0\partial_t^m(J^{-2}b^{kj})\partial_t^{3-m}v^i,_{j}\partial_t^4v^i,_{k}\,\diff x\diff s}_{:=I_{3_m}}
	\end{aligned}
	\end{equation*}

First,  one uses \eqref{Jab-bound-1}, \eqref{Q-bound-1} and \eqref{Q-bound-2} to estimate 
\begin{equation*}
\begin{aligned}
|I_2|
\lesssim \int_0^tP_{**}^2\|\sqrt{\rho_0}\partial_t^3Dv\|_{L^2}^2\,\diff s
\leq M_0+Ct P(\sup_{0\leq s\leq t}E^{1/2}(s,v)).
\end{aligned}
\end{equation*}
Next, it holds that
\begin{equation*}
\begin{aligned}
|I_{3_m}|
&\lesssim \int_0^t\int_\Omega\rho_0|\partial_t^4Dv|^2\,\diff x\diff s+\int_0^tP_{**}^2\|\sqrt{\rho_0}\partial_t^{3-m}Dv\|_{L^2}^2\,\diff s,
\quad m=1,2,\\
|I_{3_3}|
&\lesssim \int_0^t\int_\Omega\rho_0|\partial_t^4Dv|^2\,\diff x\diff s
+\int_0^tP_{**}^2\|\sqrt{\rho_0}\partial_t^2Dv\|_{L^2}^2\|Dv\|_{L^\infty}^2\,\diff s+\mathcal{R},\\
\end{aligned}
\end{equation*}
which, together with \eqref{EEP-1}, implies that \(I_{3_m}\) satisfies 
the desired bound. 

\(I_1\) can be handed similarly as \(I_{3_3}\) (in fact, \(I_1\) is much easier since it contains a higher-order weight \(\rho_0^2\)). 

Thus, the proof of
the case \(l_0=3\) in \eqref{EEP-2} is completed.

\end{proof}

\subsubsection{Estimates on \(\partial_t^{l_0}\partial_1^2Dv \ (l_0=0,1,2)\) and  \(\partial_t^{l_0}\partial_1^4Dv \ (l_0=0,1)\)}
Following a similar argument for \(\partial_t^3Dv\) in Proposition \ref{pr:ene-2}, 
one can show the following estimates. 
\begin{proposition}\label{pr:ene-3} It holds that
\begin{equation}\label{EEP-add-1}
\begin{aligned}
&\sum_{l_0=0}^2\int_\Omega\rho_0|\partial_t^{l_0}\partial_1^2Dv|^2(t)\,\diff x
+\sum_{l_0=1}^1\int_\Omega \rho_0|\partial_t^{l_0}\partial_1^4Dv|^2(t)\,\diff x\\
&\leq M_0+Ct P(\sup_{0\leq s\leq t}E^{1/2}(s,v)).
\end{aligned}
\end{equation}
\end{proposition}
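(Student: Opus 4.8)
\textbf{Proof proposal for Proposition \ref{pr:ene-3}.}
The plan is to mimic the energy argument of Proposition \ref{pr:ene-2}, replacing the pure time-derivative operator $\partial_t^3$ by the mixed operators $\partial_t^{l_0}\partial_1^{l_1+1}$ with $2l_0+l_1+1$ adjusted so that the total count matches the type-I scheme: first treat $\partial_t^2\partial_1^2 Dv$, then $\partial_t\partial_1^2 Dv$ (easier), then the higher tangential ones $\partial_t\partial_1^4 Dv$ and $\partial_1^4 Dv$. By \eqref{FTC} it suffices to bound the top-order quantities appearing on the left of \eqref{EEP-add-1}, so I would focus on, say, $\partial_t^2\partial_1^2 Dv$ and $\partial_t\partial_1^4 Dv$. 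For the first, apply $\partial_t^2\partial_1^2$ to \eqref{eq:main-3}, take the $L^2$-inner product with $\partial_t^3\partial_1^2 v^i$, and integrate by parts in space. Since $\partial_1$ is tangential, it commutes with the spatial integration by parts and produces \emph{no} boundary terms (the $\rho_0$-weight still kills the boundary contribution as in \eqref{EE-1}); this is the key structural reason the argument goes through unchanged from the 1D case at this level.

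The resulting identity has the schematic form
\begin{equation*}
\begin{aligned}
&\int_0^t\!\!\int_\Omega \rho_0|\partial_t^3\partial_1^2 v|^2\,\diff x\diff s
+\tfrac12\int_\Omega \rho_0 J^{-2}b^{kj}\partial_t^2\partial_1^2 v^i,_j\,\partial_t^2\partial_1^2 v^i,_k(t)\,\diff x\\
&=\tfrac12\int_\Omega \rho_0 J^{-2}b^{kj}\partial_t^2\partial_1^2 v^i,_j\,\partial_t^2\partial_1^2 v^i,_k(0)\,\diff x
+ \text{(commutator terms)} + \mathcal{R},
\end{aligned}
\end{equation*}
where the commutator terms collect all ways the derivatives $\partial_t^{m}\partial_1^{n}$ (with $1\le 2m+n$, $2m+n\le 4$) can fall on the coefficients $J^{-2}b^{kj}$ and $\rho_0^2 J^{-2}a_i^k$ rather than on the highest-order factor, plus the tangential-derivative hits on $\rho_0$, which are controlled by \eqref{TE}. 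The coefficient-derivative factors are estimated by Lemmas \ref{le:Jab-bound-2}, \ref{le:Jab-bound-3}, \ref{le:Jab-bound-4} (these are exactly the bounds on $\partial_1^{l_1}(J^{-2}b^{kj})$, $\partial_t\partial_1^{l_1}(J^{-2}b^{kj})$, $\partial_t^2\partial_1(J^{-2}b^{kj})$, etc.), and each such factor carries a $Q_i(t)\lesssim P_{**}(t)$ which, after the $\int_0^t$, contributes $CtP(\sup E^{1/2})$ by \eqref{Q-bound-2}. The remaining velocity factors are placed in $L^\infty$ or $H^{1/2}$ using the weighted inequalities of Section \ref{Some Weighted inequalities}: in particular the borderline term where derivatives split as $(\partial_t\partial_1 Dv)\cdot(\partial_t\partial_1 Dv)$ or $(\partial_1^2 Dv)\cdot(\partial_1^2 Dv)$ is handled by \eqref{Weighted-6} together with \eqref{Weighted-7}, \eqref{Weighted-9}, \eqref{Weighted-9-1}, and the sharp $L^\infty$ bounds \eqref{Weighted-10}, \eqref{Weighted-12} are invoked whenever one factor must absorb one extra tangential derivative. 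The leading dissipation term is bounded below, via \eqref{J-bound} and \eqref{a-bound}, by $\tfrac{2}{11}\int_0^t\!\int_\Omega \rho_0|\partial_t^2\partial_1^2 Dv|^2$; a small multiple of this is what we allow ourselves to donate back to absorb terms of the form $\int_0^t\!\int_\Omega \rho_0|\partial_t^2\partial_1^2 Dv|^2$ arising from the $m=0$ commutator pieces, exactly as $G$ was used in \eqref{EE-2}. The initial-time term is $\le M_0$.

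For $\partial_t\partial_1^4 Dv$ and $\partial_1^4 Dv$ one repeats this with the operators $\partial_t\partial_1^4$ and $\partial_1^5$ respectively (note $2\cdot1+4=6$ and $0+6=6$, matching the $2l_0+l_1\le 6$ range in \eqref{HOEF-0}); now the coefficient bounds needed are the $l_1=4,5,6,7$ cases of Lemmas \ref{le:Jab-bound-2} and \ref{le:Jab-bound-4}, which come with the extra factors $\sum_{m=4}^{l_1}|\partial_1^m D\eta|$ (resp.\ $+|\partial_1^m Dv|$) that are themselves controlled by $\|\rho_0^{l_2}\partial_1^{l_1}D^{\cdot}\eta\|$-type bounds from Lemma \ref{le:W-1} and carry a $t$-smallness. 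The one genuinely new point compared to Proposition \ref{pr:ene-2} — and the step I expect to be the main obstacle — is the bookkeeping of the top tangential cases $\partial_1^4 Dv$ (and the $D\eta$-dependence entering through $b^{kj}$): there the product of two factors each carrying four tangential derivatives must be split so that the borderline $\partial_1^3 Dv\cdot\partial_1^3 Dv$-type pairing is estimated by the sharp $L^\infty$ inequality \eqref{Weighted-10} on one factor and an $L^2$ bound on the other, rather than naively by $H^{1/2}\times H^{1/2}$ which would be just short; getting every such splitting to close within the available $E^{1/2}$ budget, while only borrowing an absorbably small multiple of the dissipation, is the delicate part. Collecting all contributions yields the desired bound $M_0+CtP(\sup_{0\le s\le t}E^{1/2}(s,v))$ for each term, and summing over $l_0$ proves \eqref{EEP-add-1}. $\qed$
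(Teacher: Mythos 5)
Your approach matches the paper's intent exactly: the paper's proof of Proposition~\ref{pr:ene-3} is the single sentence ``Following a similar argument for $\partial_t^3Dv$ in Proposition~\ref{pr:ene-2}\dots'', and your plan — apply a mixed operator $\partial_t^{l_0}\partial_1^{l_1}$, pair against the test function with one extra time derivative, use \eqref{TE} for the $\rho_0$-commutators, use Lemmas~\ref{le:Jab-bound-2}--\ref{le:Jab-bound-4} for the coefficient derivatives, and split the borderline products via the $H^{1/2}$-type (\eqref{Weighted-6}--\eqref{Weighted-9-1}) and sharp $L^\infty$ inequalities (\eqref{Weighted-10}, \eqref{Weighted-12}) — is precisely that argument.

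Two imprecisions are worth flagging. First, you repeatedly discuss $\partial_1^4 Dv$ (and propose applying $\partial_1^5$ to reach it), but \eqref{EEP-add-1} does not contain $\partial_1^4 Dv$: the second sum runs over $l_0=1$ only, so the top-order targets are $\partial_t^2\partial_1^2Dv$ and $\partial_t\partial_1^4Dv$ (the lower ones then follow from \eqref{FTC}); pure tangential estimates of order $6$ belong to Proposition~\ref{pr:ene-4}, and $\partial_1^5 Dv$ (which is what applying $\partial_1^5$ actually gives) is part of the type-II chain. Consequently your ``main obstacle'' discussion of the $\partial_1^3Dv\cdot\partial_1^3Dv$ pairing pertains to Proposition~\ref{pr:ene-4}, not this one; the worst commutator split here is of the type $\partial_1^4(J^{-2}b^{kj})\cdot\partial_t Dv$, handled by $\|\partial_1^4 D\eta\|_{H^{1/2}}\|\partial_t Dv\|_{H^{1/2}}$ via \eqref{Weighted-8} and \eqref{Weighted-9}. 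Second, you should make explicit the repeated integration by parts in $\partial_1$ (as the paper does in Proposition~\ref{pr:ene-5}'s displayed identity, where the commutator terms carry $\partial_1^2[\cdot]$ paired with $\partial_t^4 v^i,_k$) so that the test-function factor never carries more tangential derivatives than $E(t,v)$ can afford — e.g., pairing against $\partial_t^3\partial_1^2 v^i,_k$ directly would require $\|\sqrt{\rho_0}\partial_t^3\partial_1^2 Dv\|_{L_t^2L^2}$, which lies outside the energy \eqref{HOEF-0}, whereas after two $\partial_1$-integrations by parts one needs only $\|\sqrt{\rho_0}\partial_t^3 Dv\|_{L_t^2L^2}$. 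You gesture at this by noting $\partial_1$ produces no boundary terms, but the absorption mechanism you describe (``donate back a small multiple \dots to absorb $\int_0^t\!\int\rho_0|\partial_t^2\partial_1^2Dv|^2$'') is not how this term is controlled: the time-integral of $\|\sqrt{\rho_0}\partial_t^2\partial_1^2 Dv\|^2$ is simply bounded by $tP(\sup E^{1/2})$ under the a~priori assumption, with no absorption needed. None of this changes the verdict: the argument closes once these details are cleaned up.
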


\subsubsection{Estimates on \(\partial_1^6Dv\)}
\begin{proposition}\label{pr:ene-4} It holds that
\begin{equation}\label{EEP-3}
\begin{aligned}
\int_\Omega\rho_0|\partial_1^6Dv|^2(t)\,\diff x
\leq M_0+Ct P(\sup_{0\leq s\leq t}E^{1/2}(s,v)).
\end{aligned}
\end{equation}
\end{proposition}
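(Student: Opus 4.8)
\textbf{Proof proposal for Proposition \ref{pr:ene-4}.}

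The plan is to mimic the energy estimate for $\partial_t^3 Dv$ in Proposition \ref{pr:ene-2}, but now with the pure tangential operator $\partial_1^6$ in place of $\partial_t^3$; this is the top rung of the ``type-I'' ladder, where all derivatives are horizontal. First I would apply $\partial_1^6$ to the equivalent form \eqref{eq:main-3}, take the $L^2$-inner product with $\partial_t\partial_1^6 v^i$, and integrate by parts in space (the boundary terms vanishing because $\rho_0=0$ on $\Gamma$ by \eqref{eq:intro-3}). Using the Piola-type cancellation as in \eqref{eq:main-3}, this produces the identity
\begin{equation*}
\begin{aligned}
&\int_0^t\!\!\int_\Omega \rho_0|\partial_t\partial_1^6v|^2\,\diff x\diff s
+\tfrac12\int_\Omega \rho_0 J^{-2}b^{kj}\partial_1^6v^i{,}_j\,\partial_1^6v^i{,}_k(t)\,\diff x\\
&=\tfrac12\int_\Omega \rho_0 J^{-2}b^{kj}\partial_1^6v^i{,}_j\,\partial_1^6v^i{,}_k(0)\,\diff x
+\int_0^t\!\!\int_\Omega \rho_0^2\,\partial_1^6(J^{-2}a_i^k)\,\partial_t\partial_1^6v^i{,}_k\,\diff x\diff s\\
&\quad+\tfrac12\int_0^t\!\!\int_\Omega \rho_0\,\partial_1(J^{-2}b^{kj})\,\partial_1^6v^i{,}_j\,\partial_1^6v^i{,}_k\,\diff x\diff s
-\sum_{m=1}^{6}\binom{6}{m}\int_0^t\!\!\int_\Omega \rho_0\,\partial_1^m(J^{-2}b^{kj})\,\partial_1^{6-m}v^i{,}_j\,\partial_t\partial_1^6v^i{,}_k\,\diff x\diff s.
\end{aligned}
\end{equation*}
The first right-hand term is in $M_0$; the ellipticity \eqref{a-bound} and $J$-bound \eqref{J-bound} make the second left-hand term coercive, controlling $\|\sqrt{\rho_0}\partial_1^6 Dv\|_{L^2}^2$, and the dissipation term $\int_0^t\|\sqrt{\rho_0}\partial_t\partial_1^6 Dv\|_{L^2}^2$ gives a buffer into which small multiples of the forcing terms can be absorbed, exactly as $G$ was used in Proposition \ref{pr:ene-1}.

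For the forcing terms I would split the commutator sum by how many of the six $\partial_1$'s land on the coefficient $J^{-2}b^{kj}$ (or $J^{-2}a_i^k$). The benign cases $m=1,2,3$ use the $L^\infty$ bounds $|\partial_1^{l_1}(J^{-2}b^{kj})|\lesssim Q_2(t)\lesssim P_*(t)$ from Lemma \ref{le:Jab-bound-2} together with the $L^2$ control on $\partial_1^{6-m}Dv$ from Lemma \ref{le:W-1}; these contribute $Ct\,P(\sup E^{1/2})$ after \eqref{Q-bound-2}. The delicate cases are $m=4,5,6$, where Lemma \ref{le:Jab-bound-2} only gives $|\partial_1^{l_1}(J^{-2}b^{kj})|\lesssim Q_2(t)(\sum_{k=4}^{l_1}|\partial_1^k D\eta|+1)$, so the coefficient itself now carries top-order $\eta$-derivatives. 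Here the strategy is to never put both a top-order factor and $\partial_t\partial_1^6 Dv$ in $L^\infty$: for $m=6$ one writes $\partial_1^6(J^{-2}b^{kj})\,v^i{,}_j$, puts $\partial_1^6 D\eta$ in $L^2$ (bounded by $\|D\eta\|$-estimates, i.e.\ $t\sup E^{1/2}$ via \eqref{Weighted-2}), puts $v{,}_j$ and the remaining coefficient factors in $L^\infty$, and the dissipation factor $\partial_t\partial_1^6 v{,}_k$ in $L^2$ with its small constant; for $m=5,4$ one uses the $H^{1/2}$-product inequality \eqref{Weighted-6} together with \eqref{Weighted-7}–\eqref{Weighted-12} to distribute one half-derivative, so that $\partial_1^5 D\eta$ or $\partial_1^4 D\eta$ appears through $\|\partial_1^{l_1}D\eta\|_{H^{1/2}}\lesssim t\sup E^{1/2}$. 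The term with the $\rho_0^2$ weight ($\partial_1^6(J^{-2}a_i^k)$ piece) is strictly easier because the extra weight absorbs one more normal loss, as noted for $I_1$ in Proposition \ref{pr:ene-2}.

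After bounding every forcing term by (small constant)$\times$dissipation plus $\mathcal R$ plus $Ct\,P(\sup_{0\le s\le t}E^{1/2}(s,v))$, I would absorb the dissipation pieces into the left side and drop the (nonnegative) dissipation integral, obtaining \eqref{EEP-3}; the lower-order tangential derivatives $\partial_1^{l_1}Dv$ with $l_1\le 5$ and their time-derivative companions are recovered by the fundamental-theorem inequality \eqref{FTC} together with the earlier propositions, so only the top case $\partial_1^6 Dv$ needs the above argument. The main obstacle I anticipate is the $m=4,5$ bookkeeping: one must verify that the $H^{1/2}$-interpolation genuinely lands every factor in a norm already controlled by $E^{1/2}(t,v)$ (for $v$-factors) or by $t\sup E^{1/2}$ (for $\eta$-factors) without ever requiring an $L^\infty$ bound on a seventh-order quantity — this is precisely where the sharp inequalities \eqref{Weighted-7}–\eqref{Weighted-12}, which gain one tangential derivative over the crude Sobolev embedding, are indispensable.
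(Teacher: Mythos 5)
Your proposed identity has two substantive errors that would block the proof. First, $\partial_1$ does not commute with $\rho_0$ (unlike $\partial_t$ in the type-I estimates you are adapting), so applying $\partial_1^6$ to $\rho_0\partial_t v^i$, to $(\rho_0^2 J^{-2}a_i^k),_{k}$ and to $(\rho_0 J^{-2}b^{kj}v^i,_{j}),_{k}$ generates commutator terms where $\partial_1$ falls on $\rho_0$ and $\rho_0^2$. Your identity drops all of them — it reads $\rho_0^2\partial_1^6(J^{-2}a_i^k)$ and $\rho_0\partial_1^m(J^{-2}b^{kj})$ where it should have $\partial_1^6(\rho_0^2 J^{-2}a_i^k)$ and $\partial_1^{m}(\rho_0 J^{-2}b^{kj})$, and the time-evolution side is missing the whole sum $\sum_{m=1}^6\binom{6}{m}\int_0^t\int_\Omega\partial_1^m(\rho_0)\partial_t\partial_1^{6-m}v^i\,\partial_t\partial_1^6 v^i$. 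These terms are handled in the paper by the tangent estimate \eqref{TE}, $\|\partial_1^l(\rho_0)/\rho_0\|_{L^\infty}\le C$, which lets one restore the weight $\rho_0$; your proposal never invokes \eqref{TE}.

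Second, and more decisively, the dissipation that falls out of this choice of multiplier is $\int_0^t\int_\Omega\rho_0\,|\partial_t\partial_1^6 v|^2$ — \emph{without} a gradient — not $\int_0^t\|\sqrt{\rho_0}\,\partial_t\partial_1^6 Dv\|_{L^2}^2$ as you assert. (Compare Proposition \ref{pr:ene-2}, where the same phenomenon occurs: testing $\partial_t^3$ of the equation with $\partial_t^4 v$ yields $\int_0^t\|\sqrt{\rho_0}\partial_t^4 v\|^2$, not $\int_0^t\|\sqrt{\rho_0}\partial_t^4 Dv\|^2$.) Consequently there is no buffer into which small multiples of integrals against $\partial_t\partial_1^6 v^i,_{k}$ can be absorbed; the quantity $\int_0^t\|\sqrt{\rho_0}\partial_t\partial_1^6 Dv\|^2$ is only established later (Proposition \ref{pr:ene-add}, en route to $\partial_1^7 Dv$), so you cannot borrow it here. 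The paper sidesteps the problem by integrating one more $\partial_1$ by parts in the forcing terms: this turns the test factor $\partial_t\partial_1^6 v^i,_{k}$ into $\partial_t\partial_1^5 v^i,_{k}$, whose $\sqrt{\rho_0}$-weighted $L^2$ norm is contained directly in $E(t,v)$ (the summand with $2l_0+l_1\le 6$, $l_0=1$, $l_1=4$ controls $\|\sqrt{\rho_0}\partial_t\partial_1^5 Dv\|_{L^2}^2$), so the corresponding $G=\int_0^t\|\sqrt{\rho_0}\partial_t\partial_1^5 Dv\|^2\,\diff s + \mathcal R$ is bounded by $Ct\,P(\sup E^{1/2})$ \emph{without} any absorption; the price is the appearance of the extra Leibniz family $I_{4_m}$ and the extra order $\partial_1^{m+1}$ on the coefficient in $I_{3_m}$. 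Your bookkeeping for $m=4,5,6$ is then necessarily off by one in the number of derivatives on the coefficient. The $H^{1/2}$-and-sharp-$L^\infty$ strategy you outline for those commutators is the right one, and the minor slip $\partial_1(J^{-2}b^{kj})$ in place of $\partial_t(J^{-2}b^{kj})$ in the coercive-commutator term is presumably a typo, but the missing extra integration by parts in $\partial_1$ and the missing $\rho_0$-commutators are genuine gaps.
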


\begin{proof}

Applying \(\partial_1^6\) to \eqref{eq:main-3}, and taking the \(L^2\)-inner product with \(\partial_t\partial_1^6v^i\), one can obtain by some manipulations that
{\small\begin{equation*}
	\begin{aligned}
	&\int_0^t\int_\Omega \rho_0|\partial_t\partial_1^6v|^2\,\diff x\diff s+\frac{1}{2}\int_\Omega \rho_0J^{-2}b^{kj}\partial_1^6v^i,_{j}\partial_1^6v^i,_{k}(t)\,\diff x\\
&=\frac{1}{2}\int_\Omega \rho_0J^{-2}b^{kj}\partial_1^6v^i,_{j}\partial_1^6v^i,_{k}(0)\,\diff x-\int_0^t\int_\Omega \partial_1^7(\rho_0^2J^{-2}a_i^k)\partial_t\partial_1^5v^i,_{k}\,\diff x\diff s.\\
&\quad-\underline{\sum_{m=1}^6\binom{6}{m}\int_0^t\int_\Omega \partial_1^m(\rho_0)\partial_t\partial_1^{6-m}v^i\partial_t\partial_1^6v^i\,\diff x\diff s}_{:=I_1}\\
&\quad+\frac{1}{2}\underline{\int_0^t\int_\Omega \rho_0\partial_t(J^{-2}b^{kj})\partial_1^6v^i,_{j}\partial_1^6v^i,_{k}\,\diff x\diff s}_{:=I_2}\\
&\quad+\sum_{m=1}^6\binom{6}{m}\underline{\int_0^t\int_\Omega \partial_1^{m+1}(\rho_0J^{-2}b^{kj})\partial_1^{6-m}v^i,_{j}\partial_t\partial_1^5v^i,_{k}\,\diff x\diff s}_{:=I_{3_m}}\\
&\quad+\sum_{m=1}^6\binom{6}{m}\underline{\int_0^t\int_\Omega \partial_1^m(\rho_0J^{-2}b^{kj})\partial_1^{7-m}v^i,_{j}\partial_t\partial_1^5v^i,_{k}\,\diff x\diff s.}_{:=I_{4_m}}
	\end{aligned}
	\end{equation*}}
One needs only to take care of the underlined terms.

Recalling \eqref{TE}, and using Cauchy's inequality, one may deduce that
\begin{equation*}
	\begin{aligned}
|I_1|&\lesssim \sum_{m=1}^6\bigg\|\frac{\partial_1^m(\rho_0)}{\rho_0}\bigg\|_{L^\infty}\int_0^t\int_\Omega \rho_0|\partial_t\partial_1^{6-m}v|^2\,\diff x\diff s
+\int_0^t\int_\Omega \rho_0|\partial_t\partial_1^6v|^2\,\diff x\diff s\\
&\lesssim \sum_{m=0}^6\int_0^t\|\sqrt{\rho_0}\partial_t\partial_1^mv\|_{L^2}^2\,\diff s
\leq Ct P(\sup_{0\leq s\leq t}E^{1/2}(s,v)).
	\end{aligned}
	\end{equation*}

With \eqref{Jab-bound-1}, \eqref{Q-bound-1} and \eqref{Q-bound-2}, one can  obtain easily that
\begin{equation*}
	\begin{aligned}
|I_2|&\lesssim \int_0^tP_{**}^2\|\sqrt{\rho_0}\partial_1^6Dv\|_{L^2}^2\,\diff s\leq Ct P(\sup_{0\leq s\leq t}E^{1/2}(s,v)).
	\end{aligned}
	\end{equation*}

In order to estimate \(I_{3_m}\) and \(I_{4_m}\), one observes that
\begin{equation}\label{Q-bound-3}
\begin{aligned}
Q_2(t)\lesssim P_{*}(t),
\end{aligned}
\end{equation}
which can be verified by \eqref{Weighted-4}, \eqref{Weighted-11} and \eqref{Jab-bound-6}. 
Moreover, it holds that 
\begin{equation}\label{Q-bound-4}
\begin{aligned}
\int_0^tP_{*}(s)\,\diff s\leq Ct P(\sup_{0\leq s\leq t}E^{1/2}(s,v))
\end{aligned}
\end{equation}
and
\begin{equation}\label{Q-bound-5}
\begin{aligned}
P_{*}(t)\leq 1+Ct P(\sup_{0\leq s\leq t}E^{1/2}(s,v)).
\end{aligned}
\end{equation}

Recall \eqref{TE} and note that 
\begin{equation*}
\begin{aligned}
\partial_1^{m+1}(\rho_0J^{-2}b^{kj})=\sum_{k=0}^{m+1}\binom{m+1}{k}\partial_1^k(\rho_0)\partial_1^{m+1-k}(J^{-2}b^{kj}).
\end{aligned}
\end{equation*}
Then one can write
\begin{equation}\label{lot}
\begin{aligned}
\partial_1^{m+1}(\rho_0J^{-2}b^{kj})=\underbrace{\rho_0\partial_1^{m+1}(J^{-2}b^{kj})}_{\text{the\ leading-order\ term}}+\text{l.o.t.}.
\end{aligned}
\end{equation}

Now, with \eqref{Jab-bound-5} and \eqref{Q-bound-3}-\eqref{lot}, we 
can handle \(I_{3_m}\).  Indeed, one has
\begin{equation}\label{EE-30}
\begin{aligned}
|I_{3_m}|
&\lesssim \int_0^tP_{*}^2\|\sqrt{\rho_0}\partial_1^{6-m}Dv\|_{L^2}^2\,\diff s+G, \quad m=1,2,\\
|I_{3_m}|
&\lesssim \int_0^tP_{*}^2
\|\partial_1^{m+1}D\eta\|_{H^{1/2}}^2
\|\partial_1^{6-m}Dv\|_{H^{1/2}}^2\,\diff s
+G,\quad m=3,4,\\
|I_{3_m}|
&\lesssim \int_0^tP_{*}^2\|\sqrt{\rho_0}\partial_1^{m+1}D\eta\|_{L^2}^2\|\partial_1^{6-m}Dv\|_{L^\infty}^2\,\diff s
+G, \quad m=5,6
\end{aligned}
\end{equation}
with \(G=\int_0^t\|\sqrt{\rho_0}\partial_t\partial_1^5Dv\|_{L^2}^2\,\diff s+\mathcal{R}\), 
which give the desired bound.
Here one has used \eqref{Weighted-7}. 

\(I_{4_m}\) can be estimated similarly as \(I_{3_m}\).

Collecting all the estimates above completes the proof of \eqref{EEP-3}.

\end{proof}

\subsection{The type-II energy estimates} \label{ENP-3}

\subsubsection{Estimates on \(\partial_t^{l_0}\partial_1Dv \ (l_0=0,1,2,3)\)}
\begin{proposition}\label{pr:ene-5} It holds that
\begin{equation}\label{EEP-5}
\begin{aligned}
\sum_{l_0=0}^3\int_\Omega\rho_0|\partial_t^{l_0}\partial_1Dv|^2(t)\,\diff x
\leq M_0+Ct P(\sup_{0\leq s\leq t}E^{1/2}(s,v)).
\end{aligned}
\end{equation}
\end{proposition}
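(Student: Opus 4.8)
The plan is to mimic the type-I argument of Proposition~\ref{pr:ene-2}, carrying one extra horizontal derivative $\partial_1$ through the computation. By \eqref{FTC} applied to $\sqrt{\rho_0}\,\partial_t^{l_0}\partial_1Dv$ (note $\partial_t\sqrt{\rho_0}=0$), it suffices to bound the top-order term $\int_\Omega\rho_0|\partial_t^3\partial_1Dv|^2(t)\,\diff x$: for $l_0=0,1,2$ the resulting space--time integrals $\int_0^t\int_\Omega\rho_0|\partial_t^{l_0+1}\partial_1Dv|^2\,\diff x\diff s$ are all controlled by $\int_0^tE(s,v)\,\diff s\leq CtP(\sup_{0\leq s\leq t}E^{1/2}(s,v))$ since $\partial_t^{l_0+1}\partial_1Dv$ lies in \eqref{HOEF-0} for $l_0\leq 2$, and the initial values are contained in $M_0$.

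For $l_0=3$ I would apply $\partial_t^3\partial_1$ to \eqref{eq:main-3}, take the $L^2$-inner product with $\partial_t^4\partial_1v^i=\partial_t(\partial_t^3\partial_1v^i)$, and integrate over $[0,t]$, integrating by parts in the spatial variables; by \eqref{eq:intro-3} all boundary terms vanish, since every one of them carries a factor $\rho_0$ or $\partial_1\rho_0$, both vanishing on $\Gamma$. Writing $\partial_t^3\partial_1(\rho_0\partial_tv^i)=(\partial_1\rho_0)\partial_t^4v^i+\rho_0\partial_t^4\partial_1v^i$, the time term produces the nonnegative dissipation $\int_0^t\int_\Omega\rho_0|\partial_t^4\partial_1v|^2\,\diff x\diff s$ together with a cross term carrying $\partial_1\rho_0$, which I would handle by integrating by parts in $x_1$ and using \eqref{TE} ($|\partial_1^2\rho_0/\rho_0|\lesssim 1$) together with \eqref{EEP-1} (since $\partial_t^4\partial_1v$ is a component of $\partial_t^4Dv$). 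The leading part of the viscous term, after integration by parts in $x_k$ and then in $t$ (using the symmetry of $b^{kj}$), yields $\frac12\int_\Omega\rho_0J^{-2}b^{kj}\partial_t^3\partial_1v^i,_{j}\partial_t^3\partial_1v^i,_{k}(t)\,\diff x$, which by \eqref{J-bound}--\eqref{a-bound} dominates $c\int_\Omega\rho_0|\partial_t^3\partial_1Dv|^2(t)\,\diff x$, plus a term of the form $\int_0^tP_{**}^2\|\sqrt{\rho_0}\partial_t^3\partial_1Dv\|_{L^2}^2\,\diff s\leq CtP(\sup_{0\leq s\leq t}E^{1/2}(s,v))$ controlled via Lemma~\ref{le:Jab-bound-1}; the initial-data terms are contained in $M_0$.

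It then remains to estimate the commutator terms. The dissipation commutators all carry the factor $\partial_t^4\partial_1v^i,_{k}=\partial_t^4\partial_1Dv$, which lies just outside \eqref{HOEF-0}; the key device is to integrate them by parts once more in time, trading $\partial_t^4\partial_1Dv$ for the endpoint quantity $\partial_t^3\partial_1Dv(t)$ (which is exactly what we are bounding) plus a time integral $\int_0^t\int_\Omega\partial_t(\mathrm{coeff})\,\partial_t^3\partial_1Dv\,\diff x\diff s$; here the coefficient and its time derivative carry a factor $\rho_0$ (writing $\partial_1\rho_0=(\partial_1\rho_0/\rho_0)\rho_0$ via \eqref{TE} where needed), so the weight can be split and the integral falls under $\int_0^tE\leq CtP$ after invoking Lemmas~\ref{le:Jab-bound-1}, \ref{le:Jab-bound-3}, \ref{le:Jab-bound-4} (with $Q_i(t)\lesssim P_{**}(t)$ as in \eqref{Q-bound-1}). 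The remaining commutators are of two types: those in which the extra $\partial_1$ falls on $\rho_0$, controlled by \eqref{TE}; and those in which $\partial_1$ and the $\partial_t$'s fall on $J^{-2}b^{kj}$ or $J^{-2}a_i^k$, controlled by Lemmas~\ref{le:Jab-bound-1}, \ref{le:Jab-bound-3} and \ref{le:Jab-bound-4}. Whenever a sub-top $v$-factor (such as $\partial_t^2\partial_1Dv$ or $\partial_t^2Dv$) cannot be put in $L^\infty$, I would split the product by the $H^{1/2}$-product inequality \eqref{Weighted-6} and invoke \eqref{Weighted-9}, \eqref{Weighted-9-1} or the sharp $L^\infty$-bounds \eqref{Weighted-10}, \eqref{Weighted-12}, the remaining factors lying in $L^\infty$. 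The $\rho_0$-weighted dissipation-type space--time integrals produced are absorbed into $\int_0^t\int_\Omega\rho_0|\partial_t^4\partial_1v|^2$ and into $\int_0^t\int_\Omega\rho_0|\partial_t^4Dv|^2$ (already bounded by \eqref{EEP-1}) with a small constant, all other space--time integrals are of the form $\int_0^tP(\sup_{0\leq s\leq t}E^{1/2}(s,v))\,\diff s$, and the lower-order contributions fall in $\mathcal{R}$; combined with \eqref{EEP-2} for $l_0\leq 2$ this gives \eqref{EEP-5}.

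The step I expect to be the main obstacle is precisely the treatment of the borderline mixed time--horizontal derivative $\partial_t^4\partial_1Dv$ created by the extra $\partial_1$, which is not contained in the energy functional $E$. Making the time-integration-by-parts trick close — so that each such occurrence is genuinely traded for $\int_\Omega\rho_0|\partial_t^3\partial_1Dv|^2(t)\,\diff x$ plus quantities already controlled by $E$, without ever losing the $\rho_0$-weight that the coefficients supply — while simultaneously being forced to estimate factors like $\partial_t^2\partial_1Dv$ only through the $H^{1/2}$-product inequality rather than through an $L^\infty$ bound, is the delicate point of the argument.
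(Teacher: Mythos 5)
Your energy identity, the treatment of the $\partial_1\rho_0$ cross term via \eqref{TE}, and the list of weighted inequalities you call on all match the paper; but the ``key device'' you propose for the borderline dissipation commutators --- integrating by parts once more \emph{in time} to trade $\partial_t^4\partial_1 Dv$ for the endpoint $\partial_t^3\partial_1 Dv(t)$ --- does not close. Take the commutator $\rho_0\,\partial_t^m(J^{-2}b^{kj})\,\partial_t^{3-m}\partial_1 v^i,_j\cdot\partial_t^4\partial_1 v^i,_k$ with $m\geq 1$. After your time-integration by parts, the endpoint at $s=t$ is, by Cauchy's inequality, bounded by
\begin{equation*}
\varepsilon\int_\Omega\rho_0|\partial_t^3\partial_1 Dv(t)|^2\,\diff x
\;+\;C_\varepsilon\int_\Omega\rho_0\,|\partial_t^m(J^{-2}b^{kj})|^2\,|\partial_t^{3-m}\partial_1 Dv|^2(t)\,\diff x .
\end{equation*}
The coefficient $\partial_t^m(J^{-2}b^{kj})$ is of order $Dv$, not of order $t$: by Lemma~\ref{le:Jab-bound-1} and \eqref{Q-bound-1} it is only bounded by $P_{**}(t)$, a polynomial that depends on $\sup_sE^{1/2}(s,v)$ \emph{itself}. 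Hence the endpoint contributes a quantity of size $P(\sup E^{1/2})^2\cdot\big(M_0+CtP(\sup E^{1/2})\big)$; the $M_0$-part acquires a multiplier that is not $1+O(t)$, and the bound is no longer of the form $M_0+CtP(\sup E^{1/2})$, so it cannot be fed back into the bootstrap. This is precisely what distinguishes your situation from the time-IBP endpoint $I_{1_m}$ that the paper \emph{does} allow in Proposition~\ref{pr:ene-7}: there the coefficient is $\partial_1^m(J^{-2}b^{kj})$, bounded by $Q_2\lesssim P_*(t)\leq 1+CtP$ (see \eqref{Q-bound-5}), so the extra multiplier really is $1+O(t)$ and the structure survives.

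The paper's actual device for Proposition~\ref{pr:ene-5} is to integrate by parts once more in the tangential variable $x_1$, not in time. Each borderline term $\int_0^t\int(\cdots)\,\partial_t^4\partial_1 v^i,_k\,\diff x\diff s$ is rewritten as $-\int_0^t\int\partial_1(\cdots)\,\partial_t^4 v^i,_k\,\diff x\diff s$, giving the displayed terms $I_1$ and $I_{2_m}$. The factor $\partial_t^4 Dv$ that now appears is exactly the dissipation quantity already controlled in Proposition~\ref{pr:ene-1} (namely $\int_0^t\int\rho_0|\partial_t^4 Dv|^2\leq M_0+CtP$), so one never leaves the space--time integral, the factor of $t$ is retained, and there is no delicate endpoint to absorb; the extra $\partial_1$ lands harmlessly on the lower-order coefficient. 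I recommend replacing your time-IBP step by this $x_1$-IBP; the remainder of your estimates for $J_{1_m}, J_{2_m}, J_{3_m}$ (via \eqref{Weighted-6}, \eqref{Weighted-9}, \eqref{Weighted-9-1} and the sharp $L^\infty$-bounds) then go through exactly as you describe.
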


\begin{proof} By \eqref{FTC}, it remains to show the case \(l_0=3\) in \eqref{EEP-5}.

Applying \(\partial_t^3\partial_1\) to  \eqref{eq:main-3},
 taking the \(L^2\)-inner product with \(\partial_t^4\partial_1v^i\), and integrating by parts,
one can get as for \(\partial_t^3Dv\) that
{\small\begin{equation*}
	\begin{aligned}
	&\int_0^t\int_\Omega \rho_0|\partial_t^4\partial_1v|^2\,\diff x\diff s+\frac{1}{2}\int_\Omega \rho_0J^{-2}b^{kj}\partial_t^3\partial_1v^i,_{j}\partial_t^3\partial_1v^i,_{k}(t)\,\diff x\\
&=\frac{1}{2}\int_\Omega \rho_0J^{-2}b^{kj}\partial_t^3\partial_1v^i,_{j}\partial_t^3\partial_1v^i,_{k}(0)\,\diff x-\int_0^t\int_\Omega \partial_1^2[\rho_0^2\partial_t^3(J^{-2}a_i^k)]\partial_t^4v^i,_{k}\,\diff x\diff s\\
&\quad-\int_0^t\int_\Omega \partial_1(\rho_0)\partial_t^4v^i\partial_t^4\partial_1v^i\,\diff x\diff s
+\frac{1}{2}\int_0^t\int_\Omega \rho_0\partial_t(J^{-2}b^{kj})\partial_t^3\partial_1v^i,_{j}\partial_t^3\partial_1v^i,_{k}\,\diff x\diff s\\
&\quad+\underline{\int_0^t\int_\Omega \partial_1[\partial_1(\rho_0J^{-2}b^{kj})\partial_t^3v^i,_{j}]\partial_t^4v^i,_{k}\,\diff x\diff s}_{:=I_1}\\
&\quad+\sum_{m=1}^3\binom{3}{m}\underline{\int_0^t\int_\Omega \partial_1^2[\rho_0\partial_t^m(J^{-2}b^{kj})\partial_t^{3-m}v^i,_{j}]\partial_t^4v^i,_{k}\,\diff x\diff s.}_{:=I_{2_m}}
	\end{aligned}
	\end{equation*}}

For \(I_1\), by \eqref{Jab-bound-5}, \eqref{EEP-1} and \eqref{Q-bound-3}-\eqref{lot},  one may estimate 
\begin{equation}\label{spacetime-estimate}
	\begin{aligned}
|I_1|&\lesssim \bigg|\int_0^t\int_\Omega \rho_0[\partial_1(J^{-2}b^{kj})\partial_t^3\partial_1v^i,_{j}+\partial_1^2(J^{-2}b^{kj})\partial_t^3v^i,_{j}]\partial_t^4v^i,_{k}\,\diff x\diff s\bigg|
+\mathcal{R}\\
&\lesssim \int_0^tP_{*}^2\|\sqrt{\rho_0}\partial_t^3\partial_1Dv\|_{L^2}^2\,\diff s
+\int_0^tP_{*}^2\|\sqrt{\rho_0}\partial_t^3Dv\|_{L^2}^2\,\diff s\\
&\quad+\int_0^t\int_\Omega\rho_0|\partial_t^4Dv|^2\,\diff x\diff s+\mathcal{R}
\leq M_0+Ct P(\sup_{0\leq s\leq t}E^{1/2}(s,v)).
	\end{aligned}
	\end{equation} 
Considering \(I_{2_m}\), it holds that
\begin{equation*}
	\begin{aligned}
|I_{2_m}|	&\lesssim \bigg|\int_0^t\int_\Omega \rho_0\partial_1^2[\partial_t^m(J^{-2}b^{kj})\partial_t^{3-m}v^i,_{j}]\partial_t^4v^i,_{k}\,\diff x\diff s\bigg|+\mathcal{R}\\
&\lesssim \underline{\int_0^t\int_\Omega\rho_0\big|\partial_t^m(J^{-2}b^{kj})\partial_t^{3-m}\partial_1^2v^i,_{j}\big|^2\,\diff x\diff s}_{:=J_{1_m}}\\
&\quad+\underline{\int_0^t\int_\Omega\rho_0\big|\partial_t^m\partial_1(J^{-2}b^{kj})\partial_t^{3-m}\partial_1v^i,_{j}\big|^2\,\diff x\diff s}_{:=J_{2_m}}\\
&\quad+\underline{\int_0^t\int_\Omega\rho_0\big|\partial_t^m\partial_1^2(J^{-2}b^{kj})\partial_t^{3-m}v^i,_{j}\big|^2\,\diff x\diff s}_{:=J_{3_m}}\\
&\quad+\int_0^t\int_\Omega\rho_0|\partial_t^4Dv|^2\,\diff x\diff s+\mathcal{R}. 
	\end{aligned}
	\end{equation*}

First, one has
\begin{equation*}
	\begin{aligned}
&|J_{1_m}|\lesssim \int_0^tP_{**}^2\|\sqrt{\rho_0}\partial_t^{3-m}\partial_1^2Dv\|_{L^2}^2\,\diff s,\quad m=1,2,\\
&|J_{1_3}|\lesssim \int_0^tP_{**}^2\|\partial_t^2Dv\|_{H^{1/2}}^2\|\partial_1^2Dv\|_{H^{1/2}}^2\,\diff s+\mathcal{R},
	\end{aligned}
	\end{equation*}
which, together with \eqref{Q-bound-1} and \eqref{Q-bound-2}, implies that \(J_{1_m}\) has the desired bound.

Next, to handle \(J_{2_m}\) and \(J_{3_m}\), one notes that 
\begin{equation}\label{Q-bound-6}
\begin{aligned}
Q_3(t)\lesssim P_{**}(t),
\end{aligned}
\end{equation}
which follows from \eqref{Weighted-3}-\eqref{Weighted-5}, \eqref{Weighted-12} and \eqref{Jab-bound-4}. 
Moreover, it holds that 
\begin{equation}\label{Q-bound-7}
\begin{aligned}
\int_0^tP_{**}(s)\,\diff s\leq Ct P(\sup_{0\leq s\leq t}E^{1/2}(s,v)).
\end{aligned}
\end{equation}

By \eqref{Q-bound-6} and \eqref{Q-bound-7}, one can estimate
\begin{equation*}
	\begin{aligned}
|J_{2_1}|+|J_{2_3}|
&\lesssim \int_0^t P_{**}^2\|\sqrt{\rho_0}\partial_t^2\partial_1Dv\|_{L^2}^2\|\partial_1Dv\|_{L^\infty}^2\,\diff s+\mathcal{R},\\
|J_{2_2}|
&\lesssim \int_0^t P_{**}^2\|\sqrt{\rho_0}\partial_t\partial_1Dv\|_{L^2}^2\,\diff s+\mathcal{R},\\
|J_{3_1}|
&\lesssim \int_0^t P_{**}^2\|\sqrt{\rho_0}\partial_t^2Dv\|_{L^2}^2\,\diff s+\mathcal{R},\\
|J_{3_2}|
&\lesssim \int_0^t P_{**}^2\|\partial_t\partial_1^2Dv\|_{H^{1/2}}^2\|\partial_tDv\|_{H^{1/2}}^2\,\diff s
+\mathcal{R},\\
|J_{3_3}|
&\lesssim \int_0^t P_{**}^2\|\sqrt{\rho_0}\partial_t^2\partial_1^2Dv\|_{L^2}^2\|Dv\|_{L^\infty}^2\,\diff s
+\mathcal{R},
	\end{aligned}
	\end{equation*}
which and \eqref{Weighted-9} imply that \(J_{2_m}\) and \(J_{3_m}\) satisfy the desired bound.

\end{proof}

\subsubsection{Estimates on \(\partial_t^{l_0}\partial_1^3Dv \ (l_0=0,1,2)\) and  \(\partial_t^{l_0}\partial_1^5Dv \ (l_0=0,1)\)} 
Following a similar argument as for \(\partial_t^3\partial_1Dv\) in Proposition \ref{pr:ene-5}, one can show
\begin{proposition}\label{pr:ene-6} It holds that
\begin{equation}\label{EEP-add-2}
\begin{aligned}
&\sum_{l_0=0}^2\int_\Omega\rho_0|\partial_t^{l_0}\partial_1^3Dv|^2(t)\,\diff x
+\sum_{l_0=1}^1\int_\Omega \rho_0|\partial_t^{l_0}\partial_1^5Dv|^2(t)\,\diff x\\
&\leq M_0+Ct P(\sup_{0\leq s\leq t}E^{1/2}(s,v)).
\end{aligned}
\end{equation}
\end{proposition}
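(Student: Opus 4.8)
The plan is to mimic the proof of Proposition \ref{pr:ene-5} for $\partial_t^3\partial_1 Dv$, but now applying $\partial_t^{l_0}\partial_1^{k}$ with $2l_0+k\le 6$ and $k$ odd (so the admissible pairs are $(l_0,k)\in\{(3,1),(2,3),(1,5)\}$, plus $(0,7)$ which belongs to the next proposition), and in each case testing the equation against $\partial_t^{l_0+1}\partial_1^{k}v^i$. First I would note, via \eqref{FTC}, that it suffices to treat the top-order cases $(l_0,k)=(2,3)$ and $(l_0,k)=(1,5)$, since the lower ones follow by integrating in time from a bound on their time derivative. So fix one such pair $(l_0,k)$ with $l_0\ge 1$ and $2l_0+k=5$.

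Applying $\partial_t^{l_0}\partial_1^{k}$ to \eqref{eq:main-3}, taking the $L^2$-inner product with $\partial_t^{l_0+1}\partial_1^{k}v^i$, and integrating by parts in $x$ (the boundary terms vanish by \eqref{eq:intro-3}), I would obtain the identity
\begin{equation*}
\begin{aligned}
&\int_0^t\!\!\int_\Omega \rho_0|\partial_t^{l_0+1}\partial_1^{k}v|^2\,\diff x\diff s+\frac12\int_\Omega \rho_0 J^{-2}b^{rj}\partial_t^{l_0}\partial_1^{k}v^i,_{j}\partial_t^{l_0}\partial_1^{k}v^i,_{r}(t)\,\diff x\\
&=\frac12\int_\Omega \rho_0 J^{-2}b^{rj}\partial_t^{l_0}\partial_1^{k}v^i,_{j}\partial_t^{l_0}\partial_1^{k}v^i,_{r}(0)\,\diff x+(\text{forcing + commutator terms}),
\end{aligned}
\end{equation*}
where, thanks to \eqref{J-bound} and \eqref{a-bound}, the second term on the left is bounded below by a positive multiple of $\|\sqrt{\rho_0}\partial_t^{l_0}\partial_1^{k}Dv\|_{L^2}^2(t)$. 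The right-hand side organizes, exactly as in Proposition \ref{pr:ene-5}, into: the initial-data term (contained in $M_0$); a pressure-type term involving $\partial_1^{k+1}(\rho_0^2 J^{-2}a_i^k)$; a Hardy-type term $\int_0^t\!\!\int_\Omega \partial_1^{k}(\rho_0)\,\partial_t^{l_0+1}v^i\,\partial_t^{l_0+1}\partial_1^{k}v^i$ handled by \eqref{TE}; the "good" self-term $\tfrac12\int_0^t\!\!\int_\Omega \rho_0\partial_t(J^{-2}b^{rj})\partial_t^{l_0}\partial_1^{k}v^i,_{j}\partial_t^{l_0}\partial_1^{k}v^i,_{r}$ absorbed via \eqref{Jab-bound-1} and \eqref{Q-bound-1}; and the main commutator terms of the form $\int_0^t\!\!\int_\Omega \partial_1^{k-1}\big[\,\partial_t^{m}\partial_1(\rho_0 J^{-2}b^{rj})\,\partial_t^{l_0-m}\partial_1^{\cdots}v^i,_{j}\,\big]\partial_t^{l_0+1}\partial_1^{k-1}v^i,_{r}$. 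For each such commutator I would expand by Leibniz, peel off one $\partial_1$ onto $\rho_0$ as in \eqref{lot} to isolate the leading term $\rho_0\partial_1^{(\cdots)}(J^{-2}b^{rj})$ plus lower-order, then split the $\partial_1$-derivatives between the coefficient and the velocity factor according to the usual rule: top-order velocity factor paired with $L^\infty$ (via Corollary \ref{le:W-2}, Lemmas \ref{le:W-3}, \ref{le:W-7}, \ref{le:W-8}), "middle" pairings estimated by the $H^{1/2}\times H^{1/2}$ product inequality \eqref{Weighted-6} together with \eqref{Weighted-7}, \eqref{Weighted-9}, \eqref{Weighted-9-1}, and coefficient-heavy pairings bounded using the $J^{-2}b^{rj}$, $J^{-2}a_i^k$ estimates of Lemmas \ref{le:Jab-bound-1}--\ref{le:Jab-bound-6} combined with \eqref{Q-bound-1}--\eqref{Q-bound-7}. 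The $\partial_t^{l_0+1}\partial_1^{k-1}Dv$ factor that recurs is absorbed into the space–time dissipation $\int_0^t\!\!\int_\Omega\rho_0|\partial_t^{l_0+1}\partial_1^{k}Dv|^2$ or into lower-order terms already controlled by $E$ via \eqref{EEP-1}, \eqref{EEP-2}, \eqref{EEP-3}, \eqref{EEP-add-1}, \eqref{EEP-5}; every remainder integrand is then of the schematic form $P_{*}^2(\text{energy quantity})$ or $P_{**}^2(\text{energy quantity})$, which integrates to the desired bound $M_0+CtP(\sup_{0\le s\le t}E^{1/2}(s,v))$ by \eqref{Q-bound-2}, \eqref{Q-bound-4}, \eqref{Q-bound-7}.

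The main obstacle, just as flagged in the introduction's discussion of time-horizontal and horizontal-normal mixed derivatives, is bookkeeping the distribution of the $k$ horizontal derivatives in the commutator terms so that no factor ever exceeds the regularity the energy functional \eqref{HOEF-0} supplies: for $k=5$ one is at the edge of what Lemma \ref{le:W-7}'s "sharp $L^\infty$" estimate $\|\partial_1^3 Dv\|_{L^\infty}\lesssim E^{1/2}$ and Lemma \ref{le:W-5}'s $\sum_{l_1=0}^5\|\partial_1^{l_1}Dv\|_{H^{1/2}}\lesssim E^{1/2}$ can afford, so the decomposition into $L^\infty\times L^2$ versus $H^{1/2}\times H^{1/2}$ pairings must be chosen carefully in each case, and the $\partial_1^{m}(\rho_0)/\rho_0$ gains from \eqref{TE} must be used to relocate a $\rho_0$-weight onto the right factor. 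The remaining cases $(l_0,k)$ with $l_0\ge 1$ but lower order, and the purely time-integrated pieces, then follow verbatim from \eqref{FTC}, completing \eqref{EEP-add-2}.
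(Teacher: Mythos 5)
Your high-level strategy is the one the paper intends (its own proof of this proposition is the one-liner ``following a similar argument as for $\partial_t^3\partial_1 Dv$''): apply $\partial_t^{l_0}\partial_1^k$ to \eqref{eq:main-3}, test against $\partial_t^{l_0+1}\partial_1^k v^i$, extract the gradient energy $\int_\Omega\rho_0 J^{-2}b^{rj}\partial_t^{l_0}\partial_1^k v^i,_j\partial_t^{l_0}\partial_1^k v^i,_r(t)\,\diff x$ from the leading viscous term after an integration by parts in $t$, reduce the lower cases to the top ones via \eqref{FTC}, and handle the commutators using \eqref{TE}, the bounds of Section~\ref{sec:Jab}, and the $L^\infty$/$H^{1/2}$ triage of Subsections~\ref{sub:W-2}--\ref{sub:W-3}. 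You also correctly flag that for $k=5$ the decomposition lives at the edge of what Lemmas~\ref{le:W-5} and \ref{le:W-7} permit.

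There is, however, a concrete gap in your closing step. You assert that the recurring test-side factor $\partial_t^{l_0+1}\partial_1^{k-1}Dv$ is ``absorbed into the space-time dissipation $\int_0^t\!\!\int_\Omega\rho_0|\partial_t^{l_0+1}\partial_1^k Dv|^2$.'' That integral is not the dissipation: the time-evolution term in the identity produces $\int_0^t\!\!\int_\Omega\rho_0|\partial_t^{l_0+1}\partial_1^k v|^2$ (no gradient), while the gradient estimate comes from the energy term, which gives only $L_t^\infty L^2$ control of $\partial_t^{l_0}\partial_1^k Dv$. More seriously, the quantities $\partial_t^3\partial_1^2 Dv$ (for $(l_0,k)=(2,3)$) and $\partial_t^2\partial_1^4 Dv$ (for $(l_0,k)=(1,5)$) satisfy $2l_0+l_1=8>6$ in either reading of \eqref{HOEF-0}; they belong to neither $E$ nor any space-time bound available from Propositions~\ref{pr:ene-1}--\ref{pr:ene-5}. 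In Proposition~\ref{pr:ene-5} the single $\partial_1$-integration by parts leaves the test factor $\partial_t^4 Dv$, which happens to be covered by the genuine viscous dissipation estimate of Proposition~\ref{pr:ene-1}; the analogous fact fails for the present pairs. To close the argument one must integrate by parts in $\partial_1$ a second time so that the test side falls to $\partial_t^3\partial_1 Dv$ (controlled by \eqref{EEP-5}) or $\partial_t^2\partial_1^3 Dv$ (controlled by the first half of the present proposition, established first), and then manage the two extra $\partial_1$'s this places on the commutator using \eqref{lot} and the weighted $L^\infty$/$H^{1/2}$ inequalities --- or, alternatively, integrate by parts in $t$ on the commutators as is done in Proposition~\ref{pr:ene-7}. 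As written, your one-IBP absorption claim does not close the estimate.
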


\bigskip
\subsubsection{Estimates on \(\partial_1^7Dv\)}

To estimate \(\partial_1^7Dv\), one first needs to bound
\(\|\sqrt{\rho_0}\partial_t\partial_1^6Dv\|_{L_t^2L^2}\). 
\begin{proposition}\label{pr:ene-add} It holds that
	\begin{equation}\label{EEP-6}
		\begin{aligned}
			\int_\Omega \rho_0|\partial_t\partial_1^6v|^2(t)\,\diff x
			+\int_0^t\int_\Omega\rho_0|\partial_t\partial_1^6Dv|^2\,\diff x\diff s
			\leq M_0+Ct P(\sup_{0\leq s\leq t}E^{1/2}(s,v)).
		\end{aligned}
	\end{equation}
\end{proposition}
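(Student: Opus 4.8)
The plan is to run a weighted energy estimate driven by the operator $\partial_t\partial_1^6$ --- the highest ``type-II'' operator still covered by the tangential part of the energy functional $E$ --- so as to extract the new dissipation $\|\sqrt{\rho_0}\,\partial_t\partial_1^6Dv\|_{L^2_tL^2}$ that will feed the subsequent estimate on $\partial_1^7Dv$. Concretely, I would apply $\partial_t\partial_1^6$ to \eqref{eq:main-3}, take the $L^2$-inner product with $\partial_t\partial_1^6v^i$, and integrate by parts in the spatial variables; the boundary terms vanish because $\rho_0$ (hence $\rho_0^2J^{-2}a_i^k$ and $\rho_0J^{-2}b^{kj}v^i,_{j}$) vanishes on $\Gamma$ by \eqref{eq:intro-3}. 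On the left-hand side the time-evolution term $\partial_t\partial_1^6(\rho_0\partial_tv^i)=\partial_1^6(\rho_0\partial_t^2v^i)$ produces the leading term $\frac{1}{2}\frac{\diff}{\diff t}\int_\Omega\rho_0|\partial_t\partial_1^6v|^2\,\diff x$ together with commutators in which powers of $\partial_1$ fall on $\rho_0$, while the viscous term, by \eqref{J-bound} and \eqref{a-bound}, produces a term bounded below by a fixed positive multiple of $\int_0^t\int_\Omega\rho_0|\partial_t\partial_1^6Dv|^2\,\diff x\diff s$ (modulo commutators). Integrating in time and absorbing the $t=0$ data into $M_0$ reduces matters to bounding the remaining space--time integrals by the desired bound $M_0+CtP(\sup_{0\leq s\leq t}E^{1/2}(s,v))$.

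These remaining integrals fall into three groups. First, the commutators $\sum_{m=1}^6\binom{6}{m}\int_0^t\int_\Omega(\partial_1^m\rho_0)(\partial_1^{6-m}\partial_t^2v^i)(\partial_t\partial_1^6v^i)\,\diff x\diff s$ coming from the time-evolution term: by the tangential estimate \eqref{TE} ($|\partial_1^m\rho_0|\lesssim\rho_0$) and Cauchy's inequality these are controlled by $\int_0^t(\|\sqrt{\rho_0}\,\partial_t^2\partial_1^{6-m}v\|_{L^2}^2+\|\sqrt{\rho_0}\,\partial_t\partial_1^6v\|_{L^2}^2)\,\diff s$; the $\partial_t\partial_1^6v$ piece is $\lesssim\int_0^tE\,\diff s\leq CtP(\sup E^{1/2})$, the pieces $\partial_t^2\partial_1^{6-m}v$ with $m\geq2$ lie inside $E$, and the only delicate case $m=1$ is closed by the space--time control of $\partial_t^2\partial_1^5v$ furnished by the type-II estimates (cf.\ the corresponding left-hand-side term in the proof of Proposition \ref{pr:ene-5}, hence also in the proof of Proposition \ref{pr:ene-6}). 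Second, the pressure contribution from $\partial_t\partial_1^6(\rho_0^2J^{-2}a_i^k),_{k}$: after integration by parts this carries the weight $\rho_0^2$ and is treated by distributing derivatives onto $\rho_0^2$ (via \eqref{TE}) and onto $J^{-2}a_i^k$ (via Lemmas \ref{le:Jab-bound-1}-\ref{le:Jab-bound-6}), combined with the weighted $L^\infty$-bounds of Lemma \ref{le:W-1} and Corollary \ref{le:W-2}. Third, the viscous commutators in which $\partial_t$ or $\partial_1$-powers land on $\rho_0J^{-2}b^{kj}$: as in \eqref{lot} one isolates the leading piece $\rho_0(\partial_t\text{ or }\partial_1^m)(J^{-2}b^{kj})$ plus lower-order terms, bounds the coefficient factors by Lemmas \ref{le:Jab-bound-1}-\ref{le:Jab-bound-6} (hence by $P_{*}$, $P_{**}$, as in \eqref{Q-bound-1}, \eqref{Q-bound-3} and \eqref{Q-bound-6}), and closes the estimate with the sharp $L^\infty$- and $H^{1/2}$-inequalities \eqref{Weighted-6}-\eqref{Weighted-12}, the weighted $L^2$/$L^\infty$ bounds of Lemma \ref{le:W-1} and Corollary \ref{le:W-2}, and the already-established energy estimates \eqref{EEP-1}, \eqref{EEP-2}, \eqref{EEP-add-1}, \eqref{EEP-5} and \eqref{EEP-add-2}; a small fixed multiple of $\int_0^t\int_\Omega\rho_0|\partial_t\partial_1^6Dv|^2\,\diff x\diff s$ is then absorbed into the left-hand-side dissipation.

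The main obstacle is the bookkeeping for the top-order mixed time--tangential terms that sit on the boundary of, or just outside, the energy space $E$: the $m=1$ commutator above and, inside the third group, the terms in which a single $\partial_t$ or $\partial_1$ must be split off so that the factor carrying the $\sqrt{\rho_0}$-weight stays within the admissible range of $E$ while the complementary factor has a derivative count low enough to be absorbed by a previously established estimate, by \eqref{Weighted-6} applied to an $H^{1/2}\times H^{1/2}$ product, or by the sharp $L^\infty$-inequalities \eqref{Weighted-10} and \eqref{Weighted-12} that gain one tangential derivative. For each such term one must choose carefully which factor goes into $L^\infty$ (or $H^{1/2}$) and which into the weighted $L^2$, and use --- crucially --- that the type-II estimates up through Proposition \ref{pr:ene-6} already supply the space--time control of $\partial_t^2\partial_1^5v$ and the pointwise-in-time control of $\partial_t\partial_1^5Dv$. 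Once this is arranged, every remaining contribution carries an explicit factor $t$ (or belongs to $M_0$), so the estimate closes directly, exactly as in the earlier propositions of this section.
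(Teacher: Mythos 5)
Your proposal matches the paper's approach exactly: the paper's proof of Proposition~\ref{pr:ene-add} is a one-line reference to the argument for $\partial_t^4v$ in Proposition~\ref{pr:ene-1} (apply $\partial_t\partial_1^6$ to \eqref{eq:main-3}, test with $\partial_t\partial_1^6v^i$, integrate by parts, and close the estimate), with the observation that $\|\sqrt{\rho_0}\,\partial_t\partial_1^6 v(0)\|_{L^2}$ is controlled by $E(0,v_0)$ because it is contained in $\|\sqrt{\rho_0}\,\partial_t\partial_1^5 Dv(0)\|_{L^2}$. You correctly fill in the details that the paper leaves implicit — in particular the $\partial_1$-commutators on $\rho_0$ (which have no analogue in the $\partial_t^4$ case since $\partial_t\rho_0=0$) and the delicate $m=1$ term, for which you rightly invoke the space--time control of $\sqrt{\rho_0}\,\partial_t^2\partial_1^5v$ coming from the dissipation in the proof of the $\partial_t\partial_1^5Dv$ estimate in Proposition~\ref{pr:ene-6}.
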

\begin{proof}
	Since \(\|\sqrt{\rho_0}\partial_t\partial_1^5Dv\|_{L^2}\) is contained in \(E(t,v)\), then one can take \(\|\sqrt{\rho_0}\partial_t\partial_1^6v(0)\|_{L^2(\Omega)}\) as the initial data to show \eqref{EEP-6} by following the argument for \(\partial_t^4v\) in Proposition \ref{pr:ene-1}. 
\end{proof}

\begin{proposition}\label{pr:ene-7} It holds that
\begin{equation}\label{EEP-7}
\begin{aligned}
\int_\Omega\rho_0|\partial_1^7Dv|^2(t)\,\diff x
\leq M_0+Ct P(\sup_{0\leq s\leq t}E^{1/2}(s,v)).
\end{aligned}
\end{equation}
\end{proposition}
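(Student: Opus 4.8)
The plan is to run the argument for $\partial_1^6Dv$ in Proposition~\ref{pr:ene-4} raised by one tangential order, so that the shape of the energy identity is unchanged and only the auxiliary space--time bound it relies on is upgraded. Concretely, apply $\partial_1^7$ to \eqref{eq:main-3}, take the $L^2$-inner product with $\partial_t\partial_1^7v^i$, and integrate by parts in the spatial variables; the tangential integrations by parts on $\mathbb{T}\times(0,1)$ produce no boundary terms, and the single normal one has vanishing boundary contribution because of the weight $\rho_0$ (recall \eqref{eq:intro-3}). Moving one tangential derivative off the test function exactly as in Proposition~\ref{pr:ene-4}, this gives
\begin{equation*}
\begin{aligned}
&\int_0^t\!\int_\Omega\rho_0|\partial_t\partial_1^7v|^2\,\diff x\diff s+\frac12\int_\Omega\rho_0J^{-2}b^{kj}\partial_1^7v^i,_{j}\partial_1^7v^i,_{k}(t)\,\diff x\\
&=\frac12\int_\Omega\rho_0J^{-2}b^{kj}\partial_1^7v^i,_{j}\partial_1^7v^i,_{k}(0)\,\diff x-\int_0^t\!\int_\Omega\partial_1^8(\rho_0^2J^{-2}a_i^k)\,\partial_t\partial_1^6v^i,_{k}\,\diff x\diff s\\
&\quad-I_1+\tfrac12I_2+\sum_{m=1}^7\binom{7}{m}I_{3_m}+\sum_{m=1}^7\binom{7}{m}I_{4_m},
\end{aligned}
\end{equation*}
where $I_1=\sum_{m=1}^7\binom{7}{m}\int_0^t\int_\Omega\partial_1^m(\rho_0)\,\partial_t\partial_1^{7-m}v^i\,\partial_t\partial_1^7v^i$, $I_2=\int_0^t\int_\Omega\rho_0\partial_t(J^{-2}b^{kj})\partial_1^7v^i,_{j}\partial_1^7v^i,_{k}$, and $I_{3_m}$, $I_{4_m}$ are the commutator terms with integrands $\partial_1^{m+1}(\rho_0J^{-2}b^{kj})\,\partial_1^{7-m}v^i,_{j}\,\partial_t\partial_1^6v^i,_{k}$ and $\partial_1^{m}(\rho_0J^{-2}b^{kj})\,\partial_1^{8-m}v^i,_{j}\,\partial_t\partial_1^6v^i,_{k}$. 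By \eqref{a-bound} and \eqref{J-bound} the second term on the left dominates $\int_\Omega\rho_0|\partial_1^7Dv|^2(t)\,\diff x$ up to a constant, and the first term on the right is contained in $M_0$.

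Then I would estimate the right-hand side term by term, mirroring Proposition~\ref{pr:ene-4}. For $I_1$ one uses the tangential Hardy bound \eqref{TE}, writing $\partial_1^m(\rho_0)=(\partial_1^m(\rho_0)/\rho_0)\rho_0$, so that a Young inequality with small parameter gives $|I_1|\le\varepsilon\int_0^t\int_\Omega\rho_0|\partial_t\partial_1^7v|^2+C_\varepsilon\sum_{m=1}^7\int_0^t\|\sqrt{\rho_0}\partial_t\partial_1^{7-m}v\|_{L^2}^2\,\diff s$; the sum lies in $\int_0^tE(s,v)\,\diff s\le CtP(\sup_{0\le s\le t}E^{1/2}(s,v))$, and the $\varepsilon$-term is absorbed into the good dissipation term on the left. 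The term $I_2$ is handled by \eqref{Jab-bound-1} together with \eqref{Q-bound-1}--\eqref{Q-bound-2}. For $I_{3_m}$ and $I_{4_m}$ one invokes \eqref{Jab-bound-5} for the tangential derivatives of $J^{-2}b^{kj}$, the decomposition \eqref{lot} to split off the leading $\rho_0$-weighted part $\rho_0\partial_1^{m+1}(J^{-2}b^{kj})$ (the remaining l.o.t.\ still carrying a weight $\rho_0$ by \eqref{TE}), and then, according to how the derivatives are distributed between $v$ and $\eta$, the weighted inequalities of Section~\ref{Some Weighted inequalities}: the $L^\infty$-bounds \eqref{Weighted-4}, \eqref{Weighted-7} when few derivatives fall on $\eta$, the bilinear $H^{1/2}$-estimate \eqref{Weighted-6} with \eqref{Weighted-7}--\eqref{Weighted-8} in the balanced cases, and the sharp $L^\infty$-bound \eqref{Weighted-10}--\eqref{Weighted-11} in the extreme cases; in every case the factor $\partial_t\partial_1^6v^i,_{k}$ is Young'd into $\int_0^t\int_\Omega\rho_0|\partial_t\partial_1^6Dv|^2\,\diff x\diff s$, which is controlled by Proposition~\ref{pr:ene-add}. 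Finally, $\int_0^t\int_\Omega\partial_1^8(\rho_0^2J^{-2}a_i^k)\,\partial_t\partial_1^6v^i,_{k}$ is lower order: by \eqref{TE} one has $|\partial_1^k(\rho_0^2)|\lesssim\rho_0^2$, so after one more tangential integration by parts it is dominated by the same weighted norms of $\eta$ and $\partial_t\partial_1^6Dv$. Collecting everything and using \eqref{Q-bound-2}, \eqref{Q-bound-4}, \eqref{Q-bound-5} yields \eqref{EEP-7}.

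The hard part will be the top-order commutators $I_{3_1}$, $I_{4_1}$ (and, to a lesser extent, the $\partial_1^8(\rho_0^2J^{-2}a_i^k)$ term), in which the coefficient carries seven or eight tangential derivatives and therefore, through $\eta=e+\int_0^tv\,\diff s$, spawns factors such as $\partial_1^7D\eta$ or $\partial_1^8D\eta$ that are only controlled in weak weighted norms, never pointwise. The point is never to let such a factor meet the top-order factor $\partial_1^7Dv$ in an unfavorable pairing: one uses \eqref{lot} to recover the weight $\rho_0$ in front of $\partial_1^{m+1}(J^{-2}b^{kj})$, then distributes it so that one $\sqrt{\rho_0}$ accompanies $\partial_1^7Dv$ (which is part of $E$) and the other accompanies $\partial_t\partial_1^6Dv$ (whose space--time integral is furnished by Proposition~\ref{pr:ene-add}), and in the borderline cases one trades an $L^\infty$-bound for the bilinear $H^{1/2}$-estimate \eqref{Weighted-6}; this is exactly where the sharpened inequalities \eqref{Weighted-6}--\eqref{Weighted-12}, which gain one tangential derivative over the naive estimates, are indispensable. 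A related structural point, already visible in the identity above, is that here the test-function slot produces $\partial_t\partial_1^6Dv$, which is \emph{not} in $E$ (unlike the $\partial_t\partial_1^5Dv$ appearing in Proposition~\ref{pr:ene-4}); this is precisely why Proposition~\ref{pr:ene-add} must be established first, and its $M_0$ contribution is what produces the $M_0$ on the right-hand side of \eqref{EEP-7}.
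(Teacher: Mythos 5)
Your energy identity (tangential integration by parts to swap the test slot $\partial_t\partial_1^7v^i,_k$ for $\partial_t\partial_1^6v^i,_k$) is a faithful transplant of the one used for $\partial_1^6Dv$ in Proposition~\ref{pr:ene-4}, but it does not scale up by one order: the terms that were previously top-order but controllable become one order too high. Concretely, after your $x_1$-integration by parts the $m=7$ commutator $I_{3_7}$ carries the factor $\partial_1^{8}(\rho_0 J^{-2}b^{kj})$, and the pressure contribution is $\int_0^t\int_\Omega\partial_1^{8}(\rho_0^2 J^{-2}a_i^k)\,\partial_t\partial_1^6 v^i,_{k}$. Through \eqref{a-formula}, \eqref{b-formula} these involve $\partial_1^{8}D\eta$, i.e. $\partial_1^{9}\eta$ or $\partial_1^{8}\partial_2\eta$. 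But the top tangential entry in \eqref{HOEF-0} is $\|\sqrt{\rho_0}\partial_1^{7}Dv\|_{L^2}$ (equivalently $\partial_1^{8}v$, $\partial_1^{7}\partial_2v$); nothing controls $\partial_1^{9}v$ or $\partial_1^{8}\partial_2v$ at any power of the weight $\rho_0$, so $\|\rho_0^{\alpha}\partial_1^{8}D\eta\|_{L^2}$ is not bounded by $E$ for any $\alpha$. Reintegrating by parts in $x_1$ to trade one derivative back to the test slot produces $\partial_t\partial_1^{7}Dv$, which is likewise not controlled (Proposition~\ref{pr:ene-add} only yields $\partial_t\partial_1^{6}Dv$), so the factor cannot be rescued. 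Note also that your sentence about ``the top-order commutators $I_{3_1}$, $I_{4_1}$ \dots\ in which the coefficient carries seven or eight tangential derivatives'' is misindexed: those are $I_{3_7}$, $I_{4_7}$, which is exactly where the argument breaks.

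The paper circumvents this by integrating by parts \emph{in time}, not in $x_1$: the commutators are rewritten so that the test-function slot is $\partial_1^{7}v^i,_k$ (no $\partial_t$), producing the time-boundary terms $I_{1_m}$ and the space--time integrals $I_{2_m}$, $I_{3_m}$, in which the coefficient never exceeds $\partial_1^{7}(\rho_0 J^{-2}b^{kj})$ or $\partial_t\partial_1^{7}(\rho_0 J^{-2}b^{kj})$ (i.e.\ at most $\partial_1^{7}D\eta$ or $\partial_1^{7}Dv$, both within reach of $E$ and \eqref{Weighted-2}). Proposition~\ref{pr:ene-add} is then invoked only once, for the single term $I_{2_1}$ where a time derivative lands on $\partial_1^{6}v^i,_j$. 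Your plan applies Proposition~\ref{pr:ene-add} in the right spirit, but attached to the wrong decomposition; the tangential integration by parts has reached its ceiling at $\partial_1^{6}Dv$, and to go one order higher one must trade it for the time integration by parts.
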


\begin{proof}
Letting \(\partial_1^7\) act on \eqref{eq:main-3}, and taking the \(L^2\)-inner product with \(\partial_t\partial_1^7v^i\), one gets by some direct calculations that
 {\small \begin{equation*}\label{EE-55}
	\begin{aligned}
	&\int_0^t\int_\Omega \rho_0|\partial_t\partial_1^7v|^2\,\diff x\diff s+\frac{1}{2}\int_\Omega \rho_0J^{-2}b^{kj}\partial_1^7v^i,_{j}\partial_1^7v^i,_{k}(t)\,\diff x\\
&=\frac{1}{2}\int_\Omega \rho_0J^{-2}b^{kj}\partial_1^7v^i,_{j}\partial_1^7v^i,_{k}(0)\,\diff x-\int_0^t\int_\Omega \partial_t\partial_1^7(\rho_0^2J^{-2}a_i^k)\partial_1^7v^i,_{k}\,\diff x\diff s\\
&\quad+\int_\Omega \partial_1^7(\rho_0^2J^{-2}a_i^k)\partial_1^7v^i,_{k}(s)\,\diff x\big|_{s=0}^{s=t}\\
&\quad-\sum_{m=1}^7\binom{7}{m}\underline{\int_\Omega \partial_1^m(\rho_0J^{-2}b^{kj})\partial_1^{7-m}v^i,_{j}\partial_1^7v^i,_{k}(s)\,\diff x\big|_{s=0}^{s=t}}_{:=I_{1_m}}\\
&\quad-\sum_{m=1}^7\binom{7}{m}\int_0^t\int_\Omega \partial_1^m(\rho_0)\partial_t\partial_1^{7-m}v^i\partial_t\partial_1^7v^i\,\diff x\diff s\\
&\quad+\frac{1}{2}\int_0^t\int_\Omega \rho_0\partial_t(J^{-2}b^{kj})\partial_1^7v^i,_{j}\partial_1^7v^i,_{k}\,\diff x\diff s\\
&\quad+\sum_{m=1}^7\binom{7}{m}\underline{\int_0^t\int_\Omega \partial_1^m(\rho_0J^{-2}b^{kj})\partial_t\partial_1^{7-m}v^i,_{j}\partial_1^7v^i,_{k}\,\diff x\diff s}_{:=I_{2_m}}\\
&\quad+\sum_{m=1}^7\binom{7}{m}\underline{\int_0^t\int_\Omega \partial_t\partial_1^m(\rho_0J^{-2}b^{kj})\partial_1^{7-m}v^i,_{j}\partial_1^7v^i,_{k}\,\diff x\diff s.}_{:=I_{3_m}}
	\end{aligned}
	\end{equation*}}

\noindent As can be checked easily, it suffices to estimate the underlined terms.

For \(I_{1_m}\), it suffices to consider the case \(s=t\), and we will not write out \(s=t\) for convenience. One first uses Cauchy's inequality to get
     \begin{equation*}
	\begin{aligned}
|I_{1_m}|
\leq \frac{1}{100}\int_\Omega \rho_0|\partial_1^7Dv|^2\,\diff x+\mathcal{R}+\underline{\int_\Omega \rho_0|\partial_1^m(J^{-2}b^{kj})\partial_1^{7-m}v^i,_{j}|^2\,\diff x,}_{:=J_{1_m}}
	\end{aligned}
	\end{equation*}
and then estimates 
\begin{equation*}
\begin{aligned}
|J_{1_m}|
&\lesssim
P_{*}^2
\|\sqrt{\rho_0}\partial_1^{7-m}Dv\|_{L^2}^2
+\mathcal{R},\quad m=1,2,3,\\
|J_{1_m}|
&\lesssim P_{*}^2\|\partial_1^mD\eta\|_{H^{1/2}}^2\|\partial_1^{7-m}Dv\|_{H^{1/2}}^2+\mathcal{R},\quad m=4,5,\\
|J_{1_m}|
&\lesssim P_{*}^2\|\sqrt{\rho_0}\partial_1^mD\eta\|_{L^2}^2\|\partial_1^{7-m}Dv\|_{L^\infty}^2+\mathcal{R},\quad m=6,7,
\end{aligned}
\end{equation*}
which satisfy the desired bound.

Next, by \eqref{Jab-bound-5} and \eqref{Q-bound-3}-\eqref{lot}, one may estimate 
\begin{equation}\label{EE-60}
\begin{aligned}
|I_{2_1}|
&\lesssim \sup_{0\leq s\leq t}P_{*}^2(s)\int_0^t\int_\Omega\rho_0|\partial_t\partial_1^6Dv|^2\,\diff x\diff s+G,\\
|I_{2_m}|
&\lesssim \int_0^t P_{*}^2\|\sqrt{\rho_0}\partial_t\partial_1^{7-m}Dv\|_{L^2}^2\,\diff s+G,\quad m=2,3,\\
|I_{2_m}|
&\lesssim \int_0^t P_{*}^2\|\partial_1^mD\eta\|_{H^{1/2}}^2\|\partial_t\partial_1^{7-m}Dv\|_{H^{1/2}}^2\,\diff s
+G, \quad m=4,5,\\
|I_{2_m}|
&\lesssim \int_0^t P_{*}^2\|\sqrt{\rho_0}\partial_1^mD\eta\|_{L^2}^2\|\partial_t\partial_1^{7-m}Dv\|_{L^\infty}^2\,\diff s
+G, \quad m=6,7
\end{aligned}
\end{equation}
with \(G=\int_0^t\|\sqrt{\rho_0}\partial_1^7Dv\|_{L^2}^2\,\diff s+\mathcal{R}\), which have the desired bound. 
Here one has used \eqref{EEP-6} in \(\eqref{EE-60}_1\).

Finally, for \(I_{3_m}\), one can use the observation that
\begin{equation*}
\begin{aligned}
Q_4(t)\lesssim P_{**}(t)
\end{aligned}
\end{equation*}
and a similar argument for \(I_{2_m}\) to show that \(I_{3_m}\) has the desired bound.

\end{proof}

\bigskip

Collecting all the inequalities in Propositions \ref{pr:ene-1}-\ref{pr:ene-7}, one concludes that
\begin{equation}\label{APBEN}
\begin{aligned}
E_{\text{en}}(t,v)
\leq M_0+CtP(\sup_{0\leq s\leq t}E^{1/2}(s,v))\quad \mathrm{for\ all}\ t\in [0,T].
\end{aligned}
\end{equation}

\section{Elliptic Estimates}\label{Elliptic Estimates}

Having energy estimates on the tangential derivatives in Section \ref{Energy Estimates}, we will use the
elliptic theory to gain regularities for the normal derivatives of the solutions in this section.  
Let \((v,\eta)\) be a solution to \eqref{eq:main-2} satisfying \eqref{a priori assumption}.

Throughout this section, \(Q_*(t)\) denotes a generic polynomial function of the argument \([M_0+Ct P(\sup_{0\leq s\leq t}E^{1/2}(s,v))]^{1/2}\), which may be different from line to line.

\subsection{Lower-order estimates}\label{LOP}

\begin{proposition}\label{pr:ell-1} It holds that
\begin{equation}\label{EEQ-1}
\begin{aligned}
\int_\Omega \rho_0^2|\partial_2^2v|^2(t)\,\diff x
\leq M_0+Ct P(\sup_{0\leq s\leq t}E^{1/2}(s,v)).
\end{aligned}
\end{equation}
\end{proposition}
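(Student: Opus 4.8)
The plan is to recover the normal second derivative $\partial_2^2 v$ from the elliptic structure of $\eqref{eq:main-3}$, using the nondegeneracy of $b^{22}$ from \eqref{b-bound} and the tangential bounds already obtained in \eqref{APBEN}. First I would rewrite $\eqref{eq:main-3}$ by isolating the pure normal second-order term. Expanding the divergence on the right-hand side,
\begin{equation*}
\begin{aligned}
\rho_0 J^{-2}b^{22}v^i,_{22}
&= \rho_0\partial_t v^i + (\rho_0^2 J^{-2}a_i^k),_{k}
- \partial_2(\rho_0 J^{-2}b^{2j})v^i,_{j}
- \partial_1(\rho_0 J^{-2}b^{1j})v^i,_{j}\\
&\quad - \rho_0 J^{-2}b^{11}v^i,_{11}
- 2\rho_0 J^{-2}b^{12}v^i,_{12},
\end{aligned}
\end{equation*}
so that, dividing by $\rho_0 J^{-2}b^{22}$ and multiplying by the weight $\rho_0$, one gets a pointwise identity expressing $\rho_0 \partial_2^2 v^i$ in terms of $\partial_t v$, $\partial_1 D v$, first derivatives of $v$, and coefficients built from $J^{-2}b^{kj}$, $J^{-2}a_i^k$ and their first spatial derivatives; the factor $(b^{22})^{-1}$ is bounded by \eqref{b-bound} and \eqref{J-bound}.

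Next I would take the $L^2$-norm of $\rho_0\partial_2^2 v$ using this identity. Each term is controlled as follows: $\|\rho_0\partial_t v\|_{L^2}$ and $\|\rho_0 D v\|_{L^2}$ are bounded by $E^{1/2}(t,v)$ directly from \eqref{HOEF-0}; the term $\|\rho_0\partial_1 D v\|_{L^2}$, and also $\rho_0\partial_1^2 v$ coming from $(\rho_0^2 J^{-2}a_i^k),_{k}$ after using \eqref{TE} to handle $\partial_1\rho_0$, are likewise bounded by $E^{1/2}(t,v)$. The coefficients $J^{-2}b^{kj}$, $J^{-2}a_i^k$ and their first derivatives are bounded in $L^\infty$ by $P_{**}(t)$ via Lemmas \ref{le:Jab-bound-2}, \ref{le:Jab-bound-6} together with the $L^\infty$-estimates of Corollary \ref{le:W-2} (in particular $\|D\eta\|_{L^\infty}$, $\|D^2\eta\|_{L^\infty}$-type bounds with their weights). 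The delicate point is the term involving $\partial_2(\rho_0 J^{-2}b^{2j})v^i,_{j}$: when the derivative falls on $\rho_0$ this is harmless because $\partial_2\rho_0$ is bounded, when it falls on $J^{-2}b^{2j}$ one uses $|D(J^{-2}b^{kj})|\lesssim Q_6(t)\lesssim P_*(t)$ from Lemma \ref{le:Jab-bound-6} and \eqref{Q-bound-3}, paired with $\|Dv\|_{L^\infty}\lesssim E^{1/2}(t,v)$. Since time does not enter directly, there is no Fundamental-Theorem-of-Calculus step here; instead one bounds everything pointwise in $t$ and uses \eqref{APBEN} together with \eqref{a priori assumption} to replace $E(t,v)$ by $M_0+CtP(\sup_{0\le s\le t}E^{1/2}(s,v))$, which is exactly the claimed bound $Q_*^2(t)$-type quantity, hence \eqref{EEQ-1}.

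The main obstacle I anticipate is purely bookkeeping rather than conceptual: one must make sure that every coefficient multiplying $\partial_1^2 v$, $\partial_1 D v$ or $\partial_t v$ carries (or can be given) enough powers of $\rho_0$ so that the product lies in $L^2$ with the right weight, and in particular that the lost derivative on $\rho_0$ in the term $(\rho_0^2 J^{-2}a_i^k),_{1}$ is absorbed via the tangential Hardy bound \eqref{TE} rather than costing a weight. Once the identity for $\rho_0\partial_2^2 v^i$ is written out carefully and the coefficient bounds of Section \ref{sec:Jab} are invoked term by term, the estimate follows from Cauchy--Schwarz and \eqref{APBEN}; no smallness of $T$ beyond that already fixed in \eqref{a priori assumption} is needed since this is a genuine elliptic (not energy) estimate and the right-hand side is controlled by lower-order quantities.
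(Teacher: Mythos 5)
Your overall framework---isolate $\rho_0 J^{-2}b^{22}v^i,_{22}$ from \eqref{eq:main-3} using the coercivity of $b^{22}$ and feed in the tangential energy bounds---is the right one and matches the paper's intent. But there is a genuine circularity in the step you dismiss as ``harmless.'' When you expand $\partial_k(\rho_0 J^{-2}b^{kj})v^i,_{j}$ and the derivative hits $\rho_0$ with $(k,j)=(2,2)$, you are left with $\partial_2(\rho_0)\,J^{-2}b^{22}\,v^i,_{2}$. Unlike $\partial_1(\rho_0)$, there is no gain from \eqref{TE} here, and unlike $b^{12},b^{21}$, the coefficient $b^{22}$ does \emph{not} vanish at $t=0$, so this term carries no smallness in $t$. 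Its $L^2$ norm requires $\|\partial_2 v\|_{L^2}$ with \emph{no} weight, and the only available route (Hardy/weighted Sobolev \eqref{ineq:weighted Sobolev-2}) gives $\|\partial_2 v\|_{L^2}^2 \lesssim \|\rho_0\partial_2 v\|_{L^2}^2 + \|\rho_0 D\partial_2 v\|_{L^2}^2$, and the last term contains $\|\rho_0\partial_2^2 v\|_{L^2}^2$---exactly the quantity you are estimating. Because the implicit Hardy constant and $\|\partial_2\rho_0\|_{L^\infty}$ need not be small, you cannot absorb this back into the left side, and falling back on the a priori bound $\sup E \le M_1$ produces a coefficient on $M_0$ larger than one, which does not close the bootstrap.

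The paper's proof avoids this by never separating $\rho_0 J^{-2}b^{22}\partial_2 v^i,_{2}$ from $\partial_2(\rho_0) J^{-2}b^{22}v^i,_{2}$ by a triangle inequality. It instead bounds the \emph{sum} $A+B := \rho_0 J^{-2}b^{22}\partial_2 v^i,_{2} + \partial_2(\rho_0)J^{-2}b^{22}v^i,_{2}$ in $L^2$ (this comes naturally from $(\rho_0 J^{-2}b^{22}v^i,_{2}),_2$), and then extracts $\|A\|_{L^2}^2$ by writing $\|A\|_{L^2}^2 = \|A+B\|_{L^2}^2 - \|B\|_{L^2}^2 - 2\int_\Omega AB\,\diff x$ and integrating by parts on the cross term (see \eqref{ELE-9}--\eqref{ELE-10}). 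The boundary term vanishes since $\rho_0=0$ on $\Gamma$, the $-\|B\|_{L^2}^2$ is cancelled exactly, and the remaining integral $K = \int_\Omega\rho_0\,\partial_2[\partial_2(\rho_0)(J^{-2}b^{22})^2](v^i,_{2})^2\diff x$ carries an extra factor of $\rho_0$, so it is controlled by $Q_*^2\int_\Omega\rho_0|Dv|^2\,\diff x$, which \eqref{EEP-2} bounds by the desired quantity. This sign/weight-gaining integration by parts is the key idea your proposal is missing; without it the argument does not close.
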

\begin{proof}
By Lemma \ref{le:W-1} and Corollary \ref{le:W-2}, one finds that
\begin{equation}\label{ELE-1}
\begin{aligned}
\|(\rho_0^2 J^{-2}a_i^k),_{k}\|_{L^2}^2
&\lesssim 
\|\partial_k(\rho_0^2) J^{-2}a_i^k\|_{L^2}^2+\|\rho_0^2 \partial_k(J^{-2}a_i^k)\|_{L^2}^2\\
&\lesssim \|D\eta\|_{L^2}^2+Q_*^2
\leq M_0+Ct P(\sup_{0\leq s\leq t}E^{1/2}(s,v)).
\end{aligned}
\end{equation}
It follows from \eqref{eq:main-3} that
\begin{equation}\label{ELE-2}
\begin{aligned}
\|(\rho_0J^{-2}b^{kj}v^i,_{j}),_{k}\|_{L^2}^2
&\leq \|\rho_0\partial_tv^i\|_{L^2}^2+\|(\rho_0^2 J^{-2}a_i^k),_{k}\|_{L^2}^2\\
&\leq M_0+Ct P(\sup_{0\leq s\leq t}E^{1/2}(s,v)),
\end{aligned}
\end{equation}
where \eqref{EEP-1} and \eqref{ELE-1} have been used.

Note that
\begin{equation}\label{ELE-3}
\begin{aligned}
(\rho_0J^{-2}b^{22}v^i,_{2}),_{2}
=(\rho_0J^{-2}b^{kj}v^i,_{j}),_{k}-\sum_{(k,j)\neq (2,2)}(\rho_0J^{-2}b^{kj}v^i,_{j}),_{k}
\end{aligned}
\end{equation}
and 
{\small \begin{equation}\label{ELE-3-add}
\begin{aligned}
\sum_{(k,j)\neq (2,2)}(\rho_0J^{-2}b^{kj}v^i,_{j}),_{k}
&=2\underbrace{\rho_0J^{-2}b^{21}\partial_2v^i,_{1}}_{I_1}+\sum_{(k,j)\neq (2,2)}\underbrace{\rho_0\partial_k(J^{-2}b^{kj})v^i,_{j}}_{I_2}\\
&\quad+\sum_{(k,j)\neq (2,2)}\underbrace{\partial_k(\rho_0)J^{-2}b^{kj}v^i,_{j}}_{I_3},
\end{aligned}
\end{equation}}
where one has used the fact that \((b^{kj})\) is a symmetric matrix. 

For \(I_1\) and \(I_2\), one may use \eqref{EEP-5} and \eqref{EEP-2} to estimate 
\begin{equation}\label{ELE-4}
\begin{aligned}
\|I_1\|_{L^2}^2
\lesssim Q_*^2\|\rho_0\partial_1Dv\|_{L^2}^2
\leq M_0+Ct P(\sup_{0\leq s\leq t}E^{1/2}(s,v))
\end{aligned}
\end{equation}
and
\begin{equation}\label{ELE-5}
\begin{aligned}
\|I_2\|_{L^2}^2
\lesssim Q_*^2\|\rho_0Dv\|_{L^2}^2
\leq M_0+Ct P(\sup_{0\leq s\leq t}E^{1/2}(s,v)),
\end{aligned}
\end{equation}
respectively.
For \(I_3\), one shall consider two cases: 
when \((k,j)=(1,2)\)
\begin{equation}\label{ELE-6}
\begin{aligned}
\|I_3\|_{L^2}^2\lesssim Q_*^2\bigg\|\frac{\partial_1(\rho_0)}{\rho_0}\bigg\|_{L^\infty}^2\|\rho_0Dv\|_{L^2}^2
\leq M_0+Ct P(\sup_{0\leq s\leq t}E^{1/2}(s,v)),
\end{aligned}
\end{equation}
where one has used \eqref{TE} and \eqref{EEP-2}, and when \((k,j)=(2,1)\)
\begin{equation}\label{ELE-7}
\begin{aligned}
\|I_3\|_{L^2}^2\lesssim Q_*^2\|\partial_1v\|_{L^2}^2
\leq M_0+Ct P(\sup_{0\leq s\leq t}E^{1/2}(s,v)),
\end{aligned}
\end{equation}
where one has used \eqref{ineq:weighted Sobolev-2} and \eqref{EEP-5} to estimate
\begin{equation*}
\begin{aligned}
\|\partial_1v\|_{L^2}^2
\lesssim \|\rho_0\partial_1v\|_{L^2}^2+\|\rho_0\partial_1Dv\|_{L^2}^2
\leq M_0+Ct P(\sup_{0\leq s\leq t}E^{1/2}(s,v)).
\end{aligned}
\end{equation*}

It follows from \eqref{ELE-2}-\eqref{ELE-7} that
\begin{equation}\label{ELE-7-add}
\begin{aligned}
\|(\rho_0J^{-2}b^{22}v^i,_{2}),_{2}\|_{L^2}^2
\leq M_0+CtP(\sup_{0\leq s\leq t}E^{1/2}(s,v)).
\end{aligned}
\end{equation}

Note that
\begin{equation*}
\begin{aligned}
\rho_0J^{-2}b^{22}\partial_2v^i,_{2}+\partial_2(\rho_0)J^{-2}b^{22}v^i,_{2}
=(\rho_0J^{-2}b^{22}v^i,_{2}),_{2}-\rho_0\partial_2(J^{-2}b^{22})v^i,_{2}.
\end{aligned}
\end{equation*}
Clearly, the last term can be dealt as \(I_2\). This fact together with \eqref{ELE-7-add} gives 
\begin{equation}\label{ELE-8}
\begin{aligned}
\|\rho_0J^{-2}b^{22}\partial_2v^i,_{2}+\partial_2(\rho_0)J^{-2}b^{22}v^i,_{2}\|_{L^2}^2
\leq M_0+CtP(\sup_{0\leq s\leq t}E^{1/2}(s,v)).
\end{aligned}
\end{equation}

To get an estimate of \(\rho_0J^{-2}b^{22}\partial_2v^i,_{2}\) in \(L^2\), one first notices that
\begin{equation}\label{ELE-9}
\begin{aligned}
\|\rho_0J^{-2}b^{22}\partial_2v^i,_{2}\|_{L^2}^2
&=\|\rho_0J^{-2}b^{22}\partial_2v^i,_{2}+\partial_2(\rho_0)J^{-2}b^{22}v^i,_{2}\|_{L^2}^2\\
&\quad-\|\partial_2(\rho_0)J^{-2}b^{22}v^i,_{2}\|_{L^2}^2\\
&\quad-\int_\Omega\rho_0\partial_2(\rho_0)(J^{-2}b^{22})^2\partial_2[(v^i,_{2})^2]\,\diff x,
\end{aligned}
\end{equation}
and then integrates by parts on the last term of \eqref{ELE-9} to deduce that
\begin{equation}\label{ELE-10}
\begin{aligned}
\|\rho_0J^{-2}b^{22}\partial_2v^i,_{2}\|_{L^2}^2
&=\|\rho_0J^{-2}b^{22}\partial_2v^i,_{2}+\partial_2(\rho_0)J^{-2}b^{22}v^i,_{2}\|_{L^2}^2\\
&\quad+
\underline{\int_\Omega\rho_0\partial_2[\partial_2(\rho_0)(J^{-2}b^{22})^2](v^i,_{2})^2\,\diff x}_{:=K}\\
&\leq M_0+Ct P(\sup_{0\leq s\leq t}E^{1/2}(s,v)),
\end{aligned}
\end{equation}
where one has used \eqref{ELE-8} and the estimate
\begin{equation*}
\begin{aligned}
|K|
\lesssim Q_{*}^2\int_\Omega\rho_0|Dv|^2\,\diff x
\leq M_0+Ct P(\sup_{0\leq s\leq t}E^{1/2}(s,v)),
\end{aligned}
\end{equation*}
in which Corollary \ref{le:W-2} and \eqref{EEP-2} have been used. 

Hence \eqref{EEQ-1} follows from \eqref{J-bound}, \eqref{b-bound} and \eqref{ELE-10}.

\end{proof}

\subsection{Intermediated estimates-I}\label{IP-I}

\subsubsection{Estimates on \(\partial_1\partial_2^2v\) and \(\partial_2^3v\)} \label{P2}
\begin{proposition}\label{pr:ell-2}
 It holds that
\begin{equation}\label{EEQ-2}
\begin{aligned}
\sum_{l=2}^3\int_\Omega \rho_0^l|\partial_1^{3-l}\partial_2^{l}v|^2(t)\,\diff x
\leq M_0+Ct P(\sup_{0\leq s\leq t}E^{1/2}(s,v)).
\end{aligned}
\end{equation}

\end{proposition}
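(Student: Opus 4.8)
The plan is to bootstrap from the lower-order estimate \eqref{EEQ-1} in Proposition \ref{pr:ell-1}, applying the same elliptic strategy but now with one additional derivative, which is either a tangential $\partial_1$ or a normal $\partial_2$. For the term $\partial_1\partial_2^2 v$ (the case $l=2$), I would apply $\partial_1$ to the equation \eqref{eq:main-3}, take the $L^2$-inner product against a suitable weighted test object, or more directly, isolate $(\rho_0 J^{-2}b^{22}v^i,_2),_2$ from the equation as in \eqref{ELE-3}--\eqref{ELE-3-add}, and then differentiate once in $x_1$. The key structural fact, exactly as in Proposition \ref{pr:ell-1}, is that $b^{22}$ is bounded below by $1/5$ (see \eqref{b-bound}), so $\rho_0 J^{-2}b^{22}\partial_1\partial_2^2 v^i$ can be controlled in $L^2$ once one controls $\partial_1[(\rho_0 J^{-2}b^{22}v^i,_2),_2]$ together with all the commutator terms. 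Every term produced by $\partial_1$ falling on a coefficient $J^{-2}b^{kj}$ is handled by Lemmas \ref{le:Jab-bound-2} and \ref{le:Jab-bound-6}, which bound $\partial_1(J^{-2}b^{kj})$ by $Q_2(t)\lesssim P_*(t)$, while $\partial_1$ falling on $\rho_0$ is absorbed via the tangential Hardy bound \eqref{TE}. The remaining tangential-normal mixed derivatives that appear, such as $\rho_0\partial_1\partial_2 Dv$ and $\rho_0\partial_1^2 Dv$, are exactly the quantities bounded in the type-I and type-II energy estimates \eqref{EEP-5}, \eqref{EEP-add-1}, \eqref{EEP-add-2}, so each contributes the desired bound $M_0+CtP(\sup_{0\le s\le t}E^{1/2}(s,v))$.

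For the term $\partial_2^3 v$ (the case $l=3$), I would instead differentiate the already-derived relation for $\rho_0 J^{-2}b^{22}\partial_2 v^i,_2$ in the normal direction $\partial_2$. Writing
\begin{equation*}
(\rho_0 J^{-2}b^{22}\partial_2^2 v^i) = \partial_2(\rho_0 J^{-2}b^{22}\partial_2 v^i,_2) - \partial_2(\rho_0 J^{-2}b^{22})\,\partial_2 v^i,_2,
\end{equation*}
one sees that a further normal derivative of $\rho_0 J^{-2}b^{22}\partial_2 v^i,_2$ produces $\rho_0\partial_2^3 v^i$ weighted correctly (note each $\partial_2$ indeed costs one power of $\rho_0$ in the energy functional \eqref{HOEF-0}, consistent with the weight $\rho_0^3$ on $\partial_2^3 v$ in \eqref{EEQ-2}). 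To estimate $\partial_2(\rho_0 J^{-2}b^{22}\partial_2 v^i,_2)$ in $L^2$ with a weight $\rho_0$, I would go back to \eqref{eq:main-3}, apply $\partial_2$ to the whole equation (not just to the $(2,2)$-component), and solve for $\rho_0\partial_2^3 v^i$ in terms of $\rho_0\partial_t\partial_2 v^i$, $\partial_2$-derivatives of $(\rho_0^2 J^{-2}a_i^k),_k$, and lower-order terms; the crucial point is that the coercivity from $b^{22}$ is preserved under one normal differentiation up to commutator terms that carry the right weights. The $\partial_2$-derivatives of the coefficients are controlled by the $\partial_t D$-free parts of Lemmas \ref{le:Jab-bound-5} and \ref{le:Jab-bound-6}, and terms like $\rho_0^2\partial_2^2 v$ are fed back in from \eqref{EEQ-1}.

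Throughout, I would use the weighted Sobolev inequalities \eqref{ineq:weighted Sobolev-2} to trade a bare $\partial_2$-derivative of $v$ for a $\rho_0$-weighted one at the next order, the $H^{1/2}$-product inequality \eqref{Weighted-6} together with \eqref{Weighted-7}--\eqref{Weighted-9-1} for the genuinely mixed horizontal-normal products that cannot be handled by a crude $L^\infty\cdot L^2$ split, and the a priori bounds of Section \ref{sec:Jab} uniformly to absorb coefficient derivatives into $P_*(t)$ or $P_{**}(t)$; time integration via \eqref{Q-bound-4}, \eqref{Q-bound-7} then converts these into the factor $Ct$. The main obstacle I anticipate is the bookkeeping for the case $l=3$: one must verify that, after two normal differentiations of the relation for $\rho_0 J^{-2}b^{22}\partial_2 v^i,_2$, no term appears that requires an estimate on $\partial_2^3 v$ with an insufficient weight — that is, that the degenerate elliptic structure ``$\partial_2$ costs $\sqrt{\rho_0}$'' is genuinely respected at this order, and that the worst commutator, involving $\partial_2^2(\rho_0)$ or $\partial_2(J^{-2}b^{22})\partial_2^2 v$, is closable using only the quantities already bounded in Sections \ref{Energy Estimates} and \ref{Elliptic Estimates} (in particular \eqref{EEQ-1}). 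Once the weight accounting is checked, the estimates are routine applications of Cauchy's inequality with a small absorbing constant as in Proposition \ref{pr:ell-1}.
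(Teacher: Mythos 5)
You follow the paper's structure for both \(l=2\) and \(l=3\): differentiate \eqref{eq:main-3} tangentially or normally, isolate the \((2,2)\)-block of the dissipation, invoke the coercivity \eqref{b-bound}, and handle coefficient commutators via Section~\ref{sec:Jab} and the tangential Hardy estimate \eqref{TE}, feeding back \eqref{EEQ-1} and the energy estimates. That scaffolding is correct and matches the paper's.

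The gap is the closing step of the \(l=3\) case, which you characterize as ``routine applications of Cauchy's inequality with a small absorbing constant.'' After peeling, the equation reduces (see \eqref{ELE-28}) to
\begin{equation*}
\rho_0J^{-2}b^{22}\partial_2^2v^i,_{2}+2\,\partial_2(\rho_0)J^{-2}b^{22}\partial_2v^i,_{2}
=\mathcal{G}_2-\rho_0\partial_2(J^{-2}b^{22})\partial_2v^i,_{2}
\end{equation*}
with \(\|\mathcal{G}_2\|_{L^2}\) bounded. Note the coefficient \(2\): the term \(\partial_2(\rho_0)J^{-2}b^{22}\partial_2v^i,_{2}\) arises once from \((\rho_0J^{-2}b^{22}\partial_2v^i,_{2}),_{2}\) and once from the \((k,j)=(2,2)\) piece of the commutator \([\partial_2(\rho_0)J^{-2}b^{kj}v^i,_{j}],_{k}\), and neither copy can be thrown to the right-hand side: after multiplying by \(\sqrt{\rho_0}\), ordinary Cauchy with an absorbing constant would require \(\|\sqrt{\rho_0}\,\partial_2(\rho_0)\partial_2^2v\|_{L^2}\sim\|\sqrt{\rho_0}\,\partial_2^2v\|_{L^2}\), which \eqref{EEQ-1} does \emph{not} provide (it only controls \(\|\rho_0\partial_2^2v\|_{L^2}\)). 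The step that actually closes is the integration by parts \eqref{ELE-29}: expand \(\|A+B\|_{L^2}^2=\|A\|_{L^2}^2+\|B\|_{L^2}^2+2(A,B)\) with \(A\propto\rho_0^{3/2}\partial_2^3v^i\) and \(B\propto\sqrt{\rho_0}\,\partial_2(\rho_0)\partial_2^2v^i\), and integrate the cross term in \(x_2\) so that the only remaining quadratic is \(\int_\Omega\rho_0^2(\partial_2v^i,_{2})^2\,\diff x\), which \eqref{EEQ-1} controls. This is the same device already used in \eqref{ELE-9}--\eqref{ELE-10} for \(\partial_2^2v\), but it is not Cauchy with an absorbing constant, and without it the weight accounting you rightly flag as the obstacle does not close. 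Relatedly, the worst commutator is \(\partial_2(\rho_0)\partial_2^2v\) itself, not \(\partial_2(J^{-2}b^{22})\partial_2^2v\) (which carries an extra factor of \(\rho_0\) in front and is harmless) nor \(\partial_2^2(\rho_0)\) (which only appears after the integration by parts and is bounded since \(\rho_0\in H^7\)).
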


\begin{proof}

By Lemma \ref{le:W-1}, Corollary \ref{le:W-2} and \eqref{Jab-bound-12}, one deduces that
\begin{equation*}
\begin{aligned}
Q_6(t)\leq Q_*(t).
\end{aligned}
\end{equation*}
This, together with \eqref{Jab-bound-11}, leads to
\begin{equation}\label{ELE-11}
\begin{aligned}
\|D(\rho_0^2 J^{-2}a_i^k),_{k}\|_{L^2}^2
&\lesssim 
\|D^2(\rho_0^2)J^{-2}a_i^k\|_{L^2}^2+\|D(\rho_0^2) D(J^{-2}a_i^k)\|_{L^2}^2\\
&\quad+\|\rho_0^2 D^2(J^{-2}a_i^k)\|_{L^2}^2\\
&\lesssim \|D\eta\|_{L^\infty}^2+Q_{*}^2+Q_{*}^2(\|D^3\eta\|_{L^2}+1)^2\\
&\leq M_0+Ct P(\sup_{0\leq s\leq t}E^{1/2}(s,v)).
\end{aligned}
\end{equation}

We first prove \eqref{EEQ-2} for \(l=2\). 
Applying \(\partial_1\) to \eqref{eq:main-3} yields
\begin{equation}\label{ELE-12}
\begin{aligned}
\|\partial_1(\rho_0J^{-2}b^{kj}v^i,_{j}),_{k}\|_{L^2}^2
&\leq \|\partial_1(\rho_0\partial_tv^i)\|_{L^2}^2+\|\partial_1(\rho_0^2 J^{-2}a_i^k),_{k}\|_{L^2}^2\\
&\lesssim \|\rho_0\partial_t\partial_1v\|_{L^2}^2+\|D(\rho_0^2 J^{-2}a_i^k),_{k}\|_{L^2}^2\\
&\leq M_0+Ct P(\sup_{0\leq s\leq t}E^{1/2}(s,v)),
\end{aligned}
\end{equation}
where \eqref{TE}, \eqref{EEP-2} and \eqref{ELE-11} have been used.

By \eqref{TE}, one may write
\begin{equation}\label{ELE-13}
\begin{aligned}
&(\rho_0J^{-2}b^{kj}\partial_1v^i,_{j}),_{k}
=\partial_1(\rho_0J^{-2}b^{kj}v^i,_{j}),_{k}-[\partial_1(\rho_0J^{-2}b^{kj})v^i,_{j}],_{k}\\
&=\partial_1(\rho_0J^{-2}b^{kj}v^i,_{j}),_{k}-[\rho_0\partial_1(J^{-2}b^{kj})v^i,_{j}],_{k}+\text{l.o.t.}
\end{aligned}
\end{equation}
with
\begin{equation}\label{ELE-13-add}
	\begin{aligned}
		{[\rho_0\partial_1(J^{-2}b^{kj})v^i,_{j}]},_{k}
		&=\underbrace{\partial_k(\rho_0)\partial_1(J^{-2}b^{kj})v^i,_{j}}_{I_4}+\underbrace{\rho_0\partial_1\partial_k(J^{-2}b^{kj})v^i,_{j}}_{I_5}\\
		&\quad+\underbrace{\rho_0\partial_1(J^{-2}b^{kj})\partial_kv^i,_{j}}_{I_6}.
	\end{aligned}
\end{equation}

The RHS terms in \eqref{ELE-13-add} can be estimated as follows: 
\begin{equation}\label{ELE-14}
\begin{aligned}
\|I_4\|_{L^2}^2+\|I_5\|_{L^2}^2
\lesssim Q_{*}^2\|Dv\|_{L^2}^2
\leq M_0+Ct P(\sup_{0\leq s\leq t}E^{1/2}(s,v))
\end{aligned}
\end{equation}
and
\begin{equation}\label{ELE-15}
\begin{aligned}
\|I_6\|_{L^2}^2
\lesssim Q_{*}^2\|\rho_0D^2v\|_{L^2}^2
\leq M_0+Ct P(\sup_{0\leq s\leq t}E^{1/2}(s,v)),
\end{aligned}
\end{equation}
where one has used \eqref{EEQ-1} and 
\begin{equation*}
\begin{aligned}
\|Dv\|_{L^2}^2
\lesssim \|\rho_0Dv\|_{L^2}^2+\|\rho_0D^2v\|_{L^2}^2
\leq M_0+Ct P(\sup_{0\leq s\leq t}E^{1/2}(s,v)).
\end{aligned}
\end{equation*}

It follows from \eqref{ELE-13}-\eqref{ELE-15} that
\begin{equation*}
\begin{aligned}
\|[\rho_0\partial_1(J^{-2}b^{kj})v^i,_{j}],_{k}\|_{L^2}^2
\leq M_0+Ct P(\sup_{0\leq s\leq t}E^{1/2}(s,v)),
\end{aligned}
\end{equation*}
which, together with \eqref{ELE-12}, yields
\begin{equation}\label{ELE-16}
\begin{aligned}
\|(\rho_0J^{-2}b^{kj}\partial_1v^i,_{j}),_{k}\|_{L^2}^2
\leq M_0+Ct P(\sup_{0\leq s\leq t}E^{1/2}(s,v)).
\end{aligned}
\end{equation}

Similar to \eqref{ELE-3} and \eqref{ELE-3-add}, one may further write
\begin{equation}\label{ELE-17}
\begin{aligned}
(\rho_0J^{-2}b^{22}\partial_1v^i,_{2}),_{2}
=(\rho_0J^{-2}b^{kj}\partial_1v^i,_{j}),_{k}-\sum_{(k,j)\neq (2,2)}(\rho_0J^{-2}b^{kj}\partial_1v^i,_{j}),_{k}
\end{aligned}
\end{equation}
with
{\small\begin{equation}\label{ELE-18}
\begin{aligned}
\sum_{(k,j)\neq (2,2)}(\rho_0J^{-2}b^{kj}\partial_1v^i,_{j}),_{k}
&=2\underbrace{\rho_0J^{-2}b^{21}\partial_1\partial_2v^i,_{1}}_{I_7}+\sum_{(k,j)\neq (2,2)}\underbrace{\rho_0\partial_k(J^{-2}b^{kj})\partial_1v^i,_{j}}_{I_8}\\
&\quad+\sum_{(k,j)\neq (2,2)}\underbrace{\partial_k(\rho_0)J^{-2}b^{kj}\partial_1v^i,_{j}}_{I_9}.
\end{aligned}
\end{equation}}

 First, \(I_8\) can be handled as \(I_6\) (since  only the structure of \(D(J^{-2}b^{kj})\) was used in \eqref{ELE-15}). 
For \(I_7\), one may use \eqref{EEP-add-1} to estimate  
\begin{equation}\label{ELE-19}
\begin{aligned}
\|I_7\|_{L^2}^2\lesssim Q_{*}^2\|\rho_0\partial_1^2Dv\|_{L^2}^2\leq M_0+Ct P(\sup_{0\leq s\leq t}E^{1/2}(s,v)).
\end{aligned}
\end{equation}
For \(I_9\), when \((k,j)=(1,2)\)
\begin{equation}\label{ELE-20}
\begin{aligned}
\|I_9\|_{L^2}^2\lesssim Q_{*}^2\|\rho_0\partial_1Dv\|_{L^2}^2
\leq M_0+Ct P(\sup_{0\leq s\leq t}E^{1/2}(s,v))
\end{aligned}
\end{equation}
and when \((k,j)=(2,1)\)
\begin{equation}\label{ELE-21}
\begin{aligned}
\|I_9\|_{L^2}^2
\lesssim Q_{*}^2(\|\rho_0\partial_1^2v\|_{L^2}^2+\|\rho_0\partial_1^2Dv\|_{L^2}^2)
\leq M_0+Ct P(\sup_{0\leq s\leq t}E^{1/2}(s,v)).
\end{aligned}
\end{equation}

It follows from \eqref{ELE-16}-\eqref{ELE-21} that
\begin{equation}\label{ELE-22}
\begin{aligned}
\|(\rho_0J^{-2}b^{22}\partial_1v^i,_{2}),_{2}\|_{L^2}^2
\leq M_0+Ct P(\sup_{0\leq s\leq t}E^{1/2}(s,v)).
\end{aligned}
\end{equation}

Note that
\begin{equation*}
\begin{aligned}
&\rho_0J^{-2}b^{22}\partial_1\partial_2v^i,_{2}+\partial_2(\rho_0)J^{-2}b^{22}\partial_1v^i,_{2}\\
&=(\rho_0J^{-2}b^{22}\partial_1v^i,_{2}),_{2}-\rho_0\partial_2(J^{-2}b^{22})\partial_1v^i,_{2}.
\end{aligned}
\end{equation*}
Since the last term can be estimated as \(I_6\), which together with \eqref{ELE-22} yields   
\begin{equation}\label{ELE-23}
\begin{aligned}
\|\rho_0J^{-2}b^{22}\partial_1\partial_2v^i,_{2}+\partial_2(\rho_0)J^{-2}b^{22}\partial_1v^i,_{2}\|_{L^2}^2
\leq M_0+CtP(\sup_{0\leq s\leq t}E^{1/2}(s,v)).
\end{aligned}
\end{equation}

Then, by \eqref{EEP-5} and \eqref{ELE-23}, one integrates by parts to deduce that
\begin{equation}\label{ELE-24}
\begin{aligned}
&\|\rho_0J^{-2}b^{22}\partial_1\partial_2v^i,_{2}\|_{L^2}^2\\
&=\|\rho_0J^{-2}b^{22}\partial_1\partial_2v^i,_{2}+\partial_2(\rho_0)J^{-2}b^{22}\partial_1v^i,_{2}\|_{L^2}^2\\
&\quad+
\int_\Omega\rho_0\partial_2[\partial_2(\rho_0)(J^{-2}b^{22})^2](\partial_1v^i,_{2})^2\,\diff x\\
&\lesssim\|\rho_0J^{-2}b^{22}\partial_1\partial_2v^i,_{2}+\partial_2(\rho_0)J^{-2}b^{22}\partial_1v^i,_{2}\|_{L^2}^2\\
&\quad+ Q_{*}^2
\int_\Omega\rho_0|\partial_1Dv|^2\,\diff x
\leq M_0+Ct P(\sup_{0\leq s\leq t}E^{1/2}(s,v)).
\end{aligned}
\end{equation}

Finally, \eqref{EEQ-2} for \(l=2\) is a consequence of \eqref{J-bound}, \eqref{b-bound} and \eqref{ELE-24}.  \\

Now, we show \eqref{EEQ-2} for \(l=3\).  Applying \(\partial_2\) to \eqref{eq:main-3} and using \eqref{ELE-11} yield
\begin{equation}\label{ELE-25}
\begin{aligned}
\|\partial_2(\rho_0J^{-2}b^{kj}v^i,_{j}),_{k}\|_{L^2}^2
&\lesssim \|\partial_tv^i\|_{L^2}^2+\|\rho_0\partial_tDv^i\|_{L^2}^2+\|D(\rho_0^2 J^{-2}a_i^k),_{k}\|_{L^2}^2\\
&\lesssim \|\rho_0\partial_tv\|_{L^2}^2+\|\rho_0\partial_tDv\|_{L^2}^2+\|D(\rho_0^2 J^{-2}a_i^k),_{k}\|_{L^2}^2\\
&\leq M_0+Ct P(\sup_{0\leq s\leq t}E^{1/2}(s,v)).
\end{aligned}
\end{equation}

As for \(\partial_1\partial_2^2v\), we first note that
\begin{equation*}
\begin{aligned}
(\rho_0J^{-2}b^{kj}\partial_2v^i,_{j}),_{k}
&=\partial_2(\rho_0J^{-2}b^{kj}v^i,_{j}),_{k}-[\rho_0\partial_2(J^{-2}b^{kj})v^i,_{j}],_{k}\\
&\quad-[\partial_2(\rho_0)J^{-2}b^{kj}v^i,_{j}],_{k}
\end{aligned}
\end{equation*}
with
\begin{equation*}
\begin{aligned}
{[\rho_0\partial_2(J^{-2}b^{kj})v^i,_{j}]},_{k}
&=\partial_k(\rho_0)\partial_2(J^{-2}b^{kj})v^i,_{j}+\rho_0\partial_2\partial_k(J^{-2}b^{kj})v^i,_{j}\\
&\quad+\rho_0\partial_2(J^{-2}b^{kj})\partial_kv^i,_{j}
\end{aligned}
\end{equation*}
and
\begin{equation*}
\begin{aligned}
{[\partial_2(\rho_0)J^{-2}b^{kj}v^i,_{j}]},_{k}&=\underbrace{\partial_k\partial_2(\rho_0)J^{-2}b^{kj}v^i,_{j}}_{I_{10}}+\underbrace{\partial_2(\rho_0)\partial_k(J^{-2}b^{kj})v^i,_{j}}_{I_{11}}\\
&\quad+\underbrace{\partial_2(\rho_0)J^{-2}b^{kj}\partial_kv^i,_{j}}_{I_{12}}.
\end{aligned}
\end{equation*}

Clearly, \([\rho_0\partial_2(J^{-2}b^{kj})v^i,_{j}],_{k}\) can be dealt as \([\rho_0\partial_1(J^{-2}b^{kj})v^i,_{j}],_{k}\) (since only the structure of \(D(J^{-2}b^{kj})\) and \(D^2(J^{-2}b^{kj})\) in \eqref{ELE-14}-\eqref{ELE-15} has been used). Similarly, \(I_{10}\) and \(I_{11}\) can be handled as \(I_4\). 

It remains to analyze \(I_{12}\). When \((k,j)\neq (2,2)\), it holds that
\begin{equation*}
\begin{aligned}
\|I_{12}\|_{L^2}^2\lesssim Q_{*}^2\|\partial_1Dv\|_{L^2}^2
\leq M_0+Ct P(\sup_{0\leq s\leq t}E^{1/2}(s,v)),
\end{aligned}
\end{equation*}
where one has used \eqref{ineq:weighted Sobolev-2} and \eqref{EEQ-2} for \(l=2\) to estimate 
\begin{equation*}
\begin{aligned}
\|\partial_1Dv\|_{L^2}^2
\lesssim \|\rho_0\partial_1Dv\|_{L^2}^2+\|\rho_0\partial_1D^2v\|_{L^2}^2
\leq M_0+Ct P(\sup_{0\leq s\leq t}E^{1/2}(s,v)).
\end{aligned}
\end{equation*}

Collecting all the cases above and using \eqref{ELE-25} lead to
\begin{equation}\label{ELE-26}
\begin{aligned}
(\rho_0J^{-2}b^{kj}\partial_2v^i,_{j}),_{k}+\partial_2(\rho_0)J^{-2}b^{22}\partial_2v^i,_{2}
=\mathcal{G}_1
\end{aligned}
\end{equation}
with 
\begin{equation*}
\begin{aligned}
\|\mathcal{G}_1\|_{L^2}^2
\leq M_0+Ct P(\sup_{0\leq s\leq t}E^{1/2}(s,v)).
\end{aligned}
\end{equation*}

Next, one further writes \eqref{ELE-26} as 
\begin{equation*}
\begin{aligned}
(\rho_0J^{-2}b^{22}\partial_2v^i,_{2}),_{2}+\partial_2(\rho_0)J^{-2}b^{22}\partial_2v^i,_{2}
=\mathcal{G}_1-\sum_{(k,j)\neq (2,2)}(\rho_0J^{-2}b^{kj}\partial_2v^i,_{j}),_{k}
\end{aligned}
\end{equation*}
with
{\small\begin{equation*}
\begin{aligned}
\sum_{(k,j)\neq (2,2)}(\rho_0J^{-2}b^{kj}\partial_2v^i,_{j}),_{k}
&=2\underbrace{\rho_0J^{-2}b^{21}\partial_2^2v^i,_{1}}_{I_{13}}+\sum_{(k,j)\neq (2,2)}\underbrace{\rho_0\partial_k(J^{-2}b^{kj})\partial_2v^i,_{j}}_{I_{14}}\\
&\quad+\sum_{(k,j)\neq (2,2)}\underbrace{\partial_k(\rho_0)J^{-2}b^{kj}\partial_2v^i,_{j}}_{I_{15}}.
\end{aligned}
\end{equation*}}

 Note that \(I_{14}\) can be handled as \(I_6\). For \(I_{13}\), \eqref{EEQ-2} for \(l=2\) implies that 
\begin{equation*}
\begin{aligned}
\|I_{13}\|_{L^2}^2\lesssim Q_{*}^2\|\rho_0\partial_1D^2v\|_{L^2}^2\leq M_0+Ct P(\sup_{0\leq s\leq t}E^{1/2}(s,v)).
\end{aligned}
\end{equation*}
For \(I_{15}\), if \((k,j)=(1,2)\), then
\begin{equation*}
\begin{aligned}
\|I_{15}\|_{L^2}^2\lesssim Q_{*}^2\|\rho_0D^2v\|_{L^2}^2
\leq M_0+Ct P(\sup_{0\leq s\leq t}E^{1/2}(s,v)),
\end{aligned}
\end{equation*}
while for \((k,j)=(2,1)\), it holds that
\begin{equation*}
\begin{aligned}
\|I_{15}\|_{L^2}^2
\lesssim Q_{*}^2\|\partial_1Dv\|_{L^2}^2
\leq M_0+Ct P(\sup_{0\leq s\leq t}E^{1/2}(s,v)).
\end{aligned}
\end{equation*}

Consequently, 
\begin{equation*}
\begin{aligned}
(\rho_0J^{-2}b^{22}\partial_2v^i,_{2}),_{2}+\partial_2(\rho_0)J^{-2}b^{22}\partial_2v^i,_{2}
=\mathcal{G}_2
\end{aligned}
\end{equation*}
with 
\begin{equation*}
\begin{aligned}
\|\mathcal{G}_2\|_{L^2}^2
\leq M_0+Ct P(\sup_{0\leq s\leq t}E^{1/2}(s,v)),
\end{aligned}
\end{equation*}
which can be rewritten as 
\begin{equation*}
\begin{aligned}
\rho_0J^{-2}b^{22}\partial_2^2v^i,_{2}+2\partial_2(\rho_0)J^{-2}b^{22}\partial_2v^i,_{2}
=\mathcal{G}_2-\rho_0\partial_2(J^{-2}b^{22})\partial_2v^i,_{2}.
\end{aligned}
\end{equation*}
Note that the last term has the desired bound. Hence, 
\begin{equation}\label{ELE-28}
\begin{aligned}
\|\rho_0J^{-2}b^{22}\partial_2^2v^i,_{2}+2\partial_2(\rho_0)J^{-2}b^{22}\partial_2v^i,_{2}\|_{L^2}^2
\leq M_0+CtP(\sup_{0\leq s\leq t}E^{1/2}(s,v)).
\end{aligned}
\end{equation}

Then, one uses a weight \(\sqrt{\rho_0}\)
and integrates by parts to deduce that
\begin{equation}\label{ELE-29}
\begin{aligned}
&\|\rho_0^{3/2}J^{-2}b^{22}\partial_2^2v^i,_{2}\|_{L^2}^2\\
&=\|\rho_0^{3/2}J^{-2}b^{22}\partial_2^2v^i,_{2}+2\partial_2(\rho_0)J^{-2}b^{22}\partial_2v^i,_{2}\|_{L^2}^2\\
&\quad+
2\int_\Omega\rho_0^2\partial_2[\partial_2(\rho_0)(J^{-2}b^{22})^2](\partial_2v^i,_{2})^2\,\diff x\\
&\lesssim\|\rho_0J^{-2}b^{22}\partial_2^2v^i,_{2}+2\partial_2(\rho_0)J^{-2}b^{22}\partial_2v^i,_{2}\|_{L^2}^2\\
&\quad+ Q_{*}^2
\int_\Omega\rho_0^2|\partial_2^2v|^2\,\diff x
\leq M_0+Ct P(\sup_{0\leq s\leq t}E^{1/2}(s,v)),
\end{aligned}
\end{equation}
where \eqref{ELE-28} and \eqref{EEQ-1} have been used.

Finally,  \eqref{EEQ-2} for \(l=3\) follows from \eqref{J-bound}, \eqref{b-bound} and \eqref{ELE-29}.

\end{proof}

\subsubsection{Estimates on \(\partial_t\partial_2^2v\)}\label{TN} 
\begin{proposition}\label{pr:ell-3} It holds that
\begin{equation}\label{EEQ-4}
\begin{aligned}
\int_\Omega \rho_0^2|\partial_t\partial_2^2v|^2(t)\,\diff x
\leq M_0+Ct P(\sup_{0\leq s\leq t}E^{1/2}(s,v)).
\end{aligned}
\end{equation}
\end{proposition}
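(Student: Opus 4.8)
The plan is to follow the proof of Proposition~\ref{pr:ell-1} almost verbatim, with $v$ replaced throughout by $\partial_t v$, paying attention only to the extra terms produced when $\partial_t$ lands on the coefficients $J^{-2}b^{kj}$ and $J^{-2}a_i^k$. First I would differentiate \eqref{eq:main-3} in $t$; since $\rho_0$ is time-independent this gives
\begin{equation*}
\rho_0\partial_t^2v^i+[\rho_0^2\partial_t(J^{-2}a_i^k)],_{k}=(\rho_0J^{-2}b^{kj}\partial_tv^i,_{j}),_{k}+[\rho_0\partial_t(J^{-2}b^{kj})v^i,_{j}],_{k}.
\end{equation*}
Using Lemmas~\ref{le:Jab-bound-1} and \ref{le:Jab-bound-5} to control $\partial_t(J^{-2}a_i^k)$, $\partial_t(J^{-2}b^{kj})$ and their first spatial derivatives in $L^\infty$ by $Q_*$, together with \eqref{TE}, the $L^\infty$-estimates of Subsection~\ref{sub:W-1}, the bound $\|\rho_0\partial_t^2v\|_{L^2}^2\lesssim M_0+CtP(\sup_{0\leq s\leq t}E^{1/2}(s,v))$ from \eqref{EEP-1}, the bound $\|D^2v\|_{L^2}\lesssim E^{1/2}$ from Lemma~\ref{le:W-1}, and the energy estimate \eqref{EEP-2}, I expect to show that $[\rho_0^2\partial_t(J^{-2}a_i^k)],_{k}$ and $[\rho_0\partial_t(J^{-2}b^{kj})v^i,_{j}],_{k}$ both have $L^2$-norm bounded by $M_0+CtP(\sup_{0\leq s\leq t}E^{1/2}(s,v))$, hence so does $(\rho_0J^{-2}b^{kj}\partial_tv^i,_{j}),_{k}$.

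Next I would isolate the purely normal component exactly as in \eqref{ELE-3}--\eqref{ELE-3-add}, writing $(\rho_0J^{-2}b^{22}\partial_tv^i,_{2}),_{2}$ as $(\rho_0J^{-2}b^{kj}\partial_tv^i,_{j}),_{k}$ minus the sum over $(k,j)\neq(2,2)$. By the symmetry of $(b^{kj})$, the top-order contributions of that sum are $\rho_0J^{-2}b^{11}\partial_t\partial_1^2v^i$ and $2\rho_0J^{-2}b^{21}\partial_t\partial_1\partial_2v^i$, both controlled by $Q_*^2\|\sqrt{\rho_0}\partial_t\partial_1Dv\|_{L^2}^2$ via the type-II energy estimate \eqref{EEP-5}; the terms $\rho_0\partial_k(J^{-2}b^{kj})\partial_tv^i,_{j}$ are handled with the $L^\infty$-bound on $D(J^{-2}b^{kj})$ from Lemma~\ref{le:Jab-bound-6} and \eqref{EEP-2}; and $\partial_k(\rho_0)J^{-2}b^{kj}\partial_tv^i,_{j}$ is handled using \eqref{TE} when $k=1$ and using \eqref{ineq:weighted Sobolev-2} together with \eqref{EEP-2} and \eqref{EEP-5} when $k=2$. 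This should give $\|(\rho_0J^{-2}b^{22}\partial_tv^i,_{2}),_{2}\|_{L^2}^2\leq M_0+CtP(\sup_{0\leq s\leq t}E^{1/2}(s,v))$. Finally I would peel off the two normal derivatives as in \eqref{ELE-8}--\eqref{ELE-10}: from the identity $\rho_0J^{-2}b^{22}\partial_t\partial_2^2v^i+\partial_2(\rho_0)J^{-2}b^{22}\partial_tv^i,_{2}=(\rho_0J^{-2}b^{22}\partial_tv^i,_{2}),_{2}-\rho_0\partial_2(J^{-2}b^{22})\partial_tv^i,_{2}$ (the last term being lower order), expand the square and integrate by parts in $x_2$, keeping the weight $\rho_0$, so that the remaining integral is controlled by $Q_*^2\int_\Omega\rho_0|\partial_tDv|^2\,\diff x\leq M_0+CtP(\sup_{0\leq s\leq t}E^{1/2}(s,v))$ by \eqref{EEP-2}; then \eqref{EEQ-4} follows from \eqref{J-bound} and \eqref{b-bound}.

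Since all the estimates used are already available, the main obstacle is really the bookkeeping. One must make sure that every mixed derivative appearing along the way — $\partial_t^2v$, $\partial_tDv$, $\partial_t\partial_1Dv$, $\rho_0D^2v$ — is matched to an estimate already established in Sections~\ref{Energy Estimates} and \ref{Elliptic Estimates}, that the time derivative landing on $J^{-2}b^{kj}$ or $J^{-2}a_i^k$ never produces anything worse than the $L^\infty$-controlled quantities of Lemmas~\ref{le:Jab-bound-1} and \ref{le:Jab-bound-5}, and that in the concluding integration by parts the cross term retains its $\rho_0$ weight (exactly as in \eqref{ELE-9}--\eqref{ELE-10}) so that it is absorbed by $\int_\Omega\rho_0|\partial_tDv|^2\,\diff x$ rather than by the uncontrolled $\int_\Omega|\partial_tDv|^2\,\diff x$.
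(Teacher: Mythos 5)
Your three-stage structure — a sharp $L^2$ bound on $(\rho_0J^{-2}b^{kj}\partial_tv^i,_j),_k$, isolation of the $(2,2)$ component, and peeling off two normal derivatives via the weighted integration-by-parts with the exact $\|\partial_2(\rho_0)J^{-2}b^{22}\partial_tv^i,_2\|^2$ cancellation — is precisely the paper's. Where you diverge is in how the commutator terms produced when $\partial_t$ hits the coefficients, i.e.\ the analogues of $I_{16}=\partial_k(\rho_0)\partial_t(J^{-2}b^{kj})v^i,_j$, $I_{17}=\rho_0\partial_t\partial_k(J^{-2}b^{kj})v^i,_j$, $I_{18}=\rho_0\partial_t(J^{-2}b^{kj})\partial_kv^i,_j$ of \eqref{ELE-32}, are estimated. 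You propose $L^\infty$ control of $\partial_t(J^{-2}b^{kj})$ and $\partial_tD(J^{-2}b^{kj})$ ``by $Q_*$'' through the $L^\infty$-estimates of Subsection \ref{sub:W-1}. Those estimates give $\lesssim E^{1/2}(t,v)$, not $\lesssim (M_0+CtP)^{1/2}$; the sharp $L^\infty$ bounds $\|Dv\|_{L^\infty}^2,\|D^2v\|_{L^\infty}^2\leq M_0+CtP$ are derived only later (in \eqref{Dv-estimate-2} and \eqref{tDv-estimate-1}, which rely on $\eqref{EEQ-6}_2$ and $\eqref{EEQ-11}_3$). So your route must first invoke the a priori assumption \eqref{a priori assumption} to replace $E^{1/2}$ by $M_1^{1/2}$, and then absorb the $P(M_1)$-factor into the generic $M_0$ and $P$; with that understood, it does close for the a priori estimate. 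The paper instead proceeds in \eqref{ELE-33} by placing the two $v$-derivative factors inside the $H^{1/2}$-product inequality (Lemma \ref{le:W-4}), using the sharp bounds $\|Dv\|_{H^{1/2}}^2,\|\rho_0D^2v\|_{H^{1/2}}^2\leq M_0+CtP$ that follow from the already-established Proposition \ref{pr:ell-2}. This buys two things: it never needs to bound the velocity itself in $L^\infty$ at this stage of the bootstrap, and — as the remark after \eqref{EE-2} explains — it makes the estimate transfer verbatim to the linearized problem in Section \ref{Regularity}, where in several places (e.g.\ the $\partial_t^4v$ energy estimate) the $\partial_tDv$-factor genuinely cannot be bounded in $L^\infty$ at the right point in the bootstrap while its $H^{1/2}$ norm can. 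So your proof reaches the same conclusion for Proposition \ref{pr:ell-3}, but be aware that the $L^\infty$ shortcut is not a drop-in replacement for the $H^{1/2}$ mechanism elsewhere in the paper.
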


\begin{proof}
By \eqref{EEP-1}, \eqref{EEP-2} and \eqref{EEQ-1}, it is easy to deduce by 
applying \(\partial_t\) to \eqref{eq:main-3} that
\begin{equation}\label{ELE-31}
\begin{aligned}
\|\partial_t(\rho_0J^{-2}b^{kj}v^i,_{j}),_{k}\|_{L^2}^2
&\leq \|\rho_0\partial_t^2v^i\|_{L^2}^2+\|\partial_t(\rho_0^2 J^{-2}a_i^k),_{k}\|_{L^2}^2\\
&\lesssim \|\rho_0\partial_t^2v\|_{L^2}^2+Q_{*}^2(\|\rho_0D^2v\|_{L^2}^2+\|\rho_0Dv\|_{L^2}^2)\\
&\leq M_0+Ct P(\sup_{0\leq s\leq t}E^{1/2}(s,v)).
\end{aligned}
\end{equation}

Note that
\begin{equation}\label{ELE-32}
\begin{aligned}
(\rho_0J^{-2}b^{kj}\partial_tv^i,_{j}),_{k}
&=\partial_t(\rho_0J^{-2}b^{kj}v^i,_{j}),_{k}-\underbrace{\partial_k(\rho_0)\partial_t(J^{-2}b^{kj})v^i,_{j}}_{I_{16}}\\
&\quad-\underbrace{\rho_0\partial_t\partial_k(J^{-2}b^{kj})v^i,_{j}}_{I_{17}}-\underbrace{\rho_0\partial_t(J^{-2}b^{kj})\partial_kv^i,_{j}}_{I_{18}}.
\end{aligned}
\end{equation}

To handle \(I_{16}\), \(I_{17}\) and \(I_{18}\), one first uses \eqref{ineq:weighted Sobolev-1}, \eqref{ineq:weighted Sobolev-2} and \eqref{EEQ-2} to estimate 
\begin{equation*}
\begin{aligned}
\|Dv\|_{H^{1/2}}^2
&\lesssim \|\rho_0^{1/2}Dv\|_{L^2}^2+\|\rho_0^{1/2}D^2v\|_{L^2}^2\\
&\lesssim \|\rho_0^{1/2}Dv\|_{L^2}^2+(\|\rho_0^{3/2}D^2v\|_{L^2}^2+\|\rho_0^{3/2}D^3v\|_{L^2}^2)\\
&\leq  M_0+Ct P(\sup_{0\leq s\leq t}E^{1/2}(s,v)).
\end{aligned}
\end{equation*}
 Similarly, one can also show
\begin{equation*}
	\begin{aligned}
		\|\rho_0D^2v\|_{H^{1/2}}^2
		\leq  M_0+Ct P(\sup_{0\leq s\leq t}E^{1/2}(s,v)).
	\end{aligned}
\end{equation*}
Then it holds that
\begin{equation}\label{ELE-33}
\begin{aligned}
&\|I_{16}\|_{L^2}^2\lesssim  Q_{*}^2\|Dv\|_{H^{1/2}}^4,\\
&\|I_{17}\|_{L^2}^2\lesssim  Q_{*}^2(\|\rho_0D^2v\|_{H^{1/2}}^2\|Dv\|_{H^{1/2}}^2
+\|Dv\|_{L^2}^2),\\
&\|I_{18}\|_{L^2}^2\lesssim Q_{*}^2\|Dv\|_{H^{1/2}}^2\|\rho_0D^2v\|_{H^{1/2}}^2,
\end{aligned}
\end{equation}
which satisfy the desired bound. 

It follows \eqref{ELE-31}-\eqref{ELE-33} that
\begin{equation}\label{ELE-36}
\begin{aligned}
\|(\rho_0J^{-2}b^{kj}\partial_tv^i,_{j}),_{k}\|_{L^2}^2
\leq M_0+Ct P(\sup_{0\leq s\leq t}E^{1/2}(s,v)).
\end{aligned}
\end{equation}

Note that
\begin{equation}\label{ELE-37}
\begin{aligned}
&\rho_0J^{-2}b^{22}\partial_t\partial_2v^i,_{2}+\partial_2(\rho_0)J^{-2}b^{22}\partial_tv^i,_{2}\\
&=(\rho_0J^{-2}b^{kj}\partial_tv^i,_{j}),_{k}-\sum_{(k,j)\neq (2,2)}(\rho_0J^{-2}b^{kj}\partial_tv^i,_{j}),_{k}
\end{aligned}
\end{equation}
and
{\small\begin{equation}\label{ELE-38}
\begin{aligned}
\sum_{(k,j)\neq (2,2)}(\rho_0J^{-2}b^{kj}\partial_tv^i,_{j}),_{k}
&=2\underbrace{\rho_0J^{-2}b^{21}\partial_t\partial_2v^i,_{1}}_{I_{19}}+
\sum_{(k,j)\neq (2,2)}\underbrace{\rho_0\partial_k(J^{-2}b^{kj})\partial_tv^i,_{j}}_
{I_{20}}\\
&\quad+\sum_{(k,j)\neq (2,2)}\underbrace{\partial_k(\rho_0)J^{-2}b^{kj}\partial_tv^i,_{j}}_{I_{21}}.
\end{aligned}
\end{equation}}

First, one may use  \eqref{EEP-5} and \eqref{EEP-2} to estimate
\begin{equation}\label{ELE-39}
\begin{aligned}
&\|I_{19}\|_{L^2}^2
\lesssim Q_{*}^2\|\rho_0\partial_t\partial_1Dv\|_{L^2}^2
\leq M_0+Ct P(\sup_{0\leq s\leq t}E^{1/2}(s,v)),\\
&\|I_{20}\|_{L^2}^2
\lesssim Q_{*}^2\|\rho_0\partial_tDv\|_{L^2}^2
\leq M_0+Ct P(\sup_{0\leq s\leq t}E^{1/2}(s,v)).
\end{aligned}
\end{equation}
Next, it remains to consider \(I_{21}\). In fact, 
for \((k,j)=(1,2)\),
\begin{equation}\label{ELE-41}
\begin{aligned}
\|I_{21}\|_{L^2}^2\lesssim Q_{*}^2\|\rho_0\partial_tDv\|_{L^2}^2 
\leq M_0+Ct P(\sup_{0\leq s\leq t}E^{1/2}(s,v))
\end{aligned}
\end{equation}
and for \((k,j)=(2,1)\),
\begin{equation}\label{ELE-42}
\begin{aligned}
\|I_{21}\|_{L^2}^2\lesssim Q_{*}^2\|\partial_t\partial_1v\|_{L^2}^2 
\leq M_0+Ct P(\sup_{0\leq s\leq t}E^{1/2}(s,v)),
\end{aligned}
\end{equation}
where one has used 
\begin{equation*}
\begin{aligned}
\|\partial_t\partial_1v\|_{L^2}^2 
\lesssim \|\rho_0\partial_t\partial_1v\|_{L^2}^2+\|\rho_0\partial_t\partial_1Dv\|_{L^2}^2
\leq M_0+Ct P(\sup_{0\leq s\leq t}E^{1/2}(s,v)).
\end{aligned}
\end{equation*}

It follows from \eqref{ELE-36}-\eqref{ELE-42} that
\begin{equation*}
\begin{aligned}
\|\rho_0J^{-2}b^{22}\partial_t\partial_2v^i,_{2}+\partial_2(\rho_0)J^{-2}b^{22}\partial_tv^i,_{2}\|_{L^2}^2
\leq M_0+CtP(\sup_{0\leq s\leq t}E^{1/2}(s,v)).
\end{aligned}
\end{equation*}

This together with \eqref{EEP-2} and 
integration by parts yields
\begin{equation}\label{ELE-44}
\begin{aligned}
&\|\rho_0J^{-2}b^{22}\partial_t\partial_2v^i,_{2}\|_{L^2}^2\\
&\lesssim\|\rho_0J^{-2}b^{22}\partial_t\partial_2v^i,_{2}+\partial_2(\rho_0)J^{-2}b^{22}\partial_tv^i,_{2}\|_{L^2}^2\\
&\quad+ Q_{*}^2
\int_\Omega\rho_0|\partial_tDv|^2\,\diff x
\leq M_0+Ct P(\sup_{0\leq s\leq t}E^{1/2}(s,v)).
\end{aligned}
\end{equation}

Hence \eqref{EEQ-4} follows from \eqref{J-bound}, \eqref{b-bound} and \eqref{ELE-44}.

\end{proof}

\subsubsection{Estimates on \(\partial_1^2\partial_2^2v\), \(\partial_1\partial_2^3v\) and \(\partial_2^4v\)} 
\begin{proposition}\label{pr:ell-4} 
It holds that
\begin{equation}\label{EEQ-5}
\begin{aligned}
\sum_{\substack{ l_1+l_2=4\\  l_1\geq 0,\ l_2\geq 2}}\int_\Omega \rho_0^{l_2}|\partial_1^{l_1}\partial_2^{l_2}v|^2(t)\,\diff x
\leq M_0+Ct P(\sup_{0\leq s\leq t}E^{1/2}(s,v)).
\end{aligned}
\end{equation}
\end{proposition}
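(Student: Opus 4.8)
The plan is to follow the elliptic-iteration scheme of Propositions \ref{pr:ell-1}--\ref{pr:ell-3}, proving the three cases $(l_1,l_2)=(2,2)$, $(1,3)$, $(0,4)$ in this order (each one relying on its predecessors): the case $(l_1,l_2)$ is obtained by applying $\partial_1^{l_1}\partial_2^{l_2-2}$ to \eqref{eq:main-3}, i.e.\ one spatial derivative more than in the proof of Proposition \ref{pr:ell-2}. In each case one singles out the top normal-derivative term $\bigl(\rho_0 J^{-2}b^{22}\partial_1^{l_1}\partial_2^{l_2-2}v^i,_{2}\bigr),_{2}$, uses $(\partial_1^m\rho_0)/\rho_0\in L^\infty$ from \eqref{TE} to write $\partial_1^m(\rho_0 J^{-2}b^{kj})=\rho_0\partial_1^m(J^{-2}b^{kj})+\text{l.o.t.}$, moves all remaining terms to the right-hand side, and bounds them by ``the desired bound'' using: the energy estimates of Section \ref{Energy Estimates} (notably \eqref{EEP-1}, \eqref{EEP-2}, \eqref{EEP-5}, \eqref{EEP-add-1}, \eqref{EEP-add-2}) for the time, tangential and tangential-normal pieces; the lower-order elliptic estimates \eqref{EEQ-1}, \eqref{EEQ-2}, \eqref{EEQ-4} for the intermediate normal derivatives $\partial_2^2 v,\ \partial_1\partial_2^2 v,\ \partial_2^3 v,\ \partial_t\partial_2^2 v$, supplemented by \eqref{ineq:weighted Sobolev-2} whenever the available weight exceeds the target weight $\sqrt{\rho_0^{l_2}}$; the a priori bounds on the coefficients from Lemmas \ref{le:Jab-bound-2} and \ref{le:Jab-bound-6} together with Lemma \ref{le:W-1} and Corollary \ref{le:W-2}; and the $H^{1/2}$-product inequality \eqref{Weighted-6} combined with \eqref{Weighted-7} for the borderline products in which no single factor carries an $L^\infty$ bound. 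Finally, as in \eqref{ELE-24} and \eqref{ELE-29}, one recovers the genuine norm $\|\rho_0^{l_2/2}\partial_1^{l_1}\partial_2^{l_2}v\|_{L^2}$ from the combination $\rho_0 J^{-2}b^{22}\partial_1^{l_1}\partial_2^{l_2-1}v^i,_{2}+(\text{correction})$ by integrating by parts against the appropriate power of $\rho_0$, the remainder being controlled by \eqref{EEQ-1}--\eqref{EEQ-2} and Corollary \ref{le:W-2}; then \eqref{J-bound} and \eqref{b-bound} yield \eqref{EEQ-5}.

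Concretely, for $(l_1,l_2)=(2,2)$ one applies $\partial_1^2$ to \eqref{eq:main-3} and follows the $l=2$ part of Proposition \ref{pr:ell-2} with one extra horizontal derivative: the off-diagonal term $\rho_0 J^{-2}b^{21}\partial_1^2\partial_2 v^i,_{1}$ is absorbed by \eqref{EEP-add-2}, the pressure term $\partial_1^2(\rho_0^2 J^{-2}a_i^k),_{k}$ by Lemma \ref{le:Jab-bound-6} and Lemma \ref{le:W-1}, and the integration by parts of \eqref{ELE-24} directly produces $\|\rho_0\partial_1^2\partial_2^2 v\|_{L^2}$. For $(l_1,l_2)=(1,3)$ one applies $\partial_1\partial_2$ and runs the $l=3$ argument of Proposition \ref{pr:ell-2} with one horizontal derivative, the new intermediate terms $\partial_1\partial_2^2 v$ and $\partial_2^3 v$ being covered by \eqref{EEQ-2}; the integration by parts of \eqref{ELE-29}, now carried out with weight $\rho_0^{3/2}$, produces $\|\rho_0^{3/2}\partial_1\partial_2^3 v\|_{L^2}$. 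For $(l_1,l_2)=(0,4)$ one applies $\partial_2^2$ and runs the same scheme once more with final weight $\rho_0^2$; here $\partial_2^2(\rho_0\partial_t v^i)$ contributes $\rho_0\partial_t\partial_2^2 v^i$, which is exactly controlled by \eqref{EEQ-4} (after pre-multiplying the differentiated equation by $\rho_0$ so that all terms sit at weight level $\rho_0^2$), while the term carrying $\rho_0\partial_2^3 v$ is brought back through \eqref{ineq:weighted Sobolev-2} at the cost of a copy of $\|\rho_0^2\partial_2^4 v\|_{L^2}$, absorbed into the left-hand side by a Cauchy inequality.

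The main obstacle is precisely this bookkeeping of weights and commutators: one must verify, summand by summand in each Leibniz expansion, that every product $\partial_1^{\alpha}\partial_2^{\beta}(\rho_0 J^{-2}b^{kj})\cdot\partial_1^{\gamma}\partial_2^{\delta}v$ (and the analogue with $a_i^k$) has at most second-order normal content on $v$ and can be absorbed by an already-established energy or elliptic bound with exactly the right power of $\rho_0$ -- trading a horizontal derivative on $\rho_0$ for a factor of $\rho_0$ via \eqref{TE}, exchanging one power of $\rho_0$ for one derivative via \eqref{ineq:weighted Sobolev-2}, and placing both factors in $H^{1/2}$ via \eqref{Weighted-6}--\eqref{Weighted-7} when neither admits an $L^\infty$ bound. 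This is delicate because the three cases must genuinely be proved in the order $(2,2)\to(1,3)\to(0,4)$ -- the $(0,4)$ estimate invokes the $(1,3)$ estimate through \eqref{ineq:weighted Sobolev-2} -- and because in the $(0,4)$ case the naive bound loses a power of $\rho_0$, so that closing it requires the weight-boosting integration by parts together with the final absorption step.
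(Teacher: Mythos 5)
Your overall scheme is exactly the paper's intent: iterate the two-step (estimate the $(2,2)$-part of the differentiated equation, then peel off normal derivatives by integration by parts) argument of Proposition~\ref{pr:ell-2}, in the order $(2,2)\to(1,3)\to(0,4)$, feeding in the previously obtained EEQ-1, EEQ-2, EEQ-4 and the tangential energy estimates. The $(2,2)$ and $(1,3)$ descriptions, the identification of where \eqref{EEP-add-1}--\eqref{EEP-add-2} and Lemmas~\ref{le:Jab-bound-2}/\ref{le:Jab-bound-6} enter, and the use of \eqref{TE} to commute $\partial_1$ past $\rho_0$ are all consistent with the proof that would be written out.

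The $(0,4)$ endpoint as you describe it, however, has a genuine gap. You propose to estimate the ``$\rho_0\partial_2^3 v$'' correction term (i.e.\ the Leibniz term $c\,\rho_0\,\partial_2(\rho_0)\,J^{-2}b^{22}\,\partial_2^3 v$ with $c=3$) \emph{separately}, by invoking \eqref{ineq:weighted Sobolev-2} and then absorbing the resulting copy of $\|\rho_0^2\partial_2^4 v\|_{L^2}^2$ by Cauchy's inequality. This does not close: the copy carries the prefactor $Q_*^2$ (plus the implicit domain constant of \eqref{ineq:weighted Sobolev-2}), and $Q_*$ is a polynomial in $[M_0+CtP(\cdot)]^{1/2}$, which is of order $M_0$ and has no smallness — so the absorption into the left-hand side fails. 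The correct mechanism — and the one implicitly used in \eqref{ELE-24}, \eqref{ELE-29}, and written out for $\partial_2^8 v$ in \eqref{ELE-305}--\eqref{ELE-306} — is to keep the correction coupled to the leading term: with $A=\rho_0^2 J^{-2}b^{22}\partial_2^4 v$ and $B=c\,\rho_0\partial_2(\rho_0)J^{-2}b^{22}\partial_2^3 v$ one has
\begin{equation*}
\begin{aligned}
\|A\|_{L^2}^2=\|A+B\|_{L^2}^2-2\langle A,B\rangle-\|B\|_{L^2}^2,
\end{aligned}
\end{equation*}
and after integrating $-2\langle A,B\rangle$ by parts, the lowest-weight piece $c\cdot c\int\rho_0^2(\partial_2\rho_0)^2(J^{-2}b^{22})^2|\partial_2^3 v|^2\,\diff x$ produced by the IBP \emph{exactly cancels} $-\|B\|_{L^2}^2$; the remainder sits at weight $\rho_0^3$ and is directly bounded by $Q_*^2\|\rho_0^{3/2}\partial_2^3 v\|_{L^2}^2$, i.e.\ by \eqref{EEQ-2}. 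No self-referential absorption is needed. The same cancellation (with the appropriate Leibniz coefficient) is what makes the remainder of your ``Finally, as in \eqref{ELE-24} and \eqref{ELE-29}'' sentence controllable by EEQ-1--EEQ-2 — so the two sentences of your $(0,4)$ discussion actually contradict one another, and it is the Cauchy-absorption one that should be dropped.
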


\begin{proof}  Based on \eqref{EEQ-2} and \eqref{EEQ-4},  \eqref{EEQ-5} can be verified by following a similar argument for \(\partial_1\partial_2^2v\) and \(\partial_2^3v\) in Proposition \ref{pr:ell-2}. 

\end{proof}

\subsection{Intermediated estimates-II}\label{IP-II} 
{\small\begin{proposition}\label{pr:ell-5} It holds that
\begin{equation}\label{EEQ-6}
\begin{aligned}
&\sum_{\substack{ l_1+l_2=3+2l_0\\  l_1\geq 0,\ l_2\geq 2}}\int_\Omega \rho_0^{l_2}|\partial_t^{1-l_0}\partial_1^{l_1}\partial_2^{l_2}v|^2(t)\,\diff x
\leq M_0+Ct P(\sup_{0\leq s\leq t}E^{1/2}(s,v)),\quad l_0=0,1,\\
&\int_\Omega \rho_0^2|\partial_t^2\partial_2^2v|^2(t)\,\diff x
\leq M_0+Ct P(\sup_{0\leq s\leq t}E^{1/2}(s,v)),\\
&\sum_{\substack{ l_1+l_2=4+2l_0\\   l_1\geq 0,\ l_2\geq 2}}\int_\Omega \rho_0^{l_2}|\partial_t^{1-l_0}\partial_1^{l_1}\partial_2^{l_2}v|^2(t)\,\diff x
\leq M_0+Ct P(\sup_{0\leq s\leq t}E^{1/2}(s,v)),\quad l_0=0,1.
\end{aligned}
\end{equation}

\end{proposition}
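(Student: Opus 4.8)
The plan is to continue the elliptic bootstrap of Section~\ref{Elliptic Estimates}, promoting the normal-derivative estimates of Propositions~\ref{pr:ell-1}--\ref{pr:ell-4} to include one or two time derivatives and higher tangential orders. The structural fact driving everything is that \eqref{eq:main-3} can be solved for its top-order normal derivative: for any operator of the form $\partial_t^{l_0}\partial_1^{l_1}\partial_2^{l_2-2}$, applying it to \eqref{eq:main-3}, expanding $(\rho_0J^{-2}b^{kj}v^i,_j),_k$ and using \eqref{b-bound}, one isolates $\rho_0 J^{-2}b^{22}\,\partial_t^{l_0}\partial_1^{l_1}\partial_2^{l_2}v^i$ modulo (i) the term $\partial_t^{l_0}\partial_1^{l_1}\partial_2^{l_2-2}(\rho_0\partial_t v^i)$, controlled by the energy estimates $\eqref{EEP-1}$--$\eqref{EEP-7}$ of Section~\ref{Energy Estimates}; (ii) the source $\partial_t^{l_0}\partial_1^{l_1}\partial_2^{l_2-2}(\rho_0^2 J^{-2}a_i^k),_k$; (iii) terms in which derivatives fall on $J^{-2}b^{kj}$ or $J^{-2}a_i^k$; and (iv) lower-order/commutator terms (l.o.t.). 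The required weight $\rho_0^{l_2}$ in \eqref{EEQ-6} is then produced by multiplying this relation by the appropriate power of $\rho_0$ and peeling off the two (or more) normal derivatives by repeated integration by parts, exactly as in the passages $\eqref{ELE-9}$--$\eqref{ELE-10}$, $\eqref{ELE-28}$--$\eqref{ELE-29}$ and $\eqref{ELE-44}$; each integration by parts trades a derivative of $\rho_0$ for a harmless $\rho_0$-weight via \eqref{TE} and \eqref{ineq:weighted Sobolev-2}.

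I would carry out the six families of \eqref{EEQ-6} in order of increasing total order (and, within each family, from high $l_1$ to high $l_2$) so that each case only invokes already-settled ones together with \eqref{EEQ-1}--\eqref{EEQ-5}. First, $\partial_t\partial_1\partial_2^2v$ and $\partial_t\partial_2^3v$ ($l_0=0$, $l_1+l_2=3$): apply $\partial_t\partial_1$, resp.\ $\partial_t$, to \eqref{eq:main-3}, bound $\partial_t^2\partial_1 v$, $\partial_t^2 v$ by \eqref{EEP-5}, \eqref{EEP-1}, bound the source and its $\partial_1$-derivative by Lemmas~\ref{le:Jab-bound-1}, \ref{le:Jab-bound-4} together with \eqref{EEQ-1}, \eqref{EEQ-2}, \eqref{EEQ-4}, and absorb the category-(iii) terms with the $H^{1/2}$-product inequality \eqref{Weighted-6} fed by \eqref{Weighted-7}--\eqref{Weighted-9-1}; this is the template of Proposition~\ref{pr:ell-3} combined with that of Proposition~\ref{pr:ell-2}. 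Next $\partial_t^2\partial_2^2v$: apply $\partial_t^2$, use \eqref{EEP-1} for $\partial_t^3 v$, Lemma~\ref{le:Jab-bound-1} for $\partial_t^2(J^{-2}b^{kj})$ and $\partial_t^2(J^{-2}a_i^k)$, and \eqref{Weighted-9-1} for the mixed first-time-derivative terms; since only $\rho_0^2$ is available, the two normal derivatives are peeled in precisely two integration-by-parts steps as in \eqref{ELE-44}. Then $\partial_t\partial_1^{l_1}\partial_2^{l_2}v$ with $l_1+l_2=4$ (template: Proposition~\ref{pr:ell-4} plus the time-derivative bookkeeping above, using \eqref{EEQ-5} and \eqref{EEP-add-1}), and finally the purely tangential-normal cases $l_1+l_2=5,6$, which follow the same scheme, now calling on the type-I/type-II energy estimates \eqref{EEP-add-1}, \eqref{EEP-add-2}, \eqref{EEP-3}, \eqref{EEP-7} for the tangential blocks $\partial_1^{l_1}Dv$ up to $l_1=7$, on Lemmas~\ref{le:Jab-bound-2} and \ref{le:Jab-bound-6} for the purely tangential and purely spatial derivatives of the coefficients, and on the sharp $L^\infty$ bound \eqref{Weighted-10} together with \eqref{Weighted-8}, \eqref{Weighted-11} whenever four or more tangential derivatives must be split off a coefficient.

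The main obstacle is the bookkeeping of the category-(iii) products in the highest cases, e.g.\ for $\partial_1^4\partial_2^2v$ the contributions $\partial_1^m(J^{-2}b^{kj})\,\partial_1^{4-m}\partial_2^2v^i,_j$, $m=1,\dots,4$: for small $m$ one keeps the $\rho_0$-weight on the $v$-factor and uses \eqref{EEQ-5}; for intermediate $m$ one uses the $H^{1/2}$ split \eqref{Weighted-6} with \eqref{Weighted-7}--\eqref{Weighted-8}; and for $m=3,4$ one must place the $L^\infty$ estimate on the low-order $v$-factor via \eqref{Weighted-3}, \eqref{Weighted-10} while the high tangential derivative of the coefficient still carries a $\rho_0$-weight (supplied by \eqref{Weighted-2}) so that it survives against the $\rho_0^2$ on the left. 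Checking that in every case there is exactly enough $\rho_0$-weight to close — in particular that extracting the two normal derivatives never costs more weight than $\rho_0^{l_2}$ provides — is the delicate point; it works because each $\partial_2$ in \eqref{eq:main-3} already carries one power of $\rho_0$ and because $\partial_2\rho_0$ and $\partial_1^l\rho_0/\rho_0$ are bounded. No essentially new idea beyond Propositions~\ref{pr:ell-1}--\ref{pr:ell-4} is required; the content is arranging the induction so that each estimate uses only estimates of strictly lower total order, and the proof therefore reduces to "following a similar argument" to the corresponding lower-order propositions, case by case.
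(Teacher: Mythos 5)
Your machinery — apply a mixed derivative $\partial_t^{l_0}\partial_1^{l_1}$ to \eqref{eq:main-3}, split off the $(k,j)\neq(2,2)$ part of the dissipation, and peel the remaining normal derivatives by weighted integration by parts as in \eqref{ELE-9}--\eqref{ELE-10}, \eqref{ELE-28}--\eqref{ELE-29}, \eqref{ELE-44} — is exactly the paper's. The gap is in the \emph{order} in which you propose to run the bootstrap. You would prove $\int\rho_0^2|\partial_t^2\partial_2^2v|^2\leq M_0+CtP$ \emph{second}, before the purely spatial $l_1+l_2=5$ block $\eqref{EEQ-6}_2$. The paper explicitly warns against this at the end of its proof: to control $\partial_t^2\partial_2^2v$ one must handle $\partial_t^2(J^{-2}b^{kj})$, which (unlike $\partial_t(J^{-2}b^{kj})$) carries a contribution quadratic in $Dv$, and the resulting factor in the peeled equation requires the \emph{sharp} bound $\|Dv\|_{L^\infty}^2\leq M_0+CtP(\sup_s E^{1/2})$, not merely the a priori $\|Dv\|_{L^\infty}\lesssim \sup_s E^{1/2}(s,v)$, which would destroy the $M_0+CtP$ structure at fixed $t$. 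That sharp bound is \eqref{Dv-estimate-2}; it runs the weighted ladder \eqref{ineq:weighted Sobolev-2}--\eqref{ineq:weighted Sobolev-3} all the way up to $\int\rho_0^5|D^5v|^2$, whose worst component $\int\rho_0^5|\partial_2^5v|^2$ is the $l_2=5$ endpoint of $\eqref{EEQ-6}_2$ and is simply not available yet in your sequence. Your description of the $\partial_t^2\partial_2^2v$ step (cite Lemma~\ref{le:Jab-bound-1}, then \eqref{Weighted-9-1}) is silent on how the $Q_1(|Dv|,|D\eta|,|\partial_tDv|)$ coefficient bound from Lemma~\ref{le:Jab-bound-1} is to be upgraded to the desired form, and this is precisely where the argument as written breaks. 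The correct induction parameter here is not the raw total derivative count $l_0+l_1+l_2$ but the energy scaling $2l_0+l_1+l_2$ from \eqref{HOEF-0}: the paper's diagram \eqref{Pic-1} orders the five (not six) cases by increasing $2l_0+l_1+l_2$ and, at each fixed level, by decreasing $l_0$, yielding $\eqref{EEQ-6}_1\to\eqref{EEQ-6}_2\to\eqref{EEQ-6}_3\to\eqref{EEQ-6}_4\to\eqref{EEQ-6}_5$, so that $\eqref{EEQ-6}_2$ feeds the sharp $L^\infty$ estimate needed for $\eqref{EEQ-6}_3$. Either adopt this ordering, or supply an explicit alternative estimate for the quadratic-in-$Dv$ coefficient; the proposal as written does neither.
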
}

\begin{proof}
The estimates \(\eqref{EEQ-6}_1\)-\(\eqref{EEQ-6}_5\) correspond successively to the following estimates in each row respectively: 
{\small \begin{equation}\label{Pic-1}
\begin{aligned}
&\partial_t\partial_1\partial_2^2v\rightarrow\ \ \partial_t\partial_2^3v\rightarrow\\
&\partial_1^3\partial_2^2v\rightarrow\quad\ \partial_1^2\partial_2^3v\rightarrow\quad \partial_1\partial_2^4v\rightarrow\ \partial_2^5v\rightarrow\\
&\partial_t^2\partial_2^2v\rightarrow\\
&\partial_t\partial_1^2\partial_2^2v\rightarrow\ \ \partial_t\partial_1\partial_2^3v\rightarrow \partial_t\partial_2^4v\rightarrow\\
&\partial_1^4\partial_2^2v\rightarrow\quad\ \ \partial_1^3\partial_2^3v\rightarrow\ \ \partial_1^2\partial_2^4v\rightarrow\ \partial_1\partial_2^5v\rightarrow\ \partial_2^6v
\end{aligned}
\end{equation}}

The strategy in showing Proposition \ref{pr:ell-5} is an application of a bootstrapping procedure: One first uses Propositions \ref{pr:ell-1}-\ref{pr:ell-4} to show the estimates in \(\eqref{EEQ-6}_1\) (\(l_0=0\)) along the arrow direction in the first row of \eqref{Pic-1}, and then uses \(\eqref{EEQ-6}_1\) to show \(\eqref{EEQ-6}_2\) (\(l_0=1\)) along the arrow direction in the second row of \eqref{Pic-1}, and so on, until one reaches \(\eqref{EEQ-6}_5\). \\

It is worth pointing out that one shall use a sharp estimate on \(\|Dv\|_{L^\infty}\) to show \(\eqref{EEQ-6}_3\) based on \(\eqref{EEQ-6}_2\), namely 
{\small\begin{equation}\label{Dv-estimate-2}
\begin{aligned}
&\|Dv\|_{L^\infty}^2
\lesssim \sum_{k=0}^2\int_\Omega \rho_0|D^{k+1}v|^2\,\diff x\\
&\lesssim M_0+Ct P(\sup_{0\leq s\leq t}E^{1/2}(s,v))+\bigg(\int_\Omega \rho_0^3|D^2v|^2\,\diff x+\int_\Omega \rho_0^3|D^3v|^2\,\diff x\bigg)\\
&\quad+\bigg(\int_\Omega \rho_0^3|D^3v|^2\,\diff x+\int_\Omega \rho_0^3|D^4v|^2\,\diff x\bigg)\\
&\lesssim M_0+Ct P(\sup_{0\leq s\leq t}E^{1/2}(s,v))+\bigg(\int_\Omega \rho_0^5|D^4v|^2\,\diff x+\int_\Omega \rho_0^5|D^5v|^2\,\diff x\bigg)\\
&\leq M_0+Ct P(\sup_{0\leq s\leq t}E^{1/2}(s,v)),
\end{aligned}
\end{equation}}
where \eqref{ineq:weighted Sobolev-2}, \eqref{ineq:weighted Sobolev-3} and \(\eqref{EEQ-6}_2\) have been used.

\end{proof}

\subsection{Higher-order estimates}\label{HOP}

{\small\begin{proposition}\label{pr:ell-6} It holds that
\begin{equation}\label{EEQ-11}
\begin{aligned}
&\sum_{\substack{ l_1+l_2=5+2l_0\\   l_1\geq 0,\ l_2\geq 2}}\int_\Omega \rho_0^{l_2}|\partial_t^{1-l_0}\partial_1^{l_1}\partial_2^{l_2}v|^2(t)\,\diff x
\leq M_0+Ct P(\sup_{0\leq s\leq t}E^{1/2}(s,v)), \quad l_0=0,1,2,\\
&\int_\Omega \rho_0^2|\partial_t^3\partial_2^2v|^2(t)\,\diff x
\leq M_0+Ct P(\sup_{0\leq s\leq t}E^{1/2}(s,v)),\\
&\sum_{\substack{ l_1+l_2=6+2l_0\\   l_1\geq 0,\ l_2\geq 2}}\int_\Omega \rho_0^{l_2}|\partial_t^{1-l_0}\partial_1^{l_1}\partial_2^{l_2}v|^2(t)\,\diff x
\leq M_0+Ct P(\sup_{0\leq s\leq t}E^{1/2}(s,v)), \quad l_0=0,1,2.
\end{aligned}
\end{equation}

\end{proposition}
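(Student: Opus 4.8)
The plan is to run one further layer of the bootstrapping scheme of Propositions \ref{pr:ell-1}--\ref{pr:ell-5}, this time feeding in the full tangential energy bound \eqref{APBEN} together with all of Propositions \ref{pr:ell-1}--\ref{pr:ell-5}. Every term of \eqref{EEQ-11} will be produced by inverting, in the normal variable $x_2$, the coercive second-order operator $w\mapsto(\rho_0 J^{-2}b^{22}w,_{2}),_{2}$ after the tangential derivatives have been frozen as forcing. Concretely, to control a target quantity $\int_\Omega\rho_0^{l_2}|\partial_t^{l_0}\partial_1^{l_1}\partial_2^{l_2}v|^2$ from \eqref{EEQ-11} (so $l_2\geq 2$), I would apply $\partial^\alpha:=\partial_t^{l_0}\partial_1^{l_1}\partial_2^{l_2-2}$ to \eqref{eq:main-3}, move the time-evolution term $\rho_0\partial_t v^i$, the pressure-type term $(\rho_0^2 J^{-2}a_i^k),_{k}$, and all commutators between $\partial^\alpha$ and the coefficient $\rho_0 J^{-2}b^{kj}$ to the right-hand side, and bound those in $L^2$ with the appropriate weights via \eqref{APBEN}, the coefficient bounds of Lemmas \ref{le:Jab-bound-1}--\ref{le:Jab-bound-6}, and the already-established conclusions of Propositions \ref{pr:ell-1}--\ref{pr:ell-5} (together with the lower-order members of \eqref{EEQ-11}). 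This leaves an identity for $(\rho_0 J^{-2}b^{kj}\partial^\alpha v^i,_{j}),_{k}$ whose right-hand side has the desired bound.

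The extraction of the remaining normal derivatives then copies the calculations of Propositions \ref{pr:ell-1}--\ref{pr:ell-4}. Using Piola's identity \eqref{Piola} and the symmetry of $(b^{kj})$, one splits
\[
(\rho_0 J^{-2}b^{kj}\partial^\alpha v^i,_{j}),_{k}=(\rho_0 J^{-2}b^{22}\partial^\alpha v^i,_{2}),_{2}+\mathcal{E},
\]
where $\mathcal{E}$ is the sum of the $(k,j)\neq(2,2)$ contributions. Each summand of $\mathcal{E}$ carries strictly more tangential regularity than the current normal order, so after distributing derivatives by Leibniz one bounds the coefficient factors by $Q_*$ through Lemmas \ref{le:Jab-bound-1}--\ref{le:Jab-bound-6} and the surviving factor of $v$ (or $\eta$) through the weighted Sobolev inequalities \eqref{ineq:weighted Sobolev-1}--\eqref{ineq:weighted Sobolev-3}, the $H^{1/2}$-type inequalities of Lemmas \ref{le:W-4}--\ref{le:W-6}, the sharp $L^\infty$-type inequalities of Lemmas \ref{le:W-7}--\ref{le:W-8}, and the already-proven parts of \eqref{EEQ-11} and of Propositions \ref{pr:ell-1}--\ref{pr:ell-5}; factors $\partial_1^m\rho_0/\rho_0$ are absorbed by \eqref{TE}, while $\partial_2$-derivatives of $\rho_0$ are bounded pointwise since $\rho_0\in H^7(\Omega)$. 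Finally one peels the last normal derivative: write the leading term as $\rho_0 J^{-2}b^{22}\partial_2(\partial^\alpha v^i,_{2})+\partial_2(\rho_0)J^{-2}b^{22}\partial^\alpha v^i,_{2}$, multiply by the weight $\rho_0^{(l_2-2)/2}$, integrate by parts to turn the cross term into a lower-order quantity already under control, and use the coercivity \eqref{b-bound} together with \eqref{J-bound}; this yields the bound on $\|\rho_0^{l_2/2}\partial^\alpha\partial_2^2 v\|_{L^2}$, i.e.\ the asserted inequality.

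The terms of \eqref{EEQ-11} have to be handled in a definite order, encoded by a table analogous to \eqref{Pic-1} but now reaching normal order $8$: one proves a given time-level only after the previous one is complete, and inside each level proceeds from fewest to most normal derivatives, ending with the purely normal top term $\partial_2^8v$ and, on the middle line, with the top time-derivative term $\partial_t^3\partial_2^2 v$. Wherever a product $Dv\cdot(\text{top-order derivative})$ appears in $\mathcal{E}$, one invokes a sharp $L^\infty$-bound on $Dv$ of the form \eqref{Dv-estimate-2}, now built on the layer just established. I expect the principal obstacle to be the bookkeeping of the top-order interactions inside $\mathcal{E}$: when $\partial^\alpha$ is a maximal admissible tangential multi-index, Leibniz generates terms in which either a coefficient derivative $\partial^\beta(J^{-2}b^{kj})$ or a factor $\partial^\gamma v$ sits at the very top of the energy functional, and the split between the coercive leading term and $\mathcal{E}$ must be arranged so that every such term is either absorbed into the coercive $L^2$ contribution coming from $\rho_0 J^{-2}b^{22}$ or closed by the half-derivative gain of Lemmas \ref{le:W-4}--\ref{le:W-8}; keeping this split consistent all the way up the staircase, and checking that none of the repeated integrations by parts loses a power of the weight $\rho_0$, is the delicate point.
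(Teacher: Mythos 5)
Your strategy is the same as the paper's: apply the tangential operator $\partial_t^{l_0}\partial_1^{l_1}\partial_2^{l_2-2}$ to \eqref{eq:main-3}, bound the right-hand side and the commutator with the coefficient $\rho_0 J^{-2}b^{kj}$ using Lemmas~\ref{le:Jab-bound-1}--\ref{le:Jab-bound-6} and the weighted inequalities, isolate the $(2,2)$-block, and peel off the remaining two normal derivatives by integration by parts, bootstrapping in the order given by the staircase \eqref{Pic-2}. Two points where the paper's execution is sharper than your sketch and worth absorbing: for the top time-derivative estimate $\partial_t^3\partial_2^2 v$ the sharp $L^\infty$ bounds that matter are on $\partial_t Dv$ and $D^2 v$ rather than $Dv$ alone --- they enter through the refined coefficient bound \eqref{Q-bound-13} with polynomial $Q_7(|Dv|,|D\eta|,|D^2\eta|,|\partial_tDv|,|D^2v|)$, and the paper verifies $\|\partial_tDv\|_{L^\infty}^2+\|D^2v\|_{L^\infty}^2\leq F(M_0,v)$ using the just-established layers $\eqref{EEQ-11}_2,\eqref{EEQ-11}_3$; and for the purely normal endpoint $\partial_2^8v$, the nonlinear corrections $g_6(\eta),g_7(\eta)$ in Lemma~\ref{le:Jab-bound-6} and the stepwise weight bookkeeping ($\rho_0^2\to\rho_0^3\to\rho_0^4$) have to be tracked explicitly, which the paper does by estimating $\|\rho_0^3 g_6(\eta)\|_{L^2}$ and $\|\rho_0^4 g_7(\eta)\|_{L^2}$ through \eqref{Weighted-2} before the peeling stage.
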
}

\begin{proof}

The estimates \(\eqref{EEQ-11}_1\)-\(\eqref{EEQ-11}_7\) correspond successively to the following estimates in each row respectively: 
{\small \begin{equation}\label{Pic-2}
\begin{aligned}
&\partial_t^2\partial_1\partial_2^2v\rightarrow\  \partial_t^2\partial_2^3v\rightarrow\\
&\partial_t\partial_1^3\partial_2^2v\rightarrow\   \partial_t\partial_1^2\partial_2^3v\rightarrow\ \ \partial_t\partial_1\partial_2^4v\rightarrow \partial_t\partial_2^5v\rightarrow\\
&\partial_1^5\partial_2^2v\rightarrow\ \quad \partial_1^4\partial_2^3v\rightarrow\ \quad \partial_1^3\partial_2^4v\rightarrow\ \ \ \partial_1^2\partial_2^5v\rightarrow\ \ \ \ \partial_1\partial_2^6v\rightarrow\ \partial_2^7v\rightarrow\\
&\partial_t^3\partial_2^2v\rightarrow\\
&\partial_t^2\partial_1^2\partial_2^2v\rightarrow\  \partial_t^2\partial_1\partial_2^3v\rightarrow\ \partial_t^2\partial_2^4v\rightarrow\\
&\partial_t\partial_1^4\partial_2^2v\rightarrow\  \partial_t\partial_1^3\partial_2^3v\rightarrow\ \ \partial_t\partial_1^2\partial_2^4v\rightarrow\ \partial_t\partial_1\partial_2^5v\rightarrow\ \partial_t\partial_2^6v\rightarrow\\
&\partial_1^6\partial_2^2v\rightarrow\ \quad \partial_1^5\partial_2^3v\rightarrow\ \quad \partial_1^4\partial_2^4v\rightarrow\ \quad \partial_1^3\partial_2^5v\rightarrow\ \ \  \partial_1^2\partial_2^6v
\rightarrow\ \partial_1\partial_2^7v\rightarrow\ \partial_2^8v
\end{aligned}
\end{equation}}

The strategy in showing Proposition \ref{pr:ell-6} is also an application of a bootstrapping procedure: One first uses Proposition \ref{pr:ell-5} to show the estimates in \(\eqref{EEQ-11}_1\) (\(l_0=0\)) along the arrow direction the first row of \eqref{Pic-2}, and then uses \(\eqref{EEQ-11}_1\) to show \(\eqref{EEQ-11}_2\) (\(l_0=1\)) along the arrow direction in the second row of \eqref{Pic-2}, and so on, until one reaches \(\eqref{EEQ-11}_7\). \\

In the following, we will give the proof of \(\eqref{EEQ-11}_4\), 
which depends on the sharp estimates on \(\|\partial_tDv\|_{L^\infty}\) 
and \(\|D^2v\|_{L^\infty}\); we will treat the endpoint case: the estimates on \(\partial_2^8v\) in \(\eqref{EEQ-11}_7\),  which is the most difficult case. \\

\noindent{\bf{Estimates on \(\partial_t^3\partial_2^2v\)}.}
As for \eqref{ELE-31}, one can show that 
\begin{equation}\label{ELE-200}
\begin{aligned}
\|\partial_t^3(\rho_0J^{-2}b^{kj}v^i,_{j}),_{k}\|_{L^2}^2
\leq M_0+Ct P(\sup_{0\leq s\leq t}E^{1/2}(s,v)).
\end{aligned}
\end{equation}

Note that
\begin{equation}\label{ELE-201}
\begin{aligned}
&(\rho_0J^{-2}b^{kj}\partial_t^3v^i,_{j}),_{k}
-\partial_t^3(\rho_0J^{-2}b^{kj}v^i,_{j}),_{k}\\
&=-\sum_{m=1}^3\binom{3}{m}\underbrace{\partial_k(\rho_0)\partial_t^{m}(J^{-2}b^{kj})\partial_t^{3-m}v^i,_{j}}_{I_{1_m}}\\
&\quad-\sum_{m=1}^3\binom{3}{m}\underbrace{\rho_0\partial_t^{m}\partial_k(J^{-2}b^{kj})\partial_t^{3-m}v^i,_{j}}_{I_{2_m}}\\
&\quad-\sum_{m=1}^3\binom{3}{m}\underbrace{\rho_0\partial_t^{m}(J^{-2}b^{kj})\partial_t^{3-m}\partial_kv^i,_{j}}_{I_{3_m}}.
\end{aligned}
\end{equation}

To deal with the RHS of \eqref{ELE-201}, more precise information on \(J^{-2}b^{kj}\) than that in Section \ref{sec:Jab} is needed. Elementary calculations show that
\begin{equation}\label{Q-bound-13}
\begin{aligned}
&|\partial_t(J^{-2}b^{kj})|+|\partial_tD(J^{-2}b^{kj})|+|\partial_t^2(J^{-2}b^{kj})|\lesssim  Q_7(t),\\
&|\partial_t^3(J^{-2}b^{kj})|\lesssim  Q_7(t)(|\partial_t^2Dv|+1),\\
&|\partial_t^{l_0}D(J^{-2}b^{kj})|\lesssim  Q_7(t)\sum_{m=1}^{l_0-1}(|\partial_t^mD^2v|+1),\quad l_0=2,3
\end{aligned}
\end{equation}
with some generic polynomial function 
\begin{equation*}
\begin{aligned}
Q_7(t):=Q_7(|Dv|, |D\eta|, |D^2\eta|, |\partial_tDv|, |D^2v|).
\end{aligned}
\end{equation*}

We claim that 
\begin{equation}\label{Q-bound-14}
\begin{aligned}
Q_7(t)\leq Q_{*}(t).
\end{aligned}
\end{equation}
It suffices to show
\begin{equation}\label{tDv-estimate-1}
\begin{aligned}
\|\partial_tDv\|_{L^\infty}^2+\|D^2v\|_{L^\infty}^2
\leq M_0+Ct P(\sup_{0\leq s\leq t}E^{1/2}(s,v)).
\end{aligned}
\end{equation}
\eqref{tDv-estimate-1}
can be shown as \eqref{Dv-estimate-2} by 
\eqref{ineq:weighted Sobolev-2} and \eqref{ineq:weighted Sobolev-3}.
Indeed, one has
\begin{equation}\label{tDv-estimate-proof}
\begin{aligned}
&\|\partial_tDv\|_{L^\infty}^2\lesssim \sum_{k=0}^2\int_\Omega \rho_0|\partial_tD^{k+1}v|^2\,\diff x\lesssim \cdots\\
&\lesssim \mathcal{R}+\int_\Omega \rho_0^5|\partial_tD^5v|^2\,\diff x
\leq M_0+Ct P(\sup_{0\leq s\leq t}E^{1/2}(s,v))
\end{aligned}
\end{equation}
and
\begin{equation*}
\begin{aligned}
&\|D^2v(t)\|_{L^\infty}^2\lesssim \sum_{k=0}^2\int_\Omega \rho_0|D^{k+2}v|^2\,\diff x\lesssim \cdots\\
&\lesssim \mathcal{R}+
\int_\Omega \rho_0^7|D^7v|^2\,\diff x
\leq M_0+Ct P(\sup_{0\leq s\leq t}E^{1/2}(s,v)).
\end{aligned}
\end{equation*}
where \(\eqref{EEQ-11}_2\) and \(\eqref{EEQ-11}_3\) have been used, respectively. 

According to \eqref{Q-bound-13}-\eqref{tDv-estimate-1}, it holds that
\begin{equation*}
\begin{aligned}
&\|I_{1_{l_0}}\|_{L^2}^2\leq Q_{*}^2\|\partial_t^{3-l_0}Dv\|_{L^2}^2,\quad l_0=1,2,\\
&\|I_{1_3}\|_{L^2}^2\leq Q_{*}^2(\|\partial_t^2Dv\|_{L^2}^2+1)\|Dv\|_{L^\infty}^2,\\
&\|I_{3_{l_0}}\|_{L^2}^2\leq Q_{*}^2\|\rho_0\partial_t^{3-l_0}D^2v\|_{L^2}^2,\quad l_0=1,2,\\
&\|I_{3_3}\|_{L^2}^2\leq Q_{*}^2(\|\partial_t^2Dv\|_{L^2}^2+1)\|D^2v\|_{L^\infty}^2
\end{aligned}
\end{equation*}
and
\begin{equation*}
\begin{aligned}
&\|I_{2_1}\|_{L^2}^2\leq Q_{*}^2\|\partial_t^2Dv\|_{L^2}^2,\\
&\|I_{2_2}\|_{L^2}^2\leq Q_{*}^2(\|\rho_0\partial_tD^2v\|_{L^2}^2+1)\|\partial_tDv\|_{L^\infty}^2,\\
&\|I_{2_3}\|_{L^2}^2\leq Q_{*}^2(\|\rho_0\partial_t^2D^2v\|_{L^2}^2+\|\rho_0\partial_tD^2v\|_{L^2}^2+1)\|Dv\|_{L^\infty}^2,
\end{aligned}
\end{equation*}
which lead to
\begin{equation}\label{ELE-202}
\begin{aligned}
\sum_{m=1}^3(\|I_{1_m}\|_{L^2}^2+\|I_{2_m}\|_{L^2}^2+\|I_{3_m}\|_{L^2}^2)
\leq M_0+Ct P(\sup_{0\leq s\leq t}E^{1/2}(s,v)). 
\end{aligned}
\end{equation}
Here one has used \(\eqref{EEQ-6}_3\) to estimate
\begin{equation*}
\begin{aligned}
\|\partial_t^2Dv\|_{L^2}^2\lesssim \|\rho_0\partial_t^2Dv\|_{L^2}^2+\|\rho_0\partial_t^2D^2v\|_{L^2}^2
\leq M_0+Ct P(\sup_{0\leq s\leq t}E^{1/2}(s,v)).
\end{aligned}
\end{equation*}

It follows from \eqref{ELE-200}, \eqref{ELE-201} and \eqref{ELE-202} that
\begin{equation}\label{ELE-203}
\begin{aligned}
\|(\rho_0J^{-2}b^{kj}\partial_t^3v^i,_{j}),_{k}\|_{L^2}^2
\leq M_0+Ct P(\sup_{0\leq s\leq t}E^{1/2}(s,v)). 
\end{aligned}
\end{equation}

Based on \eqref{ELE-203}, one can complete the remaining proof by following a similar argument for \(\partial_t\partial_2^2v\)
in Proposition \ref{pr:ell-3}. \\

\noindent{\bf{Estimates on \(\partial_2^8v\)}.} We will show
\begin{equation}\label{ELE-300}
\begin{aligned}
\|\rho_0^4\partial_2^8v\|_{L^2}^2
\leq M_0+Ct P(\sup_{0\leq s\leq t}E^{1/2}(s,v)).
\end{aligned}
\end{equation}

We start from estimating \(\partial_2^6(\rho_0J^{-2}b^{kj}v^i,_{j}),_{k}\) with a weight \(\rho_0^2\) in \(L^2\). 
Note that
\begin{equation*}
\begin{aligned}
\partial_2^6(\rho_0\partial_tv^i)=\rho_0\partial_t\partial_2^6v+6\partial_2(\rho_0)\partial_t\partial_2^5v+\text{l.o.t.},
\end{aligned}
\end{equation*}
which, together with \eqref{ineq:weighted Sobolev-2} and \(\eqref{EEQ-11}_6\), implies  
\begin{equation}\label{ELE-300-add-1}
\begin{aligned}
\|\rho_0^2\partial_2^6(\rho_0\partial_tv^i)\|_{L^2}^2
&\lesssim (\|\rho_0^3\partial_t\partial_2^5v\|_{L^2}^2+\|\rho_0^3\partial_t\partial_2^6v\|_{L^2}^2)+\|\rho_0^3\partial_t\partial_2^6v\|_{L^2}^2+\mathcal{R}\\
&\leq M_0+Ct P(\sup_{0\leq s\leq t}E^{1/2}(s,v)).
\end{aligned}
\end{equation} 
Since 
\begin{equation*}
\begin{aligned}
\partial_2^6(\rho_0^2 J^{-2}a_i^k),_{k}=\partial_k(\rho_0^2)\partial_2^6(J^{-2}a_i^k)+\rho_0^2\partial_2^6(J^{-2}a_i^k),_{k}+\text{l.o.t.},
\end{aligned}
\end{equation*}
then it follows that
\begin{equation}\label{ELE-300-add-2}
\begin{aligned}
\|\rho_0^2\partial_2^6(\rho_0^2 J^{-2}a_i^k),_{k}\|_{L^2}^2
&\lesssim 
Q_{*}^2(\|\rho_0^3D^7\eta\|_{L^2}^2+\|\rho_0^4D^8\eta\|_{L^2}^2\\
&\quad+\|\rho_0^3g_6(\eta)\|_{L^2}^2+\|\rho_0^4g_7(\eta)\|_{L^2}^2)+\mathcal{R}\\
&\leq M_0+Ct P(\sup_{0\leq s\leq t}E^{1/2}(s,v)),
\end{aligned}
\end{equation}
where one has used \eqref{Weighted-2} and \eqref{Jab-bound-11}  to estimate
\begin{equation*}
\begin{aligned}
\|\rho_0^3g_6(\eta)\|_{L^2}^2\lesssim \|\rho_0D^5\eta\|_{L^2}^2\|\rho_0D^3\eta\|_{L^\infty}^2+\mathcal{R}
\end{aligned}
\end{equation*}
and
\begin{equation*}
\begin{aligned}
\|\rho_0^4g_7(\eta)\|_{L^2}^2&\lesssim \|\rho_0^2D^6\eta\|_{L^2}^2\|\rho_0D^3\eta\|_{L^\infty}^2+\|\rho_0D^5\eta\|_{L^2}^2\|\rho_0^2D^4\eta\|_{L^\infty}^2
\\
&\quad+\|D^4\eta\|_{L^2}^2\|\rho_0D^3\eta\|_{L^\infty}^4+\mathcal{R}.
\end{aligned}
\end{equation*}
Therefore, applying \(\partial_2^6\) to \eqref{eq:main-3}, and using \eqref{ELE-300-add-1}-\eqref{ELE-300-add-2}, one obtains
\begin{equation}\label{ELE-301}
\begin{aligned}
\|\rho_0^2\partial_2^6(\rho_0J^{-2}b^{kj}v^i,_{j}),_{k}\|_{L^2}^2
\leq M_0+Ct P(\sup_{0\leq s\leq t}E^{1/2}(s,v)).
\end{aligned}
\end{equation}

Next, we focus on \(\partial_2^6(\rho_0J^{-2}b^{22}v^i,_{2}),_{2}\), which will be estimated in \(L^2\) by using a higher-order weight \(\rho_0^3\). Notice that  
\begin{equation*}
\begin{aligned}
\partial_2^m(\rho_0J^{-2}b^{kj})&=\rho_0\partial_2^m(J^{-2}b^{kj})+m\partial_2(\rho_0)\partial_2^{m-1}(J^{-2}b^{kj})+\text{l.o.t.}\\
&=\underbrace{\rho_0\partial_2^m(J^{-2}b^{kj})}_{\text{the\ leading-order\ term}}+\text{s.l.t.}+\text{l.o.t.},
\end{aligned}
\end{equation*}
where \text{s.l.t.} denotes the same level terms that are as difficult as the leading-order term.
Then one may write
\begin{equation*}
\begin{aligned}
\partial_2^6(\rho_0J^{-2}b^{22}v^i,_{2}),_{2}
=\partial_2^6(\rho_0J^{-2}b^{kj}v^i,_{j}),_{k}-\sum_{(k,j)\neq (2,2)}\partial_2^6(\rho_0J^{-2}b^{kj}v^i,_{j}),_{k}
\end{aligned}
\end{equation*}
with
{\small\begin{equation*}
\begin{aligned}
\sum_{(k,j)\neq (2,2)}\partial_2^6(\rho_0J^{-2}b^{kj}v^i,_{j}),_{k}
&=2\sum_{m=0}^6\binom{6}{m}\underbrace{\rho_0\partial_2^m(J^{-2}b^{21})\partial_2^{7-m}v^i,_{1}}_{I_{{4}_m}}\\
&\quad+\sum_{(k,j)\neq (2,2)}\sum_{m=0}^6\binom{6}{m}\underbrace{\rho_0\partial_k\partial_2^m(J^{-2}b^{kj})\partial_2^{6-m}v^i,_{j}}_{I_{{5}_m}}\\
&\quad+\sum_{(k,j)\neq (2,2)}\sum_{m=0}^6\binom{6}{m}\underbrace{\partial_k(\rho_0)\partial_2^m(J^{-2}b^{kj})\partial_2^{6-m}v^i,_{j}}_{I_{{6}_m}}\\
&\quad+\text{s.l.t.}+\text{l.o.t.}.
\end{aligned}
\end{equation*}}
 We analyze only  the two endpoint cases \(m=0, 6\), in which one shall handle the highest-order derivatives of \(v\) and the nonlinear terms involving higher-order derivatives of \(\eta\) in \eqref{Jab-bound-11}. 
 
First, it is easy to get
\begin{equation*}
\begin{aligned}
&\|\rho_0^3I_{{4}_0}\|_{L^2}^2\lesssim Q_{*}^2\|\rho_0^4\partial_1D^7v\|_{L^2}^2,\
\|\rho_0^3I_{{5}_0}\|_{L^2}^2\lesssim Q_{*}^2\|\rho_0^4D^7v\|_{L^2}^2,\\
&\|\rho_0^3I_{{4}_6}\|_{L^2}^2\lesssim Q_{*}^2(\|\rho_0^3D^7\eta\|_{L^2}^2
+\|\rho_0^3g_6(\eta)\|_{L^2}^2)\|D^2v\|_{L^\infty}^2+\mathcal{R},\\
&\|\rho_0^3I_{{5}_6}\|_{L^2}^2\lesssim Q_{*}^2(\|\rho_0^4D^8\eta\|_{L^2}^2
+\|\rho_0^4g_7(\eta)\|_{L^2}^2)\|Dv\|_{L^\infty}^2+\mathcal{R},\\
&\|\rho_0^3I_{{6}_6}\|_{L^2}^2\lesssim Q_{*}^2(\|\rho_0^3D^7\eta\|_{L^2}^2
+\|\rho_0^3g_6(\eta)\|_{L^2}^2)\|Dv\|_{L^\infty}^2+\mathcal{R},
\end{aligned}
\end{equation*}
which satisfy the desired bound. 
Here one has used \(\eqref{EEQ-11}_7\) with \((l_1,l_2)=(1,7)\) in estimating \(I_{{4}_0}\), and \(\eqref{EEQ-11}_3\) in estimating \(I_{{5}_0}\). Then, it remains to consider \(I_{{6}_0}\). 
In fact, one has for \((k,j)= (1,2)\),
\begin{equation*}
\begin{aligned}
\|\rho_0^3I_{{6}_0}\|_{L^2}^2\lesssim Q_{*}^2\|\rho_0^4D^7v\|_{L^2}^2
\leq M_0+Ct P(\sup_{0\leq s\leq t}E^{1/2}(s,v)),
\end{aligned}
\end{equation*}
while for \((k,j)= (2,1)\),
\begin{equation*}
\begin{aligned}
\|\rho_0^3I_{{6}_0}\|_{L^2}^2\lesssim Q_{*}^2\|\rho_0^3\partial_1D^6v\|_{L^2}^2
\leq M_0+Ct P(\sup_{0\leq s\leq t}E^{1/2}(s,v)),
\end{aligned}
\end{equation*}
where \(\eqref{EEQ-11}_3\) has been used. 

Collecting all the cases and using \eqref{ELE-301} yield
\begin{equation}\label{ELE-302}
\begin{aligned}
\|\rho_0^3\partial_2^6(\rho_0J^{-2}b^{22}v^i,_{2}),_{2}\|_{L^2}^2
\leq M_0+Ct P(\sup_{0\leq s\leq t}E^{1/2}(s,v)).
\end{aligned}
\end{equation}

Then, to further peel off the normal derivatives from \(\partial_2^6(\rho_0J^{-2}b^{22}v^i,_{2}),_{2}\), one notices that
\begin{equation*}
\begin{aligned}
&\rho_0\partial_2^7(J^{-2}b^{22}v^i,_{2})+7\partial_2(\rho_0)\partial_2^6(J^{-2}b^{22}v^i,_{2})\\
&=\partial_2^6(\rho_0J^{-2}b^{22}v^i,_{2}),_{2}
-\sum_{m=2}^7\binom{7}{m}\partial_2^{m}(\rho_0)\partial_2^{7-m}(J^{-2}b^{22}v^i,_{2})
\end{aligned}
\end{equation*}
with 
\begin{equation*}
\begin{aligned}
\sum_{m=2}^7\binom{7}{m}\partial_2^{m}(\rho_0)\partial_2^{7-m}(J^{-2}b^{22}v^i,_{2})
=\binom{7}{2}\partial_2^2(\rho_0)\partial_2^{5}(J^{-2}b^{22}v^i,_{2})+\text{l.o.t.}.
\end{aligned}
\end{equation*}
It is easy to check by \(\eqref{EEQ-6}_5\) that 
\begin{equation*}
\begin{aligned}
\|\rho_0^3\partial_2^{5}(J^{-2}b^{22}v^i,_{2})\|_{L^2}^2
\leq M_0+CtP(\sup_{0\leq s\leq t}E^{1/2}(s,v)).
\end{aligned}
\end{equation*}
This together with \eqref{ELE-302} gives
\begin{equation}\label{ELE-305}
\begin{aligned}
&\|\rho_0^4\partial_2^7(J^{-2}b^{22}v^i,_{2})+7\partial_2(\rho_0)\rho_0^3\partial_2^6(J^{-2}b^{22}v^i,_{2})\|_{L^2}^2\\
&\leq M_0+CtP(\sup_{0\leq s\leq t}E^{1/2}(s,v)). 
\end{aligned}
\end{equation}

Now, we can further extract the normal derivatives to write
\begin{equation*}
\begin{aligned}
&\rho_0J^{-2}b^{22}\partial_2^7v^i,_{2}+7\partial_2(\rho_0)J^{-2}b^{22}\partial_2^6v^i,_{2}\\
&=\rho_0\partial_2^7(J^{-2}b^{22}v^i,_{2})+7\partial_2(\rho_0)\partial_2^6(J^{-2}b^{22}v^i,_{2})\\
&\quad-\sum_{m_1=1}^7\binom{7}{m_1}\underbrace{\rho_0\partial_2^{m_1}(J^{-2}b^{22})\partial_2^{7-m_1}v^i,_{2}}_{J_{m_1}}\\
&\quad-7\sum_{m_2=1}^6\binom{6}{m_2}\underbrace{\partial_2(\rho_0)\partial_2^{m_2}(J^{-2}b^{22})\partial_2^{6-m_2}v^i,_{2}}_{J_{m_2}}.
\end{aligned}
\end{equation*}
Note that \(J_{m_1}\) and  \(J_{m_2}\) can be handled similarly as \(I_{{4}_m}\), \(I_{{5}_m}\) and \(I_{{6}_m}\). 
This fact together with \eqref{ELE-305} leads to
\begin{equation}\label{ELE-306}
\begin{aligned}
\|\rho_0^4J^{-2}b^{22}\partial_2^7v^i,_{2}+7\partial_2(\rho_0)\rho_0^3J^{-2}b^{22}\partial_2^6v^i,_{2}\|_{L^2}^2
\leq M_0+CtP(\sup_{0\leq s\leq t}E^{1/2}(s,v)).
\end{aligned}
\end{equation}

Finally, one can integrate by parts and use \(\eqref{EEQ-11}_3\) and \eqref{ELE-306} to deduce
\begin{equation*}
\begin{aligned}
\|\rho_0^4J^{-2}b^{22}\partial_2^7v^i,_{2}\|_{L^2}^2
&\lesssim\|\rho_0^4J^{-2}b^{22}\partial_2^7v^i,_{2}+7\partial_2(\rho_0)\rho_0^3J^{-2}b^{22}\partial_2^6v^i,_{2}\|_{L^2}^2\\
&\quad+ Q_{*}^2
\int_\Omega\rho_0^7|\partial_2^7v|^2\,\diff x
\leq M_0+Ct P(\sup_{0\leq s\leq t}E^{1/2}(s,v)),
\end{aligned}
\end{equation*}
which, together with \eqref{J-bound} and \eqref{b-bound}, gives \eqref{ELE-300}.

\end{proof}

\bigskip

 Finally, we conclude from Propositions \ref{pr:ell-1}-\ref{pr:ell-6} that
\begin{equation}\label{APBEL}
\begin{aligned}
E_{\text{el}}(t,v)
\leq M_0+CtP(\sup_{0\leq s\leq t}E^{1/2}(s,v))\quad \mathrm{for\ all}\ t\in [0,T].
\end{aligned}
\end{equation}

It follows from \eqref{APBEN} and \eqref{APBEL} that
\begin{equation}\label{APB-1}
\begin{aligned}
E(t,v)
\leq M_0+CtP(\sup_{0\leq s\leq t}E^{1/2}(s,v))\quad \mathrm{for\ all}\ t\in [0,T],
\end{aligned}
\end{equation}
where \(P\) denotes a generic polynomial function of its arguments, and \(C\) is an absolutely constant  depending only on \(\|\rho_0\|_{H^7(\Omega)}\). 
\eqref{APB-1} implies for suitably small \(T>0\) that 
\begin{equation}\label{APB-2}
	\begin{aligned}
		\sup_{0\leq t\leq T}E(t,v)
		\leq 2M_0.
	\end{aligned}
\end{equation}

\section{The Linearized Problem}\label{The Linearized Problem}

For given \(T>0\), let \(\mathcal{X}_T\) be a Banach space defined by
\begin{equation*}
	\begin{aligned}
		\mathcal{X}_T=\{ v\in L^\infty([0,T]; H^4(\Omega)):\ \sup_{0\leq t\leq T}E(t,v)<\infty\}
	\end{aligned}
\end{equation*}
with the norm
\begin{equation*}
	\begin{aligned}
		\|v\|_{\mathcal{X}_T}^2=\sup_{0\leq t\leq T}E(t,v).
	\end{aligned}
\end{equation*}
For given \(M_1\), define \(\mathcal{C}_T(M_1)\) to be a closed, bounded, and
convex subset of \(\mathcal{X}_T\) given by
\begin{equation*}\label{solution space}
	\begin{aligned}
		\mathcal{C}_T(M_1)&=\{ v\in \mathcal{X}_T:\|v\|_{\mathcal{X}_T}^2\leq M_1\}.
	\end{aligned}
\end{equation*}

For any given \(\bar{v}\in\mathcal{C}_T(M_1)\), define
\begin{align*}
	\bar{\eta}(x,t)=x+\int_0^t\bar{v}(x,s)\,\diff s,
\end{align*}
\begin{align*}
	\bar{J}=\bar{\eta}^1,_{1}\bar{\eta}^2,_{2}-\bar{\eta}^1,_{2}\bar{\eta}^2,_{1},
\end{align*}
\begin{equation*}
	\bar{a}=
	\begin{bmatrix}
		\bar{\eta}^2,_{2}& -\bar{\eta}^1,_{2}\\
		-\bar{\eta}^2,_{1}&\bar{\eta}^1,_{1}
	\end{bmatrix},\quad \bar{b}^{kj}=\bar{a}_l^k\bar{a}_l^j.
\end{equation*}

As for \eqref{J-bound}, \(T>0\) can be chosen such that 
\begin{align}\label{eta-bound-2}
	9/10\leq \bar{J}(x,t)\leq 10/11\quad \text{for}\ (x,t)\in \Omega\times [0,T]
\end{align}
and
\begin{align}\label{a-bound-2}
	\bar{b}^{kj}(x,t)\xi_k\xi_j\geq |\xi|^2/5\quad \text{for}\ (x,t)\in \Omega\times [0,T]\ \text{and}\ \ \xi\in \R^2.
\end{align}
The choice of  \(M_1\) and \(T\) is given in Subsection \ref{The a priori assumption}.
We then consider the following
linearized problem for \(v\):
\begin{equation}\label{existence-3}
	\begin{cases}
		\rho_0\partial_tv^i+(\rho_0^2 \bar{J}^{-2}\bar{a}_i^k),_{k}=(\rho_0\bar{J}^{-2}\bar{b}^{kj}v^i,_{j}),_{k} &\quad \mbox{in}\ \Omega\times (0,T],\\
		v=u_0 &\quad \mbox{on}\ \Omega\times \{t=0\}.
	\end{cases}
\end{equation}

\section{Degenerate-Singular Elliptic Operators and Construction of An Orthogonal Basis of \(H_{\rho_0}^1(\Omega)\)}\label{Degenerate-Singular Elliptic Operators}

In this section, we will construct an orthogonal basis \(\{w_l\}_{l=1}^\infty\) of the weighted Hilbert space \(H_{\rho_0}^1(\Omega)\).

Consider the degenerate-singular elliptic operator
\begin{align*}\label{degenerate-singular elliptic operator}
  \mathcal{L}w:=-\frac{\mathrm{div}(\rho_0Dw)}{\rho_0}+w \quad \text{in}\ \Omega.
\end{align*}
First, we study the solvability of the degenerate-singular elliptic equation
\begin{equation}\label{degenerate-singular elliptic equation}
\begin{aligned}
\mathcal{L}w=g \quad \mathrm{in}\ \Omega
\end{aligned}
\end{equation}
for some given \(g\) to be specified later. It should be pointed out that we do not prescribe any boundary conditions for \eqref{degenerate-singular elliptic equation}. 
To this end, we define a new measure \(\diff \mu=\rho_0\diff x\), and will find a unique weak solution to \eqref{degenerate-singular elliptic equation} in the Hilbert space \(H^1(\Omega, \diff \mu)\) (which is exactly \(H_{\rho_0}^1(\Omega)\)). Thus, we define 
a symmetric bilinear form and a linear functional on \(H^1(\Omega, \diff \mu)\), respectively, as 
 \begin{align*}
    B[w, \varphi]_\mu=\int_{\Omega}(DwD\varphi+ w \varphi)\,\diff \mu
  \end{align*}
and 
\begin{equation*}
\begin{aligned}
\langle g, \varphi\rangle_\mu=\int_{\Omega} g \varphi\,\diff \mu. 
\end{aligned}
\end{equation*}

Then the weak solutions to \eqref{degenerate-singular elliptic equation} can be defined as: 
\begin{definition}\label{Weak sense}
For any given \(g\in L^2(\Omega, \diff \mu)\), we call \(w\in H^1(\Omega, \diff \mu)\) a weak solution to \eqref{degenerate-singular elliptic equation}, 
if the following equality holds
\begin{align*}
  B[w,\varphi]_\mu=\langle g, \varphi\rangle_\mu\quad \mathrm{for\ all}\ \varphi\in H^1(\Omega, \diff \mu). 
\end{align*}

\end{definition}

The existence of such weak solutions is given as follows:

\begin{theorem}\label{Existence weak solution}
 For any given \(g\in L^2(\Omega, \diff \mu)\),
there exists a unique weak solution \(\tilde{w}\in H^1(\Omega, \diff \mu)\) to \eqref{degenerate-singular elliptic equation}. 
\end{theorem}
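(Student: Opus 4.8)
The plan is to apply the Lax--Milgram theorem to the bilinear form $B[\cdot,\cdot]_\mu$ on the Hilbert space $H^1(\Omega,\diff\mu) = H^1_{\rho_0}(\Omega)$. First I would verify that $H^1(\Omega,\diff\mu)$, equipped with the inner product $B[\cdot,\cdot]_\mu$ itself (or equivalently the norm $\|w\|_{H^1_{\rho_0}}^2 = \int_\Omega \rho_0(w^2+|Dw|^2)\,\diff x$), is a genuine Hilbert space: completeness follows because a Cauchy sequence in this norm has $\sqrt{\rho_0}\,w_n$ and $\sqrt{\rho_0}\,Dw_n$ Cauchy in $L^2(\Omega,\diff x)$, hence convergent, and since $\rho_0>0$ in the interior $\Omega$ one identifies the limits as $\sqrt{\rho_0}\,w$ and $\sqrt{\rho_0}\,Dw$ for a suitable $w\in H^1_{\mathrm{loc}}$; the weighted bounds then place $w$ in the space. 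Once this functional-analytic setup is in place, the coercivity and boundedness of $B$ are immediate: boundedness is Cauchy--Schwarz, $|B[w,\varphi]_\mu|\le \|w\|_{H^1_{\rho_0}}\|\varphi\|_{H^1_{\rho_0}}$, and coercivity is in fact an identity, $B[w,w]_\mu = \|w\|_{H^1_{\rho_0}}^2$, since no cross terms or wrong-sign contributions appear. The functional $\varphi\mapsto\langle g,\varphi\rangle_\mu = \int_\Omega g\varphi\,\diff\mu$ is bounded on $H^1(\Omega,\diff\mu)$ because $|\langle g,\varphi\rangle_\mu|\le \|g\|_{L^2(\Omega,\diff\mu)}\|\varphi\|_{L^2(\Omega,\diff\mu)}\le \|g\|_{L^2(\Omega,\diff\mu)}\|\varphi\|_{H^1_{\rho_0}}$.

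With these three properties established, Lax--Milgram (or, since $B$ is symmetric, the Riesz representation theorem applied to the Hilbert space $(H^1(\Omega,\diff\mu),B[\cdot,\cdot]_\mu)$) produces a unique $\tilde w\in H^1(\Omega,\diff\mu)$ with $B[\tilde w,\varphi]_\mu = \langle g,\varphi\rangle_\mu$ for all $\varphi\in H^1(\Omega,\diff\mu)$, which is precisely the weak solution in the sense of Definition \ref{Weak sense}. Uniqueness is automatic from coercivity: if $\tilde w_1,\tilde w_2$ are two weak solutions, then $B[\tilde w_1-\tilde w_2,\varphi]_\mu = 0$ for all test functions $\varphi$, and taking $\varphi = \tilde w_1-\tilde w_2$ gives $\|\tilde w_1-\tilde w_2\|_{H^1_{\rho_0}}^2 = 0$, whence $\tilde w_1=\tilde w_2$ (a priori as elements of the weighted space, and then pointwise a.e. since $\rho_0>0$ in $\Omega$).

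I expect the main obstacle to be the careful treatment of the weighted space $H^1(\Omega,\diff\mu)$ near the degenerate boundary $\Gamma$, rather than the Lax--Milgram mechanics. Specifically, one must be sure that (i) $C^\infty(\overline\Omega)$ (or $C^\infty_c(\Omega)$, depending on the intended definition) is dense in $H^1(\Omega,\diff\mu)$ so that the weak formulation is not vacuous and so that the space is separable — separability will be needed later for the Galerkin scheme — and (ii) the space is complete, which requires identifying the distributional limit of a Cauchy sequence correctly given that $\rho_0$ vanishes on $\Gamma$; here the bound $C_1 d(x)\le\rho_0(x)\le C_2 d(x)$ from \eqref{eq:intro-3} and the weighted Sobolev/Hardy inequalities \eqref{ineq:weighted Sobolev-1}--\eqref{ineq:weighted Sobolev-2} are the key tools, since they let one control $\|w\|_{L^2_{\mathrm{loc}}}$ and even $\|w\|_{H^{1/2}(\Omega)}$ by the weighted norm, trapping the limit inside the space. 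Once the Hilbert space structure is secured, the rest is routine, and this is also the natural moment to record that $\mathcal L^{-1}\colon L^2(\Omega,\diff\mu)\to H^1(\Omega,\diff\mu)\hookrightarrow L^2(\Omega,\diff\mu)$ is well-defined, which—together with compactness of the embedding, to be exploited in the sequel—sets up the spectral decomposition yielding the orthogonal basis $\{w_l\}_{l=1}^\infty$.
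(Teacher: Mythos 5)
Your proposal is correct and follows essentially the same route as the paper: verify boundedness and coercivity of $B[\cdot,\cdot]_\mu$ and boundedness of the linear functional $\langle g,\cdot\rangle_\mu$ on $H^1(\Omega,\diff\mu)$, then invoke Lax--Milgram. The only differences are cosmetic refinements the paper leaves implicit—you note that coercivity is actually the identity $B[w,w]_\mu=\|w\|_{H^1_{\rho_0}}^2$, observe that symmetry of $B$ lets Riesz representation substitute for Lax--Milgram, and spell out the (routine but worth recording) completeness of the weighted space near the degenerate boundary.
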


\begin{proof}
First,  \(B[\cdot, \cdot]_\mu\) is bounded and coercive on \(H^1(\Omega, \diff \mu)\). Indeed, 
by Cauchy's inequality, one has
\begin{equation*}
\begin{aligned}
|B[w, \varphi]_\mu|&\lesssim \bigg(\int_{\Omega} \rho_0|Dw|^2\, d x\bigg)^{1/2} \bigg(\int_{\Omega} \rho_0|D\varphi|^2\,\diff x\bigg)^{1/2}\\
&\quad+\bigg(\int_{\Omega} \rho_0|w|^2\,\diff x\bigg)^{1/2} \bigg(\int_{\Omega} \rho_0|\varphi|^2\,\diff x\bigg)^{1/2}\\
&\leq C\|w\|_{H^1(\Omega, \diff \mu)}\|\varphi\|_{H^1(\Omega, \diff \mu)}
\end{aligned}
\end{equation*}
and
\begin{equation*}
\begin{aligned}
|B[w, w]_\mu|\gtrsim \int_{\Omega} \rho_0|Dw|^2\, d x+\int_{\Omega} \rho_0|w|^2\,\diff x\geq C\|w\|_{H^1(\Omega, \diff \mu)}^2.
\end{aligned}
\end{equation*}

Next, \(g: H^1(\Omega, \diff \mu)\rightarrow \mathbb{R}\) is a bounded linear functional on \(H^1(\Omega, \diff \mu)\), which follows from 
\begin{equation*}
\begin{aligned}
|\langle g, \varphi\rangle_\mu|\leq \bigg(\int_{\Omega} \rho_0g^2\, d x\bigg)^{1/2} \bigg(\int_{\Omega} \rho_0\varphi^2\,\diff x\bigg)^{1/2}
\leq \|g\|_{L^2(\Omega, \diff \mu)}\|\varphi\|_{H^1(\Omega, \diff \mu)}. 
\end{aligned}
\end{equation*}

Finally, the result is a direct consequence of the Lax-Milgram theorem.

\end{proof}

Now, we can construct an orthogonal basis of \(H_{\rho_0}^1(\Omega)\).

\begin{theorem}\label{Hilbert basis construction}
\noindent{\bf{(i)}} The spectrum \(\Sigma\) of \(\mathcal{L}\) can be written as
\begin{align*}
\Sigma=\{\sigma_l\}_{l=1}^\infty,
\end{align*}
where
\begin{align*}
0<\sigma_1\leq \sigma_2\leq \sigma_3\leq \cdots
\end{align*}
and
\begin{align*}
\sigma_l\rightarrow  \infty \quad   \mathrm{as}\  l\rightarrow \infty,
\end{align*}
where each eigenvalue with multiplicity \(m\) is repeated \(m\) times.\\
\noindent{\bf{(ii)}} There exists an orthonormal basis \(\{w_l\}_{l=1}^\infty\) of \(L^2(\Omega, \diff \mu)\) (\(\{w_l\}_{l=1}^\infty\) is also an orthogonal basis of \(H^1(\Omega, \diff \mu)\)), where \(w_l\) is an eigenfunction corresponding to the eigenvalue \(\sigma_l\), namely
\begin{equation}\label{eigenvalue problem}
\begin{aligned}
\mathcal{L}w_l=\sigma_lw_l\quad  \mathrm{in}\  \Omega.
\end{aligned}
\end{equation}

\end{theorem}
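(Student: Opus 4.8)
The proof is a standard application of spectral theory for compact, self-adjoint operators, once we have isolated the right functional-analytic setup. The key object is the solution operator $\mathcal{L}^{-1}$ furnished by Theorem \ref{Existence weak solution}: for $g \in L^2(\Omega,\diff\mu)$, set $\mathcal{L}^{-1}g = \tilde{w}$, the unique weak solution to $\mathcal{L}w = g$. I would first record that $\mathcal{L}^{-1}\colon L^2(\Omega,\diff\mu) \to L^2(\Omega,\diff\mu)$ is well-defined, bounded, self-adjoint, positive, and — the crucial point — \emph{compact}. Boundedness follows from the coercivity estimate $\|\tilde w\|_{H^1(\Omega,\diff\mu)}^2 \lesssim B[\tilde w,\tilde w]_\mu = \langle g,\tilde w\rangle_\mu \le \|g\|_{L^2(\Omega,\diff\mu)}\|\tilde w\|_{L^2(\Omega,\diff\mu)}$ already present in Theorem \ref{Existence weak solution}. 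Self-adjointness and positivity come from the symmetry of $B[\cdot,\cdot]_\mu$: for $g_1,g_2$ with solutions $w_1,w_2$, one has $\langle \mathcal{L}^{-1}g_1,g_2\rangle_\mu = B[w_1,w_2]_\mu = B[w_2,w_1]_\mu = \langle g_1,\mathcal{L}^{-1}g_2\rangle_\mu$, and $\langle \mathcal{L}^{-1}g,g\rangle_\mu = B[w,w]_\mu \ge C\|w\|_{H^1(\Omega,\diff\mu)}^2 > 0$ unless $g=0$.

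\textbf{Compactness.} This is the main obstacle, because $\Omega$ is a bounded Lipschitz domain but the weight $\rho_0$ \emph{degenerates} on $\Gamma$, so the usual Rellich--Kondrachov theorem is not directly applicable to the weighted space $H^1(\Omega,\diff\mu) = H^1_{\rho_0}(\Omega)$. The plan is to show the embedding $H^1_{\rho_0}(\Omega) \hookrightarrow L^2_{\rho_0}(\Omega)$ is compact, which suffices since $\mathcal{L}^{-1}$ factors as $L^2_{\rho_0}(\Omega) \to H^1_{\rho_0}(\Omega) \hookrightarrow L^2_{\rho_0}(\Omega)$ with the first arrow bounded. To prove the compact embedding I would use the assumption $\rho_0 \sim d(x)$ from \eqref{eq:intro-3} together with the weighted Hardy-type inequalities in Subsection \ref{Some Preliminaries}: by \eqref{ineq:weighted Sobolev-1}, $\|w\|_{H^{1/2}(\Omega)}^2 \lesssim \int_\Omega \rho_0(w^2 + |Dw|^2)\,\diff x = \|w\|_{H^1_{\rho_0}(\Omega)}^2$, so $H^1_{\rho_0}(\Omega) \hookrightarrow H^{1/2}(\Omega)$ continuously; then $H^{1/2}(\Omega) \hookrightarrow\hookrightarrow L^2(\Omega) \hookrightarrow L^2_{\rho_0}(\Omega)$ (the last inclusion is bounded since $\rho_0$ is bounded above), and the composition of a continuous map with a compact one is compact. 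Hence $\mathcal{L}^{-1}$ is a compact self-adjoint positive operator on $L^2(\Omega,\diff\mu)$.

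\textbf{Conclusion via spectral theorem.} Now apply the Hilbert--Schmidt spectral theorem to $\mathcal{L}^{-1}$: there is an orthonormal basis $\{w_l\}_{l=1}^\infty$ of $L^2(\Omega,\diff\mu)$ consisting of eigenfunctions, $\mathcal{L}^{-1}w_l = \mu_l w_l$ with $\mu_l > 0$ (positivity), $\mu_1 \ge \mu_2 \ge \cdots \to 0$. Setting $\sigma_l = \mu_l^{-1}$ gives $\mathcal{L}w_l = \sigma_l w_l$ weakly, with $0 < \sigma_1 \le \sigma_2 \le \cdots$ and $\sigma_l \to \infty$; counting each eigenvalue with its (finite) multiplicity proves part (i) and the eigenfunction statement in part (ii). It remains to check that $\{w_l\}$ is also an orthogonal basis of $H^1(\Omega,\diff\mu)$: orthogonality follows since $B[w_l,w_m]_\mu = \sigma_l\langle w_l,w_m\rangle_\mu = \sigma_l\delta_{lm}$ (using that $w_l$ solves the eigenvalue problem and $w_m \in H^1(\Omega,\diff\mu)$ is an admissible test function); completeness follows because any $\varphi \in H^1(\Omega,\diff\mu)$ with $B[\varphi,w_l]_\mu = 0$ for all $l$ satisfies $\sigma_l\langle\varphi,w_l\rangle_\mu = 0$, hence $\langle\varphi,w_l\rangle_\mu = 0$ for all $l$, so $\varphi = 0$ by completeness in $L^2(\Omega,\diff\mu)$. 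This closes the argument; I would note that the slight discrepancy between the two ``$\Omega$ splits as $\mathbb{T}\times[0,1/2]$ and $\mathbb{T}\times[1/2,1]$'' regimes used for Hardy inequalities causes no trouble here since \eqref{ineq:weighted Sobolev-1} is already stated globally.
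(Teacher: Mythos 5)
Your proposal is correct and follows essentially the same route as the paper: reduce to the solution operator $S=\mathcal{L}^{-1}$, verify it is bounded, self-adjoint, positive and compact on $L^2(\Omega,\diff\mu)$, then invoke the spectral theorem for compact self-adjoint operators. The one place you diverge is in the proof of compactness. The paper handles it by citing Opic--Gurka for the compactness of the embedding $H^1_{\rho_0}(\Omega)\hookrightarrow L^2_{\rho_0}(\Omega)$ (and in fact states the direction of the embedding backwards, an evident slip), whereas you give a short, self-contained argument by chaining the weighted Hardy-type embedding \eqref{ineq:weighted Sobolev-1}, $H^1_{\rho_0}(\Omega)\hookrightarrow H^{1/2}(\Omega)$, with the compact Rellich embedding $H^{1/2}(\Omega)\hookrightarrow\hookrightarrow L^2(\Omega)$ and the bounded inclusion $L^2(\Omega)\hookrightarrow L^2_{\rho_0}(\Omega)$ coming from $\rho_0\in L^\infty$. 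This is a genuine improvement in transparency: it keeps the whole proof within the toolbox already set up in Section \ref{Some Preliminaries} and does not require the reader to consult the external weighted-Sobolev reference; it also makes clear that the compactness is really driven by the single power of $\rho_0$ costing only half a derivative, which is exactly the phenomenon \eqref{ineq:weighted Sobolev-1} encodes. Your expansion of ``follows from the theory of compact operators'' into the explicit verification that $\{w_l\}$ is orthogonal and complete in $H^1(\Omega,\diff\mu)$ via $B[w_l,w_m]_\mu=\sigma_l\langle w_l,w_m\rangle_\mu$ is also a welcome unpacking of a step the paper leaves implicit.
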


\begin{proof} By Theorem \ref{Existence weak solution}, the unique weak solution to \eqref{degenerate-singular elliptic equation} can be written as 
\(w_g=\mathcal{L}^{-1} g\).
Define \(S:=\mathcal{L}^{-1}\).

Next, we will show that \(S\) is a linear, bounded, compact, symmetric and positive operator on \(L^2(\Omega, \diff \mu)\). The linearity of \(S\) is obvious.  
To show the boundedness of \(S\), one notices first, 
\begin{equation*}
\begin{aligned}
|B[Sg, Sg]_\mu|\geq C\|Sg\|_{H^1(\Omega, \diff \mu)}^2,
\end{aligned}
\end{equation*}
on the other hand, 
\begin{equation*}
\begin{aligned}
|B[Sg, Sg]_\mu|=|\langle g, Sg\rangle_\mu|\leq \|g\|_{L^2(\Omega, \diff \mu)}\|Sg\|_{H^1(\Omega, \diff \mu)}.
\end{aligned}
\end{equation*}
Hence
\begin{align*}
  \|Sg\|_{L^2(\Omega, \diff \mu)}\leq \|Sg\|_{H^1(\Omega, \diff \mu)}\leq  C\|g\|_{L^2(\Omega, \diff \mu)}.
\end{align*}

The compactness of \(S\) follows from the fact that \(L^2(\Omega, \diff \mu)=L_{\rho_0}^2(\Omega)\) is compactly embedded in \(H^1(\Omega, \diff \mu)=H_{\rho_0}^1(\Omega)\) due to \eqref{eq:intro-3} (see \cite{MR0983485}). 

For \(g_i\in L^2(\Omega, \diff \mu)\) \((i=1,2)\), then \(Sg_i=w_i\in H^1(\Omega, \diff \mu)\) 
solve
\begin{eqnarray*}
\begin{aligned}
\mathcal{L}w_i=g_i\quad \mathrm{in}\ \Omega
\end{aligned}
\end{eqnarray*}
in the weak sense. Thus one has
\begin{align*}
  \langle S g_1, g_2 \rangle_\mu=\langle w_1, g_2 \rangle_\mu=\langle g_2, w_1 \rangle_\mu=B[w_2, w_1]_\mu
\end{align*}
and
\begin{align*}
  \langle g_1, S g_2 \rangle_\mu=\langle g_1, w_2 \rangle_\mu=B[w_1, w_2]_\mu=B[w_2, w_1]_\mu.
\end{align*}
Hence the symmetry of \(S\) follows.

\(S\) is positive on \(L^2(\Omega, \diff \mu)\). Indeed, for any \(g\in L^2(\Omega, \diff \mu)\), 
\begin{align*}
  \langle S g, g \rangle_\mu=\langle w, g \rangle_\mu=\langle g, w \rangle_\mu=B[w, w]_\mu\geq 0. 
\end{align*}
  
Finally, the results in \noindent{\bf{(i)}} and \noindent{\bf{(ii)}} follow directly from the theory of compact operators (see \cite{MR2597943}).

\end{proof}

The eigenfunctions \(\{w_l\}_{l=1}^\infty\) admit the following regularity estimates.

\begin{theorem}\label{regularity of eigenvalue}
It holds that
\begin{equation}\label{weighted eigenfunction regularity}
	\begin{aligned}
		&\sum_{l_1=0}^7\big\|\sqrt{\rho_0}\partial_1^{l_1}Dw_l\big\|_{L^2(\Omega)}+\sum_{\substack{l_1+l_2\leq 8\\  l_1\geq 0,\ l_2\geq 2}}\big\|\sqrt{\rho_0^{l_2}}\partial_1^{l_1}\partial_2^{l_2}w_l\big\|_{L^2(\Omega)}\\
		&\leq C\|\sqrt{\rho_0}w_l\|_{L^2(\Omega)}, \quad l=1,2,\dots.
	\end{aligned}
\end{equation}
Hence
\begin{align}\label{eigenfunction regularity}
  w_l\in H^4(\Omega),\quad l=1,2,\dots. 
\end{align}

\end{theorem}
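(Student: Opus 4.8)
I would prove the regularity estimate \eqref{weighted eigenfunction regularity} by treating the eigenfunction equation \eqref{eigenvalue problem} as a degenerate elliptic problem and running exactly the same kind of weighted elliptic bootstrap that is used in Section \ref{Elliptic Estimates}, but in a static (time-independent) version. Rewrite \eqref{eigenvalue problem} in divergence form as $-\mathrm{div}(\rho_0 Dw_l) = (\sigma_l-1)\rho_0 w_l$ in $\Omega$, i.e.
\begin{equation*}
(\rho_0 w_l,_{j}),_{j} = -(\sigma_l-1)\rho_0 w_l \quad \text{in } \Omega,
\end{equation*}
with no boundary condition imposed. First I would record the base energy identity: testing the weak formulation with $\varphi = w_l$ gives $\|\sqrt{\rho_0}Dw_l\|_{L^2}^2 + \|\sqrt{\rho_0}w_l\|_{L^2}^2 = \sigma_l\|\sqrt{\rho_0}w_l\|_{L^2}^2$, so $\|\sqrt{\rho_0}Dw_l\|_{L^2} \lesssim \|\sqrt{\rho_0}w_l\|_{L^2}$ with a constant depending on $\sigma_l$ (which is harmless since the estimate is stated for each fixed $l$). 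This is the analogue of the a priori assumption; everything downstream is bounded by $\|\sqrt{\rho_0}w_l\|_{L^2}$.

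**Tangential estimates.** Since $\partial_1$ is tangential and $\rho_0$ depends only on $x_2$ near $\Gamma$ (and more generally $\partial_1^{l_1}\rho_0/\rho_0 \in L^\infty$ by \eqref{TE}), differentiating the equation in $x_1$ preserves its structure. I would apply $\partial_1^{l_1}$ to the divergence-form equation, test with $\partial_1^{l_1}w_l$, integrate by parts (boundary terms vanish because of the weight $\rho_0$ vanishing on $\Gamma$, exactly as in the energy estimates), and use \eqref{TE} to absorb the commutator terms $[\partial_1^{l_1},\rho_0]$ against the $\rho_0$-weighted gradient. This yields
\begin{equation*}
\sum_{l_1=0}^{7}\|\sqrt{\rho_0}\partial_1^{l_1}Dw_l\|_{L^2} \leq C\|\sqrt{\rho_0}w_l\|_{L^2},
\end{equation*}
which is the first half of \eqref{weighted eigenfunction regularity}. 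This part is routine because it only involves tangential derivatives, which commute nicely with the weight.

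**Normal estimates via the Section 5 bootstrap.** For the mixed and pure normal derivatives $\partial_1^{l_1}\partial_2^{l_2}w_l$ with $l_2\geq 2$ and $l_1+l_2\leq 8$, I would isolate the top normal derivative from the equation: the coefficient of $\partial_2^2 w_l$ in $-\mathrm{div}(\rho_0 Dw_l)$ is $\rho_0$ (the operator is $-\rho_0\Delta w_l - D\rho_0\cdot Dw_l + \rho_0 w_l$), so
\begin{equation*}
\rho_0\partial_2^2 w_l = (\sigma_l-1)\rho_0 w_l - \rho_0\partial_1^2 w_l - \partial_2(\rho_0)\partial_2 w_l - \partial_1(\rho_0)\partial_1 w_l.
\end{equation*}
Multiplying by an appropriate power of $\rho_0$ and using \eqref{ineq:weighted Sobolev-2} to trade a normal derivative for a weight $\rho_0$, I would induct: $\|\rho_0\partial_2^2 w_l\|_{L^2}$ is controlled by the already-known tangential quantities; then differentiate the equation and repeat, exactly following the bootstrapping order displayed in \eqref{Pic-1}--\eqref{Pic-2} (first $\partial_1\partial_2^2$, $\partial_2^3$, then $\partial_t$-free analogues of the higher rows). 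Each step peels off one normal derivative at the cost of one power of $\rho_0$, which is precisely why the exponent $\sqrt{\rho_0^{l_2}}$ appears and why one stops at $l_1+l_2\leq 8$. The proof is simpler than Section \ref{Elliptic Estimates} because there are no time derivatives, no quasilinear coefficients $J^{-2}b^{kj}$ (the coefficient is literally $\rho_0$), and no nonlinear commutators involving $\eta$ — only commutators with $\rho_0$, all handled by \eqref{TE} and \eqref{ineq:weighted Sobolev-2}. Finally, \eqref{eigenfunction regularity} follows by combining \eqref{weighted eigenfunction regularity} with the Hardy-type embedding: the argument is identical to Lemma \ref{le:Preliminary-1}, since the left-hand side of \eqref{weighted eigenfunction regularity} controls $\|w_l\|_{H^4(\Omega)}$ via repeated application of \eqref{ineq:weighted Sobolev-2}.

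**Main obstacle.** The one genuine point requiring care is the boundary-term bookkeeping in the integration by parts for the normal estimates: because we impose \emph{no} boundary condition on \eqref{degenerate-singular elliptic equation}, one must check at each stage that the weight $\rho_0$ (vanishing linearly at $\Gamma$ by \eqref{eq:intro-3}) kills the boundary contributions, or else carry them as controlled traces. This is exactly the mechanism already used throughout Sections \ref{Energy Estimates} and \ref{Elliptic Estimates} ("boundary terms vanish due to \eqref{eq:intro-3}"), so it transfers directly, but it is the step where the degenerate-singular nature of $\mathcal{L}$ actually matters, and it is what makes the a priori estimate \eqref{weighted eigenfunction regularity} hold without prescribing Dirichlet or Neumann data.
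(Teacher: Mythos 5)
Your proposal is essentially the paper's proof: the base energy identity from testing with $w_l$, tangential estimates in $\partial_1$ absorbing commutators with $\rho_0$ via \eqref{TE}, isolation of $\rho_0\partial_2^2 w_l$ from the divergence-form equation, and a bootstrap that imports the elliptic-estimate machinery of Section~\ref{Elliptic Estimates} in the simplified static case where the coefficient is literally $\rho_0$ rather than $\rho_0 J^{-2}b^{kj}$. The only refinement the paper makes explicit and you gloss over is that, since $w_l$ is a priori only in $H^1_{\rho_0}(\Omega)$, the tangential gain must be run through the difference quotient $D_1^\tau$ (choosing $\varphi=-D_1^{-\tau}D_1^\tau w_l$ in \eqref{weak form}) rather than by applying $\partial_1$ to the equation directly; after that one passes to the strong form to peel off normal derivatives exactly as you describe. (You also have a harmless sign slip: from $\mathcal{L}w_l=\sigma_l w_l$ one gets $\rho_0\partial_2^2 w_l=(1-\sigma_l)\rho_0 w_l-\rho_0\partial_1^2 w_l-D\rho_0\cdot Dw_l$, not $(\sigma_l-1)\rho_0 w_l$, but this does not affect the $L^2$ estimate.)
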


\begin{proof} By Definition \ref{Weak sense}, the weak form of \eqref{eigenvalue problem} becomes
\begin{equation}\label{weak form}
\begin{aligned}
\int_\Omega \rho_0Dw_lD\varphi\,\diff x=(\sigma_l-1)\int_\Omega \rho_0 w_l  \varphi\,\diff x \quad \mathrm{for\ all}\ \varphi\in H^1(\Omega, \diff \mu).
\end{aligned}
\end{equation}

Noticing that the leading term (i.e., the LHS) of \eqref{weak form} enjoys a similar formulation with the elliptic part of \eqref{eq:main-3} in the weak sense, one can follow the argument of the elliptic estimates in Section \ref{Elliptic Estimates} to deduce \eqref{weighted eigenfunction regularity}.
More precisely, defining the difference quotient 
\begin{align}\label{DQ}
  D_i^\tau w(x)=\frac{w(x+\tau {\bf e}_i)-w(x)}{\tau},
\end{align}
where \({\bf e}_i\)\ \((i=1,2)\) stand for the \(i\)-th standard coordinate vector, one can deduce \eqref{weighted eigenfunction regularity} in the following manner: \\

\noindent \(\mathrm{(\bf{I})}\) choosing \(\varphi=w_l\) in \eqref{weak form} easily yields 
\begin{equation}\label{DS-1}
\begin{aligned}
\|\sqrt{\rho_0}Dw_l\|_{L^2(\Omega)}\leq C \|\sqrt{\rho_0}w_l\|_{L^2(\Omega)}.
\end{aligned}
\end{equation}

\noindent \(\mathrm{(\bf{II})}\) first choosing \(\varphi=-D_1^{-\tau} D_1^\tau w_l\) in \eqref{weak form} and using integration by parts and \eqref{DS-1} lead easily to 
\begin{equation}\label{DS-2}
\begin{aligned}
\|\sqrt{\rho_0}\partial_1Dw_l\|_{L^2(\Omega)}\leq C\|\sqrt{\rho_0}w_l\|_{L^2(\Omega)},
\end{aligned}
\end{equation}
and then repeating the argument in Subsection \ref{LOP}, and using \eqref{DS-1} and \eqref{DS-2}, one can get 
\begin{equation}\label{DS-3}
\begin{aligned}
\|\rho_0\partial_2^2w_l\|_{L^2(\Omega)}\leq C\|\sqrt{\rho_0}w_l\|_{L^2(\Omega)}.
\end{aligned}
\end{equation}

\noindent \(\mathrm{(\bf{III})}\) One can show the remaining terms on the LHS of \eqref{weighted eigenfunction regularity} have the  bound \(C\|\sqrt{\rho_0}w_l\|_{L^2}\) by choosing appropriate test functions \(\tilde{w}\) and following the arguments in Subsections \ref{IP-I}-\ref{HOP}.

\eqref{eigenfunction regularity} is an easy consequence of \eqref{weighted eigenfunction regularity} and \eqref{ineq:weighted Sobolev-2}.

\end{proof}

\section{Existence of Weak Solutions to the Linearized Problem}\label{weak solution}

 Let \(\langle\cdot, \cdot\rangle\) be the  pairing of \(H^{-1}(\Omega)\) and \(H^1(\Omega)\), and \((\cdot,\cdot)\) stand for the inner product of \(L^2(\Omega)\). In order to obtain classical solutions to \eqref{existence-3}, we start with its weak solutions. 
\begin{definition}[Weak Solution]\label{Weak Solution} A function \(v\), satisfying
	\begin{equation*}
		\begin{aligned}
		\sqrt{\rho_0}Dv\in L^2([0,T];L^2(\Omega))\quad {\rm{and}}\quad \rho_0 v_t\in L^2([0,T];H^{-1}(\Omega)),
		\end{aligned}
	\end{equation*}
	is said to be a weak solution to \eqref{existence-3} if\\	
	{\rm{(a)}}
	\begin{equation*}
		\begin{aligned}
			\big\langle\rho_0v_t^i, \phi\big\rangle+\big(\rho_0\bar{J}^{-2}\bar{b}^{kj}v^i,_{j}, \phi,_{k}\big)=\big(\rho_0^2 \bar{J}^{-2}\bar{a}_i^k, \phi,_{k}\big)
		\end{aligned}
	\end{equation*}	
	for each \(\phi\in H^1(\Omega)\) and a.e. \(0< t\leq T\), and\\
	{\rm{(b)}} \(\|\rho_0v(t,\cdot)-\rho_0v(0, \cdot)\|_{L^2(\Omega)}\to 0\ \text{as}\ t\to 0^+\), and $v(0,\cdot)=u_0(\cdot) \ \mbox{a.e.\ on}\ \Omega$. 
\end{definition}

\subsection{Galerkin's scheme}
We first use the Galerkin's scheme (see \cite{MR2597943}) to construct approximate solutions to \eqref{existence-3}. 

Let \(\{e_n\}_{n=1}^\infty\) be the eigenfunctions constructed in Theorem \ref{Hilbert basis construction}, which form an orthonormal basis of \(L_{\rho_0}^2(\Omega)\) and an orthogonal basis of \(H_{\rho_0}^1(\Omega)\). 
Given a positive integer \(n\), we set
\begin{equation}\label{existence-4}
	\begin{aligned}
		(X^i)^n(t,x)=\sum_{m_1=1}^n(\lambda_{m_1}^i)^n(t)e_{m_1}(x),
	\end{aligned}
\end{equation}
in which the coefficients \((\lambda_{m_1}^i)^n(t)\) are chosen such that
\begin{eqnarray}\label{existence-5}
	\begin{cases}
		\big(\rho_0\partial_t(X^i)^n,e_{m_2}\big)
		+\big(\rho_0\bar{J}^{-2}\bar{b}^{kj}(X^i)^n,_{j},(e_{m_2}),_{k}\big)\\
		=\big(\rho_0^2 \bar{J}^{-2}\bar{a}_i^k,(e_{m_2}),_{k}\big)
		&\qquad \mbox{in}\ (0,T],\\ \\
		(\lambda_{m_1}^i)^n=(u_0^i,e_{m_1}) &\qquad \mbox{on}\ \{t=0\},
	\end{cases}
\end{eqnarray}
where \(m_1, m_2=1,2,...,n\).
Inserting \eqref{existence-4} into \eqref{existence-5} leads to
\begin{equation}\label{existence-6}
	\left\{
	\begin{aligned}
		&[(\lambda_{m_2}^i)^n(t)\big]_t
		+\sum_{m_1=1}^n\int_\Omega\rho_0\bar{J}^{-2}\bar{b}^{kj}(e_{m_1}),_{j}(e_{m_2}),_{k}\,\diff x\cdot  (\lambda_{m_1}^i)^n(t)\\
		&=\int_\Omega \rho_0^2 \bar{J}^{-2}\bar{a}_i^k(e_{m_2}),_{k}\,\diff x	\qquad\qquad\qquad\qquad\qquad\qquad\ \mbox{in}\ (0,T],\\ 
		&(\lambda_{m_1}^i)^n=(u_0^i,e_{m_1}) \qquad\qquad\qquad\qquad\qquad\qquad\qquad\quad \mbox{on}\ \{t=0\},
	\end{aligned}
	\right.
\end{equation}
where \(m_1, m_2=1,2,...,n\).

 It follows from \(\bar{v}\in \mathcal{C}_T(M_1)\), \eqref{eta-bound-2}, \eqref{a-bound-2} and \eqref{weighted eigenfunction regularity} that 
\begin{align*} 
	\int_\Omega\rho_0\bar{J}^{-2}\bar{b}^{kj}(e_{m_1}),_{j}(e_{m_2}),_{k}\,\diff x\quad \text{and}\quad \int_\Omega \rho_0^2 \bar{J}^{-2}\bar{a}_i^k(e_{m_2}),_{k}\,\diff x
\end{align*}
are Lipschitz continuous for \(t\in[0,T]\). By the standard theory for ordinary differential equations, one can find solutions \(\lambda_{m_1}^n(t)\) \ \((m_1=1,...,n)\) to \eqref{existence-6}, thus there exist approximate solutions \(X^n(t,x)\)\ \((n=1,2,...)\) to \eqref{existence-5}.\\

\subsection{Uniform estimates}

We next show that \(\{X^n\}_{n=1}^\infty\)  satisfy some uniform estimates in \(n\geq 1\).

\begin{lemma}\label{uniform estimates} It holds uniformly for \(n\geq 1\) that
\begin{equation}\label{first uniform estimates}
	\begin{aligned}
		&\underset{t\in[0,T]}{\sup}\|\sqrt{\rho_0}X^n(t)\|_{L^2(\Omega)}^2
		+\|\sqrt{\rho_0}DX^n\|_{L^2([0,T];L^2(\Omega))}^2\\
&+\|\rho_0 \partial_tX^n\|_{L^2([0,T];H^{-1}(\Omega))}^2
		\leq C\|\sqrt{\rho_0}u_0\|_{L^2(\Omega)}^2+CT.
	\end{aligned}
\end{equation}
	
\end{lemma}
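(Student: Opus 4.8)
The plan is to derive the three pieces of the estimate \eqref{first uniform estimates} in the natural order: first the weighted $L^2$ bound on $X^n$ together with the dissipative bound on $\sqrt{\rho_0}DX^n$ by a standard energy argument applied to the Galerkin ODE system \eqref{existence-5}, and then the $H^{-1}$ bound on $\rho_0\partial_tX^n$ using the crucial orthogonality of the basis $\{e_n\}$ in $L^2_{\rho_0}(\Omega)$. First I would multiply the $i$-th component of $\eqref{existence-5}_1$ by $(\lambda_{m_2}^i)^n(t)$ and sum over $i$ and over $m_2=1,\dots,n$; by \eqref{existence-4} this produces
\begin{equation*}
\frac{1}{2}\frac{\diff}{\diff t}\int_\Omega \rho_0|X^n|^2\,\diff x
+\int_\Omega \rho_0\bar{J}^{-2}\bar{b}^{kj}(X^i)^n,_{j}(X^i)^n,_{k}\,\diff x
=\int_\Omega \rho_0^2\bar{J}^{-2}\bar{a}_i^k(X^i)^n,_{k}\,\diff x.
\end{equation*}
By the ellipticity \eqref{a-bound-2} the second term on the left is bounded below by $\tfrac{1}{5}\|\sqrt{\rho_0}DX^n\|_{L^2}^2$, while the right-hand side is handled by Cauchy's inequality together with $\rho_0\leq C_2 d\leq C$ and the $L^\infty$ bounds on $\bar{J}^{-2}\bar{a}_i^k$ coming from \eqref{eta-bound-2} and $\bar{v}\in\mathcal{C}_T(M_1)$: one writes it as $\leq \tfrac{1}{10}\|\sqrt{\rho_0}DX^n\|_{L^2}^2+C$. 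Absorbing and integrating in $t$ (there is no zeroth-order term in $X^n$ on the left after the energy identity, but one can simply keep the time derivative and integrate, using $\|\sqrt{\rho_0}X^n(0)\|_{L^2}\leq\|\sqrt{\rho_0}u_0\|_{L^2}$ which follows from \eqref{existence-5}$_2$ and Bessel's inequality in $L^2_{\rho_0}$) yields the first two terms of \eqref{first uniform estimates} with the stated right-hand side $C\|\sqrt{\rho_0}u_0\|_{L^2}^2+CT$.

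For the $H^{-1}$ bound, the key point—and the heart of the whole Galerkin construction as explained in the introduction—is that $\{e_n\}$ is orthogonal in $L^2_{\rho_0}(\Omega)$, not merely in $H^1$. Fix $t$ and any $\phi\in H^1(\Omega)$ with $\|\phi\|_{H^1(\Omega)}\leq 1$. Since $H^1(\Omega)\subseteq H^1_{\rho_0}(\Omega)=H^1(\Omega,\diff\mu)$ by \eqref{eq:intro-3}, and $\{e_n\}$ is an orthogonal basis of $H^1(\Omega,\diff\mu)$, write $\phi=\phi_1+\phi_2$ with $\phi_1\in\mathrm{span}\{e_m\}_{m=1}^n$ and $\phi_2$ in the (closed) span of $\{e_m\}_{m>n}$; because $\{e_n\}$ is in particular orthogonal in $L^2_{\rho_0}(\Omega)$, we get $(\rho_0 e_m,\phi_2)=0$ for $m=1,\dots,n$, hence $(\rho_0\partial_t(X^i)^n,\phi_2)=0$. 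Therefore
\begin{equation*}
\langle\rho_0\partial_t(X^i)^n,\phi\rangle
=(\rho_0\partial_t(X^i)^n,\phi_1)
=(\rho_0^2\bar{J}^{-2}\bar{a}_i^k,\phi_1,_{k})-(\rho_0\bar{J}^{-2}\bar{b}^{kj}(X^i)^n,_{j},\phi_1,_{k}),
\end{equation*}
where the second equality uses $\eqref{existence-5}_1$ (valid against each $e_{m_2}$, $m_2\leq n$, hence against $\phi_1$). Bounding the two terms by Cauchy--Schwarz, using $\rho_0\leq C$ and the $L^\infty$ bounds on $\bar{J}^{-2}\bar{a}_i^k$, $\bar{J}^{-2}\bar{b}^{kj}$, gives
\begin{equation*}
|\langle\rho_0\partial_t(X^i)^n,\phi\rangle|\leq C(1+\|\sqrt{\rho_0}DX^n\|_{L^2})\|\phi_1\|_{H^1(\Omega)}\leq C(1+\|\sqrt{\rho_0}DX^n\|_{L^2})\|\phi\|_{H^1(\Omega)},
\end{equation*}
the last step because the splitting $\phi=\phi_1+\phi_2$ is $H^1_{\rho_0}$-orthogonal and the $H^1(\Omega)$ and $H^1_{\rho_0}(\Omega)$ norms are comparable up to the constant $C_2$ from \eqref{eq:intro-3}, so $\|\phi_1\|_{H^1(\Omega)}\lesssim\|\phi_1\|_{H^1_{\rho_0}}\leq\|\phi\|_{H^1_{\rho_0}}\lesssim\|\phi\|_{H^1(\Omega)}$. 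Taking the supremum over such $\phi$ gives $\|\rho_0\partial_tX^n(t)\|_{H^{-1}(\Omega)}\leq C(1+\|\sqrt{\rho_0}DX^n(t)\|_{L^2})$; squaring and integrating in $t$ over $[0,T]$ and inserting the already-established bound on $\|\sqrt{\rho_0}DX^n\|_{L^2([0,T];L^2)}^2$ yields the third term of \eqref{first uniform estimates}.

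The main obstacle is precisely the $H^{-1}$ estimate: without the $L^2_{\rho_0}$-orthogonality of the basis, the term $(\rho_0\partial_t(X^i)^n,\phi_2)$ need not vanish and there is no evolution equation controlling it, since $\eqref{existence-5}_1$ only tests against $\{e_m\}_{m=1}^n$ and the time-evolution term $\rho_0\partial_t$ degenerates at $\Gamma$; this is exactly the difficulty flagged in the Methodology section and the reason the degenerate--singular operator $\mathcal{L}$ and Theorem \ref{Hilbert basis construction} were set up. Everything else is a routine energy estimate, modulo keeping careful track of the comparability of the weighted and unweighted $H^1$ norms (which is immediate from \eqref{eq:intro-3} and the fact that $\rho_0$ is bounded) and of the uniform-in-$n$ control of the constants, which depend only on $M_1$, $T$, the ellipticity constant $1/5$, and $\|\rho_0\|_{H^7}$.
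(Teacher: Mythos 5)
Your proof follows the same route as the paper's: the energy identity from multiplying by $(\lambda_{m_2}^i)^n$ and summing, the ellipticity estimate via \eqref{eta-bound-2} and \eqref{a-bound-2}, and the $H^{-1}$ bound via the $H^1_{\rho_0}$-orthogonal decomposition $\phi=\phi_1+\phi_2$ exploiting the $L^2_{\rho_0}$-orthogonality of the eigenbasis. That structure is all correct and is exactly the paper's argument.

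There is, however, a real error in the final chain of inequalities for the $H^{-1}$ bound. You assert that "the $H^1(\Omega)$ and $H^1_{\rho_0}(\Omega)$ norms are comparable up to the constant $C_2$," and use $\|\phi_1\|_{H^1(\Omega)}\lesssim\|\phi_1\|_{H^1_{\rho_0}(\Omega)}$. This is false: since $\rho_0\sim d(\cdot,\Gamma)$ vanishes on $\Gamma$, one only has the one-sided inequality $\|\cdot\|_{H^1_{\rho_0}(\Omega)}\lesssim\|\cdot\|_{H^1(\Omega)}$, i.e.\ $H^1(\Omega)\hookrightarrow H^1_{\rho_0}(\Omega)$, and the reverse embedding fails (indeed $H^1_{\rho_0}(\Omega)$ contains functions with unbounded $H^1(\Omega)$-norm). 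The projection $\phi\mapsto\phi_1$ is bounded in $H^1_{\rho_0}$, not in $H^1$, so $\|\phi_1\|_{H^1(\Omega)}$ is simply not the right quantity to end up with. The fix is to carry out the Cauchy--Schwarz step keeping the $\sqrt{\rho_0}$ weight attached to $D\phi_1$: e.g.\ $(\rho_0\bar J^{-2}\bar b^{kj}(X^i)^n,_j,(\phi_1),_k)\leq C\|\sqrt{\rho_0}DX^n\|_{L^2}\|\sqrt{\rho_0}D\phi_1\|_{L^2}$ and $(\rho_0^2\bar J^{-2}\bar a_i^k,(\phi_1),_k)\leq C\|\sqrt{\rho_0}D\phi_1\|_{L^2}$, which gives $|\langle\rho_0\partial_t(X^i)^n,\phi\rangle|\leq C(1+\|\sqrt{\rho_0}DX^n\|_{L^2})\|\phi_1\|_{H^1_{\rho_0}(\Omega)}$ directly, and then $\|\phi_1\|_{H^1_{\rho_0}}\leq\|\phi\|_{H^1_{\rho_0}}\leq C\|\phi\|_{H^1(\Omega)}\leq C$ closes the estimate --- which is exactly what the paper does. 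So the gap is a fixable misstep rather than a flaw in the strategy, but the "comparability" claim as written should be retracted.
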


\begin{proof} Multiplying \(\eqref{existence-5}_1\) by \((\lambda_{m_2}^i)^n\), summing it over \(m_2=1,2,...,n\), 
integrating it on \([0,T_n]\) and integrating by parts yield
\begin{equation*}\label{existence-7}
	\begin{aligned}
		&\frac{1}{2}\int_\Omega \rho_0|X^n|^2\,\diff x
		+\int_0^{T_n}\int_\Omega\rho_0\bar{J}^{-2}\bar{b}^{kj}(X^i)^n,_{j}(X^i)^n,_{k}\,\diff x\diff s\\
		&=\frac{1}{2}\int_\Omega \rho_0|X^n|^2(x,0)\,\diff x
		+\int_0^{T_n}\int_\Omega\rho_0^2 \bar{J}^{-2}\bar{a}_i^k(X^i)^n,_{k}\,\diff x\diff s.
	\end{aligned}
\end{equation*}
\eqref{eta-bound-2} and Cauchy's inequality imply
\begin{equation*}\label{existence-8}
	\begin{aligned}
		\int_0^{T_n}\int_\Omega\rho_0\bar{J}^{-2}\bar{b}^{kj}(X^i)^n,_{j}(X^i)^n,_{k}\,\diff x\diff s
		\geq \frac{2}{11} \int_0^{T_n}\int_\Omega\rho_0|DX^n|^2\,\diff x\diff s
	\end{aligned}
\end{equation*}
and
\begin{equation*}\label{existence-9}
	\begin{aligned}
		\bigg|\int_0^{T_n}\int_\Omega\rho_0^2 \bar{J}^{-2}\bar{a}_i^k(X^i)^n,_{k}\,\diff x\diff s\bigg|
		\leq CT_n+\frac{1}{100}\int_0^{T_n}\int_\Omega\rho_0|DX^n|^2\,\diff x\diff s.
	\end{aligned}
\end{equation*}
Hence
\begin{equation}\label{existence-10}
	\begin{aligned}
		\int_\Omega \rho_0|X^n|^2\,\diff x
		+\int_0^{T_n}\int_\Omega\rho_0|DX^n|^2\,\diff x\diff s
		\leq C\|\sqrt{\rho_0}u_0\|_{L^2(\Omega)}^2+CT_n.
	\end{aligned}
\end{equation}

Fix any \(\phi\in H^1(\Omega)\) with \(\|\phi\|_{H^1(\Omega)}\leq 1\). Since \(H^1(\Omega)\subset H_{\rho_0}^1(\Omega)\), one may decompose \(\phi=\phi_1+\phi_2\) in \(H_{\rho_0}^1(\Omega)\) satisfying 
\begin{equation*}
	\begin{aligned}
\phi_1\in \text{span}\{e_m\}_{m=1}^n\quad {\rm{and}}\quad (\phi_2,\rho_0e_m)=0\ (m=1,...,n).
	\end{aligned}
\end{equation*}
Recalling that the functions \(\{e_m\}_{m=1}^\infty\) are orthogonal in \(H_{\rho_0}^1(\Omega)\), one has
\begin{equation*}
	\begin{aligned}
\|\phi_1\|_{H_{\rho_0}^1(\Omega)}\leq \|\phi\|_{H_{\rho_0}^1(\Omega)}\leq \|\phi\|_{H^1(\Omega)}\leq 1.
	\end{aligned}
\end{equation*}
It follows from \(\eqref{existence-5}_1\)  that 
\begin{equation}\label{existence-11}
	\begin{aligned}
\big(\rho_0\partial_t(X^i)^n, \phi_1\big)+\big(\rho_0\bar{J}^{-2}\bar{b}^{kj}(X^i)^n,_{j}, (\phi_1),_{k}\big)=\big(\rho_0^2 \bar{J}^{-2}\bar{a}_i^k, (\phi_1),_{k}\big),
	\end{aligned}
\end{equation}
for a.e. \(0\leq t\leq T\).
Hence \eqref{existence-11} yields 
\begin{equation*}
	\begin{aligned}
\big\langle\rho_0\partial_t(X^i)^n, \phi\big\rangle
&=\big(\rho_0\partial_t(X^i)^n, \phi\big)=\big(\rho_0\partial_t(X^i)^n, \phi_1\big)\\
&=\big(\rho_0^2 \bar{J}^{-2}\bar{a}_i^k, (\phi_1),_{k}\big)-\big(\rho_0\bar{J}^{-2}\bar{b}^{kj}(X^i)^n,_{j}, (\phi_1),_{k}\big),
	\end{aligned}
\end{equation*}
which furthermore implies 
\begin{equation*}
	\begin{aligned}
		|\big\langle\rho_0\partial_t(X^i)^n, \phi\big\rangle|\leq C(1+\|\sqrt{\rho_0}DX^n\|_{L^2})\|\phi_1\|_{H_{\rho_0}^1(\Omega)}
		\leq C(1+\|\sqrt{\rho_0}DX^n\|_{L^2}).
	\end{aligned}
\end{equation*}
This together with \(\|\phi\|_{H^1(\Omega)}\leq 1\) results in 
\begin{equation*}
	\begin{aligned}
	  \|\rho_0\partial_tX^n\|_{H^{-1}(\Omega)}\leq C(1+\|\sqrt{\rho_0}DX^n\|_{L^2}),
	\end{aligned}
\end{equation*}
and therefore 
\begin{equation}\label{existence-12}
	\begin{aligned}
		\int_0^{T_n}\|\rho_0\partial_tX^n\|_{H^{-1}(\Omega)}^2\,\diff t&\leq C\int_0^{T_n}\int_\Omega\rho_0|DX^n|^2\,\diff x\,\diff s+CT_n\\
		&\leq C\|\sqrt{\rho_0}u_0\|_{L^2(\Omega)}^2+CT_n,
	\end{aligned}
\end{equation}
due to \eqref{existence-10}.

It follows from \eqref{existence-10} and \eqref{existence-12} that
 \begin{equation}\label{existence-13}
	\begin{aligned}
		&\underset{t\in[0,T_n]}{\sup}\|\sqrt{\rho_0}X^n(t)\|_{L^2(\Omega)}^2
		+\|\sqrt{\rho_0}DX^n\|_{L^2([0,T_n];L^2(\Omega))}^2\\
&+\|\rho_0 \partial_tX^n\|_{L^2([0,T_n];H^{-1}(\Omega))}^2
		\leq C\|\sqrt{\rho_0}u_0\|_{L^2(\Omega)}^2+CT_n.
	\end{aligned}
\end{equation}
Note that \eqref{eta-bound-2} holds on \(\Omega\times[0,T]\), hence \(T_n\) can reach \(T\).  Consequently \eqref{first uniform estimates} follows from \eqref{existence-13}.

\end{proof}

\subsection{Existence and uniqueness of a weak solution}

Finally, we show the existence of a weak solution to \eqref{existence-3}. 

\begin{lemma}\label{Existence and uniqueness of a weak solution} There exists a unique weak solution \(v\) to \eqref{existence-3} with
	\begin{equation*}
		\begin{aligned}
			&\sqrt{\rho_0}v\in L^\infty([0,T],L^2(\Omega)),\ \sqrt{\rho_0}Dv\in L^2([0,T],L^2(\Omega)),\\
			&\rho_0\partial_tv\in L^2([0,T];H^{-1}(\Omega)).
		\end{aligned}
	\end{equation*}	
	Moreover, \(v\) satisfies the following estimate:	
	\begin{equation}\label{solution bound}
		\begin{aligned}
			&\underset{t\in[0,T]}{\sup}\|\sqrt{\rho_0}v(t)\|_{L^2(\Omega)}^2
			+\|\sqrt{\rho_0}Dv\|_{L^2([0,T];L^2(\Omega))}^2\\
			&+\|\rho_0\partial_tv\|_{L^2([0,T];H^{-1}(\Omega))}^2
			\leq C\|\sqrt{\rho_0}u_0\|_{L^2(\Omega)}^2+C(1+T).
		\end{aligned}
	\end{equation}	
	
\end{lemma}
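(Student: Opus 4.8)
The plan is to pass to the limit in the Galerkin approximations $\{X^n\}_{n=1}^\infty$ constructed above, using the uniform estimates of Lemma \ref{uniform estimates}. First, I would invoke \eqref{first uniform estimates}: the sequence $\{\sqrt{\rho_0}X^n\}$ is bounded in $L^\infty([0,T];L^2(\Omega))$, $\{\sqrt{\rho_0}DX^n\}$ is bounded in $L^2([0,T];L^2(\Omega))$, and $\{\rho_0\partial_t X^n\}$ is bounded in $L^2([0,T];H^{-1}(\Omega))$. By weak-$*$ and weak compactness, there is a subsequence (not relabeled) and a limit $v$ with $\sqrt{\rho_0}v\in L^\infty([0,T];L^2(\Omega))$, $\sqrt{\rho_0}Dv\in L^2([0,T];L^2(\Omega))$, $\rho_0\partial_t v\in L^2([0,T];H^{-1}(\Omega))$, such that $\sqrt{\rho_0}X^n\rightharpoonup \sqrt{\rho_0}v$, $\sqrt{\rho_0}DX^n\rightharpoonup \sqrt{\rho_0}Dv$, $\rho_0\partial_t X^n\rightharpoonup \rho_0\partial_t v$ in the respective spaces. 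The estimate \eqref{solution bound} then follows from \eqref{first uniform estimates} together with the weak lower semicontinuity of the norms.

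Next I would verify that $v$ is a weak solution in the sense of Definition \ref{Weak Solution}. Fix $N\geq 1$ and a test function of the form $\phi=\sum_{m=1}^N c_m(t)e_m$ with $c_m\in C([0,T])$; for $n\geq N$, equation $\eqref{existence-5}_1$ gives
\begin{equation*}
\big(\rho_0\partial_t(X^i)^n,\phi\big)+\big(\rho_0\bar{J}^{-2}\bar{b}^{kj}(X^i)^n,_{j},\phi,_{k}\big)=\big(\rho_0^2\bar{J}^{-2}\bar{a}_i^k,\phi,_{k}\big)
\end{equation*}
for a.e.\ $t$. Integrating in $t$ against an arbitrary scalar weight and passing to the limit $n\to\infty$ term by term — the first term by weak convergence of $\rho_0\partial_t X^n$ in $L^2([0,T];H^{-1})$, the second by weak convergence of $\sqrt{\rho_0}DX^n$ against the bounded coefficient $\sqrt{\rho_0}\bar{J}^{-2}\bar{b}^{kj}\phi,_k$ (using \eqref{eta-bound-2}, \eqref{a-bound-2} and \eqref{weighted eigenfunction regularity}), and the third term being independent of $n$ — yields the identity with $X^n$ replaced by $v$. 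Since finite linear combinations of the $e_m$ are dense in $H^1(\Omega)$ (as $\{e_m\}$ is an orthogonal basis of $H_{\rho_0}^1(\Omega)\supseteq H^1(\Omega)$, and one checks density in the $H^1$-topology) and the coefficients of $t$ are arbitrary, this gives part (a) of Definition \ref{Weak Solution} for a.e.\ $0<t\leq T$. For part (b), the bound $\rho_0\partial_t v\in L^2([0,T];H^{-1}(\Omega))$ together with $\sqrt{\rho_0}v\in L^\infty([0,T];L^2)$ yields, after the usual mollification/Lions--Magenes-type argument, that $t\mapsto \rho_0 v(t,\cdot)$ has a representative in $C([0,T];H^{-1}(\Omega))$, and strong $L^2$-continuity at $t=0$ follows from the energy identity in the limit (lower semicontinuity gives $\limsup_{t\to 0^+}\|\sqrt{\rho_0}v(t)\|_{L^2}^2\leq\|\sqrt{\rho_0}u_0\|_{L^2}^2$, which combined with weak continuity upgrades to strong convergence); the initial condition $\eqref{existence-5}_2$ passes to the limit since $(\lambda_{m_1}^i)^n(0)=(u_0^i,e_{m_1})$ is independent of $n$, giving $v(0,\cdot)=u_0$ a.e.

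Finally, uniqueness: given two weak solutions $v_1,v_2$ with the stated regularity, set $w=v_1-v_2$. Then $\rho_0\partial_t w\in L^2([0,T];H^{-1})$, $\sqrt{\rho_0}w\in L^\infty([0,T];L^2)$, $\sqrt{\rho_0}Dw\in L^2([0,T];L^2)$, $w(0)=0$, and for a.e.\ $t$,
\begin{equation*}
\big\langle\rho_0\partial_t w^i,\phi\big\rangle+\big(\rho_0\bar{J}^{-2}\bar{b}^{kj}w^i,_{j},\phi,_{k}\big)=0\quad\text{for all }\phi\in H^1(\Omega).
\end{equation*}
One would like to test with $\phi=w^i(t)$; since $w(t)\in H^1(\Omega)$ is not immediate (only $\sqrt{\rho_0}Dw\in L^2$), I would instead justify the energy identity $\tfrac{d}{dt}\tfrac12\|\sqrt{\rho_0}w(t)\|_{L^2}^2=\langle\rho_0\partial_t w,w\rangle$ by an approximation argument (mollifying in $t$, or using that $w$ lies in the space $\{\sqrt{\rho_0}g\in L^\infty_t L^2,\ \sqrt{\rho_0}Dg\in L^2_t L^2,\ \rho_0 g_t\in L^2_t H^{-1}\}$ for which such an identity holds), obtaining
\begin{equation*}
\frac12\|\sqrt{\rho_0}w(t)\|_{L^2}^2+\int_0^t\int_\Omega\rho_0\bar{J}^{-2}\bar{b}^{kj}w^i,_{j}w^i,_{k}\,\diff x\diff s=0.
\end{equation*}
By \eqref{a-bound-2} the dissipation term is nonnegative, hence $\|\sqrt{\rho_0}w(t)\|_{L^2}=0$ for all $t$, and since $\eqref{existence-3}_1$ then forces $(\rho_0\bar{J}^{-2}\bar{b}^{kj}w^i,_j),_k=0$ with $\sqrt{\rho_0}Dw\in L^2$ vanishing a.e., we get $w\equiv 0$. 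The main obstacle I expect is precisely this last step — rigorously justifying the energy identity (equivalently, that $\langle\rho_0\partial_t w,w\rangle$ makes sense and equals $\tfrac{d}{dt}\tfrac12\|\sqrt{\rho_0}w\|_{L^2}^2$) in the degenerate weighted setting where $w(t)$ need not belong to $H^1(\Omega)$; this requires a careful density/mollification argument adapted to the measure $\diff\mu=\rho_0\,\diff x$, and the same care is needed to make the limit passage in part (a) fully rigorous against general $H^1(\Omega)$ test functions.
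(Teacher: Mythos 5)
Your proposal follows essentially the same route as the paper: pass to the limit in the Galerkin approximations using the uniform bounds from Lemma~\ref{uniform estimates}, obtain \eqref{solution bound} by weak lower semicontinuity, verify the weak formulation against test functions built from the $\{e_m\}$ and extend by density, and identify the initial datum. Two minor points of difference are worth flagging. For the strong $L^2$-continuity at $t=0$ (part (b) of Definition~\ref{Weak Solution}), the paper observes that $\rho_0 v \in L^2([0,T];H^1(\Omega))$ (using $|D\rho_0|\in L^\infty$ together with $\|v\|_{L^2}\lesssim\|\sqrt{\rho_0}v\|_{L^2}+\|\sqrt{\rho_0}Dv\|_{L^2}$ from \eqref{ineq:weighted Sobolev-2}) and that $\partial_t(\rho_0 v)\in L^2([0,T];H^{-1}(\Omega))$, which gives $\rho_0 v\in C([0,T];L^2(\Omega))$ directly by the standard Lions--Magenes result; this is cleaner than your route through weak continuity plus $\limsup_{t\to 0^+}$ lower semicontinuity, though your version is also correct. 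For uniqueness, the paper simply invokes linearity of \eqref{existence-3}, whereas you correctly observe that testing the difference equation with $w=v_1-v_2$ is not immediate because $w(t)$ a priori only lies in $H^1_{\rho_0}(\Omega)$, not $H^1(\Omega)$. You flag this as the main technical obstacle, which is a fair observation: the resolution is to note that the equation actually places $\rho_0 w_t$ in the dual of $H^1_{\rho_0}(\Omega)$ (since $\rho_0 w_t = \mathrm{div}(\rho_0\bar J^{-2}\bar b^{kj}w,_j)$ with $\sqrt{\rho_0}Dw\in L^2$), so the right functional triple is $H^1_{\rho_0}\hookrightarrow L^2_{\rho_0}\hookrightarrow (H^1_{\rho_0})'$, in which the standard energy identity does hold; once the framework is adjusted the paper's ``easy since linear'' is justified. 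Your argument is therefore not a different approach so much as a more explicit and more cautious version of the paper's, with the uniqueness gap you identify being real but fillable by the standard Gelfand-triple machinery adapted to $\diff\mu=\rho_0\,\diff x$.
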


\begin{proof}
It follows from  Lemma \ref{uniform estimates} that
\begin{equation*}
	\begin{aligned}
	\|\sqrt{\rho_0}DX^n\|_{L^2([0,T];L^2(\Omega))}\quad {\rm{and}}\quad \|\rho_0 \partial_tX^n\|_{L^2([0,T];H^{-1}(\Omega))}
	\end{aligned}
\end{equation*}	
are uniformly bounded in \(n\geq 1\).  So there exist a subsequence of \(\{X^n\}_{n=1}^\infty\) (which is still denoted by \(\{X^n\}_{n=1}^\infty\) for convenience) and a function \(v\) satisfying \(\sqrt{\rho_0}Dv \in\ L^2([0,T];L^2(\Omega))\) and \(\rho_0\partial_tv \in\ L^2([0,T];H^{-1}(\Omega))\) such that as \(n\to\infty\)
\begin{equation}\label{existence-13-add}
	\begin{cases}
		\sqrt{\rho_0}DX^n\rightharpoonup \sqrt{\rho_0}Dv &\quad \text{in}\ L^2([0,T];L^2(\Omega)),\\
		\rho_0\partial_tX^n\rightharpoonup \rho_0\partial_tv &\quad \text{in}\ L^2([0,T];H^{-1}(\Omega)).	
	\end{cases}
\end{equation}	
Then, \eqref{solution bound} follows easily from \eqref{first uniform estimates} by the lower semi-continuity of norm.

We claim that \(v\) is a weak solution to \eqref{existence-3}. Fix any positive integer \(m\geq 1\),  choose a function  \(\Phi\in C^1([0,T]; H^1(\Omega))\)
of the form 
	\begin{equation}\label{existence-14}
	\begin{aligned}
		\Phi=\sum_{m_0=1}^m\mu_{m_0}(t)e_{m_0}(x),
	\end{aligned}
\end{equation}	
where \(\{\mu_{m_0}(t)\}_{m_0=1}^m\) are any given smooth functions. Choosing \(n\geq m\), multiplying \(\eqref{existence-5}_1\) by \(\mu_{m_0}(t)\), summing up for \(m_0=1,...,m\), and integrating with respect to \(t\) over \([0,T]\), we get
\begin{equation}\label{existence-15}
	\begin{aligned}
		\int_0^T\big\langle\rho_0\partial_t(X^i)^n, \Phi\big\rangle+\big(\rho_0\bar{J}^{-2}\bar{b}^{kj}(X^i)^n,_{j}, \Phi,_{k}\big)\,\diff t=\int_0^T\big(\rho_0^2 \bar{J}^{-2}\bar{a}_i^k, \Phi,_{k}\big)\,\diff t.
	\end{aligned}
\end{equation}
Recalling \eqref{existence-13-add},  one takes the limit \(n\rightarrow\infty\) in  \eqref{existence-15} to find
\begin{equation}\label{existence-16}
	\begin{aligned}
		\int_0^T\big\langle\rho_0\partial_tv^i, \Phi\big\rangle+\big(\rho_0\bar{J}^{-2}\bar{b}^{kj}v^i,_{j}, \Phi,_{k}\big)\,\diff t=\int_0^T\big(\rho_0^2 \bar{J}^{-2}\bar{a}_i^k, \Phi,_{k}\big)\,\diff t.
	\end{aligned}
\end{equation}

Since functions of the form \eqref{existence-14} are dense in \(L^2([0,T]; H^1(\Omega))\), 
\eqref{existence-16} holds for all  \(\Phi\in L^2([0,T]; H^1(\Omega))\).	
In particular, it holds that
\begin{equation}\label{existence-weak solution}
	\begin{aligned}
\big\langle\rho_0\partial_tv^i, \phi\big\rangle+\big(\rho_0\bar{J}^{-2}\bar{b}^{kj}v^i,_{j}, \phi,_{k}\big)=\big(\rho_0^2 \bar{J}^{-2}\bar{a}_i^k, \phi,_{k}\big)
	\end{aligned}
\end{equation}
for each \(\phi\in H^1(\Omega)\) and a.e. \(0< t\leq T\).  

By Definition \ref{Weak Solution}, it remains to check that 
\begin{equation}\label{initial-add-1}
	\begin{aligned}
\|\rho_0v(t,\cdot)-\rho_0v(0, \cdot)\|_{L^2(\Omega)}\to 0\quad \text{as}\ t\to 0^+
	\end{aligned}
\end{equation}
and
\begin{equation}\label{initial-add-2}
	\begin{aligned}
v(0):=v(0, \cdot)=u_0(0, \cdot) \quad \text{a.e.\ in}\ \Omega.
	\end{aligned}
\end{equation}
First note that
\begin{equation*}
		\begin{aligned}
			&\|\rho_0v\|_{L^2([0,T];H^1(\Omega))}^2\\
&\lesssim \|\rho_0v\|_{L^2([0,T];L^2(\Omega))}^2+\|\rho_0Dv\|_{L^2([0,T];L^2(\Omega))}^2
                                                +\|D(\rho_0) v\|_{L^2([0,T];L^2(\Omega))}^2\\
                                 &\lesssim \|\rho_0v\|_{L^2([0,T];L^2(\Omega))}^2+\|\rho_0Dv\|_{L^2([0,T];L^2(\Omega))}^2\\
                                 &\lesssim \|\rho_0^{1/2}v\|_{L^\infty([0,T];L^2(\Omega))}^2+\|\rho_0^{1/2}Dv\|_{L^2([0,T];L^2(\Omega))}^2,
		\end{aligned}
	\end{equation*}	
which, together with \eqref{solution bound}, implies that \(\rho_0v\in L^2([0,T]; H^1(\Omega))\).
Recalling \(\rho_0\partial_tv\in L^2([0,T];H^{-1}(\Omega))\), so one has 
\begin{equation}\label{initial-add-3}
	\begin{aligned} 
\rho_0v\in C([0,T]; L^2(\Omega)).
	\end{aligned}
\end{equation}
Thus \eqref{initial-add-1} follows. 
Then one may deduce from \eqref{existence-16} and \eqref{initial-add-3} that  
\begin{equation}\label{existence-17}
	\begin{aligned}
		&\int_0^T-\big\langle\partial_t\Phi, \rho_0v^i\big\rangle+\big(\rho_0\bar{J}^{-2}\bar{b}^{kj}v^i,_{j}, \Phi,_{k}\big)\,\diff t\\
&=\int_0^T\big(\rho_0^2 \bar{J}^{-2}\bar{a}_i^k, \Phi,_{k}\big)\,\diff t+(\rho_0v(0), \Phi(0))
	\end{aligned}
\end{equation}
for each \(\Phi\in C^1([0,T]; H^1(\Omega))\) with \(\Phi(T)=0\). It follows from \eqref{existence-15} that 
\begin{equation}\label{existence-18}
	\begin{aligned}
		&\int_0^T-\big\langle\partial_t\Phi, \rho_0(X^i)^n\big\rangle+\big(\rho_0\bar{J}^{-2}\bar{b}^{kj}(X^i)^n,_{j}, \Phi,_{k}\big)\,\diff t\\
&=\int_0^T\big(\rho_0^2 \bar{J}^{-2}\bar{a}_i^k, \Phi,_{k}\big)\,\diff t+(\rho_0(X^i)^n(0), \Phi(0)).
	\end{aligned}
\end{equation}
Passing limits in \(n\to \infty\) in \eqref{existence-18} gives 
\begin{equation}\label{existence-19}
	\begin{aligned}
		&\int_0^T-\big\langle\partial_t\Phi, \rho_0v^i\big\rangle+\big(\rho_0\bar{J}^{-2}\bar{b}^{kj}v^i,_{j}, \Phi,_{k}\big)\,\diff t\\
&=\int_0^T\big(\rho_0^2 \bar{J}^{-2}\bar{a}_i^k, \Phi,_{k}\big)\,\diff t+(\rho_0u_0^i, \Phi(0)),
	\end{aligned}
\end{equation}
where one has used the fact \(\|\sqrt{\rho_0}X^n(0)-\sqrt{\rho_0}u_0\|_{L^2(\Omega)}\to 0\) as \(n\to\infty\). As \(\Phi(0)\) is arbitrary, comparing \eqref{existence-17} and \eqref{existence-19}, one gets 
\begin{equation*}
	\begin{aligned}
		\|\rho_0v(0)-\rho_0u_0\|_{L^2(\Omega)}=0,
	\end{aligned}
\end{equation*}
which yields
\begin{equation*}
	\begin{aligned}
		\rho_0v(0)=\rho_0u_0\quad a.e.\ \text{in}\ \Omega.
	\end{aligned}
\end{equation*}
Hence \eqref{initial-add-2} follows due to \eqref{eq:intro-3}.

The uniqueness of the weak solutions of \eqref{existence-3} is easy to check since \eqref{existence-3} is a linear problem.

\end{proof}

\section{Regularity of Weak Solutions to the Linearized Problem}\label{Regularity}

\begin{lemma}\label{Regularity lemma} The weak solution \(v\) to \eqref{existence-3} satisfies the regularity estimate
	\begin{equation}\label{re-0}
		\begin{aligned}
			\sup_{0\leq t\leq T}E(t,v)
			\leq M_1.
		\end{aligned}
	\end{equation}
Consequently, the solution map \(\bar{v}\mapsto v: \mathcal{C}_T(M_1)\rightarrow \mathcal{C}_T(M_1)\) is well-defined.

\end{lemma}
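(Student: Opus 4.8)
The plan is to establish the \emph{a priori}-type estimate \eqref{re-0} directly for the weak solution $v$ of the \emph{linear} problem \eqref{existence-3}, by re-running the energy estimates of Section \ref{Energy Estimates} and the elliptic estimates of Section \ref{Elliptic Estimates} on \eqref{existence-3} rather than on the nonlinear equation \eqref{eq:main-2}. The key observation, already emphasized in the Methodology, is that \eqref{existence-3} has the same degenerate structure as \eqref{eq:main-3}: the coefficient matrices $\bar J^{-2}\bar b^{kj}$ and $\bar J^{-2}\bar a_i^k$ are built from $\bar\eta=e+\int_0^t\bar v$ in exactly the same way $J^{-2}b^{kj}$, $J^{-2}a_i^k$ are built from $\eta$, and since $\bar v\in\mathcal C_T(M_1)$ we have $\sup_{0\le t\le T}E(t,\bar v)\le M_1$, so Lemmas \ref{le:Jab-bound-1}--\ref{le:Jab-bound-6} and all the weighted inequalities of Section \ref{Some Weighted inequalities} apply verbatim to the barred quantities, with \eqref{eta-bound-2}--\eqref{a-bound-2} playing the roles of \eqref{J-bound} and \eqref{a-bound}. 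The crucial structural simplification is that \eqref{existence-3} is \emph{linear} in $v$: the difference from the nonlinear case is only that the ``bad'' coefficient factors now carry $\bar v,\bar\eta$ (whose $E$-norm is controlled by the fixed constant $M_1$) while the derivatives of $v$ that must be absorbed sit in the genuine energy/dissipation terms. Thus every term that in Sections \ref{Energy Estimates}--\ref{Elliptic Estimates} was estimated by $P_{*}(t)$ or $P_{**}(t)$ times a weighted norm of $v$ is now estimated by a \emph{fixed} polynomial $P(M_1^{1/2})$ times the corresponding weighted norm of $v$, and integrating in time produces a factor $T\cdot P(M_1^{1/2})$.

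The steps, in order, would be: (1) fix the differentiated forms of \eqref{existence-3} — apply $\partial_t^{l_0}$, $\partial_t^{l_0}\partial_1^{l_1}$, etc., to the linear equation, noting that the leading dissipation term $(\rho_0\bar J^{-2}\bar b^{kj}v^i,_j),_k$ produces, after testing against the appropriate time/tangential derivative of $v$, the positive term $\tfrac{2}{11}\int\rho_0|\partial_t^{l_0}\partial_1^{l_1}Dv|^2$ by \eqref{eta-bound-2}--\eqref{a-bound-2}, exactly as $I_1$ in \eqref{EE-1}; (2) carry out the tangential energy estimates (Propositions \ref{pr:ene-1}--\ref{pr:ene-7}) with the barred coefficients, so that the commutator terms $I_{3_m}$, $I_{2_m}$, etc., are now bounded by $P(M_1^{1/2})$ times weighted norms of $v$ plus the small dissipation term $G$, yielding $E_{\text{en}}(t,v)\le M_0+CtP(M_1^{1/2})$ as in \eqref{APBEN}; (3) carry out the elliptic estimates (Propositions \ref{pr:ell-1}--\ref{pr:ell-6}) the same way, obtaining $E_{\text{el}}(t,v)\le M_0+CtP(M_1^{1/2})$ as in \eqref{APBEL}; (4) add to get $E(t,v)\le M_0+CtP(M_1^{1/2})$ for all $t\in[0,T]$, and then choose $T$ small enough — shrinking the $T$ already fixed in Subsection \ref{The a priori assumption} if necessary — so that $CTP(M_1^{1/2})\le M_0$, hence $\sup_{0\le t\le T}E(t,v)\le 2M_0=M_1$, which is \eqref{re-0}. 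The conclusion that $\bar v\mapsto v$ maps $\mathcal C_T(M_1)$ into itself is then immediate from the definition of $\mathcal C_T(M_1)$ together with Lemma \ref{Existence and uniqueness of a weak solution}, which gives existence and uniqueness of $v$ and that $v\in L^\infty([0,T];H^4(\Omega))$ via \eqref{re-0} and Lemma \ref{le:Preliminary-1}.

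There is one genuine gap between an \emph{a priori} estimate and the statement here: the weak solution $v$ produced in Section \ref{weak solution} is only known a priori to satisfy $\sqrt{\rho_0}Dv\in L^2_tL^2$ and $\rho_0\partial_tv\in L^2_tH^{-1}$, so the high-order quantities in $E(t,v)$ are not yet known to be finite, and one cannot simply multiply the equation by $\partial_t^4 v$ etc. and integrate by parts. The standard remedy — and the one I expect to be the main technical obstacle — is to justify the higher-order estimates by a further Galerkin / difference-quotient argument at the level of the approximate solutions $X^n$: derive the weighted estimates for $X^n$ uniformly in $n$ (using that the test functions $\partial_t^{l_0}\partial_1^{l_1}X^n$ lie in the span of the $e_m$'s, which are smooth by Theorem \ref{regularity of eigenvalue}, so integrations by parts and the tangential-derivative manipulations are legitimate), then pass to the limit by weak lower semicontinuity; the normal (elliptic) estimates are obtained exactly as in the proof of Theorem \ref{regularity of eigenvalue}, i.e.\ via difference quotients $D_1^\tau$ in the tangential direction followed by the algebraic extraction of $\partial_2$-derivatives from the equation as in Section \ref{Elliptic Estimates}. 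Once one checks that none of these manipulations requires more regularity than is available at each stage of the bootstrap, the estimate \eqref{re-0} follows; the bookkeeping — matching each term of Sections \ref{Energy Estimates}--\ref{Elliptic Estimates} to its linearized counterpart and verifying the weights line up — is the bulk of the work but is routine given the preparation already done.
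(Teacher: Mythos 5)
Your overall plan---re-run Sections \ref{Energy Estimates}--\ref{Elliptic Estimates} with barred coefficients, use $\bar v\in\mathcal C_T(M_1)$ to turn the $P_*, P_{**}$ factors into a fixed $P(M_1^{1/2})$, absorb into dissipation, and shrink $T$---matches the paper's, and you correctly flag the gap between an a priori estimate and a genuine regularity statement. But your proposed mechanism for closing that gap is wrong at one decisive point. You assert that ``the test functions $\partial_t^{l_0}\partial_1^{l_1}X^n$ lie in the span of the $e_m$'s,'' and this fails for $l_1\geq 1$: time derivatives of $X^n=\sum_{m_1\leq n}\lambda_{m_1}^n(t)e_{m_1}(x)$ stay in $\text{span}\{e_1,\dots,e_n\}$, but $\partial_1 X^n=\sum\lambda_{m_1}^n\,\partial_1 e_{m_1}$ does not, since the eigenfunctions are not closed under differentiation. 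This is exactly why the paper states in Section \ref{Regularity} that ``one can not improve further the spatial regularity of $v$ by obtaining higher-order uniform estimates for $\{X^n\}_{n=1}^\infty$ in view of \eqref{existence-5} directly as in the standard parabolic theory \dots since \eqref{existence-5} is degenerate.'' The paper's resolution is first to get $\sqrt{\rho_0}v_t\in L^\infty_t L^2$ from Galerkin uniform bounds on $\partial_t^{l_0}X^n$ (which \emph{do} stay in the projection space), conclude $\langle\rho_0 v_t,\phi\rangle=(\rho_0 v_t,\phi)$, and thereby upgrade the weak formulation to the pointwise equation \eqref{re-12}; only then can tangential difference quotients $D_1^\tau$ be applied to \eqref{re-12} directly to run the tangential energy estimates. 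You mention difference quotients for the elliptic step, but your tangential-energy route via higher-order Galerkin bounds is a dead end; the derivation of \eqref{re-12} is the missing link.

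A secondary omission: you call the matching of terms to Sections \ref{Energy Estimates}--\ref{Elliptic Estimates} ``routine bookkeeping,'' whereas the paper treats the bootstrap ordering as one of the main difficulties of Section \ref{Regularity}. The $H^{1/2}$- and sharp $L^\infty$-bounds needed on the right-hand side of the linearized estimates are on $v$ (not $\bar v$), so elliptic estimates must be interleaved with the tangential energy estimates in the precise order \eqref{EEU-1}--\eqref{EEU-10}; this also forces new $L^2_tL^2$-type estimates (the ``third and fifth columns,'' e.g.\ on $\partial_1^{l_1}v$) that have no counterpart in Sections \ref{Energy Estimates}--\ref{Elliptic Estimates}. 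Verifying that the required lower-level elliptic bounds are available at each stage is the bulk of Subsections \ref{re-LOP}--\ref{re-HOP}, not an afterthought.
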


\begin{proof} To prove \eqref{re-0}, it suffices to show
\begin{equation}\label{re-1}
\begin{aligned}
E(t,v)
\leq M_0+CtP(\sup_{0\leq s\leq t}E^{1/2}(s,\bar{v}))\quad \mathrm{for\ all}\ t\in [0,T].
\end{aligned}
\end{equation}

\subsection{The strategy of the estimates}\label{re-se} 

The energy estimates will be carried out successively along the arrow direction: 
{\small{
\begin{align}
&\partial_tv\rightarrow \quad\  Dv\rightarrow \qquad \partial_1v\rightarrow\quad\ \partial_1Dv\rightarrow\quad\  \partial_1^2v\rightarrow \label{EEU-1}\\ \nonumber \\
&\partial_t^2v\rightarrow \quad \partial_tDv\rightarrow\quad\ \partial_t\partial_1v\rightarrow\  \partial_t\partial_1Dv\rightarrow\ \partial_t\partial_1^2v\rightarrow \label{EEU-2}\\
&\qquad\qquad\  \partial_1^2Dv\rightarrow\quad\ \partial_1^3v\rightarrow\quad\  \partial_1^3Dv\rightarrow \quad\  \partial_1^4v\rightarrow \label{EEU-3}\\ \nonumber \\
&\partial_t^3v\rightarrow\quad \partial_t^2Dv\rightarrow \quad\ \partial_t^2\partial_1v\rightarrow\ \partial_t^2\partial_1Dv\rightarrow\partial_t^2\partial_1^2v\rightarrow \label{EEU-4}\\
&\qquad\qquad\  \partial_t\partial_1^2Dv\rightarrow\ \partial_t\partial_1^3v\rightarrow\ \ \partial_t\partial_1^3Dv\rightarrow\partial_t\partial_1^4v\rightarrow\label{EEU-5} \\
&\qquad\qquad\  \partial_1^4Dv\rightarrow\quad\ \partial_1^5v\rightarrow\quad\ \  \partial_1^5Dv\rightarrow \quad  \partial_1^6v\rightarrow \label{EEU-6}\\ \nonumber \\
&\partial_t^4v\rightarrow\quad\ \partial_t^3Dv\rightarrow\quad \partial_t^3\partial_1v\rightarrow \   \partial_t^3\partial_1Dv\rightarrow\partial_t^3\partial_1^2v\rightarrow\label{EEU-7} \\
&\qquad\qquad\ \partial_t^2\partial_1^2Dv\rightarrow\ \partial_t^2\partial_1^3v\rightarrow\ \partial_t^2\partial_1^3Dv\rightarrow\partial_t^2\partial_1^4v\rightarrow\label{EEU-8} \\
&\qquad\qquad\ \partial_t\partial_1^4Dv\rightarrow\ \ \partial_t\partial_1^5v\rightarrow\ \partial_t\partial_1^5Dv\rightarrow\partial_t\partial_1^6v\rightarrow\label{EEU-9} \\
&\qquad\qquad\ \ \partial_1^6Dv\rightarrow\quad\ \partial_1^7v\rightarrow\quad\ \partial_1^7Dv \label{EEU-10}
\end{align}}}

The estimates in the third and fifth columns (can provide useful \(L_t^2 L^2\)-type estimates) are not studied in Section \ref{Energy Estimates}, however, which are necessary to show \eqref{re-1}. In fact, only pure tangential derivative estimates (including estimates on the time derivative \(\partial_t\) and the horizontal spatial derivative \(\partial_1\)) can provide useful \(L_t^2 L^2\)-type estimates. 

The procedure of the elliptic estimates are exactly same to Section \ref{Elliptic Estimates}. 

In the remaining subsections, we will sketch the proof of the energy estimate \eqref{EEU-1}, and give detailed proofs on the energy estimates \eqref{EEU-7} and \eqref{EEU-10}. \eqref{EEU-1} is the starting point of the energy estimates; \eqref{EEU-7} is the highest-order estimate on time derivatives; \eqref{EEU-10} is the last step of the energy estimates. 
The remaining estimates can be handled in a similar way as above together with an application of a bootstrapping procedure. 

In this section,  one needs to 
use the elliptic estimates in Section \ref{Elliptic Estimates} to locate some key energy estimates in which one needs some \(H^{1/2}\)-type and sharp 
\(L^\infty\)-type estimates to handle the nonlinear terms. 
This constitutes the main difficulties of this section.

Throughout this section, we denote by
\begin{equation*}
\begin{aligned}
\bar{Q}_1(t)=Q_1(|D\bar{v}|, |D\bar{\eta}|, |\partial_tD\bar{v}|),
\end{aligned}
\end{equation*}
where \(Q_1\) is given by \eqref{Jab-bound-2}, and
then observe that 
\begin{equation}\label{re-Q-bound-2}
\begin{aligned}
 \bar{Q}_1(t)\lesssim P(M_1)
\end{aligned}
\end{equation}
since \(\bar{v}\in\mathcal{C}_T(M_1)\). Here and thereafter \(P(\cdot)\) denotes a generic polynomial function of its arguments, which may be different from line to line. One can define similarly \(\bar{Q}_2(t), \bar{Q}_3(t), \dots\), which also have a bound \(P(M_1)\). 

For convenience, we will use the short-hand notation 
\begin{equation*}
\begin{aligned}
F(M_0, \bar{v})=M_0+Ct P(\sup_{0\leq s\leq t}E^{1/2}(s,\bar{v})).
\end{aligned}
\end{equation*}

\subsection{Lower-order estimates}\label{re-LOP}

\begin{proposition}[\eqref{EEU-1}+elliptic estimates] It holds that
		\begin{equation}\label{re-2}
			\begin{aligned}
				&\|\sqrt{\rho_0}\partial_tv\|_{L^2}^2
				+\|\sqrt{\rho_0}\partial_tDv\|_{L_t^2 L^2}^2
				\leq F(M_0, \bar{v}),\\
				&\|\sqrt{\rho_0}Dv\|_{L^2}^2
				\leq F(M_0, \bar{v}),\\
				&\|\sqrt{\rho_0}\partial_1v\|_{L^2}^2
				+\|\sqrt{\rho_0}\partial_1Dv\|_{L_t^2 L^2}^2
				\leq F(M_0, \bar{v}),\\
				&\|\sqrt{\rho_0}\partial_1Dv\|_{L^2}^2
				\leq F(M_0, \bar{v}),\\
				&\|\sqrt{\rho_0}\partial_1^2v\|_{L^2}^2
				+\|\sqrt{\rho_0}\partial_1^2Dv\|_{L_t^2 L^2}^2
				\leq F(M_0, \bar{v}),\\
				&\|\rho_0\partial_2^2v\|_{L^2}^2
				\leq F(M_0, \bar{v}).
			\end{aligned}
		\end{equation}
\end{proposition}

\begin{proof} The strategy in showing \eqref{re-2} is an application of a bootstrapping procedure from \(\eqref{re-2}_1\) to \(\eqref{re-2}_6\).

First, one can work with  \eqref{existence-5} to prove \(\eqref{re-2}_1\) and
\(\eqref{re-2}_2\) by deducing some uniform bounds for \(\{X^n\}_{n=1}^\infty\) in a similar way as showing \eqref{first uniform estimates}. Moreover, one can also obtain the corresponding time regularity of \(v\) by deducing uniform bounds for \(\{\partial_t^{l_0}X^n\}_{n=1}^\infty\) \((l_0=2,3,4)\).  

However, one can not improve further the spatial regularity of \(v\) by obtaining higher-order uniform estimates for \(\{X^n\}_{n=1}^\infty\) in view of \eqref{existence-5} directly as in the standard parabolic theory (see \cite{MR2597943}) since \eqref{existence-5} is degenerate. Next, to overcome this difficulty, we turn to derive an equation for \(v\). 

\begin{proposition} It holds that
	\begin{equation}\label{re-12}
		\begin{aligned}
			\rho_0\partial_tv^i+(\rho_0^2 \bar{J}^{-2}\bar{a}_i^k),_{k}=(\rho_0\bar{J}^{-2}\bar{b}^{kj}v^i,_{j}),_{k} &\quad \mathrm{a.e.\ in}\ \Omega\times (0,T].
		\end{aligned}
	\end{equation}
\end{proposition}

\begin{proof}[Proof of \eqref{re-12}]
	We first claim that 
	\begin{equation}\label{re-13}
		\begin{aligned}
			\big(\rho_0\partial_tv^i, \phi\big)+\big(\rho_0\bar{J}^{-2}\bar{b}^{kj}v^i,_{j}, \phi,_{k}\big)=\big(\rho_0^2 \bar{J}^{-2}\bar{a}_i^k, \phi,_{k}\big)
		\end{aligned}
	\end{equation}
	for each \(\phi\in H^1(\Omega)\) and a.e. \(0< t\leq T\).  According to \eqref{existence-weak solution}, to prove \eqref{re-13}, it suffices to show
	\begin{align}\label{re-14}
		\big\langle\rho_0\partial_tv^i, \phi\big\rangle=\big(\rho_0\partial_tv^i, \phi\big).
	\end{align}
	In fact, \(\eqref{re-2}_1\) provides \(\sqrt{\rho_0}v_t\in L^\infty([0,T]; L^2(\Omega))\), and thus
	\begin{align*} 
		\rho_0v_t\in L^\infty([0,T]; L^2(\Omega)), 
	\end{align*}
	which implies \eqref{re-14}. 
	
	Due to \eqref{eq:intro-3},  one can obtain \(v\in H^2_{\text{loc}}(\Omega)\) from \eqref{re-13} by a standard argument (see \cite{MR2597943}). Hence \eqref{re-12} follows. 
	
\end{proof}

Then, by replacing \(\partial_1\) by the tangential difference quotient \(D_1^\tau\) defined in \eqref{DQ}\footnote{This can be done similarly as in the proof of \eqref{weighted eigenfunction regularity}, so we will not expand this procedure in this section for simplicity.}, one can instead work with \eqref{re-12} to complete the remaining estimates in \eqref{re-2}.

\end{proof}

\subsection{Intermediated estimates-I}\label{re-ITP-I}

\begin{proposition} [\eqref{EEU-2}] It holds that
		\begin{equation}\label{re-100}
			\begin{aligned}
				&\|\sqrt{\rho_0}\partial_t^2v\|_{L^2}^2
				+\|\sqrt{\rho_0}\partial_t^2Dv\|_{L_t^2 L^2}^2
				\leq F(M_0, \bar{v}),\\
				&\|\sqrt{\rho_0}\partial_tDv\|_{L^2}^2
				\leq F(M_0, \bar{v}),\\
				&\|\sqrt{\rho_0}\partial_t\partial_1v\|_{L^2}^2
				+\|\sqrt{\rho_0}\partial_t\partial_1Dv\|_{L_t^2 L^2}^2
				\leq F(M_0, \bar{v}),\\
				&\|\sqrt{\rho_0}\partial_t\partial_1Dv\|_{L^2}^2
				\leq F(M_0, \bar{v}),\\
				&\|\sqrt{\rho_0}\partial_t\partial_1^2v\|_{L^2}^2
				+\|\sqrt{\rho_0}\partial_t\partial_1^2Dv\|_{L_t^2 L^2}^2
				\leq F(M_0, \bar{v}).
			\end{aligned}
		\end{equation}
\end{proposition}

\begin{proposition}[\eqref{EEU-3}+elliptic estimates] It holds that
		\begin{equation}\label{re-105}
			\begin{aligned}
				&\|\sqrt{\rho_0}\partial_1^2Dv\|_{L^2}^2
				\leq F(M_0, \bar{v}),\\
				&\|\sqrt{\rho_0}\partial_1^3v\|_{L^2}^2
				+\|\sqrt{\rho_0}\partial_1^3Dv\|_{L_t^2 L^2}^2
				\leq F(M_0, \bar{v}),\\
				&\sum_{\substack{ l_1+l_2=3\\  l_1\geq 0,\ l_2\geq 2}} \big\|\sqrt{\rho_0^{l_2}}\partial_1^{l_1}\partial_2^{l_2}v\big\|_{L^2}^2
				\leq F(M_0, \bar{v}),\\
				&\|\rho_0\partial_t\partial_2^2v\|_{L^2}^2
				\leq F(M_0, \bar{v}),\\
				&\|\sqrt{\rho_0}\partial_1^3Dv\|_{L^2}^2
				\leq F(M_0, \bar{v}),\\
				&\|\sqrt{\rho_0}\partial_1^4v\|_{L^2}^2
				+\|\sqrt{\rho_0}\partial_1^4Dv\|_{L_t^2 L^2}^2
				\leq F(M_0, \bar{v}),\\
				&\sum_{\substack{ l_1+l_2=4\\  l_1\geq 0,\ l_2\geq 2}} \big\|\sqrt{\rho_0^{l_2}}\partial_1^{l_1}\partial_2^{l_2}v\big\|_{L^2}^2
				\leq F(M_0, \bar{v}).
			\end{aligned}
		\end{equation}
\end{proposition}

\subsection{The intermediated problems-II}\label{re-ITP-II}

\begin{proposition} [\eqref{EEU-4}] It holds that
		\begin{equation}\label{re-111}
			\begin{aligned}
				&\|\sqrt{\rho_0}\partial_t^3v\|_{L^2}^2
				+\|\sqrt{\rho_0}\partial_t^3Dv\|_{L_t^2 L^2}^2
				\leq F(M_0, \bar{v}),\\
				&\|\sqrt{\rho_0}\partial_t^2Dv\|_{L^2}^2
				\leq F(M_0, \bar{v}),\\
				&\|\sqrt{\rho_0}\partial_t^2\partial_1v\|_{L^2}^2
				+\|\sqrt{\rho_0}\partial_t^2\partial_1Dv\|_{L_t^2 L^2}^2
				\leq F(M_0, \bar{v}),\\
				&\|\sqrt{\rho_0}\partial_t^2\partial_1Dv\|_{L^2}^2
				\leq F(M_0, \bar{v}),\\
				&\|\sqrt{\rho_0}\partial_t^2\partial_1^2v\|_{L^2}^2
				+\|\sqrt{\rho_0}\partial_t^2\partial_1^2Dv\|_{L_t^2 L^2}^2
				\leq F(M_0, \bar{v}).
			\end{aligned}
		\end{equation}
\end{proposition}

\begin{proposition}[\eqref{EEU-5}+elliptic estimates] It holds that
		\begin{equation}\label{re-120}
			\begin{aligned}
				&\|\sqrt{\rho_0}\partial_t\partial_1^2Dv\|_{L^2}^2
				\leq F(M_0, \bar{v}),\\
				&\|\sqrt{\rho_0}\partial_t\partial_1^3v\|_{L^2}^2
				+\|\sqrt{\rho_0}\partial_t\partial_1^3Dv\|_{L_t^2 L^2}^2
				\leq F(M_0, \bar{v}),\\
				&\sum_{\substack{ l_1+l_2=3\\  l_1\geq 0,\ l_2\geq 2}} \big\|\sqrt{\rho_0^{l_2}}\partial_t\partial_1^{l_1}\partial_2^{l_2}v\big\|_{L^2}^2
				\leq F(M_0, \bar{v}),\\
				&\|\sqrt{\rho_0}\partial_t\partial_1^3Dv\|_{L^2}^2
				\leq F(M_0, \bar{v}),\\
				&\|\sqrt{\rho_0}\partial_t\partial_1^4v\|_{L^2}^2
				+\|\sqrt{\rho_0}\partial_t\partial_1^4Dv\|_{L_t^2 L^2}^2
				\leq F(M_0, \bar{v}).
			\end{aligned}
		\end{equation}
\end{proposition}

\begin{proposition}[\eqref{EEU-6}+elliptic estimates] It holds that
		\begin{equation}\label{re-125}
			\begin{aligned}
				&\|\sqrt{\rho_0}\partial_1^4Dv\|_{L^2}^2
				\leq F(M_0, \bar{v}),\\
				&\|\sqrt{\rho_0}\partial_1^5v\|_{L^2}^2
				+\|\sqrt{\rho_0}\partial_1^5Dv\|_{L_t^2 L^2}^2
				\leq F(M_0, \bar{v}),\\
				&\sum_{\substack{ l_1+l_2=5\\  l_1\geq 0,\ l_2\geq 2}} \big\|\sqrt{\rho_0^{l_2}}\partial_1^{l_1}\partial_2^{l_2}v\big\|_{L^2}^2
				\leq F(M_0, \bar{v}),\\
				&\|\rho_0\partial_t^2\partial_2^2v\|_{L^2}^2
				\leq F(M_0, \bar{v}),\\
				&\sum_{\substack{ l_1+l_2=4\\  l_1\geq 0,\ l_2\geq 2}} \big\|\sqrt{\rho_0^{l_2}}\partial_t\partial_1^{l_1}\partial_2^{l_2}v\big\|_{L^2}^2
				\leq F(M_0, \bar{v}),\\
				&\|\sqrt{\rho_0}\partial_1^5Dv\|_{L^2}^2
				\leq F(M_0, \bar{v}),\\
				&\|\sqrt{\rho_0}\partial_1^6v\|_{L^2}^2
				+\|\sqrt{\rho_0}\partial_1^6Dv\|_{L_t^2 L^2}^2
				\leq F(M_0, \bar{v}),\\
				&\sum_{\substack{ l_1+l_2=6\\  l_1\geq 0,\ l_2\geq 2}} \big\|\sqrt{\rho_0^{l_2}}\partial_1^{l_1}\partial_2^{l_2}v\big\|_{L^2}^2
				\leq F(M_0, \bar{v}).
			\end{aligned}
		\end{equation}
\end{proposition}

\subsection{Higher-order estimates}\label{re-HOP}

\begin{proposition} [\eqref{EEU-7}] It holds that
		\begin{equation}\label{re-150}
			\begin{aligned}
				&\|\sqrt{\rho_0}\partial_t^4v\|_{L^2}^2
				+\|\sqrt{\rho_0}\partial_t^4Dv\|_{L_t^2 L^2}^2
				\leq F(M_0, \bar{v}),\\
				&\|\sqrt{\rho_0}\partial_t^3Dv\|_{L^2}^2
				\leq F(M_0, \bar{v}),\\
				&\|\sqrt{\rho_0}\partial_t^3\partial_1v\|_{L^2}^2
				+\|\sqrt{\rho_0}\partial_t^3\partial_1Dv\|_{L_t^2 L^2}^2
				\leq F(M_0, \bar{v}),\\
				&\|\sqrt{\rho_0}\partial_t^3\partial_1Dv\|_{L^2}^2
				\leq F(M_0, \bar{v}),\\
				&\|\sqrt{\rho_0}\partial_t^3\partial_1^2v\|_{L^2}^2
				+\|\sqrt{\rho_0}\partial_t^3\partial_1^2Dv\|_{L_t^2 L^2}^2
				\leq F(M_0, \bar{v}).
			\end{aligned}
		\end{equation}
\end{proposition}

\begin{proposition}[\eqref{EEU-8}+elliptic estimates] It holds that
		\begin{equation}\label{re-160}
			\begin{aligned}
				&\|\sqrt{\rho_0}\partial_t^2\partial_1^2Dv\|_{L^2}^2
				\leq F(M_0, \bar{v}),\\
				&\|\sqrt{\rho_0}\partial_t^2\partial_1^3v\|_{L^2}^2
				+\|\sqrt{\rho_0}\partial_t^2\partial_1^3Dv\|_{L_t^2 L^2}^2
				\leq F(M_0, \bar{v}),\\
				&\sum_{\substack{ l_1+l_2=3\\  l_1\geq 0,\ l_2\geq 2}} \big\|\sqrt{\rho_0^{l_2}}\partial_t^2\partial_1^{l_1}\partial_2^{l_2}v\big\|_{L^2}^2
				\leq F(M_0, \bar{v}),\\
				&\|\sqrt{\rho_0}\partial_t^2\partial_1^3Dv\|_{L^2}^2
				\leq F(M_0, \bar{v}),\\
				&\|\sqrt{\rho_0}\partial_t^2\partial_1^4v\|_{L^2}^2
				+\|\sqrt{\rho_0}\partial_t^2\partial_1^4Dv\|_{L_t^2 L^2}^2
				\leq F(M_0, \bar{v}).
			\end{aligned}
		\end{equation}
\end{proposition}

\begin{proposition}[\eqref{EEU-9}+elliptic estimates] It holds that
		\begin{equation}\label{re-170}
			\begin{aligned}
				&\|\sqrt{\rho_0}\partial_t\partial_1^4Dv\|_{L^2}^2
				\leq F(M_0, \bar{v}),\\
				&\|\sqrt{\rho_0}\partial_t\partial_1^5v\|_{L^2}^2
				+\|\sqrt{\rho_0}\partial_t\partial_1^5Dv\|_{L_t^2 L^2}^2
				\leq F(M_0, \bar{v}),\\
				&\sum_{\substack{ l_1+l_2=5\\  l_1\geq 0,\ l_2\geq 2}} \big\|\sqrt{\rho_0^{l_2}}\partial_t\partial_1^{l_1}\partial_2^{l_2}v\big\|_{L^2}^2
				\leq F(M_0, \bar{v}),\\
				&\|\sqrt{\rho_0}\partial_t\partial_1^5Dv\|_{L^2}^2
				\leq F(M_0, \bar{v}),\\
				&\|\sqrt{\rho_0}\partial_t\partial_1^6v\|_{L^2}^2
				+\|\sqrt{\rho_0}\partial_t\partial_1^6Dv\|_{L_t^2 L^2}^2
				\leq F(M_0, \bar{v}).
			\end{aligned}
		\end{equation}
\end{proposition}

\begin{proposition}[\eqref{EEU-10}+elliptic estimates] It holds that
		\begin{equation}\label{re-180}
			\begin{aligned}
				&\|\sqrt{\rho_0}\partial_1^6Dv\|_{L^2}^2
				\leq F(M_0, \bar{v}),\\
				&\|\sqrt{\rho_0}\partial_1^7v\|_{L^2}^2
				+\|\sqrt{\rho_0}\partial_1^7Dv\|_{L_t^2 L^2}^2
				\leq F(M_0, \bar{v}),\\
				&\sum_{\substack{ l_1+l_2=7\\  l_1\geq 0,\ l_2\geq 2}} \big\|\sqrt{\rho_0^{l_2}}\partial_1^{l_1}\partial_2^{l_2}v\big\|_{L^2}^2
				\leq F(M_0, \bar{v}),\\
				&\|\rho_0\partial_t^3\partial_2^2v\|_{L^2}^2
				\leq F(M_0, \bar{v}),\\
				&\sum_{\substack{ l_1+l_2=4\\  l_1\geq 0,\ l_2\geq 2}} \big\|\sqrt{\rho_0^{l_2}}\partial_t^2\partial_1^{l_1}\partial_2^{l_2}v\big\|_{L^2}^2
				\leq F(M_0, \bar{v}),\\
				&\sum_{\substack{ l_1+l_2=6\\  l_1\geq 0,\ l_2\geq 2}} \big\|\sqrt{\rho_0^{l_2}}\partial_t\partial_1^{l_1}\partial_2^{l_2}v\big\|_{L^2}^2
				\leq F(M_0, \bar{v}),\\
				&\|\sqrt{\rho_0}\partial_1^7Dv\|_{L^2}^2
				\leq F(M_0, \bar{v}),\\
				&\sum_{\substack{ l_1+l_2=8\\  l_1\geq 0,\ l_2\geq 2}} \big\|\sqrt{\rho_0^{l_2}}\partial_1^{l_1}\partial_2^{l_2}v\big\|_{L^2}^2
				\leq F(M_0, \bar{v}).
			\end{aligned}
		\end{equation}
\end{proposition}

As mentioned in Section \ref{re-se}, we will give the proofs of \eqref{re-150} and the energy estimates in \eqref{re-180}. 
In the following, we mainly focus on the terms involving the \(H^{1/2}\)-type and sharp \(L^\infty\)-type estimates.

\subsubsection{Proof of \eqref{re-150}}

\begin{proof}[Proof of \(\eqref{re-150}_1\)]
Comparing with \eqref{EE-1}, one has 
\begin{equation}\label{re-200}
\begin{aligned}
&\frac{1}{2}\int_\Omega \rho_0|\partial_t^4v|^2\,\diff x
+\int_0^t\int_\Omega \rho_0\bar{J}^{-2}\bar{b}^{kj}\partial_t^4v^i,_{j}\partial_t^4v^i,_{k}\,\diff x\diff s\\
&=\frac{1}{2}\int_\Omega \rho_0|\partial_t^4v|^2(x,0)\,\diff x+\int_0^t\int_\Omega\rho_0^2\partial_t^4(\bar{J}^{-2}\bar{a}_i^k)\partial_t^4v^i,_{k}\,\diff x\diff s\\
&\quad-\sum_{m=1}^4\binom{4}{m}\underline{\int_0^t\int_\Omega\rho_0\partial_t^m(\bar{J}^{-2}\bar{b}^{kj})\partial_t^{4-m}v^i,_{j}\partial_t^4v^i,_{k}\,\diff x\diff s.}_{:=I_m}
\end{aligned}
\end{equation}

It suffices to take care of \(I_3\) and \(I_4\), which can be estimated as follows: 
\begin{equation*}
\begin{aligned}
&|I_3|
\leq CP^2(M_1)\int_0^t\|\partial_t^2D\bar{v}\|_{H^{1/2}}^2\|\partial_tDv\|_{H^{1/2}}^2\,\diff s+G,\\
&|I_4|
\leq CP^2(M_1)\int_0^t\|\sqrt{\rho_0}\partial_t^3D\bar{v}\|_{L^2}^2\|Dv\|_{L^\infty}^2\,\diff s+G
\end{aligned}
\end{equation*}
with \(G=\frac{1}{100}\int_0^t\int_\Omega\rho_0|\partial_t^4Dv|^2\,\diff x\diff s+\mathcal{R}\), which are bounded by
\(G+F(M_0, \bar{v})\).
Here one has used
\begin{equation}\label{re-210}
\begin{aligned}
\|\partial_tDv\|_{H^{1/2}}^2&\lesssim \|\rho_0^{1/2}\partial_tDv\|_{L^2}^2+\|\rho_0^{1/2}\partial_tD^2v\|_{L^2}^2\lesssim \cdots\\
&\lesssim \mathcal{R}+\|\rho_0^{3/2}\partial_tD^3v\|_{L^2}^2\leq F(M_0, \bar{v}),
\end{aligned}
\end{equation}
and
\begin{equation}\label{re-212}
\begin{aligned}
\|Dv\|_{L^\infty}^2\lesssim \sum_{k=0}^2\int_\Omega \rho_0|D^{k+1}v|^2\,\diff x\lesssim \cdots
\lesssim \mathcal{R}+\|\rho_0^{5/2}D^5v\|_{L^2}^2
\leq F(M_0, \bar{v}),
\end{aligned}
\end{equation}
in which \(\eqref{re-120}_3\) and \(\eqref{re-125}_3\) have been utilized.

\end{proof}

\begin{proof}[Proof of \(\eqref{re-150}_2\)] The proof is exactly same to 
\eqref{EEP-2}.

\end{proof}

\begin{proof}[Proof of \(\eqref{re-150}_3\)] 

Applying \(\partial_t^3\partial_1\) 
to \eqref{re-12} and multiplying it by \(\partial_t^3\partial_1v^i\), one obtains by some direct calculations that
\begin{equation*}
\begin{aligned}
&\frac{1}{2}\int_\Omega \rho_0|\partial_t^3\partial_1v|^2\,\diff x+\int_0^t\int_\Omega \rho_0\bar{J}^{-2}\bar{b}^{kj}\partial_t^3\partial_1v^i,_{j}\partial_t^3\partial_1v^i,_{k}\,\diff x\diff s\\
&=\frac{1}{2}\int_\Omega \rho_0|\partial_t^3\partial_1v|^2(x,0)\,\diff x+\int_0^t\int_\Omega \partial_t^3\partial_1(\rho_0^2\bar{J}^{-2}\bar{a}_i^k)\partial_t^3\partial_1v^i,_{k}\,\diff x\diff s\\
&\quad-\int_0^t\int_\Omega \partial_1(\rho_0)\partial_t^4v^i\partial_t^3\partial_1v^i\,\diff x\diff s
-\int_0^t\int_\Omega \partial_1(\rho_0\bar{J}^{-2}\bar{b}^{kj})\partial_t^3v^i,_{j}\partial_t^3\partial_1v^i,_{k}\,\diff x\diff s\\
&\quad-\sum_{m=1}^3\underline{\int_0^t\int_\Omega \partial_1[\rho_0\partial_t^m(\bar{J}^{-2}\bar{b}^{kj})\partial_t^{3-m}v^i,_{j}]\partial_t^3\partial_1v^i,_{k}\,\diff x\diff s.}_{:=I_m}
\end{aligned}
\end{equation*}

The most difficult term is \(I_3\) which can be estimated by
\begin{equation*}
\begin{aligned}
|I_3|
&\leq \frac{1}{100}\int_0^t\int_\Omega \rho_0|\partial_t^3\partial_1Dv|^2\,\diff x\diff s
+CP^2(M_1)\int_0^t\|\partial_t^2\partial_1D\bar{v}\|_{L^2}^2\|Dv\|_{L^\infty}^2\,\diff s\\
&\quad+CP^2(M_1)\int_0^t\|\sqrt{\rho_0}\partial_t^2D\bar{v}\|_{L^2}^2\|\partial_1Dv\|_{L^\infty}^2\,\diff s+\mathcal{R}
\leq F(M_0, \bar{v}),
\end{aligned}
\end{equation*}
where one has used \eqref{re-212} and
\begin{equation}\label{re-221}
\begin{aligned}
\|\partial_1Dv\|_{L^\infty}^2
\lesssim \mathcal{R}+\|\rho_0^{5/2}\partial_1D^5v\|_{L^2}^2\leq F(M_0, \bar{v}),
\end{aligned}
\end{equation}
where \(\eqref{re-125}_8\) has been utilized in the last inequality.

\end{proof}

Next, one can use the \(L_t^2 L^2\)-type estimate in \(\eqref{re-150}_3\) to show  \(\eqref{re-150}_4\).

\begin{proof}[Proof of \(\eqref{re-150}_4\)] Comparing with the proof of 
	\eqref{EEP-5}, the only modification is the estimate \eqref{spacetime-estimate} on \(I_1\),  which should be replaced by
\begin{equation*}
\begin{aligned}
|I_1|&\lesssim \bigg|\int_0^t\int_\Omega \rho_0[\partial_1(\bar{J}^{-2}\bar{b}^{kj})\partial_t^3\partial_1v^i,_{j}+\partial_1^2(\bar{J}^{-2}\bar{b}^{kj})\partial_t^3v^i,_{j}]\partial_t^4v^i,_{k}\,\diff x\diff s\bigg|
+\mathcal{R}\\
&\lesssim P^2(M_1)\int_0^t\int_\Omega\rho_0|\partial_t^3\partial_1Dv|^2\, \diff x\diff s
+P^2(M_1)\int_0^t\|\sqrt{\rho_0}\partial_t^3Dv\|_{L^2}^2\,\diff s\\
&\quad+\int_0^t\int_\Omega\rho_0|\partial_t^4Dv|^2\,\diff x\diff s+\mathcal{R}
\leq F(M_0, \bar{v}),
\end{aligned}
\end{equation*}
where \(\eqref{re-150}_3\) has been used.
\end{proof}

\begin{proof}[Proof of \(\eqref{re-150}_5\)]	
	
One can apply \(\partial_t^3\partial_1^2\) to \eqref{re-12}, multiply it by \(\partial_t^3\partial_1^2v^i\), to obtain by some elementary calculations that
\begin{equation*}
\begin{aligned}
&\frac{1}{2}\int_\Omega \rho_0|\partial_t^3\partial_1^2v|^2\,\diff x
+\int_0^t\int_\Omega\rho_0\bar{J}^{-2}\bar{b}^{kj}\partial_t^3\partial_1^2v^i,_{j}\partial_t^3\partial_1^2v^i,_{k}\,\diff x\diff s\\
&=\frac{1}{2}\int_\Omega \rho_0|\partial_t^3\partial_1^2v|^2(x,0)\,\diff x+\int_0^t\int_\Omega \partial_1^2[\rho_0^2\partial_t^3( \bar{J}^{-2}\bar{a}_i^k)]\partial_t^3\partial_1^2v^i,_{k}\,\diff x\diff s\\
&\quad-\sum_{m=1}^2\int_0^t\int_\Omega \partial_1^m(\rho_0)\partial_t^4\partial_1^{2-m}v^i\partial_t^3\partial_1^2v^i\,\diff x\diff s\\
&\quad-\sum_{m=1}^2\int_0^t\int_\Omega \partial_1^m(\rho_0\bar{J}^{-2}\bar{b}^{kj})\partial_t^3\partial_1^{2-m}v^i,_{j}\partial_t^3\partial_1^2v^i,_{k}\,\diff x\diff s\\
&\quad-\sum_{m=1}^3\underline{\int_0^t\int_\Omega \partial_1^2[\rho_0\partial_t^m(\bar{J}^{-2}\bar{b}^{kj})\partial_t^{3-m}v^i,_{j}]\partial_t^3\partial_1^2v^i,_{k}\,\diff x\diff s.}_{:=I_m}
\end{aligned}
\end{equation*}

The new ingredient is contained in \(I_3\), which may be estimated as follows:
\begin{equation*}
\begin{aligned}
|I_3|
&\leq CP^2(M_1)\int_0^t(\|\sqrt{\rho_0}\partial_t^2\partial_1^2D\bar{v}\|_{L^2}^2\|Dv\|_{L^\infty}^2
		+\|\partial_t^2\partial_1D\bar{v}\|_{H^{1/2}}^2\|\partial_1Dv\|_{H^{1/2}}^2\\
&\quad+\int_0^t\|\partial_t^2D\bar{v}\|_{H^{1/2}}^2\|\partial_1^2Dv\|_{H^{1/2}}^2)\,\diff s
+\frac{1}{100}\int_0^t\int_\Omega\rho_0|\partial_t^3\partial_1^2Dv|^2\,\diff x\diff s+\mathcal{R}\\
&\leq \frac{1}{100}\int_0^t\int_\Omega\rho_0|\partial_t^3\partial_1^2Dv|^2\,\diff x\diff s+F(M_0, \bar{v}),
	\end{aligned}
\end{equation*}
where one has used \(\eqref{re-105}_7\) and \(\eqref{re-125}_3\) to estimate 
\begin{equation}\label{re-230}
\begin{aligned}
\|\partial_1^{l_1}Dv\|_{H^{1/2}}^2
\lesssim \mathcal{R}+\|\rho_0^{3/2}\partial_1^{l_1}D^3v\|_{L^2}^2
\leq F(M_0, \bar{v}),\quad  l_1=1,2.
\end{aligned}
\end{equation}

\end{proof}

\subsubsection{Proof of \eqref{re-180}}

\begin{proof}[Proof of \(\eqref{re-180}_1\)]
	
This can be done in the exactly same way as for \eqref{EEP-3}. The new ingredient is to explain
how to estimate \(\|\partial_1^3Dv\|_{H^{1/2}}\) in \(\eqref{EE-30}_3\). More precisely, it suffices to obtain the following estimate: 
\begin{equation*}
		\begin{aligned}
		|I_{3_3}|
		&\lesssim P^2(M_1)\int_0^t
		\|\partial_1^4D\bar{\eta}\|_{H^{1/2}}^2
		\|\partial_1^3Dv\|_{H^{1/2}}^2\,\diff s\\
		&\quad+\int_0^t\|\sqrt{\rho_0}\partial_t\partial_1^5Dv\|_{L^2}^2\,\diff s+\mathcal{R}\leq F(M_0, \bar{v}),
	\end{aligned}
	\end{equation*}	
where one has used \(\eqref{re-125}_8\) to estimate
\begin{equation}\label{re-240}
\begin{aligned}
\|\partial_1^3Dv\|_{H^{1/2}}^2
\lesssim \mathcal{R}+\|\rho_0^{3/2}\partial_1^3D^3v\|_{L^2}^2
\leq F(M_0, \bar{v}).
\end{aligned}
\end{equation}

\end{proof}

\begin{proof}[Proof of \(\eqref{re-180}_2\)] 

Applying \(\partial_1^7\) to \eqref{re-12} and multiplying it by \(\partial_1^7v^i\), one gets after some direct calculations that
\begin{equation*}
\begin{aligned}
&\frac{1}{2}\int_\Omega \rho_0|\partial_1^7v|^2\,\diff x+\int_0^t\int_\Omega \rho_0\bar{J}^{-2}\bar{b}^{kj}\partial_1^7v^i,_{j}\partial_1^7v^i,_{k}\,\diff x\diff s\\
&=\frac{1}{2}\int_\Omega \rho_0|\partial_1^7v|^2(x,0)\,\diff x+\int_0^t\int_\Omega\partial_1^7(\rho_0^2\bar{J}^{-2}\bar{a}_i^k)\partial_1^7v^i,_{k}\,\diff x\diff s\\
&\quad-\sum_{m=1}^7\int_0^t\int_\Omega \partial_1^m(\rho_0)\partial_t\partial_1^{7-m}v^i\partial_1^7v^i\,\diff x\diff s\\
&\quad-\sum_{m=1}^7\int_0^t\int_\Omega \partial_1^m(\rho_0\bar{J}^{-2}\bar{b}^{kj})\partial_1^{7-m}v^i,_{j}\partial_1^7v^i,_{k}\,\diff x\diff s.
\end{aligned}
\end{equation*}
All the terms on the RHS above can be handled by means of
\eqref{re-212}-\eqref{re-240}.

\end{proof}

\begin{proof}[Proof of \(\eqref{re-180}_7\)]
Comparing with the proof of \eqref{EEP-7}, one needs only to take care of the last four estimates in \eqref{EE-60}, i.e., 
\begin{equation*}
	\begin{aligned}
	|I_{2_{m}}|
	&\lesssim P^2(M_1)\int_0^t \|\partial_1^{m}D\bar{\eta}\|_{H^{1/2}}^2\|\partial_t\partial_1^{7-m}Dv\|_{H^{1/2}}^2\,\diff s
	+G,\quad m=4,5,\\
	|I_{2_{m}}|
	&\lesssim P^2(M_1)\int_0^t \|\sqrt{\rho_0}\partial_1^{m}D\bar{\eta}\|_{L^2}^2
	\|\partial_t\partial_1^{7-m}Dv\|_{L^\infty}^2\,\diff s
	+G,\quad m=6,7
	\end{aligned}
\end{equation*}
with \(G=\int_0^t\int_\Omega \rho_0|\partial_1^7Dv|^2\,\diff x\diff s+\mathcal{R}\).

Notice that \(G\) admits the bound \(F(M_0, \bar{v})\) 
by \(\eqref{re-180}_2\). It suffices to estimate \(\|\partial_t\partial_1^{l_1}Dv\|_{H^{1/2}}\) \((l_1=2,3)\) and \(\|\partial_t\partial_1^{l_1}Dv\|_{L^\infty}\) \((l_1=0,1)\). In fact, one can use \(\eqref{re-170}_3\) and \(\eqref{re-180}_6\) to estimate
\begin{equation*}
\begin{aligned}
\|\partial_t\partial_1^{l_1}Dv\|_{H^{1/2}}^2
\lesssim \mathcal{R}+\|\rho_0^{3/2}\partial_t\partial_1^{l_1}D^3v\|_{L^2}^2\leq F(M_0, \bar{v}),\quad l_1=2,3
\end{aligned}
\end{equation*}
and 
\begin{equation*}
\begin{aligned}
\|\partial_t\partial_1^{l_1}Dv\|_{L^\infty}^2
\lesssim \mathcal{R}+ \|\rho_0^{5/2}\partial_t\partial_1^{l_1}D^5v\|_{L^2}^2
\leq F(M_0, \bar{v}),\quad l_1=0,1.
\end{aligned}
\end{equation*}

\end{proof}

Collecting all the estimates in Subsections \ref{re-LOP}-\ref{re-HOP} completes the proof of \eqref{re-1}. 

\end{proof}

\section{Proof of Theorem \ref{th:main-1}: Existence}\label{Existence Part}

\subsection{Iterative problems}

We will show that
there exists a classical solution to \eqref{eq:main-2} by an iteration. Therefore consider the iterative problem
{\small{\begin{equation}\label{existence-21}
	\begin{cases}
		\rho_0\partial_t(v^i)^{(n)}+[\rho_0^2 (J^{-2}a_i^k)^{(n-1)}],_{k}\\
		=[\rho_0(J^{-2}b^{kj})^{(n-1)}(v^i)^{(n)},_{j}],_{k} &\ \mbox{in}\ \Omega\times (0,T],\\ \\
		v^{(n)}=u_0 &\ \mbox{on}\ \Omega\times \{t=0\}.
	\end{cases}
\end{equation}}}

When \(n=1\),  imposing 
\(\eta^{(0)}(t,x)=x+tu_0(x)\), then one can solve \eqref{existence-21} for \(n=1,2,...\) iteratively. 
Indeed, in view of Lemma \ref{Regularity lemma}, one can obtain \(\{v^{(n)}\}_{n=1}^\infty\subset \mathcal{C}_T(M_1)\) for any \(n\geq 1\).

\subsection{Uniform estimates}
Next, we show that the above iterative solutions \(\{v^{(n)}\}_{n=1}^\infty\) 
are contractive in some appropriate energy space. To this end, 
setting 
\[\sigma[v^{(n)}]:=v^{(n+1)}-v^{(n)},\]
one deduces that
{\small{\begin{equation}\label{existence-22}
	\begin{cases}
		\rho_0\partial_t\sigma[(v^i)^{(n)}]+[\rho_0^2 (J^{-2}a_i^k)^{(n)}],_{k}
-[\rho_0^2 (J^{-2}a_i^k)^{(n-1)}],_{k}\\
		=[\rho_0(J^{-2}b^{kj})^{(n)}(v^i)^{(n+1)},_{j}],_{k}
		-[\rho_0(J^{-2}b^{kj})^{(n-1)}(v^i)^{(n)},_{j}],_{k} &\ \mbox{in}\ \Omega\times (0,T],\\ \\
	\sigma[v^{(n)}]=0 &\ \mbox{on}\ \Omega\times \{t=0\}.
	\end{cases}
\end{equation}}}

Then \(\{\sigma[v^{(n)}]\}_{n=1}^\infty\)  satisfy the following uniform estimates in \(n\geq 1\).
\begin{lemma} It holds that
	\begin{equation}\label{existence-23}
		\begin{aligned}
			&\frac{\diff}{\diff t}\int_\Omega\rho_0|\sigma[v^{(n)}]|^2\,\diff x
			+\int_\Omega\rho_0|D\sigma[v^{(n)}]|^2\,\diff x\\
			&\leq Ct(1+M_1+M_1^2)\int_0^t\int_\Omega\rho_0|D\sigma[v^{(n-1)}]|^2\,\diff x\diff s\quad \mathrm{for}\ 0<t\leq T.
		\end{aligned}
	\end{equation}		
\end{lemma}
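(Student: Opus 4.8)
The plan is to test the equation \eqref{existence-22} for $\sigma[v^{(n)}]$ against $\sigma[v^{(n)}]$ itself, exactly as in the basic energy estimate for the linearized problem, but now keeping track of the difference of the coefficients $(J^{-2}a_i^k)^{(n)}-(J^{-2}a_i^k)^{(n-1)}$ and $(J^{-2}b^{kj})^{(n)}-(J^{-2}b^{kj})^{(n-1)}$ as forcing terms. First I would multiply $\eqref{existence-22}_1$ by $\sigma[(v^i)^{(n)}]$, integrate over $\Omega$, and integrate by parts in the spatial variables; the boundary terms vanish because $\rho_0=0$ on $\Gamma$ by \eqref{eq:intro-3}. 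This produces $\frac{1}{2}\frac{\diff}{\diff t}\int_\Omega\rho_0|\sigma[v^{(n)}]|^2\,\diff x$ plus the good dissipation term $\int_\Omega\rho_0(J^{-2}b^{kj})^{(n)}\sigma[(v^i)^{(n)}],_{j}\,\sigma[(v^i)^{(n)}],_{k}\,\diff x$, which by \eqref{a-bound}-type coercivity (available since $v^{(n)}\in\mathcal{C}_T(M_1)$, so the analogue of \eqref{a-bound} holds for $(b^{kj})^{(n)}$) is bounded below by a constant multiple of $\int_\Omega\rho_0|D\sigma[v^{(n)}]|^2\,\diff x$. On the right-hand side one gets two types of commutator contributions: one from rewriting $[\rho_0(J^{-2}b^{kj})^{(n)}(v^i)^{(n+1)},_{j}],_{k}-[\rho_0(J^{-2}b^{kj})^{(n-1)}(v^i)^{(n)},_{j}],_{k}$ as $[\rho_0(J^{-2}b^{kj})^{(n)}\sigma[(v^i)^{(n)}],_{j}],_{k}+[\rho_0((J^{-2}b^{kj})^{(n)}-(J^{-2}b^{kj})^{(n-1)})(v^i)^{(n)},_{j}],_{k}$, and one from the $\rho_0^2$ pressure-type term $[\rho_0^2((J^{-2}a_i^k)^{(n)}-(J^{-2}a_i^k)^{(n-1)})],_{k}$.

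Next I would estimate the difference quantities $(J^{-2}a_i^k)^{(n)}-(J^{-2}a_i^k)^{(n-1)}$ and $(J^{-2}b^{kj})^{(n)}-(J^{-2}b^{kj})^{(n-1)}$ pointwise in terms of $\eta^{(n)}-\eta^{(n-1)}$ and its spatial derivative: since $J$ and $a$ are polynomial in $D\eta$ (see \eqref{J-formula}, \eqref{a-formula}) and $\eta^{(m)}-\eta^{(m-1)}=\int_0^t\sigma[v^{(m-1)}]\,\diff s$, the mean-value theorem plus the uniform bounds from $\mathcal{C}_T(M_1)$ (and \eqref{J-bound}, which keeps $J^{-1}$ controlled) give $|(J^{-2}b^{kj})^{(n)}-(J^{-2}b^{kj})^{(n-1)}|+|D\big((J^{-2}b^{kj})^{(n)}-(J^{-2}b^{kj})^{(n-1)}\big)|\lesssim P(M_1)\big(\int_0^t|D\sigma[v^{(n-1)}]|\,\diff s+\int_0^t|\sigma[v^{(n-1)}]|\,\diff s+\ldots\big)$, i.e. a time integral of first derivatives of $\sigma[v^{(n-1)}]$, with the extra factor of $t$ coming from $\int_0^t$. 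I would then move the spatial derivative in the terms $[\rho_0(\cdots)(v^i)^{(n)},_{j}],_{k}$ back onto $\sigma[(v^i)^{(n)}]$ by integration by parts (using the $\rho_0=0$ boundary vanishing and the tangential Hardy inequality \eqref{TE} to absorb $\partial_1\rho_0/\rho_0$ factors when the derivative lands on $\rho_0$), so that no derivative higher than first order of $\sigma[v^{(n)}]$ or $\sigma[v^{(n-1)}]$ appears. The factor $v^{(n)},_{j}$, $D v^{(n)}$, $D^2\eta^{(n)}$, etc., are all controlled in $L^\infty$ by $E^{1/2}\lesssim M_1^{1/2}$ via Lemma~\ref{le:Preliminary-1} and Corollary~\ref{le:W-2}.

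With these ingredients, Cauchy's inequality lets me absorb the $\int_\Omega\rho_0|D\sigma[v^{(n)}]|^2\,\diff x$ pieces into the dissipation, leaving on the right a bound of the form $Ct\,P(M_1)\int_\Omega\rho_0|D\sigma[v^{(n-1)}]|^2\,\diff s$ after applying Cauchy–Schwarz to the time integral $\int_0^t$ (this squares the time-integral factor and produces the extra $t$, hence the $t(1+M_1+M_1^2)$ dependence — the constant is a concrete low-degree polynomial in $M_1$ since $J^{-2}b$ is quadratic in $D\eta$, its difference is linear, and each appears at most to the second power in the quadratic energy identity). Lower-order terms involving $\int_\Omega\rho_0|\sigma[v^{(n-1)}]|^2$ are absorbed using the weighted Poincaré-type inequality \eqref{ineq:weighted Sobolev-2} (so they are controlled by $\int_\Omega\rho_0|D\sigma[v^{(n-1)}]|^2$), which keeps the final estimate purely in terms of the $D$-weighted norm of $\sigma[v^{(n-1)}]$ and matches \eqref{existence-23} exactly. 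The main obstacle I anticipate is the careful bookkeeping of which spatial derivative sits on $\rho_0$ versus on the coefficient difference versus on the velocity factors: one must arrange every integration by parts so that at most one derivative falls on each $\sigma$ and the residual $\rho_0$-weights line up to invoke \eqref{TE} and \eqref{ineq:weighted Sobolev-2}; the degenerate weight $\rho_0$ is both what saves the boundary terms and what forces this delicate distribution of derivatives, but since \eqref{existence-22} has the same degenerate structure as \eqref{eq:main-2} this is a routine (if tedious) adaptation of the Section~\ref{Energy Estimates} machinery at the lowest order, and no new inequality beyond those already collected is needed.
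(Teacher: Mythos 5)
Your proposal follows essentially the same route as the paper: test \eqref{existence-22} against $\sigma[v^{(n)}]$, split the dissipation difference into a coercive piece $\int_\Omega\rho_0(J^{-2}b^{kj})^{(n)}\sigma[v^{(n)}],_{j}\sigma[v^{(n)}],_{k}$ and a commutator piece with $(J^{-2}b^{kj})^{(n)}-(J^{-2}b^{kj})^{(n-1)}$, bound the coefficient differences pointwise via \eqref{J-formula}, \eqref{a-formula}, \eqref{Eta-formula}, \eqref{b-formula} by $P(M_1)\int_0^t|D\sigma[v^{(n-1)}]|\,\diff s$, and then Cauchy--Schwarz in time to extract the factor $t$. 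The only over-cautious steps are invoking \eqref{TE} and \eqref{ineq:weighted Sobolev-2}: after the single integration by parts the derivative lands on the test function $\sigma[v^{(n)}]$ rather than on $\rho_0$, and the coefficient differences depend only on $D(\eta^{(n)}-\eta^{(n-1)})=\int_0^t D\sigma[v^{(n-1)}]\,\diff s$, so neither the Hardy-type nor the weighted Poincar\'e inequality is actually needed.
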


\begin{proof} Multiplying \(\eqref{existence-22}_1\) by \(\sigma[v^{(n)}]\) and integrating by parts yield
	\begin{equation}\label{existence-24}
		\begin{aligned}
			&\frac{1}{2}\frac{\diff}{\diff t}\int_\Omega\rho_0|\sigma[v^{(n)}]|^2\,\diff x\\
			&+\underline{\int_\Omega\rho_0\big[(J^{-2}b^{kj})^{(n)}(v^i)^{(n+1)},_{j}
			-(J^{-2}b^{kj})^{(n-1)}(v^i)^{(n)},_{j}\big]\sigma[(v^i)^{(n)}],_{k}\,\diff x}_{:=I_1}\\
			&=\underline{\int_\Omega\rho_0^2\big[(J^{-2}a_i^k)^{(n)}- (J^{-2}a_i^k)^{(n-1)}\big]\sigma[(v^i)^{(n)}],_{k}\,\diff x.}_{:=I_2}	
		\end{aligned}
	\end{equation}	
	
Note that
	\begin{equation}\label{existence-25}
		\begin{aligned}
			I_1&=\underline{\int_\Omega\rho_0(J^{-2}b^{kj})^{(n)}\big[(v^i)^{(n+1)},_{j}
			-(v^i)^{(n)},_{j}\big]\sigma[(v^i)^{(n)}],_{k}\,\diff x}_{:=J_1}\\
			&+\underline{\int_\Omega\rho_0(v^i)^{(n)},_{j}\big[(J^{-2}b^{kj})^{(n)}
			-(J^{-2}b^{kj})^{(n-1)}\big]\sigma[(v^i)^{(n)}],_{k}\,\diff x.}_{:=J_2}
		\end{aligned}
	\end{equation}	
First, it is easy to see that
\begin{equation}\label{existence-26}
	\begin{aligned}
	J_1
\geq \frac{2}{11}\int_\Omega\rho_0|D\sigma[v^{(n)}]|^2\,\diff x.
	\end{aligned}
\end{equation}	
To handle \(J_2\), one writes 
\begin{equation*}
\begin{aligned}
(J^{-2}b^{kj})^{(n)}-(J^{-2}b^{kj})^{(n-1)}
&=[(J^{-2})^{(n)}-(J^{-2})^{(n-1)}](b^{kj})^{(n)}\\
&\quad+(J^{-2})^{(n-1)}[(b^{kj})^{(n)}-(b^{kj})^{(n-1)}].
\end{aligned}
\end{equation*}	
By \eqref{J-formula}, one decomposes 
\begin{equation*}
\begin{aligned}
 (J^{-2})^{(n)}-(J^{-2})^{(n-1)}
 &=[(\eta^{(n-1)})^1,_{1}-(\eta^{(n)})^1,_{1}](\eta^{(n-1)})^2,_{2}\\
 &\quad+(\eta^{(n)})^1,_{1}[(\eta^{(n-1)})^2,_{2}-(\eta^{(n)})^2,_{2}]\\
 &\quad+[(\eta^{(n)})^1,_{2}-(\eta^{(n-1)})^1,_{2}](\eta^{(n)})^2,_{1}\\
 &\quad+(\eta^{(n-1)})^1,_{2}[(\eta^{(n)})^2,_{1}-(\eta^{(n-1)})^2,_{1}]
\end{aligned}
\end{equation*}	
and then uses \eqref{Eta-formula} to estimate
\begin{equation*}
\begin{aligned}
|(J^{-2})^{(n)}-(J^{-2})^{(n-1)}|
\lesssim (1+M_1^{1/2})\int_0^t|D\sigma[v^{(n-1)}]|\,\diff s
\end{aligned}
\end{equation*}	
since \(\{v^{(n)}\}_{n=1}^\infty\subset \mathcal{C}_T(M_1)\) and \(0<t\leq T<1\).
Similarly, by \eqref{Eta-formula} and \eqref{b-formula}, one can get
\begin{equation*}
\begin{aligned}
|(b^{kj})^{(n)}|
\lesssim 1+M_1^{1/2}+M_1
\end{aligned}
\end{equation*}	
and
\begin{equation*}
\begin{aligned}
|(b^{kj})^{(n)}-(b^{kj})^{(n-1)}|
\lesssim (1+M_1^{1/2})\int_0^t|D\sigma[v^{(n-1)}]|\,\diff s.
\end{aligned}
\end{equation*}
Hence,
\begin{equation*}
\begin{aligned}
|(J^{-2}b^{kj})^{(n)}-(J^{-2}b^{kj})^{(n-1)}|
\lesssim (1+M_1^{1/2}+M_1+M_1^{3/2})
\int_0^t|D\sigma[v^{(n-1)}]|\,\diff s.
\end{aligned}
\end{equation*}
Now, it holds that
\begin{equation}\label{existence-27}
	\begin{aligned}
		|J_2|
		&\leq CtM_1^{1/2}(1+M_1^{1/2}+M_1+M_1^{3/2})\int_0^t\int_\Omega\rho_0|D\sigma[v^{(n-1)}]|^2\,\diff x\diff s\\
		&\quad+\frac{1}{100}\int_\Omega\rho_0|D\sigma[v^{(n)}]|^2\,\diff x.
	\end{aligned}
\end{equation}	
Consequently, it follows from \eqref{existence-25}-\eqref{existence-27} that
	\begin{equation}\label{existence-28}
	\begin{aligned}
		I_1
		&\geq -CtM_1^{1/2}(1+M_1^{1/2}+M_1+M_1^{3/2})\int_0^t\int_\Omega\rho_0|D\sigma[v^{(n-1)}]|^2\,\diff x\diff s\\
		&\quad+\frac{1}{10}\int_\Omega\rho_0|D\sigma[v^{(n)}]|^2\,\diff x.
	\end{aligned}
\end{equation}		

Similar to \(J_2\), one can obtain
	\begin{equation}\label{existence-29}
		\begin{aligned}
			|I_2|
			&\leq Ct(1+M_1^{1/2}+M_1)\int_0^t\int_\Omega\rho_0|D\sigma[v^{(n-1)}]|^2\,\diff x\diff s\\
			&\quad+\frac{1}{100}\int_\Omega\rho_0|D\sigma[v^{(n)}]|^2\,\diff x. 
		\end{aligned}
	\end{equation}	

Substituting \eqref{existence-28} and \eqref{existence-29} into \eqref{existence-24} gives \eqref{existence-23}.

\end{proof}

\begin{corollary} It holds that
\begin{equation}\label{existence-30}
	\begin{aligned}
		&\underset{0\leq t\leq T}{\sup}\|\rho_0^{1/2}\sigma[v^{(n)}](t)\|_{L^2(\Omega)}+\|\rho_0^{1/2}D\sigma[v^{(n)}]\|_{L^2([0,T];L^2(\Omega))}\\
		&\leq \frac{1}{2}\big(\underset{0\leq t\leq T}{\sup}\|\rho_0^{1/2}\sigma[v^{(n-1)}](t)\|_{L^2(\Omega)}+
		\|\rho_0^{1/2}D\sigma[v^{(n-1)}]\|_{L^2([0,T];L^2(\Omega))}\big).
	\end{aligned}
\end{equation}	

\end{corollary}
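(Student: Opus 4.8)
The plan is to derive \eqref{existence-30} directly from the differential inequality \eqref{existence-23} by a Gr\"onwall-type absorption argument, exploiting the small factor $t$ (recall $T<1$ is chosen suitably small). First I would abbreviate
\begin{equation*}
A_n(t):=\int_\Omega\rho_0|\sigma[v^{(n)}]|^2(t)\,\diff x,\qquad
B_n(t):=\int_0^t\int_\Omega\rho_0|D\sigma[v^{(n)}]|^2\,\diff x\diff s,
\end{equation*}
so that \eqref{existence-23} reads $A_n'(t)+B_n'(t)\le C t\,(1+M_1+M_1^2)\,B_{n-1}(t)$ for $0<t\le T$, where $C$ is the absolute constant from that lemma. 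Integrating in $t$ over $[0,t]$ and using $A_n(0)=0$ (from $\eqref{existence-22}_2$) gives
\begin{equation*}
A_n(t)+B_n(t)\le C(1+M_1+M_1^2)\int_0^t s\,B_{n-1}(s)\,\diff s
\le \tfrac{C}{2}(1+M_1+M_1^2)\,T^2\, \sup_{0\le s\le T}B_{n-1}(s),
\end{equation*}
since $B_{n-1}$ is nondecreasing and $\int_0^t s\,\diff s\le T^2/2$.

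Next I would take the supremum over $t\in[0,T]$ on the left. Because $A_n$ need not be monotone, I use instead that $A_n(t)\le A_n(t)+B_n(t)$ and that the right-hand side above is already independent of $t$; hence
\begin{equation*}
\sup_{0\le t\le T}A_n(t)+\sup_{0\le t\le T}B_n(t)
\le C(1+M_1+M_1^2)\,T^2\,\sup_{0\le t\le T}B_{n-1}(t).
\end{equation*}
Now I choose $T$ small enough that $C(1+M_1+M_1^2)\,T^2\le 1/4$; this is compatible with the smallness restriction on $T$ already imposed in Subsection~\ref{The a priori assumption}, possibly after shrinking $T$ further (and $M_1=2M_0$ is fixed, so the constant $C(1+M_1+M_1^2)$ is a fixed number depending only on $M_0$ and $\|\rho_0\|_{H^7}$). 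With this choice,
\begin{equation*}
\sup_{0\le t\le T}A_n(t)+\sup_{0\le t\le T}B_n(t)
\le \tfrac14\sup_{0\le t\le T}B_{n-1}(t)
\le \tfrac14\Big(\sup_{0\le t\le T}A_{n-1}(t)+\sup_{0\le t\le T}B_{n-1}(t)\Big).
\end{equation*}
Passing to square roots and using $\sqrt{a+b}\le\sqrt a+\sqrt b$ on the left and $\sqrt{\tfrac14(a+b)}\le\tfrac12(\sqrt a+\sqrt b)$ on the right converts this into \eqref{existence-30} in the stated norms $\|\rho_0^{1/2}\sigma[v^{(n)}](t)\|_{L^2}=A_n(t)^{1/2}$ and $\|\rho_0^{1/2}D\sigma[v^{(n)}]\|_{L^2([0,T];L^2)}=B_n(T)^{1/2}$.

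The only genuine subtlety — and the step I expect to need the most care — is bookkeeping the constant: one must verify that shrinking $T$ to make $C(1+M_1+M_1^2)T^2\le 1/4$ does not conflict with the earlier constraints $T\in(0,1/(20c_1c_2\sqrt{M_1}+10c_1^2c_2^2M_1)]\cap(0,1)$ needed for \eqref{J-bound}, \eqref{eta-bound-2}, \eqref{a-bound-2} and for Lemma~\ref{Regularity lemma} to apply; since all these are finitely many upper bounds on $T$ depending only on $M_0$, one simply takes $T$ to be the minimum, and relabels. Everything else is the routine Gr\"onwall/Cauchy--Schwarz manipulation indicated above, together with the monotonicity of $t\mapsto B_{n-1}(t)$; I would state \eqref{existence-30} as an immediate corollary of \eqref{existence-23} with this choice of $T$ recorded explicitly.
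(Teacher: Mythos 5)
Your proposal is correct and is essentially the argument the paper gives: integrate \eqref{existence-23} in time using $\sigma[v^{(n)}](0)=0$, bound the resulting $\int_0^t s\,B_{n-1}(s)\,\diff s$ via the monotonicity of $B_{n-1}$, absorb the constant by choosing $T$ small (possibly shrinking the earlier $T$, with $M_1=2M_0$ fixed), and pass to square roots. The only detail worth flagging is that converting the squared-norm inequality with factor $1/4$ into the unsquared statement with factor $1/2$ actually costs a $\sqrt{2}$ via $x_1+x_2\le\sqrt{2}\sqrt{x_1^2+x_2^2}$, so one really needs $CT(1+M_1+M_1^2)\le 1/8$; this is harmless (shrink $T$) and the paper's own proof elides it in the same way.
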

\begin{proof}
It follows from \eqref{existence-23} that
	\begin{equation*}
		\begin{aligned}
			&\underset{0\leq t\leq T}{\sup}\|\rho_0^{1/2}\sigma[v^{(n)}](t)\|_{L^2(\Omega)}^2+\|\rho_0^{1/2}D\sigma[v^{(n)}]\|_{L^2([0,T];L^2(\Omega))}^2\\
			&\leq CT(1+M_1+M_1^2)\|\rho_0^{1/2}D\sigma[v^{(n-1)}]\|_{L^2([0,T];L^2(\Omega))}^2\\
			&\leq \frac{1}{4}\big(\underset{0\leq t\leq T}{\sup}\|\rho_0^{1/2}\sigma[v^{(n-1)}](t)\|_{L^2(\Omega)}^2+
			\|\rho_0^{1/2}D\sigma[v^{(n-1)}]\|_{L^2([0,T];L^2(\Omega))}^2\big)
		\end{aligned}
	\end{equation*}
since \(T>0\) is sufficiently small. Hence \eqref{existence-30} follows.
\end{proof}

\subsection{Convergence}

\eqref{existence-30} implies only that \(\{v^{(n)}\}_{n=1}^\infty\) is a Cauchy sequence in \(L^2([0,T]; L^2(\Omega))\), 
which is insufficient to pass limit in \(n\) for \eqref{existence-21} in time pointwisely. 
To overcome this difficulty, we need to use the weighted interpolation inequality \eqref{Weighted-13}. 
Indeed, taking \(g=\sigma[(v^i)^{(n)}](t)\) in \eqref{Weighted-13} yields that for each \(t\in[0,T]\) 
	\begin{equation}\label{existence-35}
		\begin{aligned}
	\|\sigma[(v^i)^{(n)}](t)\|_{L^2(\Omega)}\lesssim \|\sigma[(v^i)^{(n)}](t)\|_{L_{\rho_0}^2(\Omega)}^{1/2}\|\sigma[(v^i)^{(n)}](t)\|_{H_{\rho_0}^1(\Omega)}^{1/2}.
	\end{aligned}
\end{equation}
Note that \eqref{existence-30} implies that \(\|\sigma[v^{(n)}](t)\|_{L_{\rho_0}^2(\Omega)}\to 0\) as \(n\to \infty\).
And \eqref{APB-2} implies that \(\|\sigma[v^{(n)}](t)\|_{H^4(\Omega)}\) is uniformly bounded in \(n\).  This together with \eqref{existence-35} implies that 
\begin{align}\label{existence-36}
		v^{(n)}\rightarrow v\quad \text{in}\ C([0,T];L^2(\Omega))\quad \text{as} \ n\to \infty.
\end{align}
	
On the other hand, by the Gagliardo-Nirenberg interpolation inequality,  
\begin{equation}\label{existence-37}
	\begin{aligned}
		\|\sigma[v^{(n)}](t)\|_{H^s(\Omega)}\lesssim \|\sigma[v^{(n)}](t)\|_{L^2(\Omega)}^{1-\frac{s}{4}}
		\|\sigma[v^{(n)}](t)\|_{H^4(\Omega)}^{\frac{s}{4}} \quad \text{for}\ s\in(0,4).
	\end{aligned}
\end{equation}
It follows from \eqref{existence-36} and \eqref{existence-37} that 
\begin{align*}
	v^{(n)}\rightarrow v\quad \text{in}\ C([0,T];H^s(\Omega)) \quad \text{for}\ s\in(0,4), \quad \text{as} \ n\to \infty,
\end{align*}
which furthermore implies that 
\begin{align}\label{existence-38}
	v^{(n)}\rightarrow v\quad \text{in}\ C([0,T];C^2(\Omega))\quad \text{as} \ n\to \infty.
\end{align}

According to \(\eqref{existence-21}_1\), one has 
\begin{equation*}
	\begin{aligned}
		\rho_0\partial_t(v^i)^{(n)}=-[\rho_0^2 (J^{-2}a_i^k)^{(n-1)}],_{k}
		+[\rho_0(J^{-2}b^{kj})^{(n-1)}(v^i)^{(n)},_{j}],_{k},
	\end{aligned}
\end{equation*}
which, together with \eqref{existence-38}, yields that as \(n\to \infty\)
\begin{align}\label{existence-39}
	\rho_0\partial_t(v^i)^{(n)}\to -(\rho_0^2 J^{-2}a_i^k),_{k}+(\rho_0J^{-2}b^{kj}v^i,_{j}),_{k}\quad \text{in}\ C([0,T];C(\Omega)).
\end{align}
Due to \eqref{existence-39}, the
distribution limit of \(\partial_tv^{(n)}\) must be \(\partial_tv\) as \(n\to \infty\), so, in particular, \(v\) is a classical solution to \eqref{eq:main-2}.
Moreover, following the standard argument, one may show 
\[v\in C([0,T]; H^4(\Omega))\cap C^1([0,T]; H^2(\Omega)).\]

\section{Proof of Theorem \ref{th:main-1}: Uniqueness}\label{Uniqueness Part}

Let \(v\) and \(\tilde{v}\) be two solutions to \eqref{eq:main-2} on \(\Omega\times (0,T]\) with initial data \((\rho_0,u_0)\) satisfying \eqref{APB-1}. Their corresponding flow maps are defined by
\begin{equation*}
	\begin{aligned}
		\eta(x,t)=x+\int_0^tv(x,s)\,\diff s\quad \text{and}\quad
		\tilde{\eta}(x,t)=x+\int_0^t\tilde{v}(x,s)\,\diff s.
	\end{aligned}
\end{equation*}
\(J\) and \(\tilde{J}\), and  \(a\) and \(\tilde{a}\) can be defined similarly. 
Set
\begin{equation*}
	\begin{aligned}
		\delta_{v\tilde{v}}=v-\tilde{v}.
	\end{aligned}
\end{equation*}
Then, \(\delta_{v\tilde{v}}\) solves
{\small{\begin{equation}\label{U-1}
	\begin{cases}
		\rho_0\partial_t\delta_{v\tilde{v}}^i+[\rho_0^2(J^{-2}a_i^k-\tilde{J}^{-2}\tilde{a}_i^k)],_{k}\\
		=[\rho_0(J^{-2}b^{kj}v^i,_{j}-\tilde{J}^{-2}\tilde{b}^{kj}\tilde{v}^i,_{j})],_{k}&\ \mbox{in}\ \Omega\times (0,T],\\ \\
		\delta_{v\tilde{v}}=0 &\ \mbox{on}\ \Omega\times \{t=0\}.
	\end{cases}
\end{equation}}}

\noindent Finally, by \(\eqref{U-1}_1\), one can follow a similar proof of \eqref{existence-23} to show 
{\small	\begin{equation*}
		\begin{aligned}
			\frac{\diff}{\diff t}\int_\Omega\rho_0|\delta_{v\tilde{v}}|^2\,\diff x
			+\int_\Omega\rho_0|D\delta_{v\tilde{v}}|^2\,\diff x
			\leq Ct(1+M_1+M_1^2)\int_0^t\int_\Omega\rho_0|D\delta_{v\tilde{v}}|^2\,\diff x\diff s
		\end{aligned}
	\end{equation*}	}	

\noindent for \(0<t\leq T\), which, together with Gronwall's inequality and \(\eqref{U-1}_2\),  implies \(\delta_{v\tilde{v}}=0\) in \(\Omega\times (0,T]\).

\section{Well-Posedness on General Domains}\label{general domain case}
In this section, we will extend the main results in Section \ref{main results} on the simplified domain \(\Omega=\mathbb{T}\times(0,1)\) to a general domain \(\Omega\) with \(\partial\Omega\in C^8\).

\subsection{Geometry of the boundary}

\begin{definition}[\cite{MR2597943}]  \(\partial\Omega\) is said to be \(C^\alpha\) if for each \(x^*\in \partial\Omega\) there exist \(r>0\) and a \(C^\alpha\) function $\gamma : \mathbb{R}^{d-1}\rightarrow \mathbb{R}$ such that-upon relabeling and reorienting the coordinates axes if necessary-we have 
\begin{align*}
  \Omega\cap B(x^*,r)=\{x\in B(x^*,r):x_d>\gamma(x_1,\ldots,x_{d-1})\}.
\end{align*}

\end{definition}

Fix \(x^*\in \partial \Omega\), and choose \(r, \gamma\), etc. as above. Define 
\begin{equation*}
\begin{cases}
y_i=x_i=:\Phi^i(x)\quad    (i=1,2,\cdots,d-1),\\
y_d=x_d-\gamma(x_1,\ldots,x_{d-1})=:\Phi^d(x),
\end{cases}
\end{equation*}
and write \(y=\Phi(x)\).
Similarly, set 
\begin{equation*}
\begin{cases}
x_i=y_i=:\Psi^i(x)\quad    (i=1,2,\cdots,d-1),\\
x_d=y_d+\gamma(x_1,\ldots,x_{d-1})=:\Psi^d(x),
\end{cases}
\end{equation*}
and write \( x=\Psi(y)\).
Then \(\Phi=\Psi^{-1}\), and the mapping \(x\mapsto \Phi(x)=y\) ``straightens out \(\partial \Omega\)'' near \(x^*\). Observe that \(\det D\Phi=\det D\Psi=1\).  

Since \(\partial\Omega\)  is compact, we can find finitely many points \(x^m\in \partial\Omega\), radii \(r_m>0\) and \(\gamma_m\) such that \(\partial\Omega\subset \bigcup_{m=1}^K B(x^m,r_m/2)\) and 
\begin{align*}
 &\Omega\cap B(x^m,r_m)=\{x\in B(x^m,r_m):x_d>\gamma_m(x_1,\ldots,x_{d-1})\},\\
&\partial\Omega\cap B(x^m,r_m)=\{x\in B(x^m,r_m):x_d=\gamma_m(x_1,\ldots,x_{d-1})\}.
\end{align*}
Then there exist some \(\Phi_m\) and small \(s_m>0\) for \(m=1,2,\cdots, K\) such that 
\begin{equation*}
\begin{aligned}
&\Phi_m(\Omega \cap B(x^m,r_m/2))\subset U_m\subset \Phi_m(\Omega \cap B(x^m,r_m)),\\
&\Phi_m(\partial\Omega \cap B(x^m,r_m/2))\subset \Sigma_m\subset \Phi_m(\partial\Omega \cap B(x^m,r_m)), 
\end{aligned}
\end{equation*}	
where
\begin{equation*}
\begin{aligned}
U_m=:B(0,s_m)\cap \{y_n>0\},\
\Sigma_m=:B(0,s_m)\cap \{y_n=0\}.
\end{aligned}
\end{equation*}	

Take an open set \(\Omega_0\subset\subset\Omega\) such that \(\Omega\subset(\Omega_0\bigcup_{m=1}^K B(x^m,r_m/2))\). Now let \(\{\zeta_0, \zeta_1, \cdots, \zeta_K\}\) be a smooth partition of unity on \(\bar{\Omega}\), subordinate to the open covering \(\{\Omega_0, B(x^1,r_1/2),\cdots, B(x^K,r_K/2)\}\), that is, 
\begin{equation*}
\begin{aligned}
&\zeta_0, \zeta_m\in C^\infty (\mathbb{R}^d),\quad \text{and}\quad 0\leq \zeta_0, \zeta_m\leq 1 \quad \text{for}\ m=1,2,\cdots, K,\\
& \supp \zeta_0 \subset \Omega_0, \quad \text{and}\quad \supp \zeta_m \subset B(x^m,r_m/2) \quad \text{for}\ m=1,2,\cdots, K,\\
&\zeta_0(x)+\sum_{m=1}^K\zeta_m(x)=1\quad \text{for}\ x\in \bar{\Omega}, \quad \text{and}\quad \sum_{m=1}^K\zeta_m(x)=1\quad \text{for}\ x\in \partial\Omega.
\end{aligned}
\end{equation*}

In what follows, we will confine to a general domain \(\Omega\) with \(d=2\) and \(\partial\Omega\in C^8\). 

\subsection{The higher-order energy functional and main results}

Let 
\begin{equation*}
\begin{aligned}
\tilde{\partial}_1=\partial_{y_1},\quad \tilde{\partial}_2=\partial_{y_2}, \quad \tilde{D}=(\tilde{\partial}_1, \tilde{\partial}_2)
\end{aligned}
\end{equation*}	
and
\begin{equation*}
\begin{aligned}
{\widetilde{\rho_0}}_m(y)=\rho_0(\Psi_m(y)),\quad \tilde{v}_m (y)=v(\Psi_m(y)).
\end{aligned}
\end{equation*}	
The higher-order energy functional consists of the inner energy and the boundary energy as follows:
{\small{\begin{equation*}\label{HOEF-100}
\begin{aligned}
\widetilde{E}(t,v)=E^{\text{I}}(t, v)+\sum_{m=1}^KE^{\text{B}_m}(t, v),
\end{aligned}
\end{equation*}}}

\noindent where 
{\small{\begin{equation*}\label{HOEF-101}
\begin{aligned}
E^{\text{I}}(t,v)&=\sum_{l_0=0}^4\|\partial_t^{l_0}v\|_{L^2(\Omega_0)}^2
+\sum_{\substack{2l_0+l_1\leq 6\\ l_0,\ l_1\geq 0}}\big(\|\partial_t^{l_0}\partial_1^{l_1}Dv\|_{L^2(\Omega_0)}^2+\|\partial_t^{l_0}\partial_1^{l_1+1}Dv\|_{L^2(\Omega_0)}^2\big)\\
&\quad+\sum_{\substack{2l_0+l_1+l_2\leq 8\\  l_0,\ l_1\geq 0,\ l_2\geq 2}}\big\|\partial_t^{l_0}\partial_1^{l_1}\partial_2^{l_2}v\big\|_{L^2(\Omega_0)}^2
\end{aligned}
\end{equation*}}}

\noindent and 
{\small{\begin{equation}\label{HOEF-102}
\begin{aligned}
E^{\text{B}_m}(t,v)&=
\sum_{l_0=0}^4\|\sqrt{{\widetilde{\rho_0}}_m}\partial_t^{l_0}\tilde{v}_m\|_{L^2(U_m)}^2
+\sum_{\substack{2l_0+l_1\leq 6\\ l_0,\ l_1\geq 0}}\big(\|\sqrt{{\widetilde{\rho_0}}_m}\partial_t^{l_0}\tilde{\partial}_1^{l_1}\tilde{D}\tilde{v}_m\|_{L^2(U_m)}^2\\
&\quad+\|\sqrt{{\widetilde{\rho_0}}_m}\partial_t^{l_0}\tilde{\partial}_1^{l_1+1}\tilde{D}\tilde{v}_m\|_{L^2(U_m)}^2\big)\\
&\quad+\sum_{\substack{2l_0+l_1+l_2\leq 8\\  l_0,\ l_1\geq 0,\ l_2\geq 2}}\big\|\sqrt{{\widetilde{\rho_0}}_m^{l_2}}\partial_t^{l_0}\tilde{\partial}_1^{l_1}\tilde{\partial}_2^{l_2}\tilde{v}_m\big\|_{L^2(U_m)}^2.
\end{aligned}
\end{equation}}}

Define the polynomial function \(M_0\) by
\begin{align*}
M_0=P(\widetilde{E}(0,v_0)),
\end{align*}
where \(P\) denotes a generic polynomial function of its argument. Then the main results can be stated as follows:

\begin{theorem}\label{th:main-100} Let \(\partial\Omega\in C^8\) and \((\rho_0,v_0)\) satisfy \eqref{eq:intro-3} and 
\(\widetilde{E}(0,v_0)<\infty\). Then
there exist a suitably small \(T>0\) and a unique classical solution 
\begin{align*}
v\in C([0,T]; H^4(\Omega))\cap C^1([0,T]; H^2(\Omega))
\end{align*}
to \eqref{eq:main-2} such that
\begin{align*}
\sup_{0\leq t\leq T} \widetilde{E}(t,v)\leq 2M_0.
\end{align*}

Moreover, \(v\) satisfies the boundary condition
\begin{align*}
(\rho_0),_{k}a_l^ka_l^jv^i,_{j}=0 \quad \rm{on}\ \Gamma\times (0,T].
\end{align*}

\end{theorem}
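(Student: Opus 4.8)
The strategy is to reduce Theorem \ref{th:main-100} to the already-established result on the slab, Theorem \ref{th:main-1}, by a localization argument. The construction of approximate solutions via the Galerkin scheme (Sections \ref{Degenerate-Singular Elliptic Operators}--\ref{weak solution}) goes through verbatim on a general domain $\Omega$: the degenerate-singular elliptic operator $\mathcal{L}w=-\operatorname{div}(\rho_0 Dw)/\rho_0+w$ is defined intrinsically, its weak solvability in $H_{\rho_0}^1(\Omega)$ follows from Lax--Milgram exactly as in Theorem \ref{Existence weak solution}, and the spectral decomposition of Theorem \ref{Hilbert basis construction} produces an orthogonal basis $\{w_l\}$ of $H_{\rho_0}^1(\Omega)$ — this part needs no boundary flattening since it does not distinguish tangential from normal directions. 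Thus one obtains a weak solution $v$ to the linearized problem \eqref{existence-3} on $\Omega$ satisfying the analogue of \eqref{solution bound}, and the iterative-contraction scheme of Sections \ref{Existence Part}--\ref{Uniqueness Part}, which also uses only the weighted interpolation inequality Lemma \ref{le:W-9} and energy-type identities, yields existence and uniqueness of a classical solution once the a priori bound $\sup_{0\le t\le T}\widetilde E(t,v)\le 2M_0$ is in hand.

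The remaining and essential task is therefore the a priori estimate $\widetilde E(t,v)\le M_0+CtP(\sup_{0\le s\le t}\widetilde E^{1/2}(s,v))$, which I would establish piece by piece. For the interior part $E^{\mathrm{I}}(t,v)$, one multiplies \eqref{eq:main-3} by the cutoff $\zeta_0$ and carries out the energy and elliptic estimates of Sections \ref{Energy Estimates}--\ref{Elliptic Estimates} on $\Omega_0\subset\subset\Omega$; since $\zeta_0$ is supported away from $\Gamma$ the weights $\rho_0$ are bounded above and below there, so these are \emph{standard} (non-degenerate) parabolic estimates and all the commutator terms generated by $[\zeta_0,\cdot]$ are lower order. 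For each boundary chart, I would change variables by $\Psi_m$, multiply by $\zeta_m$, and transform \eqref{eq:main-2} into \eqref{existence-100} in the $y$-coordinates. The key structural observation — already flagged in the Methodology section — is that because $\det D\Psi_m=\det D\Phi_m=1$ and $\Psi_m$ is merely a shear $y_2\mapsto y_2$, $y_1\mapsto y_1$ (composed with a graph term), the flattened system retains \emph{exactly} the degenerate structure of $(*1)$: the degeneracy weight is still $\widetilde{\rho_0}_m$, the normal direction is still $\tilde\partial_2$, the tangential direction is still $\tilde\partial_1$, and the coefficient $\tilde b^{22}$ is still uniformly elliptic. Hence the boundary energy functional \eqref{HOEF-102} is the exact transplant of \eqref{HOEF-0}, and the entire machinery of Sections \ref{Energy Estimates}--\ref{Elliptic Estimates} — the type-I and type-II energy estimates, the weighted inequalities of Subsection \ref{Some Weighted inequalities}, the bootstrapping bubble diagrams \eqref{Pic-1}--\eqref{Pic-2} — applies to $E^{\mathrm{B}_m}(t,v)$ with only notational changes, producing $E^{\mathrm{B}_m}(t,v)\le M_0+CtP(\sup E^{1/2})$ for each $m$. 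Summing the interior and the finitely many boundary contributions via the partition of unity gives the bound on $\widetilde E$, which closes into $\sup_{[0,T]}\widetilde E\le 2M_0$ for $T$ small, just as in \eqref{APB-1}--\eqref{APB-2}.

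The main obstacle I anticipate is bookkeeping rather than conceptual: one must verify carefully that the change of variables $\Psi_m$ does not spoil the algebraic identities (Piola's identity \eqref{Piola}, the formulas \eqref{J-formula}--\eqref{a-formula}, the a priori bounds of Lemmas \ref{le:Jab-bound-1}--\ref{le:Jab-bound-6}) in a way that would break the delicate weight-counting in the elliptic estimates — in particular that each $\tilde\partial_2$ still buys a factor $\sqrt{\widetilde{\rho_0}_m}$ and that the cross terms arising from the graph function $\gamma_m$ (which enters the transformed metric through $\partial_{y_1}\gamma_m$-type coefficients) are genuinely tangential and hence absorbable. A secondary point is the treatment of the overlaps of the charts: the commutators $[\zeta_m,\text{(elliptic operator)}]$ and the interface between $\Omega_0$ and the $B(x^m,r_m/2)$ produce terms that are one derivative lower and supported in the overlap region where either $\rho_0$ is bounded below (if in $\Omega_0$) or the structure is the flattened one; these are handled by the already-proven interior/boundary estimates of one lower order, i.e. by the same bootstrapping. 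Finally, the boundary condition $(\rho_0),_{k}a_l^k a_l^j v^i,_{j}=0$ on $\Gamma$ follows, as in Remark \ref{re:main-1}, from the high regularity of $v$ and the trace theorem applied chart by chart, using that $H^3\hookrightarrow H^{5/2}(\Gamma)$ and that $\rho_0\sim d(\cdot,\Gamma)$ vanishes linearly.
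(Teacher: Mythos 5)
Your proposal follows the same route as the paper: the Galerkin construction and spectral theory for $\mathcal{L}$ are carried out intrinsically on $\Omega$ without flattening, while the a priori and regularity estimates are localized via a partition of unity, with the interior piece non-degenerate and each boundary piece transformed by $\Psi_m$ into the same degenerate structure \eqref{existence-100} so that the slab machinery of Sections \ref{Energy Estimates}--\ref{Regularity} transfers with only notational changes. The anticipated checkpoints you list (preservation of the $\tilde\partial_2 \leftrightarrow \sqrt{\widetilde{\rho_0}_m}$ weight-counting, uniform ellipticity of $\tilde{\bar{b}}_m^{kj}$, absorption of cutoff commutators, trace argument for the boundary condition) are precisely the ones the paper verifies, so the approach matches.
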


\begin{theorem}\label{th:main-101} Let \(\partial\Omega\in C^8\) and \((\rho_0,u_0)\) satisfy \eqref{eq:intro-3} and 
\(\widetilde{E}(0,v_0)<\infty\). Then
there exist a \(T>0\) and a unique classical solution \((\rho(y,t), u(y,t), \Gamma(t))\) for \(t\in[0,T]\) to \eqref{eq:intro-vfb}. Moreover, \(\Gamma(t)\in C^2([0,T])\), and for \(t\in[0,T]\) and \(y\in \Omega(t)\), it holds that
\begin{equation*}
\begin{aligned}
&\rho(y,t)\in C([0,T];H^4(\Omega(t)))\cap C^1([0,T];H^3(\Omega(t))),\\
&u(y,t)\in C([0,T];H^4(\Omega(t)))\cap C^1([0,T];H^2(\Omega(t))).
\end{aligned}
\end{equation*}

Furthermore, \(u\) satisfies the boundary condition
\begin{align*}
\nabla\rho\cdot\mathbb{D}(u)=0 \quad \rm{on}\ \Gamma(t).
\end{align*}

\end{theorem}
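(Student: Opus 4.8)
\textbf{Plan of proof for Theorem \ref{th:main-101}.} The strategy is to reduce the vacuum free boundary problem \eqref{eq:intro-vfb} on a general domain $\Omega$ to the fixed-domain problem \eqref{eq:main-2} via the Lagrangian flow map, exactly as in the slab case, and then invoke Theorem \ref{th:main-100} in place of Theorem \ref{th:main-1}. Concretely, I would first recall that Theorem \ref{th:main-100} provides a unique classical solution $v\in C([0,T];H^4(\Omega))\cap C^1([0,T];H^2(\Omega))$ to \eqref{eq:main-2} with $\sup_{0\le t\le T}\widetilde E(t,v)\le 2M_0$, together with the Neumann-type boundary condition $(\rho_0),_k a_l^k a_l^j v^i,_j=0$ on $\Gamma\times(0,T]$. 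The analogue of \eqref{J-bound} holds on general domains (this follows from the a priori assumption machinery of Subsection \ref{The a priori assumption}, which only used $\|Dv\|_{L^\infty}$ bounds, and $\widetilde E$ controls $\|Dv\|_{L^\infty}$ via $E^{\mathrm I}$ on $\Omega_0$ and the weighted inequalities on each boundary patch $U_m$), so the flow map $\eta(\cdot,t):\Omega\to\Omega(t)$ defined by \eqref{fluid particle} is a $C^1$-diffeomorphism for each $t\in[0,T]$, with a $C^1$ inverse $\tilde\eta(\cdot,t)$.

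\textbf{Transferring back to Eulerian variables.} Next I would define $\rho(y,t)=\rho_0(\tilde\eta(y,t))\,J^{-1}(\tilde\eta(y,t),t)$ and $u(y,t)=v(\tilde\eta(y,t),t)$ for $y\in\Omega(t)$, and check that $(\rho,u,\Gamma(t))$ is a classical solution of \eqref{eq:intro-vfb}. The continuity equation $\eqref{eq:intro-vfb}_1$ and momentum equation $\eqref{eq:intro-vfb}_2$ follow by reversing the change of variables that led from \eqref{eq:intro-vfb} to \eqref{eq:main-1}--\eqref{eq:main-2}, using $f=\rho_0 J^{-1}$ and $A_i^k=J^{-1}a_i^k$; the positivity $\rho>0$ in $\Omega(t)$ and the vanishing $\rho=0$ on $\Gamma(t)$ come from \eqref{eq:intro-3} and \eqref{J-bound}; the kinematic condition $\eqref{eq:intro-vfb}_5$ is immediate from $\partial_t\eta=u\circ\eta$; and the stress-free condition $S\cdot N(t)=0$ on $\Gamma(t)$ is automatic for solutions of this regularity, since the high-order bounds give $\rho,u\in C([0,T];C^{2,\gamma})$ up to the boundary (Remark \ref{re:main-0}) and $\rho=0$ on $\Gamma(t)$ forces $\rho^2\mathbb I_2$ to vanish there while $2\rho\mathbb D(u)\cdot N(t)=0$ reduces precisely to the Neumann condition $\eqref{Nuewmann boundary condition-2}$, which is the Eulerian reformulation of $(\rho_0),_k a_l^k a_l^j v^i,_j=0$. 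This also yields $\eqref{Nuewmann boundary condition-2}$ as asserted.

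\textbf{Regularity of $\rho$, $u$ and $\Gamma(t)$.} For the claimed regularity in \eqref{regularity-2} (with $\Omega$ replaced by the general domain), I would argue as in the slab case: $u=v\circ\tilde\eta$ inherits $C([0,T];H^4(\Omega(t)))\cap C^1([0,T];H^2(\Omega(t)))$ from $v$ together with the $H^4$-in-space, $C^1$-in-time control on $\eta$ and $\tilde\eta$ (the latter following from $D\eta\in C([0,T];H^3)$ via Lemma \ref{le:Preliminary-1}, Lemma \ref{le:W-1} and $H^3\hookrightarrow$ algebra, plus the inverse function theorem with parameters). The density $\rho=\rho_0(\tilde\eta)J^{-1}(\tilde\eta,\cdot)$ gains one time derivative compared to $v$ because $\partial_t J=JA_i^jv^i,_j$ is expressed through first spatial derivatives of $v$ which are $C^1$ in time by the $\partial_t^2 v$-control in $\widetilde E$; hence $\rho\in C^1([0,T];H^3(\Omega(t)))$. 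The boundary regularity $\Gamma(t)\in C^2([0,T])$ follows from $\eqref{fluid particle}$ and $\ddot\eta=\partial_t(u\circ\eta)$, which is controlled since $\partial_t v\in C([0,T];H^2)$ and $H^2\hookrightarrow C^{0,\gamma}$.

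\textbf{Main obstacle.} The only genuine work is Theorem \ref{th:main-100} itself, i.e.\ carrying the entire a priori estimate / linearized-problem / iteration scheme of Sections \ref{Energy Estimates}--\ref{Uniqueness Part} over to the general domain. As outlined in the Methodology, this is done by flattening $\partial\Omega$ on each patch via $\Phi_m$, writing the transformed system \eqref{existence-100} in the $y$-coordinates, and observing that it has the same degenerate structure as \eqref{eq:main-2}; the boundary energy $E^{\mathrm B_m}$ in \eqref{HOEF-102} is designed so that the tangential/normal splitting and all the weighted inequalities of Section \ref{Some Preliminaries} apply verbatim in each $U_m$, while in the interior $\Omega_0$ one uses standard (non-degenerate) parabolic estimates for $E^{\mathrm I}$. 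The Galerkin construction of Section \ref{weak solution} needs no modification since it does not distinguish tangential from normal directions, so the degenerate-singular elliptic operator $\mathcal L$ and its eigenbasis furnish the projection space on $\Omega$ directly. Once Theorem \ref{th:main-100} is in hand, the present theorem is a routine change-of-variables argument; I expect essentially no new difficulty beyond bookkeeping the cutoff functions $\zeta_m$ and the commutators $[\tilde\partial_i,\Phi_m^*]$ that appear when localizing, which are lower-order and absorbed by the weighted $L^\infty$-type bounds.
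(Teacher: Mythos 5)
Your proposal is correct and follows the same route the paper (implicitly) takes: the paper gives no separate proof of Theorem~\ref{th:main-101}, instead treating it as the same corollary of Theorem~\ref{th:main-100} that Theorem~\ref{th:main-2} is of Theorem~\ref{th:main-1}, via the Lagrangian-to-Eulerian change of variables $\rho=\rho_0(\tilde\eta)J^{-1}(\tilde\eta,\cdot)$, $u=v\circ\tilde\eta$, and you reproduce exactly this reduction, including the regularity bookkeeping and the identification that all the genuine work lives in Theorem~\ref{th:main-100}.

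One small conceptual slip in your boundary-condition paragraph: you write that $2\rho\,\mathbb D(u)\cdot N(t)=0$ ``reduces precisely to'' the Neumann condition \eqref{Nuewmann boundary condition-2}. This is not quite right. On $\Gamma(t)$ one has $\rho=0$, so \emph{both} pieces of $S\cdot N(t)=2\rho\,\mathbb D(u)\cdot N(t)-\rho^2 N(t)$ vanish trivially; the stress-free condition carries no information once $\rho$ vanishes on the boundary. The Neumann constraint $\nabla\rho\cdot\mathbb D(u)=0$ is a genuinely independent consequence: it is the Eulerian push-forward of the Lagrangian condition $(\rho_0),_k a_l^k a_l^j v^i,_j=0$ proved in Theorem~\ref{th:main-100}, which in turn is derived (as in Remark~\ref{re:main-1} for the slab) by expanding $\eqref{eq:main-2}_1$ near the boundary, dividing the $\rho_0$-linear terms by the distance function, and letting $x\to\Gamma$; it is not extracted from $S\cdot N=0$. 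Since you do cite the Lagrangian condition as the actual source in the same sentence, the logical structure of your proof is intact, but the phrase ``reduces precisely to'' should be replaced by noting that $S\cdot N=0$ is vacuous on $\Gamma(t)$ and that \eqref{Nuewmann boundary condition-2} follows separately from the boundary condition established in Theorem~\ref{th:main-100}.
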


\subsection{Some important inequalities}  \eqref{ineq:weighted Sobolev-1}-\eqref{ineq:weighted Sobolev-3} can be transferred to

\begin{lemma}\label{le:Preliminary-100} For each \(m\in\{1,2,\cdots, K\}\), it holds that
\begin{equation}\label{ineq:weighted Sobolev-100}
\begin{aligned}
&\|w\|_{H^{1/2}(U_m)}^2\lesssim \int_{U_m} {\widetilde{\rho_0}}_m(w^2+|Dw|^2)(y)\,\diff y,\\
&\int_{U_m} {\widetilde{\rho_0}}_m^{\alpha}w^2(y)\,\diff y\lesssim \int_{U_m} {\widetilde{\rho_0}}_m^{\alpha+2}(w^2+|Dw|^2)(y)\,\diff y\quad \mathrm{for}\ \alpha=0,1,\cdots,\\
&\|w\|_{L^\infty(U_m)}^2\lesssim \sum_{\theta=0}^2 \int_{U_m} {\widetilde{\rho_0}}_m|D^\theta w|^2(y)\,\diff y.
\end{aligned}
\end{equation}

\end{lemma}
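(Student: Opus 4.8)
\textbf{Proof proposal for Lemma \ref{le:Preliminary-100}.}
The plan is to reduce each of the three inequalities in \eqref{ineq:weighted Sobolev-100} to its flat counterpart \eqref{ineq:weighted Sobolev-1}--\eqref{ineq:weighted Sobolev-3} by pulling everything back to a reference half-ball via the straightening map $\Psi_m$ (equivalently, pushing forward via $\Phi_m$). The crucial structural facts are that $\det D\Phi_m=\det D\Psi_m=1$ (so the change of variables introduces no Jacobian factor in the integrals), that $\Phi_m,\Psi_m\in C^8$ with all derivatives bounded on the relevant compact neighbourhoods (since $\partial\Omega\in C^8$), and that under $\Psi_m$ the Euclidean distance to $\partial\Omega$ inside $B(x^m,r_m)$ is comparable to the distance $d(y,\{y_2=0\})=y_2$ up to two-sided constants depending only on $\|\gamma_m\|_{C^1}$. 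Combining the latter with \eqref{eq:intro-3} gives ${\widetilde{\rho_0}}_m(y)\sim y_2$ on $U_m$, which is exactly the weight appearing on the flat slab where \eqref{ineq:weighted Sobolev-1}--\eqref{ineq:weighted Sobolev-3} were stated (after the remark following \eqref{ineq:weighted Sobolev-3} allowing $d$ to be replaced by $\rho_0$).

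First I would set $\tilde w(y)=w(\Psi_m(y))$ and record the chain-rule identities: $D_y\tilde w=(D\Psi_m)^{T}\,(D_x w)\circ\Psi_m$ and the analogous second-order formula $D_y^2\tilde w$ expressed through $D\Psi_m$ and $D^2\Psi_m$ acting on $(D_xw)\circ\Psi_m$ and $(D_x^2w)\circ\Psi_m$; the boundedness of these geometric coefficients yields the two-sided comparability $|D_yw|^2\sim |D_xw|^2\circ\Psi_m$ pointwise (modulo lower-order terms for the second derivative, which are absorbed since they appear summed in the $L^\infty$ estimate), and likewise $|w|^2\circ\Psi_m=|\tilde w|^2$. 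Because the Jacobian is $1$, integration over $U_m$ of a weighted quantity in $y$ equals integration over $\Phi_m^{-1}(U_m)\subset\Omega$ of the corresponding weighted quantity in $x$ with $\rho_0$ in place of ${\widetilde{\rho_0}}_m$. Then one extends by zero / uses the interior–near-boundary decomposition so that the flat inequalities \eqref{ineq:weighted Sobolev-1}, \eqref{ineq:weighted Sobolev-2}, \eqref{ineq:weighted Sobolev-3} — which are stated dimension-independently and whose weight is comparable to $\rho_0$ — apply on $\Omega$, and transport the resulting bound back to $U_m$. For the $H^{1/2}$-inequality $\eqref{ineq:weighted Sobolev-100}_1$ one additionally needs that the $H^{1/2}(U_m)$ norm is comparable to the $H^{1/2}$ norm of the pullback on the flat domain; this is standard since $\Psi_m$ is a bi-Lipschitz $C^8$ diffeomorphism and fractional Sobolev norms are invariant under such maps up to constants, so $\|w\|_{H^{1/2}(U_m)}\sim\|w\circ\Psi_m^{-1}\|_{H^{1/2}(\Phi_m^{-1}(U_m))}$ and one invokes \eqref{ineq:weighted Sobolev-1}.

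I expect the main obstacle to be the careful justification that the distance function behaves well under straightening and that the weight equivalence ${\widetilde{\rho_0}}_m(y)\sim y_2$ holds with constants uniform over $m=1,\dots,K$; one must check that $d(\Psi_m(y),\partial\Omega)\sim y_2$ near the boundary, which follows from $\partial\Omega\in C^8$ and the explicit form of $\Psi_m$ but requires shrinking $s_m$ if necessary, and that these finitely many constants can be taken uniform. A secondary technical point is the handling of the lower-order curvature terms $D^2\Psi_m$ in the second-derivative comparison needed for $\eqref{ineq:weighted Sobolev-100}_3$: these generate extra terms of the form $\int_{U_m}{\widetilde{\rho_0}}_m|Dw|^2$, which are harmless because they are already present on the right-hand side of \eqref{ineq:weighted Sobolev-100}$_3$ (the sum runs over $\theta=0,1,2$), but one should write this absorption explicitly rather than sweep it under ``$\lesssim$''. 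Apart from these two bookkeeping issues, the proof is a routine change of variables, so I would present it compactly: state the weight equivalence as a sublemma, do the change of variables for one representative inequality in detail, and remark that the other two follow verbatim.
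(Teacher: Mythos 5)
Your proposal is correct in substance and rests on the same geometric facts the paper uses: the straightened weight $\widetilde{\rho_0}_m(y)=\rho_0(\Psi_m(y))$ is comparable to $y_2$ near $\Sigma_m$ (by \eqref{eq:intro-3} and the two-sided bound $d(\Psi_m(y),\partial\Omega)\sim y_2$, valid after shrinking $s_m$ if necessary with constants uniform over the finitely many charts), and is bounded below away from $\Sigma_m$; the problem then reduces to the flat half-space case covered by the Kufner inequalities.

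The route is, however, more roundabout than the paper's, and the detour manufactures the ``secondary technical point'' you flag. The paper's proof stays entirely in the $y$-variable: $U_m$ is already a flat half-ball, the inequalities \eqref{ineq:weighted Sobolev-1}--\eqref{ineq:weighted Sobolev-3} apply there directly near $\Sigma_m$ (the weight is comparable to the distance to the flat boundary), and far from $\Sigma_m$ the weight is bounded below so the statements collapse to ordinary Sobolev embeddings. No change of variables is needed, hence no chain rule, no $D\Psi_m$ or $D^2\Psi_m$ coefficients, and in particular no extra $\int\widetilde{\rho_0}_m|Dw|^2$ term to absorb into the right side of $\eqref{ineq:weighted Sobolev-100}_3$, nor any appeal to bi-Lipschitz invariance of fractional norms for $\eqref{ineq:weighted Sobolev-100}_1$. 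Your version pushes the estimate over to $\Psi_m(U_m)\subset\Omega$ and invokes the weighted inequalities on the curved domain, then transports back — a valid path, but it obscures that after straightening the problem is literally the model flat problem. A small notational slip compounds this: the $w$ in the lemma is already a function on $U_m$ in the $y$-variable, so defining $\tilde w(y)=w(\Psi_m(y))$ does not parse; you are implicitly reading $w$ as a function on $\Omega$. With that fixed the argument goes through, but the direct reading avoids the round trip altogether.
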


\begin{proof} When \(y\in U_m\) is near \(\Sigma_m\), \eqref{ineq:weighted Sobolev-100} follow from \eqref{ineq:weighted Sobolev-1}-\eqref{ineq:weighted Sobolev-3} directly. When \(y\in U_m\) is far from \(\Sigma_m\), 
since \({\widetilde{\rho_0}}_m(y)\)  is bounded below by a positive constant, then \eqref{ineq:weighted Sobolev-100} hold obviously.  

\end{proof}

Based on Lemma \ref{le:Preliminary-100}, one can transfer all the inequalities defined on \(\Omega\) in \(x\)-coordinate in Lemma \ref{le:Preliminary-1}, Lemma \ref{le:W-1}, Corollary \ref{le:W-2}, Lemmas \ref{le:W-3}-\ref{le:W-8} to \(U_m\) in \(y\)-coordinate.

 \eqref{TE} is replaced by 

\begin{lemma}  For each \(m\in\{1,2,\cdots, K\}\), it holds that
\begin{equation}\label{TE-1}
\begin{aligned}
 \|\tilde{\partial}_1^l({\widetilde{\rho_0}}_m)/{\widetilde{\rho_0}}_m\|_{L^\infty(U_m)}\leq C\quad \mathrm{for}\ m=1,2,\cdots
\end{aligned}
\end{equation}
\end{lemma}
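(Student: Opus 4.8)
\textbf{Proof plan for \eqref{TE-1}.} The statement \eqref{TE-1} is the flattened-coordinate analogue of the tangent estimate \eqref{TE} on the initial depth profile, and the plan is to deduce it directly from \eqref{TE} by tracking how the change of variables $x = \Psi_m(y)$ acts on tangential derivatives. The key observation is that, by construction, $\Psi_m$ flattens $\partial\Omega\cap B(x^m,r_m)$ to $\Sigma_m = B(0,s_m)\cap\{y_2=0\}$, so the tangential direction $\tilde\partial_1 = \partial_{y_1}$ along the flattened boundary corresponds, through the chain rule, to a derivative that is tangential along $\partial\Omega$. Concretely, $\tilde\partial_1 = \partial_{y_1} = \sum_k (\partial x_k/\partial y_1)\partial_{x_k} = \partial_{x_1} + \gamma_m,_1\,\partial_{x_2}$, a first-order operator whose coefficients are $C^7$ (since $\partial\Omega\in C^8$ forces $\gamma_m\in C^8$). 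More generally, $\tilde\partial_1^l$ expands, via Fa\`a di Bruno, into a sum of compositions of $\partial_{x_1}$ and $\partial_{x_2}$ of total order $l$, but every such composition involves only the \emph{tangential} field $\partial_{x_1}+\gamma_m,_1\,\partial_{x_2}$ iterated (plus lower-order terms with smooth coefficients coming from derivatives of $\gamma_m$); crucially, no purely normal derivative $\partial_{x_2}$ acting alone appears.

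First I would record that $\widetilde{\rho_0}_m(y) = \rho_0(\Psi_m(y))$, hence $\tilde\partial_1^l(\widetilde{\rho_0}_m)(y)$ is a finite sum $\sum_\beta c_\beta(y)\,(\partial_1^{\beta}\rho_0)(\Psi_m(y))$ where each $\partial_1^\beta$ is an iterated tangential derivative of $\rho_0$ on $\Omega$ of order $\le l$, and the $c_\beta$ are bounded functions built from derivatives of $\gamma_m$ up to order $l\le 8$, hence bounded on the (bounded) set $U_m$. Dividing by $\widetilde{\rho_0}_m(y) = \rho_0(\Psi_m(y))$ and taking the $L^\infty(U_m)$ norm, one gets
\[
\Big\|\frac{\tilde\partial_1^l(\widetilde{\rho_0}_m)}{\widetilde{\rho_0}_m}\Big\|_{L^\infty(U_m)}
\lesssim \sum_\beta \|c_\beta\|_{L^\infty(U_m)}\,\Big\|\frac{\partial_1^{\beta}\rho_0}{\rho_0}\Big\|_{L^\infty(\Psi_m(U_m))}
\le C,
\]
where the last bound is exactly \eqref{TE} (valid for all $l\ge 1$), applied on the image $\Psi_m(U_m)\subset\Omega$. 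For the part of $U_m$ away from $\Sigma_m$, $\widetilde{\rho_0}_m$ is bounded below by a positive constant and is $C^7$ there, so the quotient is trivially bounded; this is the same dichotomy already used in the proof of Lemma \ref{le:Preliminary-100}.

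The one point requiring genuine care — and the main obstacle — is verifying that the expansion of $\tilde\partial_1^l$ in $x$-derivatives really does produce only \emph{tangential} iterated derivatives of $\rho_0$, so that \eqref{TE} (which is a purely tangential estimate) actually applies to every term. This is where one must use that $\tilde\partial_1$ differentiates \emph{along} the flattened boundary: since $y_1 = x_1 = \Phi^1_m(x)$, the level sets $\{y_2 = \text{const}\}$ are graphs parallel to $\partial\Omega$, and $\partial_{y_1}$ is tangent to them; iterating stays within the algebra generated by the tangential field and multiplication by smooth functions, never introducing a bare $\partial_{x_2}$. Once this structural fact is pinned down (a short induction on $l$ using $[\tilde\partial_1,\text{smooth mult.}]=\text{smooth mult.}$ and that $\tilde\partial_1$ applied to a tangential operator is again a tangential operator plus lower order), the estimate follows. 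I do not expect the combinatorics to require more than a remark, since the paper already treats \eqref{TE} as a known consequence of the higher-order Hardy inequality of \cite{MR2980528} and only the coordinate transfer is at issue here.
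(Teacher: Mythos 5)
Your overall plan --- split into the region away from $\Sigma_m$ (where ${\widetilde{\rho_0}}_m$ is bounded below) and the region near $\Sigma_m$, where one exploits that $\tilde{\partial}_1$ is tangential so a Hardy-type argument applies --- is the right dichotomy and matches the paper's. But the near-boundary reduction does not close as written, for two reasons.

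First, \eqref{TE} is stated and proved on the slab $\Omega=\mathbb{T}\times(0,1)$, where $\partial_1=\partial_{x_1}$ is tangential to the flat boundary. On the curved domain of Section~\ref{general domain case}, $\partial_{x_1}$ is no longer tangential to $\Gamma$, so \eqref{TE} as a statement about $\rho_0$ on $\Omega$ is simply unavailable. What the chain rule actually gives is
\[
\tilde{\partial}_1^l{\widetilde{\rho_0}}_m = (V^l\rho_0)\circ\Psi_m,\qquad V=\partial_{x_1}+(\partial_1\gamma_m)\,\partial_{x_2},
\]
so the estimate you need is the curved-domain analogue $\|V^l\rho_0/\rho_0\|_{L^\infty}\le C$, with $V$ the tangential field above; this is true and provable by the same Hardy argument, but it is not \eqref{TE}, and one cannot simply invoke \eqref{TE} ``applied on $\Psi_m(U_m)\subset\Omega$.'' Second, your structural claim that no bare normal derivative appears is false term by term. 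Already
\[
V^2\rho_0=\partial_1^2\rho_0+2(\partial_1\gamma_m)\,\partial_1\partial_2\rho_0+(\partial_1\gamma_m)^2\partial_2^2\rho_0+(\partial_1^2\gamma_m)\,\partial_2\rho_0,
\]
and the last quotient $(\partial_1^2\gamma_m)\,\partial_2\rho_0/\rho_0$ is unbounded near $\Gamma$, since $\partial_2\rho_0$ is bounded away from zero there while $\rho_0\to0$; the same goes for $(\partial_1\gamma_m)^2\partial_2^2\rho_0/\rho_0$. So individual terms are \emph{not} controlled by anything like \eqref{TE}. What rescues the bound is that the \emph{sum} $V^l\rho_0$ vanishes on $\Gamma$ (because $\rho_0|_\Gamma=0$ and $V$ is tangential), after which Hardy controls the quotient --- but this is precisely the Hardy argument one must run, not a term-by-term citation of \eqref{TE}.

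The paper's route avoids all this bookkeeping by working in $y$-coordinates directly: ${\widetilde{\rho_0}}_m\in H^7(U_m)$ (from $\rho_0\in H^7$ and $\Psi_m\in C^8$), ${\widetilde{\rho_0}}_m|_{\Sigma_m}=0$ with $\Sigma_m$ flat and $\tilde{\partial}_1=\partial_{y_1}$ literally the horizontal (tangential) derivative, and ${\widetilde{\rho_0}}_m\sim y_2$ near $\Sigma_m$. The Hardy-inequality proof of \eqref{TE} then applies verbatim to ${\widetilde{\rho_0}}_m$ in place of $\rho_0$, with no pullback to $x$-derivatives of $\rho_0$ at all. That is what ``deduce \eqref{TE-1} from \eqref{TE} directly'' means here, and it is both shorter and immune to the two issues above.
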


\begin{proof} When \(y\in U_m\) is far from \(\Sigma_m\), \eqref{TE-1} holds obviously. When \(y\in U_m\) is near \(\Sigma_m\), one can deduce \eqref{TE-1} from \eqref{TE} directly. 

\end{proof}

Finally, one can flatten the boundary and use the partition of unity to show that Lemma \ref{le:W-9} holds for a general domain.

\subsection{Proof of Theorem \ref{th:main-100}}
This will be done by following the strategy of Sections \ref{The Linearized Problem}-\ref{Uniqueness Part}.

\subsubsection{The linearized problem \eqref{existence-3}} We first claim that \(v\in L^\infty([0,T]; H^4(\Omega))\) if \(\sup_{0\leq t\leq T}\widetilde{E}(t,v)<\infty\). 
In fact, it follows from Lemma \ref{le:Preliminary-100} that
	\begin{align*}
		\|\tilde{v}_m\|_{H^4(U_m)}\lesssim (E^{\text{B}_m}(t,v))^{1/2}. 
	\end{align*}
Since \(\Phi_m, \Psi_m\in C^8\), it holds that 
     \begin{align*}
		\|v\|_{H^4(\Omega \cap B(x^m,r_m/2))}\lesssim (E^{\text{B}_m}(t,v))^{1/2}. 
	\end{align*}
By the properties of the partition of unity, one may estimate
\begin{equation*}
	\begin{aligned}
		\|v\|_{H^4(\Omega)}^2&\lesssim \|\zeta_0 v\|_{H^4(\Omega)}^2+\sum_{m=1}^K \|\zeta_m v\|_{H^4(\Omega)}^2\\
&\lesssim \|v\|_{H^4(\Omega_0)}^2+\sum_{m=1}^K \|v\|_{H^4(\Omega \cap B(x^m,r_m/2))}^2\\
&\lesssim 
E^{\text{I}}(t, v)+\sum_{m=1}^KE^{\text{B}_m}(t, v)=\widetilde{E}(t,v).
	\end{aligned}
\end{equation*}

Now, we can define the solution space of  \(v\). Given \(T>0\), let
\begin{equation*}
	\begin{aligned}
		\mathcal{X}_T=\{ v\in L^\infty([0,T]; H^4(\Omega)):\ \sup_{0\leq t\leq T}\widetilde{E}(t,v)<\infty\},
	\end{aligned}
\end{equation*}
with the norm
\begin{equation*}
	\begin{aligned}
		\|v\|_{\mathcal{X}_T}^2=\sup_{0\leq t\leq T}\widetilde{E}(t,v).
	\end{aligned}
\end{equation*}
Given \(M_1\), define \(\mathcal{C}_T(M_1)\) to be a closed, bounded, and
convex subset of \(\mathcal{X}_T\):
\begin{equation*}
	\begin{aligned}
		\mathcal{C}_T(M_1)&=\{ v\in \mathcal{X}_T:\|v\|_{\mathcal{X}_T}^2\leq M_1\}.
	\end{aligned}
\end{equation*}

To show Theorem \ref{th:main-100}, 
we will start from studying \eqref{existence-3} 
as Section \ref{The Linearized Problem}.

\subsubsection{Construction of an orthogonal basis of \(H_{\rho_0}^1(\Omega)\)} Defining the same degenerate-singular elliptic operator
\eqref{degenerate-singular elliptic equation} in \(\Omega\), one can construct an orthogonal basis \(\{\tilde{w}_{m_l}\}_{m_l=1}^\infty\) of \(H_{\rho_0}^1(\Omega)\) by following the argument in Section \ref{Degenerate-Singular Elliptic Operators}, namely Theorem \ref{Hilbert basis construction} and Theorem \ref{regularity of eigenvalue}. However, the additional regularity of the eigenfuctions \(\{\tilde{w}_{m_l}\}_{m_l=1}^\infty\), corresponding to \eqref{weighted eigenfunction regularity}, should be replaced by the following estimate in \(y\)-coordinate:
{\small \begin{equation}\label{weighted eigenfunction regularity-100}
\begin{aligned}
&\sum_{m=1}^K\sum_{l_1=0}^7\big\|\sqrt{{\widetilde{\rho_0}}_m}\tilde{\partial}_1^{l_1}\tilde{D}\tilde{w}_{m_l}\big\|_{L^2(U_m)}+\sum_{m=1}^K\sum_{\substack{l_1+l_2\leq 8\\  l_1\geq 0,\ l_2\geq 2}}\big\|\sqrt{{\widetilde{\rho_0}}_m^{l_2}}\tilde{\partial}_1^{l_1}\tilde{\partial}_2^{l_2}\tilde{w}_{m_l}\big\|_{L^2(U_m)}\\
&\leq C\|\sqrt{\rho_0}w_l\|_{L^2(\Omega)}, \quad l=1,2,\dots,
\end{aligned}
\end{equation}}

\noindent where \(\tilde{w}_{m_l} (y)=w_l(\Psi_m(y))\). 

To show \eqref{weighted eigenfunction regularity-100}, one can start from the weak formulation \eqref{weak form} for \(w_l\). 
To improve the regularity of \(w_l\), one shall first choose some tangential difference quotient as the test function \(\varphi\) in 
\eqref{weak form} to deduce the tangential estimates and then use the strong formulation for \(w_l\)
\begin{equation}\label{DS-4}
\begin{aligned}
\mathrm{div}(\rho_0Dw_l)=(\sigma_l-1)\rho_0 w_l \quad \mathrm{a.e.\ in}\ \Omega 
\end{aligned}
\end{equation}
to obtain the normal estimates.  Since \(\rho_0\)  degenerates only on \(\partial\Omega\), one needs only to take care of the boundary regularity of \(w_l\). 

To define the tangential difference quotient near \(\partial\Omega\), one needs to flatten \(\partial\Omega\). To this end, fixing \(x^m\in \partial\Omega\),  we transfer \eqref{weak form} from \(x\)-coordinate to \(y\)-coordinate, which takes the following form: 
\begin{equation}\label{weak form-100}
\begin{aligned}
\int_{U_m} {\widetilde{\rho_0}}_m c_m^{kj}\tilde{w}_{m_l},_{j} \tilde{\varphi}_m,_{k}\,\diff y=(\sigma_l-1)\int_{U_m} {\widetilde{\rho_0}}_m \tilde{w}_{m_l}  \tilde{\varphi}_m\,\diff y,
\end{aligned}
\end{equation} 
where
{\small\begin{equation*}
\begin{aligned}
\tilde{\varphi}_m(y)=\varphi(\Psi_m(y))\quad \text{and}\quad
c_m^{kj}(y)=\sum_{r=1}^2\frac{\partial\Phi_m^k}{\partial x_r}(\Psi_m(y))\frac{\partial\Phi_m^j}{\partial x_r}(\Psi_m(y)).
\end{aligned}
\end{equation*}}	

\noindent By \eqref{a-bound-2}, it is direct to check that there exists a constant \(C>0\) such that
\begin{align*}\label{a-bound-200}
	c_m^{kj}(y)\xi_k\xi_j\geq C|\xi|^2 \quad \text{for}\ y\in U_m\ \text{and}\ \xi\in \R^2. 
\end{align*}
Define the tangential difference quotient 
\begin{align*}
  D_1^\tau w(y)=\frac{w(y+\tau {\bf e}_1)-w(y)}{\tau},
\end{align*}
where \({\bf e}_1\) is the \(y_1\)-direction coordinate vector. Choose a smooth cutoff function
\begin{equation*}
\chi(y)=
\begin{cases}
1\quad \mbox{on}\ B(0,s_m/2),\\
0\quad \mbox{on}\ B(0,s_m)\setminus B(0,3s_m/4). 
\end{cases}
\end{equation*}
Now one can use  \(D_1^\tau\) and \(\chi\) to deduce the desired tangential estimates. For example, by taking 
\(\tilde{\varphi}_m=-D_1^{-\tau}(\chi^2D_1^\tau \tilde{w}_{m_l})\) in \eqref{weak form-100}, one may obtain  
\begin{equation*}\label{DS-200}
\begin{aligned}
\big\|\sqrt{{\widetilde{\rho_0}}_m}\tilde{\partial}_1\tilde{D}\tilde{w}_{m_l}\big\|_{L^2(U_m)}\leq C\|\sqrt{\rho_0}w_l\|_{L^2(\Omega)}.
\end{aligned}
\end{equation*}
Here \(\chi\) plays a role in eliminating the boundary term on \(\partial U_m\setminus\Sigma_m\) in the integration by parts.  

To obtain the normal estimates, one may also transfer \eqref{DS-4} from \(x\)-coordinate to \(y\)-coordinate yielding
\begin{equation}\label{DS-100}
\begin{aligned}
({\widetilde{\rho_0}}_m c_m^{kj}\tilde{w}_{m_l},_{j}),_{k}=(\sigma_l-1){\widetilde{\rho_0}}_m \tilde{w}_{m_l} \quad \mathrm{a.e.\ in}\ U_m. 
\end{aligned}
\end{equation}
Based on \eqref{DS-100}, one can follow the argument in Subsection \ref{LOP} to deduce the desired normal estimates in \(y\)-coordinate.

\subsubsection{Existence of weak solutions to the linearized problem \eqref{existence-3}}
One can use the Galerkin's scheme with the orthogonal basis \(\{\tilde{w}_{m_l}\}_{m_l=1}^\infty\) of \(H_{\rho_0}^1(\Omega)\) as the projection space to construct approximate solutions (which converge to the weak solution) to \eqref{existence-3}. In fact, one can show Lemma \ref{Existence and uniqueness of a weak solution} following exactly the same argument in Section \ref{weak solution}. The key point is that all the calculations can be carried out in \(x\)-coordinate since one does not need to distinguish the tangential and normal derivatives.

\subsubsection{Regularity of weak solutions to the linearized problem \eqref{existence-3}}
To carry out uniform high-order estimates for the approximate solutions (to improve the regularity of the weak solution) to \eqref{existence-3}, we will work with \(\eqref{existence-3}_1\) in \(y\)-coordinate (multiplying \(\chi^2\)), which takes the following form: 
\begin{equation}\label{existence-100}
	\begin{aligned}
		\chi^2{\widetilde{\rho_0}}_m\partial_t\tilde{v}_m^i+\chi^2({\widetilde{\rho_0}}_m^2 \tilde{\bar{J}}_m^{-2}\tilde{\bar{a}}_{i_m}^k),_{k}=\chi^2({\widetilde{\rho_0}}_m\tilde{\bar{J}}_m^{-2}\tilde{\bar{b}}_m^{kj}\tilde{v}_m^i,_{j}),_{k}  \quad \mathrm{a.e.\ in}\ U_m,
	\end{aligned}
\end{equation}
where 
\begin{equation*}
\begin{aligned}
&\tilde{\bar{J}}_m(y)=\bar{J}(\Psi_m(y)),
\quad  \tilde{\bar{b}}_m^{kj}=\tilde{\bar{a}}_{i_m}^{k}\tilde{\bar{a}}_{i_m}^{j},\\
&\tilde{\bar{a}}_{i_m}^{k}(y)=\sum_{r=1}^2\bar{a}_i^{r}(\Psi_m(y))\frac{\partial\Phi_m^k}{\partial x_r}(\Psi_m(y)).
\end{aligned}
\end{equation*}	
The reason of using the cutoff function \(\chi^2\) lies in that one can eliminate the boundary term on \(\partial U_m\setminus\Sigma_m\) when implementing integration by parts. 
Now, one can carry out the arguments in Sections \ref{Energy Estimates}, \ref{Elliptic Estimates} and \ref{Regularity} to  show \(v\in \mathcal{C}_T(M_1)\).

\subsubsection{Existence and uniqueness of classical solutions to the problem \eqref{eq:main-2}} 
The proof is exactly same as that in Sections \ref{Existence Part} and \ref{Uniqueness Part}.




\section*{Acknowledgment}
Li's research was supported by the National Natural Science Foundation of China (Grant Nos. 11931010 and 12331007), and by the key research project of Academy for Multidisciplinary Studies, Capital Normal University, and by the Capacity Building for Sci-Tech Innovation-Fundamental Scientific Research Funds 007/20530290068. Wang's research was supported by the Grant No. 830018 from China. 
Xin's research was supported by the Zheng Ge Ru Foundation and by Hong Kong RGC Earmarked Research Grants CUHK14301421, CUHK14300819, CUHK14302819, CUHK14300917,  Basic and Applied Basic Research Foundations of Guangdong Province 20201\
31515310002, and the Key Project of National Nature Science Foundation of China (Grant No. 12131010). 

Part of this research was carried out when the second author visited the Institute of Mathematical Sciences, CUHK.
He acknowledges the institute for the hospitality.

\end{document}